\documentclass[12pt]{article}

\usepackage{braket}
\usepackage{amsmath}
\usepackage{mathtools}
\usepackage{stackrel}
\usepackage{scalerel}
\usepackage{leftidx}
\usepackage{xfrac}
\usepackage[charter]{mathdesign}
\usepackage{euscript}
\let\mathbb\undefined
\usepackage{bbold}
\DeclareSymbolFont{usualmathcal}{OMS}{cmsy}{m}{n}
\DeclareSymbolFontAlphabet{\mathcal}{usualmathcal}

\usepackage{authblk}

\usepackage[shortlabels]{enumitem}

\usepackage{hyperref}
\usepackage{cleveref}
\hypersetup{colorlinks=true, linkcolor=red!50!black, citecolor=green!50!black, urlcolor=blue!80!black}

\usepackage{xcolor}
\usepackage{tcolorbox}
\usepackage{graphicx}
\usepackage{subfigure}
\usepackage{tikz}
\usepackage{tikz-3dplot}

\usepackage[all,2cell,arrow,matrix]{xy} \UseAllTwocells \SilentMatrices
\usepackage{tikz-cd}
\usepackage{tikz}
\usetikzlibrary{arrows,arrows.meta,%
decorations.markings,decorations.pathreplacing,%
decorations.pathmorphing,calc}

\usepackage{verbatim}
\usepackage{comment}

\usepackage[nottoc]{tocbibind} 
\usepackage{appendix}

\usepackage{geometry}
\tikzset{->-/.style={decoration={markings,mark=at position #1 with {\arrow{Stealth}}},postaction={decorate}},->-/.default=0.55}

\usepackage[amsmath,amsthm,thmmarks]{ntheorem}
\theoremstyle{definition}

\newtheorem{thm}{Theorem}[section]
\newtheorem{prop}[thm]{Proposition}

\newtheorem{cor}[thm]{Corollary}
\newtheorem{lem}[thm]{Lemma}

\newtheorem{conj}[thm]{Conjecture}

{
\theoremsymbol{\mbox{${\blacksquare}$}}
\newtheorem{defn}[thm]{Definition}
}
{
\theoremsymbol{\mbox{$\heartsuit$}}
\newtheorem{expl}[thm]{Example}
}
{
\theoremsymbol{\mbox{$\diamondsuit$}}
\newtheorem{rem}[thm]{Remark}
}
\qedsymbol{\mbox{$\square$}}

\numberwithin{equation}{section}
\numberwithin{thm}{section}

\newcommand\be            {\begin{equation}}
\newcommand\ee            {\end{equation}}
\newcommand\bea           {\begin{eqnarray}}
\newcommand\eea         {\end{eqnarray}}
\newcommand\bnu          {\begin{enumerate}}
\newcommand\enu          {\end{enumerate}}
\newcommand\bit          {\begin{itemize}}
\newcommand\eit          {\end{itemize}}

\newcommand{\pf}{\begin{proof}}
\newcommand{\epf}{\qed\end{proof}}

\makeatletter
\providecommand{\leftsquigarrow}{%
  \mathrel{\mathpalette\reflect@squig\relax}%
}
\newcommand{\reflect@squig}[2]{%
  \reflectbox{$\m@th#1\rightsquigarrow$}%
}
\makeatother

\newcommand\CA			{\EuScript{A}}
\newcommand\CB			{\EuScript{B}}
\newcommand\CC			{\EuScript{C}}
\newcommand\CD			{\EuScript{D}}
\newcommand\CE			{\EuScript{E}}

\newcommand\CL			{\EuScript{L}}
\newcommand\CM			{\EuScript{M}}
\newcommand\CN			{\EuScript{N}}

\newcommand\CS			{\EuScript{S}}

\newcommand\CY			{\EuScript{Y}}

\newcommand{\FZ}			{\text{\usefont{U}{euf}{m}{n}Z}}

\DeclareMathOperator{\Hom}{\mathcal{H}om}
\DeclareMathOperator{\End}{\mathcal{E}nd}

\DeclareMathOperator{\id}{id}
\DeclareMathOperator{\Tr}{Tr}

\DeclareMathOperator{\Ind}{Ind}

\DeclareMathOperator{\colim}{colim}

\DeclareMathOperator{\ev}{ev}
\DeclareMathOperator{\coev}{coev}

\DeclareMathOperator{\ob}{ob}

\DeclareMathOperator{\fun}{Fun}
\DeclareMathOperator{\Fun}{\EuScript{F}\mathrm{un}}

\DeclareMathOperator{\Vect}{Vect}

\DeclareMathOperator{\Alg}{Alg}

\DeclareMathOperator{\Mod}{\mathcal{M}od}
\DeclareMathOperator{\LMod}{LMod}
\DeclareMathOperator{\RMod}{RMod}
\DeclareMathOperator{\BMod}{BMod}

\DeclareMathOperator{\Sym}{Sym}

\newcommand{\op}			{\mathrm{op}}

\newcommand\fd			{\mathrm{fd}}

\newcommand\Rep			{\mathcal{R}\mathrm{ep}}
\newcommand\rep			{\mathrm{Rep}}

\newcommand\Irr			{\mathrm{Irr}}

\newcommand{\bscale}	{0.7}
\makeatletter
\newcommand{\ec}[2][]	{{\@ec{#1 |}{#2}}}
\newcommand{\bc}[2][]	{{\@ec{#1}{#2}}}
\newcommand{\@ec}[2]	{\mathchoice
  {\displaystyle \raise.9ex\hbox{$\scaleobj{\bscale}{#1}$} {#2}}%
  {\textstyle \raise.9ex\hbox{$\scaleobj{\bscale}{#1}$} {#2}}%
  {\scriptstyle \raise.55ex\hbox{$\scriptstyle \scaleobj{\bscale}{#1}$} {#2}}%
  {\scriptscriptstyle \raise.38ex\hbox{$\scriptscriptstyle \scaleobj{\bscale}{#1}$} {#2}}%
}
\makeatother

\newcommand\Sl		{\mathrm{SL}}
\newcommand\Gl		{\mathrm{GL}}

\newcommand\rmH		{\mathrm{H}}
\newcommand\HH		{\mathrm{HH}}
\newcommand\SSkalg		{\mathcal{S}\mathrm{kAlg}}

\newcommand\SSk		{\mathcal{S}\mathrm{k}}

\newcommand\Int		{\mathrm{int}}
\newcommand\Sk		{\mathrm{Sk}}
\newcommand\Skalg		{\mathrm{SkAlg}}
\newcommand\SSkcat		{\mathcal{S}\mathrm{kCat}}
\newcommand\Skcat		{\mathrm{SkCat}}

\newcommand\Ann		{\mathbb{A}\mathrm{nn}}
\newcommand\VVect		{\mathcal{V}\mathrm{ect}}
\newcommand\BBim		{\mathcal{B}\mathrm{im}}
\newcommand\PPr		{\mathcal{P}\mathrm{r}}
\newcommand\act		{\mathrm{act}}
\newcommand\DGcat		{\mathrm{DGCat}}
\newcommand\inthom		{\underline{\mathrm{Hom}}}
\newcommand\intend		{\underline{\mathrm{End}}}
\newcommand\Ac		{\mathcal{A}\mathrm{c}}
\newcommand\LLoc		{\mathcal{L}\mathrm{oc}}
\newcommand\HC		{\mathrm{HC}}

\newcommand\Loc	{\mathrm{Loc}}
\newcommand\BBar	{\mathrm{Bar}}
\newcommand\DDMod		{\mathcal{D}\mathrm{Mod}}
\newcommand\QQCoh		{\mathcal{Q}\mathrm{Coh}}

\newcommand\Tot	{\mathrm{Tot}}
\newcommand\Mfld	{\mathcal{M}\mathrm{fld}}
\newcommand\pt	{\mathrm{Pt}}

\newcommand\disk	{\mathrm{Disk}}

\newcommand\gl	{\mathrm{gl}}
\newcommand\ACat	{\mathcal{A}\mathrm{Cat}}

\newcommand\coinv	{\mathrm{coinv}}

\newcommand\hhom	{\mathrm{Hom}}
\newcommand\RHom	{\mathrm{RHom}}
\newcommand\ABim	{\mathcal{A}\mathrm{Bim}}
\newcommand\forg	{\mathrm{Forg}}
\newcommand\Bord	{\mathcal{B}\mathrm{ord}}
\newcommand\Shv	{\mathcal{S}\mathrm{hv}}
\newcommand\PGL	{\mathrm{PGL}}

\tikzset{
  arrowmark/.style 2 args={postaction={decorate},decoration={markings,mark=at position #1 with \arrow{#2}}}
}

	\title{\huge Derived skein module}
	\author{Chun-Yu Bai \thanks{Email: \href{mailto:cyb0375@gmail.com}{\tt C.Bai-2@sms.ed.ac.uk }}}
	\date{\vspace{-5ex}}
\begin{document}

	\maketitle

	\begin{abstract}
We propose a model‑independent axiomatic framework for the derived skein theory of oriented 3-manifolds with coefficients in a ribbon tensor category, especially focusing on input category being the finite-dimensional representations of a quantum group with quantum parameter not a root of unity. The axioms are designed so that the 0th homology recovers the ordinary skein module and gluing is governed by a bar construction. We establish several relationships between the derived skein theory and the ordinary skein theory. We show that this framework yields computable formulas in terms of ordinary internal skein modules and internal skein algebras. We also prove a Hochschild formula for $\Sigma\times S^1$. We also give the first computations of derived skein modules. We establish finiteness properties for a generic parameter using deformation quantization methods.
	\end{abstract}
	
	\tableofcontents
\newpage
\section{Introduction}

\subsection{Skein modules}

Skein modules, introduced by Przytycki \cite{Prz91} and Turaev \cite{Tur90}, package three-dimensional quantum topology  into linear-algebraic objects. Given an oriented 3-manifold $M$ and a reductive group $G$, one takes the vector space $\Sk_G(M)$ spanned by embedded $G$-colored ribbon graphs in an oriented 3-manifold $M$ and imposes local relations. Despite the elementary definition, structural questions about $\Sk_G(M)$ have been notoriously difficult. For example, it was a long-open question whether the dimension of $\Sk_G(M)$ was finite-dimensional for generic parameters. This question was first posed by Witten.

An answer to this question was given in \cite{GJS22} by Gunningham-Jordan-Safronov. Their key conceptual contribution is to replace ad hoc link-by-link manipulations with a computable algebraic model for skein modules built from a Heegaard splitting of $M$. Concretely, they upgrade the usual skein algebra $\Skalg_G(\Sigma)$ and the module $\Sk_G(H)$ of a handlebody $H$ to internal skein objects that retain the hidden internal $G$-equivariance. The ordinary skein algebra/module is then recovered by taking $G$-invariants.  This gives a powerful bridge between low-dimensional topology, combinatorics, tensor categories, higher algebra, quantum algebra, analysis, representation theory, algebraic geometry, and also quantum field theory.

With these internal objects in hand, they proved a relative tensor product formula: for a Heegaard decomposition $M=H_1\bigcup_\Sigma H_2$, the skein module of $M$ is identified with a relative tensor product of internal skein modules  $\Sk^\Int_G(H)$ over the internal skein algebra $\Skalg^\Int_G(\Sigma^\circ)$ of the punctured surface $\Sigma^\circ$. The finiteness theorem then reduces to a finiteness statement about that relative tensor product.

A decisive technical move is to interpret the internal skein module data as (holonomic) deformation quantization modules in the sense of Kashiwara-Schapira, and then invoke general finiteness properties of holonomic objects: the relevant relative tensor product is finite-dimensional for a generic quantization parameter, hence the skein module of a closed 3-manifold is finite-dimensional. This yields the headline result: for every closed oriented 3-manifold $M$ and reductive algebraic group $G$, the $G$-skein module is finite-dimensional over $\mathbb{Q}(A)$.

Placed in the broader framework of skeins, \cite{GJS22} is best read as the "foundational finiteness theorem" that makes subsequent theorems plausible: once skein modules are known to be finite-dimensional, one can pursue some self-duality properties by just comparing the dimensions from both sides. For example, in \cite{Jor23}, Ben-Zvi-Gunningham-Jordan-Safronov conjectured that the generic $G$-skein module is stable under Langlands duality. There are also some variants of the skein Langlands duality in \cite{Jor23}. 

To define the internal skein module, one needs a relative version $\Sk_G(M,X)$ of the skein module that is relative to a boundary condition $X$. There is a natural categorical enhancement for the relative skein module, called the skein category $\Skcat_G(\Sigma)$, whose Hom spaces are given by the relative skein module. This natural definition has been known for a long time \cite{Wal06}\cite{JF15}. However, until recently, the notion of skein category has not found concrete applications. There are several papers that have ignited the skein category research, for example \cite{BZBJ18} \cite{BZBJ18a} \cite{Coo19} \cite{GJS22} \cite{GJV24}. Those papers develop an "engine" from surface gluing to algebraic module to quantization of character varieties. Based on the skein category, one can directly compute $\Sk_G(\Sigma\times S^1)$ by the 0th Hochschild homology of $\Skcat_G(\Sigma)$. This leads to the first known dimension computation of the skein module in \cite{GJV24} in higher rank, beyond the $G=\Sl_2$ Kauffman bracket
skein module. Those computations lead to a highly nontrivial conjecture on skein category: the (heart) of the free cocompletion of  $\Skcat_G(\Sigma)$ (for generic parameter) of closed surface admits a \emph{compact}  \footnote{The most non-trivial point is "compact" here.} projective generator. This conjecture can be viewed as a categorical version of finiteness theorem for the skein module. If this conjecture is true, then it will, in turn, help with the computations of skein modules.

\subsection{Why derived skein modules?}
Now back to the topic of this work, one might ask: Why should such a derived version exist? Why do we want a derived version of skein theory?  Is there a criterion for what the correct notion of derived skein theory should be? Is there anything essentially new beyond ordinary skein theory?
There are several motivations to develop such a derived skein theory.
 
First, it is expected that the skein module is a deformation quantization of the character stack $\Loc_G(M)$ \cite{Bul97}\cite{PS00}\cite{GJS22}. On the other hand, the cohomological Donaldson-Thomas (DT) invariant of $\Loc_G(M)$ is proved to be a deformation quantization of the same stack \cite{Pri19}\cite{Pri26}. In \cite{GJS22} and \cite{GS23}, they relate the generic skein module to the DT sheaf, as in the following conjecture:
\begin{conj}\label{GSconj1}
(\cite{GJS22}\cite{GS23})
Let $q$ be generic, and $M$ connected closed oriented 3-manifold.
\[  \Sk_G(M)\cong \rmH^0(R_G(M),\phi_{R_G(M)})   \]
where the right hand side is sheaf-theoretic framed instanton Floer homology introduced by Abouzaid-Manolescu \cite{AM19}, also called the DT-sheaf on representation variety $R_G(M)$.
\end{conj}

A similar conjecture, by Pavel Safronov, has been made for the case where $q$ is a root of unity:
\begin{conj}
(\cite{Pav22}, see also \cite{Kin24}) Let $M$ be a closed 3-manifold. If $q$ is a good root of unity, there is a line bundle $\CL_q$ on $\Loc_{G^\vee}(M)$ such that 
\[ \Sk_{G,q}(M)\cong \rmH^0(\Loc_{G^\vee}(M),\CL_q)    \]
Here $G^\vee$ is the metaplectic dual group, and $\CL_q$ is a line bundle on the character stack $\Loc_{G^\vee}(M)$.
\end{conj}

It's therefore natural to expect a chain-level enhancement of the skein module whose full homology recovers the DT-sheaf cohomology. A derived skein module is precisely the natural setting for such a comparison.

Second, in \cite{GJS22}, they introduce an important insight that the skein module should be related to the Hilbert space of Kapustin-Witten theory \cite{KW07}, also see Du Pei's \cite{Pei26} for an intensive study. By S-duality of Kapustin-Witten theory, one would expect the skein module is stable under the Langlands duality. Therefore, in \cite{Jor23}, Ben-Zvi-Gunningham-Jordan-Safronov conjectured that 
\begin{conj}
(\cite{Jor23}) Let $q$ be generic, and $M$ connected closed oriented 3-manifold.
\[ \Sk_G(M)\cong \Sk_{G^L}(M) \]
where $G^L$ is Langlands dual of $G$.
\end{conj}
The desired definition for derived skein module should also be stable under the Langlands duality (see Conjecture \ref{TLconj}), which is an immensely more powerful statement and interfaces directly with the Geometric Langlands program. We note, however, that Du Pei \cite{Pei26} proposes a formula for skein dimensions based on physical considerations that is inconsistent with the Langlands duality conjecture, see Section \ref{Langlands}.

Third, the ordinary skein category $\Skcat(\Sigma)$ of a surface, while long defined, only recently found genuine computational use in the works of \cite{BZBJ18}\cite{BZBJ18a}\cite{Coo19}\cite{GJS22}\cite{GJV24}. A derived skein category should carry even richer information: for instance, the derived skein module of $\Sigma\times S^1$ is the Hochschild homology of the derived skein category (Theorem \ref{SkHH}), converting a topological gluing into a purely algebraic Hochschild complex. Such formulas reduce 3-dimensional topology to homological algebra in a systematic way.

There have been other pioneering attempts at derived skein theory. In \cite{Ho45}, Hochschild first studied the derived skein theory in dimension 1.
In \cite{BFK97}, Bullock-Frohman-Kania-Bartoszynska first attempted to generalize the skein module to a chain complex which is a prototype of the disjoint part of the Blob complex.
In \cite{BFN10}, they study the classical derived $G$-skein category of an oriented surface. Also, in \cite{AFMR17}, Ayala-Francis-Mazel-Gee-Rozenblyum study 1-dimensional derived skein theory by higher categorical methods. In \cite{SW21}, Schweigert-Woike study the mapping class group of  derived string-net. Among these dimension settings, the 3-dimensional case is special \footnote{One can see this point from \cite{AFRcor}.} and often leads to rich mathematics.

Despite the clear importance of the notion of derived skein modules (of 3-manifold), the only paper proposing a model for the derived skein module is \cite{MW12} \footnote{Actually Blob complex gives the desired derived skein module is confirmed by this work.}, which has a broader focus and does not give any computations of the derived skein module. However, the Blob complex is heavily model-dependent; it does not provide an intrinsic definition of the derived skein module. In particular, while it is widely expected, it is not proven in the literature that ribbon categories such as the category of representations of quantum groups provide local coefficient systems for blob homology.  See Remark \ref{tanglehypo}.
One reason for the absence of papers in this direction is exactly that  the precise axiomatic definition of derived skein theory is unclear -- for instance, it was unclear whether it should involve more than simply taking a derived functor. For example, in \cite{GJS22} Remark 1, they propose a derived analogue for skein module as a derived functor of ordinary skein module. However, as proposed in this paper Theorem \ref{Sk^intGM-B}, the derived functor of ordinary skein module should be the internal derived skein module $\SSk^\Int_G(M-\mathbb{B})$ of $M-\mathbb{B}$ instead of the derived skein module of $M$. It's a key point that derived theory is sensitive to the 3-handle which is unlike to ordinary skein module. Nevertheless, it is expected that its definition must involve the topology of $M$ in some non-trivial way. On the other hand, the Blob complex also does not build any nontrivial bridge back to ordinary skein theory. These facts make it quite difficult to compute the derived skein module from Blob complex. We want to emphasize that we do not really use the Blob complex in this paper.

We also need to mention that $\beta$-factorization homology, or factorization homology of $(\infty,n)$-categories (rather than $\mathbb{E}_n$-algebras) has been developed by Ayala-Francis-Rozenblyum \cite{AFR18}. We can view $(\infty,n)$-category as a $(\infty,n)$-category enriched in $\infty$-category of topological spaces which is a \emph{cartesian} symmetric monoidal $\infty$-category. However, what we really want, for example category $\Vect$ of vector spaces or $\infty$-category $\VVect$ of chain complexes, are  not \emph{cartesian}. Thus, the desired theory is the factorization homology of $(\infty,n)$-category enriched in \emph{arbitrary} symmetric monoidal $\infty$-category. For the $n=1$ case, it has already been established in \cite{AFMR17}\cite{AMR24}. One would expect the factorization homology of enriched $(\infty,3)$-categories over 3-manifold is (essentially) equivalent to derived skein module of 3-manifold, which means the desired factorization homology of enriched $(\infty,3)$-categories should satisfy the axioms in this paper. There is an essential change between $n=1,2$ and $n=3$ as explained in \cite{AFRcor} which highlights the special nature of three dimension. The slogan is that $\beta$-factorization homology is the derived skein module, while $\alpha$-factorization homology is derived skein category.

\subsection{This paper: an axiomatic framework}
This paper proposes an axiomatic framework for derived skein theory and develops its first formal consequences. We give the first axiomatic definition of the relative derived skein module $\SSk(M,X)$ that is model-independent, characterized by a small set of universal properties (Definition \ref{axiomdefn}). We show that the Blob complex satisfies our axioms, however we note that the axioms alone are sufficient to derive all results in this paper. In particular, we expect our axioms, or minor modifications of them will also be satisfied by an enriched version of $\beta$-factorization homology developing by Ayala and Francis. For doing computations, it is typically preferable to rely on a complete list of algebraic axioms as opposed to a concrete huge topological model.  This is certainly the case in classical algebraic topology, where excision theorems are far more useful than model-speicifc definitions involving singular/simplicial/de Rham co-chains.

A key subtlety in defining derived skein theory is that it is not a naive or straightforward generalization of ordinary skein theory. Instead, it builds a subtle bridge between the derived and underived worlds. Meanwhile, the derived skein module itself carries significant higher homological information, often of a geometric representation-theoretic nature (e.g. yielding the homology of $BG$ or $LBG$). This marriage allows us to use the vast algebraic machinery developed for ordinary skein theory to compute derived invariants, while simultaneously accessing new homotopical phenomena. 

However, we need to emphasize that we only give axioms that determine the derived skein module uniquely up to isomorphism, but that we do not attempt to address the more difficult problem of proving coherence of the isomorphisms between different presentations (Remark \ref{uniqueness}). In particular, we do not recover from our construction the action of the diffeomorphism group of the manifold on the derived skein module. Such equivariance is expected to follow from both the Blob complex and enriched $\beta$-factorization homology models, but is not captured by our axioms.

We do not consider more difficult problems concerning homotopy-coherent structures (Remark \ref{homotopycoherent} and Remark \ref{tanglehypo}). We expect the upcoming paper by Ayala-Francis on factorization homology of rigid braided $(\infty,1)$-category may give us an answer, which will rely on the Tangle Hypothesis \cite{AF24}.    

Our main results are:
\begin{itemize}
\item \textbf{Derived strong finiteness.} (Theorem \ref{finiteness1}) For generic $q$ and reductive group $G$, the  internal derived $G$-skein module $\SSk^\Int_G(M-\mathbb{B})$ is a \emph{bounded} complex with finite-dimensional homologies over $\mathbb{Q}(q^{1/d})$.
\item \textbf{Derived finiteness.} (Theorem \ref{finiteness}) 
Let $G$ be a reductive algebraic group, and $q$ generic. The derived $G$-skein module $\SSk_G(M)$ of a closed oriented 3-manifold is a complex with  finite-dimensional homologies over $\mathbb{Q}(q^{1/d})$.

\item \textbf{Algebraic formula.} (Theorem \ref{computableexcisionforrepqG}) For  $q$ not a root of unity and reductive group $G$, the derived $G$-skein module is isomorphic to
\[  \SSk_G(M)\cong (\Sk^\Int_G(H_1)\otimes^\mathbb{L}_{\Skalg^\Int_G(\Sigma^\circ)}\Sk^\Int_G(H_2))\otimes^\mathbb{L}_{\rmH_\bullet(G)}\mathbb{k}     \]
Here the right hand side uses only the ordinary internal skein modules and internal skein algebra, with derived tensor products taken in a derived $\infty$-category. This formula is the engine for computations and demonstrates that the derived theory is not an isolated abstract construction, but rather a natural roof that sits above the ordinary theory. 
\item \textbf{Hochschild formula.} (Theorem \ref{SkHH}) For any closed surface $\Sigma$ and ribbon tensor category $\CA$,
\[  \SSk_\CA(\Sigma\times S^1)\cong \HC(\SSkcat_\CA(\Sigma))    \]
the Hochschild chain complex of the derived skein category (not the full Hochschild complex of the ordinary skein category).
\item \textbf{Derived quantum Hamiltonian reduction.} (Theorem \ref{QHR} and \ref{DQHR}) The derived skein algebra can be computed as
\[ \SSkalg_\CA(\Sigma)\cong \Hom_{\CD(\CA)}(\mathbb{1},\SSkalg^\Int_\CA(\Sigma^\circ)\otimes^{\mathbb{L}}_{\SSkalg^\Int_\CA(\Ann)}\mathbb{1})      \]

\item \textbf{Explicit computations.} For  $q$ not a root of unity,
\[
\SSk_G(S^3)\cong C_\bullet(BG), \ \ \ \ \ \ \ \ \SSk_G(S^2\times S^1)\cong C_\bullet(G)\otimes C_\bullet(BG),\]
\[ \SSk_{\mathbb{G}_m}(\Sigma_g\times S^1)\cong C_\bullet(B\mathbb{G}_m)\otimes \Lambda^\bullet\mathbb{k}^{2g+1}   \]
As far as we know, these are the only known computations for derived skein modules so far. We actually know more in $\mathbb{G}_m$ case, see Remark \ref{BR26}. We can at least infer three things from those computations:
\begin{itemize}
\item Even when the input category $\CA$ is semisimple (as with not a root of unity $q$), the derived skein module is non-trivial and captures infinite-length homological information.
\item The homology of generic derived skein module can have \emph{infinite} length, with each level finite-dimensional.  In particular it can have infinite total dimension, in contrast to ordinary skein modules.
\item Derived $G$-skein module for $q$ not a root of unity should be stable under Langlands duality (Remark \ref{stableunderL}). 
\end{itemize}

\end{itemize}

\subsection{Outlook}

We give some outlooks here on the predictable potential effects on other areas: 
\begin{enumerate}
\item Our work opens a new door between skein theory and the geometric representation theory of quantum groups and loop spaces, building on the ideas that have been developed over the past two decades by Ben-Zvi, Drinfeld, Gaitsgory, Nadler, and many others. The derived $G$-skein category of $S^2$ provides a new model for the DG category $\DDMod$ of D-modules on $BG$. (Cor \ref{SkcatS2=DModBG}), which is invisible in ordinary skein theory. We also believe that the methods introduced here will serve as the basis for a "skein-theoretic six-functor formalism" analogous to the six-functor formalism for D-modules on stacks (Remark \ref{Skein6functor}). We would like to return to these questions in the future. By connecting Blob complex with the derived DT-skein conjecture, our work also provides a topological interpretation of cohomological DT invariants of the character stack of 3-manifold. Our work also opens a new door between skein theory and string topology. For example, we show that derived skein module $\SSk_G(S^2\times S^1)$ is equivalent to chains on loop space $LBG$ (Theorem \ref{SkGS^2timesS1}), or equivalently the Hochschild homology of the skein category of $S^2$ is equivalent to Hochschild homology of string topology category on $BG$ (Remark \ref{stringtopologycategoryonBG}). This is also invisible in ordinary skein theory.

\item One of the meanings of the paper \cite{GJV24} is that the dimension formulas of skein modules display interesting combinatorics. This interesting combinatorics suggests many generalizations and connections to type A algebraic combinatorics. The combinatorics appearing in derived skein modules should be far more interesting.

\item The 4-dimensional skein module, also called skein Lasagna module introduced in \cite{MWW22} based on Morrison-Walker's Blob complex \cite{MW12}, has become an important object of study in 4-manifold topology. For example, in Ren-Willis's fantastic paper \cite{RW24}, they show that it can be used to detect exotic structures on 4-manifolds in an analysis-free way. In dimension 3, Kalfagianni, Detcherry, Sikora, Przytycki, Belletti and others have done a lot of important work in this direction for example \cite{Prz98}\cite{BD25}\cite{Kal25}. We also expect derived skein module to work well in revealing deep topology or geometry of 3-manifold.

\item In \cite{BJ25}, Brown-Jordan develop a study of ordinary skein modules of 3-manifolds in the presence of codimension-one defects. The paper explains why cluster coordinates naturally appear in defect skein theory. Ideal triangulations give edge/cross-ratio coordinates. After quantization, these coordinates generate quantum tori. That is the cluster structure. They provide an effective elementary cluster-algebraic formalism for computing knot invariants and quantum A-ideals. 
There are related papers in this direction, such as classical Blob complex paper \cite{MW12}, Jordan-Le-Schrader-Shapiro's work \cite{JLSS21}, Dimofte-Garoufalidis-Yu's works \cite{Dim13}\cite{Gar04}\cite{DimGar13}\cite{GY24}\cite{GY26}, Panitch-Park's work \cite{PP24}, and also Bonahon–Wong's works \cite{BW10a}\cite{BW11}\cite{BW16}\cite{BW17}. For the finite category case, Carqueville-Runkel-Schaumann \cite{CRS19} \cite{CR16} also studied defects in skein theory using orbifold completion. In Julia Bierent's upcoming paper \cite{Bie}, she will discuss defect skein category which quantized quasi-coherent sheaves of \emph{wild} character varieties. In Matthias Vancraeynest's upcoming paper \cite{Van}, he will discuss the relation between weaves and defects in skein theory.  One can also add defects into the derived skein module to produce new knot invariants.

\item One could study the derived skein modules of 3-manifolds with boundary and their finite generation properties by combining this paper with Jordan-Romaidis \cite{JR25}.

\item If the input category $\CA$ is semisimple, then the internal derived skein algebra is just internal skein algebra (Theorem \ref{semisimpleintskalg}). One can try to figure out what algebras the derived skein algebra and internal derived skein algebra give when $\CA$ is non-semisimple. This may give us some interesting quantum algebras. Even for semisimple case, one would expect Theorem \ref{QHR} to give some interesting DG algebras as in Examples \ref{SkalgS^2=C(g)} and \ref{SkalgGm(Sigmag)}. Those DG algebras will be useful. For example, it will be useful for the computation of $\SSk_G(\Sigma\times S^1)$ by  \ref{SkHH} and \ref{GJVconj}.

\item We propose a derived skein $(3,2,1)$-TQFT (or mathematically speaking somewhat non-rigorously, a $(3,2,1,0)$-TQFT) (Remark \ref{skeinTQFT}). It would be interesting to understand what the assignment for a 4-manifold is. We know partially that it should be homologies of the derived skein module $\SSk(M)$ when assigned to $M\times S^1$. There are several nice works in this direction, by Costantino-Geer-Ha\"ioun-Patureau-Mirand \cite{Hai24}\cite{CGBPM26}, in which they use handle attachment and categorical bottom-to-top methods. Czenky-Negron's paper \cite{CN25} may also be relate to this direction.

\item The foundations laid in this paper make a host of previously unreachable conjectures well-posed and amenable to investigation. The derived DT-skein comparison conjecture, the derived Langlands duality conjecture, and the compact generation conjecture are now concrete statements that can be tested and proved.
\end{enumerate}

\subsection{Notations}
Usually when one consider derived things, one will say "everything is derived". But in this paper, underived notions will still work. That is, this paper builds a bridge between ordinary skein theory, derived skein theory, not simply consider everything to be derived (for example, simply replacing every tensor product by derived tensor product, every hom functor by derived hom functors, etc). If a notation begins by $\SSk$, that means it lives in derived skein world. If a notation begins by $\Sk$, that means it lies in ordinary skein world.
Fix:
\begin{itemize}
\item Let $\mathbb{k}$ be a $\mathbb{Z}[q,q^{-1}]$-algebra.

\item $q$ quantum parameter. We will say $q$ is \emph{generic} to mean $\mathbb{k}=\mathbb{Q}(q^{1/d})$. Here integer $d$ is divisible by the determinant of the Cartan
matrix, so it is typical to fix that minimal choice for $d$. We will say $q$ is \emph{not a root of unity} to mean either that $q$ is generic, or that $\mathbb{k}=\mathbb{C}$ and $q^l\ne 1$ for all non-zero integers $l$.

\item $\Sigma$: compact oriented topological surface. Hence $\Sigma\cong \Sigma_{g,r}$ where $\Sigma_{g,r}$ is the standard surface of genus $g$ and with $r$ punctures; We assume that $\Sigma$ has a chosen disk, then we denote the punctured surface by $\Sigma^\circ$.
\item $G$: connected reductive algebraic group over $\mathbb{k}$, e.g. $\mathbb{G}_m,\Sl_n,\Gl_n$;
\item $\CA$: ribbon tensor category, in particular it is $\mathbb{k}$-linear, e.g. $\rep^{\fd}_q(G)$;
\item $M$: oriented compact 3-manifold.
\item $\rep_q(G)$ is the heart $\widehat{\rep^\fd_q(G)^\heartsuit}$ of the free cocompletion $\widehat{(-)}$ of $\rep^{\fd}_q(G)$. $\Rep_q(G)$ is the derived $\infty$-category $\CD(\rep_q(G))$ of $\rep_q(G)$.
\item Let $\PPr$ be the $\infty$-category of stable presentable $\mathbb{k}$-linear $\infty$-categories, colimit-preserving functors.
\item Let $\PPr^\circ$ be the full subcategory of $\PPr$ on those that are generated by compact objects.
\item Let $\PPr_c$ be the $\infty$-category of compactly generated stable presentable $\mathbb{k}$-linear $\infty$-categories, compact functors.
\end{itemize}

\subsection{Outline}

Note that for the sake of brevity, this paper assumes a great deal of prior knowledge. The author hopes to add a more thorough background section in his PhD thesis.

This paper is organized as follows.  In section \ref{categoricalsetting} we set up the categorical language. We recall some basic facts of DG categories and stable presentable $\infty$-categories. We show an equivalence between the Morita category of DG categories and the $\infty$-category of compactly generated stable presentable $\infty$-categories and cocontinuous functors (Theorem \ref{Bim=Prcirc}). We introduce derived $\infty$-categories. We prove an equivalence between the derived $\infty$-category of a linear category and the free cocompletion of it (Lemma \ref{DC=hatC}). We show that if an algebra lies in heart, then taking modules of it commutes with taking derived category (Lemma \ref{DLMod=LModhat}).

Section \ref{Derivedskeintheory} is the heart of this paper. It contains the axiomatic definition of the derived skein modules and categories. We show the derived skein category satisfies the excision property, hence equivalent to factorization homology. We also introduce internal variants of these notions and build bridges to ordinary skein theory. 

Section \ref{computeableformulaandresults} derives computable formulas and explicit examples. 

Section \ref{DQmoduleandfinite} gives the proof of the derived finiteness theorem using DQ-module techniques. 

Section \ref{ConjandExpect} proposes some conjectures in derived skein theory.

\subsection{Acknowledgements}

I would like to greatly thank my supervisor David Jordan who guided me to skein theory; suggested to me to explore this direction; proposed two important expectations at the early stage of this project including that the derived skein category should be equivalent to factorization homology and the Theorem \ref{SkHH} should be true; The starting point of this paper is just to realize these two expectations. I would also want to thank him for suggesting me some very useful references;  a lot of useful feedback when I come up with ideas on this project; kindly answering my questions on his papers; giving me opportunities to present in talks and posters; always appreciating me and much encouragement throughout this work. 

I would like to thank Pavel Safronov for sharing with the author their expectation/conjecture (joint with Sam Gunningham) on ordinary skein module as 0th part Renormalized cohomology, and I especially thank him for carefully reading the draft and very helpful comments.

I would like to thank Sam Gunningham for pointing out Lemma \ref{SkalgS^2=C(g)} to the author, and helpful comments on the draft.

I would like to thank Benjamin Ha\"ioun and Qiuyu Ren for useful discussions on the axioms for the derived skein module.

I would like to thank Kevin Walker for confirming with the author that theorem 7.2.1 in \cite{MW12} is just derived coend property, and explaining to the author the role of nested balls.

I would like to thank Clark Barwick and David Ayala for useful comments on category theory.

I would like to thank John Francis for a helpful comment on Blob complex.

I would like to thank Jennifer Brown for helpful comments on the draft.

I would like to thank Adam Sikora for suggesting to me the paper \cite{BFK97} after I presented my poster in the conference "New perspectives on Skein modules".

I would also like to thank the University of Hamburg for two weeks' hospitality during preparation of this work, and especially thank Prof. Christopher Schweigert, Prof. Ingo Runkel and Paul Wedrich for their encouragements and remarks.

I gratefully acknowledge research travel support provided by Simons Foundation award 888988 as part of the Simons Collaboration on Global Categorical Symmetries.

\section{Categorical setting}\label{categoricalsetting}

\subsection{DG categories and stable presentable \texorpdfstring{$\infty$}{}-categories}

All categories and functors are assumed to be $\mathbb{k}$-linear. All colimits are homotopy colimits.

We briefly recall the theory of DG categories. Also see Appendix \ref{AinftyandDG} for a comparison between $A_\infty$-categories and DG categories.

A \emph{chain complex} $C^\bullet$ of $\mathbb{k}$-modules, or simply a \emph{complex} $C^\bullet$, is a sequence of $\mathbb{k}$-module maps
\[  \cdots\xrightarrow{d_{n+1}}C^n\xrightarrow{d_n}C^{n-1}\xrightarrow{d_{n-1}}\cdots\xrightarrow{d_2}C^1\xrightarrow{d_1}C^0\xrightarrow{d_0} C^{-1}\xrightarrow{d_{-1}}\cdots   \]
such that $d_{i}\circ d_{i+1}=0$. We call $d$ differential maps. We are mainly interested in non-negative chain complexes, i.e. we take $C^{-n}=0$ if $n>0$. An element $x\in C^n$ is a chain of degree $n$. We adopt the notation $|x|=n$. The \emph{homology groups} are denoted by $\rmH_\bullet(C)$. Let $(C^\bullet,d)$ be a complex and let $p$ be an integer. The \emph{p-shifted complex} $C[p]$ is the complex such that $C[p]^n=C_{n-p}$ with differential map $(-1)^p d$. In fact, it is the tensor product of $\mathbb{k}[p]$ (which is $\mathbb{k}$ concentrated in degree $p$) with $C$.

\begin{defn}
A \emph{DG category} is a $\mathbb{k}$-linear category $\CC$ whose Hom spaces $\Hom_\CC(V,W)$ are chain complexes and whose compositions
\[  \Hom_\CC(W,U)\otimes\Hom_\CC(V,W)\to \Hom_\CC(V,U)    \]
are chain maps satisfying a strongly associativity condition. The identity morphism $1_V\in \Hom_\CC(V,V)$ is closed of degree zero. We say \emph{cocomplete DG category} if the associated stable $\mathbb{k}$-linear $\infty$-category admits all colimits or equivalently arbitrary direct sums. 
\end{defn}

\begin{expl}
\begin{enumerate}
\item Let $\Mod_{\mathbb{k}}$ be the DG category of all complexes over $\mathbb{k}$. We denote the associated stable $\infty$-category as $\VVect$. It has a canonical t-structure whose heart $\Vect:=\VVect^\heartsuit$ is the abelian category of $\mathbb{k}$-modules.
\item The delooping $BA$ of a DG algebra $A$ is a DG category.
\end{enumerate}
\end{expl}

One defines for every DG category $\CC$ the \emph{opposite DG category} $\CC^\op$ with $\ob \CC^\op=\ob \CC$ and $\Hom_{\CC^\op}(V,W)=\Hom_\CC(W,V)$.

\begin{defn}
The \emph{homolopy category} $\rmH_0(\CC)$ of a DG category $\CC$ has the same objects whose morphism spaces are given by the zeroth homologies $\hhom_\CC(V,W):=\rmH_0(\Hom_\CC(V,W))\in \Vect$ of the Hom complexes $\Hom_\CC(V,W)$ in $\CC$.
\end{defn}

\begin{defn}
A \emph{DG functor} $F:\CC\to \CD$ is given by a map $F:\ob \CC\to \ob \CD$ and by chain maps
\[  F(V,W):\Hom_\CC(V,W)\to \Hom_\CC(F(V),F(W))     \]
compatible with the composition and the units.
\end{defn}

\begin{defn}
The \emph{tensor product $\CC \boxtimes \CD$ of DG categories} is the DG category whose
\begin{itemize}
\item the object class is $\ob \CC\times \ob \CD$, whose element $(V,W)$ is denoted by $V\boxtimes W$
\item the Hom complex is
\[ \Hom_{\CC\boxtimes \CD}(V\otimes W,V'\otimes W')=\Hom_\CC(V,V')\otimes \Hom_\CD(W,W')     \]
with 
\[d(f\otimes g)=d_\CC(f)\otimes g+(-1)^{|f|}f\otimes d_\CD(g)\]
and 
\[ (f\otimes g)\circ (f'\otimes g')=(-1)^{|g|\cdot |f'|}f\circ f'\otimes g\circ g'   \]
\end{itemize}

\end{defn}
\begin{expl}
Let $\CC$ be a DG category. Then $\CC\boxtimes \Mod_{\mathbb{k}}=\CC$.
\end{expl}

\begin{defn}
Let $\CC\in \DGcat$ be a DG category. A \emph{right $\CC$-module} is a DG functor $\CC^\op\to \Mod_{\mathbb{k}}$. Similarly, \emph{left $\CC$-module} is a DG functor $\CC\to \VVect$. A  \emph{DG $(\CC,\CD)$-bimodule}  is a DG functor $\CC\boxtimes\CD^\op\to \Mod_{\mathbb{k}}$.
\end{defn}

\begin{defn}
Let $\CC,\CD$ be two DG categories. The \emph{functor DG category} $\fun(\CC,\CD)$ is the DG category whose
\begin{itemize}
\item objects are DG functors from $\CC$ to $\CD$;
\item a morphism $\eta:F\to G$ of degree $p$ between two such functors is given by a family $\eta=(\eta_V)_{V\in \ob \CC}$ of morphisms $\eta_V:F(V)\to G(V)$ of degree $p$ in $\CD$ satisfying
\[ G(f)\circ \eta_V=(-1)^{|\eta|\cdot|f|}\eta_{V'}\circ F(f)    \]
for all $f:V\to V'$. These morphisms form the $p$-th component of the Hom-complex of $\fun(F,G)$. The differential is given by
\[ (d\eta)_V=d_\CD(\eta_V)   \]
\end{itemize}
\end{defn}

\begin{expl}
Let $\CC$ be a DG category. Then $\Mod_\CC:=\fun(\CC^\op,\Mod_{\mathbb{k}})$ is a (compactly generated pretriangulated cocomplete) DG category.
\end{expl}

We also call $\widehat{\CC}:=\fun(\CC^\op, \VVect)$ free cocompletion of $\CC$. The relation between $\widehat{\CC}$ and $\Mod_\CC$ is given in Lemma \ref{DC=hatC}.

\begin{rem}
A cocomplete DG category is automatically pretriangulated.
\end{rem}

For us, it's not harmful to replace cocomplete DG category by the notion of \emph{stable presentable $\mathbb{k}$-linear $\infty$-category} see \cite{Lur17}. Category pragmatists are recommended to read Chapter 1 of Gaitsgory-Rozenblyum's book \cite{GR19}. We will use these two names interchangeably.

\begin{defn}
An object $V$ in a cocomplete DG category $\CC$ is \emph{compact} if one (hence all) of the following equivalent conditions hold:
\begin{enumerate}
\item the functor 
\[\hhom_\CC(V,-):\CC\to \VVect^\heartsuit\] commutes with arbitrary direct sums.
\item the (a priori non-cocontinuous) functor
\[ \Hom_\CC(V,-): \CC\to \VVect    \]
commutes with all colimits.
\end{enumerate}
\end{defn}
\begin{rem}
Compared with the linear case, compact object in a presentable linear category means the hom functor preserves filtered colimits while a compact projective object means the hom functor preserves all colimits. 
\end{rem}

\begin{defn}
The \emph{derived coend} $\int^{d\in \CD} F(c,d)\otimes^\mathbb{L} G(d,e)$ of DG functor $F:\CC\boxtimes \CD^\op \to \VVect$ and DG functor $G:\CD\boxtimes \CE^\op\to \VVect$ is the geometric realization of simplicial chain complex:
\[ \int^{d\in \CD} F(c,d)\otimes^\mathbb{L} G(d,e):=|\BBar_{\bullet}(F,\CD,G)(c,e)|    \]
We also denote $\int^{d\in \CD}F(-,d)\otimes^\mathbb{L}G(d,-)$ by $F\otimes^\mathbb{L}_\CD G$ \footnote{Both these two notations are helpful. For $\int$-type notation will be helpful for coend computation, and $\otimes^\mathbb{L}$ is good for intuition}.
\end{defn}

\begin{rem}\label{geometricrelation=doublecomplex}
By \cite{ARA23} Prop 2.4, In $\VVect$, geometric realization of the simplicial chain complex is given by direct sum totalization of the double complex associated to the simplicial chain complex. 
\end{rem}

\begin{defn}
\cite{Toe07} The symmetric monoidal $(\infty,1)$-category $\DGcat$ is the $\infty$-nerve of the following category:
\begin{itemize}
\item objects are small DG categories; 
\item morphisms from $\CC$ to $\CD$ the DG functors $\CC\to \CD$; 
\item tensor product is given by tensor product $\boxtimes$ of DG categories.
\end{itemize}
\end{defn}

\begin{defn}
\cite{Tab07} The symmetric monoidal $(\infty,1)$-category $\BBim$ is the $\infty$-nerve of the following category:
\begin{itemize}
\item objects are small DG categories.
\item 1-morphisms from $\CC$ to $\CD$ are DG functors $F:\CC\boxtimes \CD^\op\to \VVect$.
\item The composition of $F:\CC\boxtimes \CD^\op\to \VVect$ and $G:\CD\boxtimes \CE^\op\to \VVect$ is the functor $F\otimes^\mathbb{L}_\CD G:\CC\boxtimes \CE^\op\to \VVect$ given by the derived coend \[(F\otimes^\mathbb{L}_\CD G)(c,e)=\int^{d\in \CD}F(c,d)\otimes^\mathbb{L} G(d,e)\]
\item Tensor product $\boxtimes$ is given by tensor product of DG categories.
\end{itemize}
We call two DG categories $\CC$ and $\CD$ are \emph{Morita equivalent} if they are equivalent in $\BBim$.
\end{defn}

\begin{expl}\label{widehatF=derivedfunctor}
In the case that $\CE=\VVect$ and ${}_{\CD}F_\CC=\Hom_\CD(-,F(-))$ which is induced by a DG functor $F:\CC\to \CD$, the derived coend ${}_{\CD}F_\CC\otimes^\mathbb{L}_\CC -$ is just derived left Kan extension (or so call derived extension of scalars) along $F$ and we denote it by $\widehat{F}$. In particular, when $\CC,\CD$ are linear categories viewed as DG categories, and $F$ is a linear functor viewed as DG functor, it also makes sense to consider ordinary coend ${}_\CD F_\CC\otimes_\CC -$ which can be also viewed as ordinary left Kan extension (denote by $\widehat{F}^\heartsuit$) along $F$. Then in this case, $\widehat{F}$ is left \footnote{The ordinary left Kan extension $\widehat{F}^\heartsuit$ is a left adjoint to precomposition, so it is right exact.} derived functor of $\widehat{F}^\heartsuit$.
\end{expl}

\begin{expl}
Let $\CE=\CC=\VVect$, $\CD$ is $\CD\boxtimes \CD^\op$ and $F=G={}_\CD\id_\CD$ induced by identity DG functor $\id:\CD\to \CD$. Then the derived coend ${}_\CD\id_\CD\otimes^\mathbb{L}_{\CD\otimes \CD^\op}{}_\CD\id_\CD$ is Hochschild chain complex $\HC(\CD)$  of $\CD$. Note that if $\CC$ is morita equivalent to $\CD$, then $\HC(\CC)\cong \HC(\CD)$.
\end{expl}

\begin{expl}\label{coendmorita}
Let $\CC=\CE=\VVect$, and $\CD$ is Morita equivalent to $BA$ for a DG algebra $A$. Then $F\otimes^{\mathbb{L}}_\CD G\cong F(\mathbb{1})\otimes^{\mathbb{L}}_A G(\mathbb{1})$ where the right side is the usual derived tensor product over DG algebra.
\end{expl}

\begin{defn}
Let $\CC\in \DGcat$ be a DG category with a distinguished object $\mathbb{1}\in \CC$ and let $F:\CC^\op\to \VVect$ be a left $\CC$-module. We say $F$ is \emph{generated by invariants} if the canonical map is a natural quasi-isomorphism
\[ \Hom_\CC(-,\mathbb{1})\otimes^\mathbb{L}_{\End_\CC(\mathbb{1})} F(\mathbb{1})\xrightarrow{\sim} F(-)    \]

\end{defn}
\begin{expl}\label{exampleofgeneratedbyinvariants}
By Example \ref{coendmorita} and coend form of Yoneda lemma, if $\CC$ is morita equivalent to $B\End_\CC(\mathbb{1})$, then any $\CC$-module is generated by invariants.
\end{expl}

The property of "generated by invariants" will give us a computable form of derived coend
\begin{prop}\label{G1End1F1=GCF}
Suppose $F:\CC^\op\to \VVect$ and $G:\CC\to \VVect$ are generated by invariants. Then the map 
\[ G(\mathbb{1})\otimes^\mathbb{L}_{\End_\CC(\mathbb{1})}F(\mathbb{1})\to G\otimes^\mathbb{L}_\CC F    \]
is an isomorphism. Here left-hand side is the usual derived relative tensor product over DG algebra.
\end{prop}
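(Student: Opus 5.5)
The plan is to substitute the ``generated by invariants'' resolution of one of the two modules into the coend, use associativity of derived tensor products, and collapse the remaining coend with the co-Yoneda lemma. Write $A:=\End_\CC(\mathbb{1})$. Since $F$ is generated by invariants, the canonical map
\[ \Hom_\CC(-,\mathbb{1})\otimes^\mathbb{L}_{A} F(\mathbb{1})\xrightarrow{\sim} F(-) \]
is a natural quasi-isomorphism of $\CC$-modules. The derived coend $G\otimes^\mathbb{L}_\CC(-)$ is computed by the bar construction $|\BBar_\bullet(G,\CC,-)|$. This construction preserves objectwise quasi-isomorphisms: it is levelwise a direct sum of tensor products over $\mathbb{k}$, and by Remark \ref{geometricrelation=doublecomplex} the realization is a direct-sum totalization. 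Hence we obtain
\[ G\otimes^\mathbb{L}_\CC F\simeq G\otimes^\mathbb{L}_\CC\bigl(\Hom_\CC(-,\mathbb{1})\otimes^\mathbb{L}_A F(\mathbb{1})\bigr). \]

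Next I interchange the two derived tensor products:
\[ G\otimes^\mathbb{L}_\CC\bigl(\Hom_\CC(-,\mathbb{1})\otimes^\mathbb{L}_A F(\mathbb{1})\bigr)\simeq \bigl(G\otimes^\mathbb{L}_\CC\Hom_\CC(-,\mathbb{1})\bigr)\otimes^\mathbb{L}_A F(\mathbb{1}). \]
Concretely, both sides are realizations of the same bisimplicial complex $\BBar_\bullet(G,\CC,\Hom_\CC(-,\mathbb{1}))\otimes\BBar_\bullet(A,A,F(\mathbb{1}))$, with the bar construction over $A$ in one direction and over $\CC$ in the other. Realizations commute with each other and with $\otimes$ over $\mathbb{k}$, since all of these are colimits and $\otimes$ preserves colimits in each variable in $\VVect$. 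I expect this step, keeping track of the right $A$-module structure on $\Hom_\CC(-,\mathbb{1})$ through the bar construction over $\CC$, to be the main bookkeeping obstacle. The action is by precomposition and commutes strictly with the left $\CC$-action, so the bisimplicial object is honestly defined and the interchange is formal.

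Then I apply the coend (co-Yoneda) form of the Yoneda lemma, the same input used in Example \ref{exampleofgeneratedbyinvariants}. It gives a quasi-isomorphism
\[ G\otimes^\mathbb{L}_\CC\Hom_\CC(-,\mathbb{1})\xrightarrow{\sim} G(\mathbb{1}), \]
which is compatible with the right $A$-actions, because the augmentation of the two-sided bar construction is $A$-equivariant. This yields $G\otimes^\mathbb{L}_\CC F\simeq G(\mathbb{1})\otimes^\mathbb{L}_A F(\mathbb{1})$.

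Finally, one must check that this composite is inverse to the canonical map in the statement, which is induced by the inclusion of $BA$ into $\CC$ as the full subcategory on $\mathbb{1}$. On the zeroth simplicial level the canonical map sends $g\otimes f\in G(\mathbb{1})\otimes F(\mathbb{1})$ to the corresponding term of the bar complex over $\CC$. Tracing this through the identifications above gives back $g\otimes f$. This shows that the canonical map is the isomorphism. Only the generation hypothesis on $F$ is used; symmetrically, one could instead resolve $G$ using its own generation hypothesis.
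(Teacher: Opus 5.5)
Your argument is correct, but it takes a somewhat different route from the paper's.

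\textbf{The paper's route.} The paper resolves \emph{both} modules. It writes $F$ as the realization of $\Hom_\CC(-,\mathbb{1})\otimes A^{\otimes\bullet}\otimes F(\mathbb{1})$ and $G$ as the realization of $G(\mathbb{1})\otimes A^{\otimes\bullet}\otimes\Hom_\CC(\mathbb{1},-)$. It then pulls both realizations out of the coend and collapses the middle coend $\int^{c}\Hom_\CC(\mathbb{1},c)\otimes\Hom_\CC(c,\mathbb{1})\simeq A$ by co-Yoneda for a representable. It ends with $G(\mathbb{1})\otimes^{\mathbb{L}}_A A\otimes^{\mathbb{L}}_A F(\mathbb{1})\simeq G(\mathbb{1})\otimes^{\mathbb{L}}_A F(\mathbb{1})$.

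\textbf{Your route.} You resolve only $F$, interchange the two bar constructions, and apply co-Yoneda to $G$ itself, via the extra degeneracy of $\BBar_\bullet(G,\CC,\Hom_\CC(-,\mathbb{1}))$.

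\textbf{What each buys.} Your version shows that one of the two hypotheses is superfluous. The statement is really $G\otimes^{\mathbb{L}}_\CC\Ind(M)\simeq G(\mathbb{1})\otimes^{\mathbb{L}}_A M$ for the induction $\Ind(M)=\Hom_\CC(-,\mathbb{1})\otimes^{\mathbb{L}}_A M$ along $BA\hookrightarrow\CC$. You also check that the resulting equivalence is the canonical map in the statement, which the paper's chain of isomorphisms never verifies. The paper's version is symmetric and only ever applies co-Yoneda to representables, but it uses both hypotheses. In the application (Theorem \ref{computableexcisionforrepqG}) both hypotheses hold anyway by Example \ref{exampleofgeneratedbyinvariants}.

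\textbf{Two minor corrections.}
\begin{enumerate}
\item The $A$-action on $\Hom_\CC(c,\mathbb{1})$ is by post-composition with elements of $\End_\CC(\mathbb{1})$. Precomposition is the $\CC$-functoriality. The two commute by associativity, so your argument is unaffected.
\item Your justification that the bar construction preserves objectwise quasi-isomorphisms is not automatic. It needs $\mathbb{k}$ to be a field (or the hom complexes to be $\mathbb{k}$-flat), or $\otimes$ to be read as the derived tensor product in $\VVect$, as the paper implicitly does.
\end{enumerate}
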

\begin{proof}
Let $A=\End_\CC(\mathbb{1})$. Consider the following simplicial chain complex
\[ \xymatrix{ \Hom_\CC(-,\mathbb{1})\otimes A\otimes A\otimes F(\mathbb{1})  \ar@<0.5ex>[r] \ar[r] \ar@<-0.5ex>[r]  & \Hom_\CC(-,\mathbb{1})\otimes A\otimes F(\mathbb{1}) \ar@<0.25ex>[r] \ar@<-0.25ex>[r] & \Hom_\CC(-,\mathbb{1})\otimes F(\mathbb{1}) }    \]
Its colimit is $\Hom_\CC(-,\mathbb{1})\otimes^\mathbb{L}_{\End_\CC(\mathbb{1})}F(\mathbb{1})$ by definition.
Since $F$ is generated by invariants, we can write $F$ as the homotopy colimit of above simplicial chain complex by Yoneda lemma. Similarly,
\[ \xymatrix{ G(\mathbb{1})\otimes A\otimes A\otimes \Hom_\CC(\mathbb{1},-)  \ar@<0.5ex>[r] \ar[r] \ar@<-0.5ex>[r]  & G(\mathbb{1})\otimes A\otimes \Hom_\CC(\mathbb{1},-) \ar@<0.25ex>[r] \ar@<-0.25ex>[r] & G(\mathbb{1})\otimes \Hom_\CC(\mathbb{1},-) \ar[r] & G(-)  }    \]
is a homotopy colimit diagram.

We have 
\begin{align*}
G\otimes^\mathbb{L}_\CC F&:=\int^{c\in \CC} G(c)\otimes^\mathbb{L} F(c) \\
&\cong \int^{c\in \CC} 
\colim(\xymatrix{  \ar@<0.25ex>[r] \ar@<-0.25ex>[r] & G(\mathbb{1})\otimes \Hom_\CC(\mathbb{1},c)  })\otimes^{\mathbb{L}}\colim(\xymatrix{  \ar@<0.25ex>[r] \ar@<-0.25ex>[r] & \Hom_\CC(c,\mathbb{1})\otimes F(\mathbb{1})  }) \\
&\cong \colim( \xymatrix{ \ar@<0.75ex>[r] \ar@<0.25ex>[r] \ar@<-0.25ex>[r] \ar@<-0.75ex>[r] & \int^{c\in \CC}G(\mathbb{1})\otimes \Hom_\CC(\mathbb{1},c)\otimes^\mathbb{L} \Hom_\CC(c,\mathbb{1})\otimes F(\mathbb{1})      }) \\
&\cong \colim( \xymatrix{ G(\mathbb{1})\otimes A\otimes A\otimes A \otimes F(\mathbb{1}) \ar@<0.75ex>[r] \ar@<0.25ex>[r] \ar@<-0.25ex>[r] \ar@<-0.75ex>[r] & G(\mathbb{1})\otimes A \otimes F(\mathbb{1})      })\\
&\cong G(\mathbb{1})\otimes^{\mathbb{L}}_A A\otimes^{\mathbb{L}}_A  F(\mathbb{1}) \\
& \cong G(\mathbb{1})\otimes^\mathbb{L}_A F(\mathbb{1})
\end{align*}

\end{proof}

\begin{defn}
The symmetric monoidal $\infty$-category $\PPr$ has:
\begin{itemize}
\item objects are stable presentable $\mathbb{k}$-linear $\infty$-categories;
\item morphisms are cocontinuous functors.
\end{itemize}
\end{defn}

We have symmetric monoidal functors
\begin{align*}
\widehat{(-)}:\DGcat & \to \PPr \\
\CC&\mapsto \fun(\CC^\op,\VVect)\\
F&  \mapsto  \widehat{F}
\end{align*}
where $\widehat{F}$ is an derived left Kan extension along $F$.

Denote $\PPr^\circ$ be the full subcategory of $\PPr$ on those compactly generated stable presentable $\mathbb{k}$-linear $\infty$-category.
\begin{thm}\label{Bim=Prcirc}
There is an equivalence of $\infty$-categories
\[ \widehat{(-)}:\BBim \xrightarrow{\sim} \PPr^\circ : (-)^c    \]
\end{thm}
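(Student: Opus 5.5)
We will define the functor $\widehat{(-)}$ on $\BBim$ by sending a small DG category $\CC$ to $\widehat{\CC}=\fun(\CC^\op,\VVect)$. A bimodule $F:\CC\boxtimes\CD^\op\to\VVect$ goes to the cocontinuous functor $F\otimes^\mathbb{L}_\CC -:\widehat{\CC}\to\widehat{\CD}$, taking the variance convention so that composition in $\BBim$ matches composition of functors. Functoriality up to coherent homotopy comes from associativity of the derived coend: the bar constructions $|\BBar_\bullet(F,\CD,G)|$ assemble into a two-sided bar construction. By Remark \ref{geometricrelation=doublecomplex}, this reduces the coherence to totalizations of multisimplicial complexes. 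Rather than building the $\infty$-functor by hand, we would instead identify both sides with module categories in $\PPr$. Specifically, $\widehat{\CC}\simeq \RMod_\CC(\VVect)$, and by Lurie's results \cite{Lur17} on relative tensor products of module categories, $\fun^L(\RMod_\CC,\RMod_\CD)\simeq \BMod_{\CC,\CD}$, with composition given by relative tensor product. This yields a functor $\BBim\to\PPr$ that is fully faithful on mapping spaces.

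Next we check that the image lands in $\PPr^\circ$. The representables $h_c=\Hom_\CC(-,c)$ are compact, because $\Hom_{\widehat{\CC}}(h_c,M)\simeq M(c)$ by the enriched Yoneda lemma, and evaluation preserves all colimits. They also generate: if $\Hom(h_c,M)=M(c)\simeq 0$ for all $c$, then $M\simeq 0$. Hence $\widehat{\CC}$ is compactly generated. Full faithfulness is the Yoneda/Morita step. A cocontinuous functor $\Phi:\widehat{\CC}\to\widehat{\CD}$ is determined by its restriction to representables, since every object is a colimit of representables (the coend form of Yoneda used in Proposition \ref{G1End1F1=GCF}). That restriction is exactly a bimodule, $(c,d)\mapsto \Phi(h_c)(d)$. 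Hence $\fun^L(\widehat{\CC},\widehat{\CD})\simeq\fun(\CC\boxtimes\CD^\op,\VVect)$, compatibly with composition.

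For essential surjectivity, let $\CE\in\PPr^\circ$ and write $\CE^c$ for its small stable subcategory of compact objects. Choose a DG enhancement of $\CE^c$, for instance via the equivalence between small $\mathbb{k}$-linear stable $\infty$-categories and DG categories up to Morita equivalence (Cohn, Haugseng). The functor $\CE\to\widehat{\CE^c}$, $X\mapsto\Hom_\CE(-,X)|_{\CE^c}$, preserves colimits because the objects of $\CE^c$ are compact. It is conservative because $\CE^c$ generates, and it is fully faithful on $\CE^c$ by Yoneda. A standard argument then makes it an equivalence: it has a colimit-preserving left adjoint (the left Kan extension), and the unit is an equivalence on compact generators, so it is an equivalence everywhere. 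This also shows that $(-)^c$ is an inverse up to idempotent completion. The idempotent completion is harmless, since $\CC\to\mathrm{Idem}(\CC)$ is a Morita equivalence, i.e.\ an equivalence in $\BBim$.

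The main obstacle is the $\infty$-categorical coherence in the first step: exhibiting $\BBim$, defined as an $\infty$-nerve with composition given by derived coends, as an honest $\infty$-category equivalent to the subcategory of $\PPr$ spanned by module categories. We would handle this by citing Toën's theorem \cite{Toe07} that $\mathbb{R}\underline{\mathrm{Hom}}_c(\widehat{\CC},\widehat{\CD})\simeq \widehat{\CC^\op\boxtimes\CD}$, together with Tabuada's model structure \cite{Tab07}. Using these, the statement reduces to the levelwise identifications above, and the remaining verifications are formal.
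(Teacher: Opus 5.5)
Your argument is correct, and its object-level ingredients are the same as the paper's. Representables compactly generate $\widehat{\CC}$; $(\widehat{\CC})^c$ is the idempotent completion of (the pretriangulated hull of) $\CC$ and hence Morita equivalent to $\CC$; and $\widehat{\CS^c}\simeq\CS$ for $\CS\in\PPr^\circ$.

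Where you differ is in how the equivalence is organized. The paper only checks that $\widehat{(-)}$ and $(-)^c$ are mutually inverse on objects up to equivalence, and it asserts $\widehat{\CS^c}\simeq\CS$ ``by definition''. It never defines $(-)^c$ on a cocontinuous functor that does not preserve compact objects. Nor does it explain why $\widehat{(-)}$ induces equivalences on mapping spaces.

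You instead prove fully faithful plus essentially surjective. The universal property of free cocompletion gives $\fun^L(\widehat{\CC},\widehat{\CD})\simeq\fun(\CC\boxtimes\CD^\op,\VVect)$, compatibly with derived-coend composition. Your unit-plus-conservativity argument then shows that the restricted Yoneda functor $\CE\to\widehat{\CE^c}$ is an equivalence.

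This buys a genuine proof of the morphism-level statement, which is what separates an equivalence of $\infty$-categories from a bijection on equivalence classes of objects. It also spares you from defining $(-)^c$ on morphisms. The cost is the coherence question you flag, which you reasonably delegate to To\"en; the paper does not address it at all.

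One caveat applies to both arguments: unless $\mathbb{k}$ is a field, $\CC\boxtimes\CD^\op$ must be read as the derived tensor product (To\"en's $\otimes^{\mathbb{L}}$). Otherwise the bimodule identification is not homotopy invariant.
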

\begin{proof}
The free cocompletion of a DG category is a compactly generated stable presentable $\mathbb{k}$-linear $\infty$-category \cite{Lur17}\cite{Toe07}. The full subcategory of compact objects of a compactly generated stable presentable $\infty$-category is an idempotent complete stable $\infty$-category. Given an DG category $\CC$, by definition $(\widehat{\CC})^c$ is the idempotent completion of $\CC$. Since two DG categories are equivalent in $\BBim$ (i.e they are Morita equivalent) iff their idempotent completions are equivalent, we have $\CC\simeq (\widehat{\CC})^c$. Given a $\CS$ in $\PPr^\circ$, by definition, we have $\widehat{(\CS^c)}\simeq \CS$.
\end{proof}

\begin{defn}
Let $\CC,\CD$ be $\infty$-categories. A functor $F:\CC\to \CD$ is said to be \emph{conservative} if for a morphism $f:x\to y$ in $\CC$, is such that $F(f)$ is an isomorphism, then $f$ is an isomorphism.
\end{defn}

Denote $\PPr_c\subset\PPr^\circ$ be the symmetric monoidal $\infty$-category of compactly generated stable presentable $\infty$-categories, compact-preserving functors. 

\begin{defn}
A \emph{tensor category} in $\PPr_c$ is an $E_1$-algebra in $\PPr_c$. Similarly a \emph{braided tensor category} in $\PPr_c$ is an $E_2$-algebra in $\PPr_c$.
\end{defn}
\begin{expl}
Let $\CA$ be a (ribbon) tensor category, then $\widehat{\CA}$ is a (ribbon) tensor category in $\PPr_c$.
\end{expl}

\begin{defn}
Let $\CC$ be a tensor category in $\PPr_c$, and $\CM\in \PPr_c$ be a $\CC$-module category. We say that $m\in \CM$ is:
\begin{itemize}
\item an \emph{$\CA$-generator} if for every object $n\in \CM$, there exists an object $a\in \CA$ and a map $\act_m(a)\to n$ in $\CM$ such that the induced map $\rmH_0 (\act_m(a))\to \rmH_0 (n)$ is an epimorphism in the abelian category $\CM^\heartsuit$.
\item \emph{$\CA$-compact} if $\inthom_\CA(m,-):\CM\to \CA$ preserves filtered colimits ( equivalently all small colimits or direct sums).
\end{itemize}
\end{defn}
\begin{defn}
We say a tensor category $\CC$ in $\PPr_c$ is \emph{compact-rigid} if all ($\VVect$-)compact objects in $\CC$ has dual.
\end{defn}
\begin{expl}
If $\CA$ is rigid, then $\widehat{\CA}$ is compact-rigid.
\end{expl}

\begin{defn}\label{generatingcategory}
Let $\CC$ be a stable $\infty$-category. A collection of objects $\{x_i\}$ is said to \emph{generate $\CC$} if $\Hom_\CC(x_i,y)\simeq 0$ for all $i$, implies that $y=0$, i.e. $\CC$ should be generated under colimits of $\{x_i\}$. 
\end{defn}

\begin{lem}\label{rightadjointdominantfunctorisconservative}
(\cite{GR19} lemma 5.4.3) Let $\CC,\CD$ be stable $\infty$-categories, and let $F:\CC\to \CD$ be a functor that admits a right adjoint. Then the essential image of $F$ generates $\CD$ (or say $F$ is dominant) if and only if its right adjoint $F^R$ is conservative.
\end{lem}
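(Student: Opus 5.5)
The plan is to prove both implications using only the adjunction $F\dashv F^R$, stability, and Definition \ref{generatingcategory}. For $d\in\CD$ and $c\in\CC$ the adjunction gives a natural equivalence of mapping spectra (or complexes in the $\mathbb{k}$-linear setting)
\[ \Hom_\CD(F(c),d)\simeq \Hom_\CC(c,F^R(d)). \]
In a stable $\infty$-category a morphism $f$ is an isomorphism iff its cofiber (equivalently its fiber) vanishes. Moreover, $F^R$ is a right adjoint between stable categories, so it is exact and preserves fibers.

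\textbf{Dominant $\Rightarrow$ conservative.} Let $f:d\to d'$ be such that $F^R(f)$ is an isomorphism, and let $e=\mathrm{fib}(f)$. Since $F^R$ preserves fibers, $F^R(e)\simeq \mathrm{fib}(F^R(f))\simeq 0$. The adjunction then gives $\Hom_\CD(F(c),e)\simeq\Hom_\CC(c,F^R(e))\simeq 0$ for every $c\in\CC$. Because the essential image $\{F(c)\}$ generates $\CD$ in the sense of Definition \ref{generatingcategory}, this forces $e\simeq 0$, so $f$ is an isomorphism.

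\textbf{Conservative $\Rightarrow$ dominant.} Suppose $d\in\CD$ satisfies $\Hom_\CD(F(c),d)\simeq 0$ for all $c$. By adjunction $\Hom_\CC(c,F^R(d))\simeq 0$ for all $c$; taking $c=F^R(d)$ shows that the identity of $F^R(d)$ is nullhomotopic, so $F^R(d)\simeq 0$. Then $F^R$ sends the map $0\to d$ to an isomorphism $0\to 0$, and conservativity makes $0\to d$ an isomorphism, that is, $d\simeq 0$. So the essential image generates.

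The only point needing care is the meaning of ``generate'': Definition \ref{generatingcategory} uses the vanishing-Hom formulation. If one instead wants ``generated under colimits'', a further step is needed: the localizing subcategory generated by the image, together with its right orthogonal, yields a semi-orthogonal decomposition in the presentable setting. That step is where presentability would enter. Since the paper takes the Hom-vanishing condition as the definition, I would work with that and cite \cite{GR19} for the equivalence of the two formulations.
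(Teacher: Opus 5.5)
Your proof is correct, and it is essentially the standard argument. The paper gives no proof of its own and only cites \cite{GR19} Lemma 5.4.3, which rests on the same two points you use: an exact functor between stable $\infty$-categories is conservative iff it kills no nonzero object, and $F^R(d)\simeq 0$ iff $\Hom_\CD(F(c),d)\simeq 0$ for all $c$, with ``generate'' read in the Hom-vanishing sense of Definition \ref{generatingcategory}.
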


\begin{cor}\label{genrator=conservative}
Let $\CC$ be a tensor category in $\PPr_c$, and $\CM\in \PPr_c$ be a $\CC$-module category with t-structure. $m\in \CM$ is an $\CC$-generator if and only if $\inthom_\CC(m,-)$ is conservative.
\end{cor}
\begin{proof}
Here $F=\act_m:\CC\to \CM$, and $\inthom_\CC(m,-)$ is its right adjoint.
\end{proof}

\begin{lem}\label{FRmodule}
(\cite{BZN09} Lemma 3.5) Let $\CC$ be a compact-rigid tensor category in $\PPr_c$, let $\CM, \CN\in \PPr_c$ be $\CC$-module categories, and let $F:\CM\to \CN$ an $\CC$-module functor in $\PPr_c$ admitting a right adjoint $F^R$. Then $F^R$ admits a canonical $\CC$-module functor structure.
\end{lem}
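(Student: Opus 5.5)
The plan is to write down the candidate module structure on $F^R$ explicitly and then check that it is coherent. Being a $\CC$-module functor means having natural equivalences
\[ F^R(c\otimes n)\simeq c\otimes F^R(n), \qquad c\in\CC,\ n\in\CN, \]
compatible with the associativity and unit constraints. Since every functor in sight preserves colimits and $\CC$ is compactly generated, it is enough to produce these equivalences, and check their coherences, for compact $c$. Compact objects are dualizable by compact-rigidity, so we may use duals there. The structure is then extended by colimits, which works because $F^R$ is continuous in $\PPr_c$. Indeed $F$ preserves compacts, so $F^R$ preserves filtered colimits, and since everything is stable, $F^R$ preserves all colimits.

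For compact $c$ with dual $c^\vee$, the action functor $c\otimes-$ on $\CM$ (and on $\CN$) has right adjoint $c^\vee\otimes-$. The key step is to check the chain of natural equivalences
\[ \Hom_\CM(m, F^R(c\otimes n))\simeq \Hom_\CN(F(m),c\otimes n)\simeq \Hom_\CN(c^\vee\otimes F(m),n)\simeq \Hom_\CN(F(c^\vee\otimes m),n)\simeq \Hom_\CM(c^\vee\otimes m,F^R(n))\simeq \Hom_\CM(m,c\otimes F^R(n)). \]
The third equivalence uses the $\CC$-module structure of $F$. By Yoneda this chain gives $F^R(c\otimes n)\simeq c\otimes F^R(n)$. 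Equivalently, the comparison map $c\otimes F^R(n)\to F^R(c\otimes n)$, adjoint to $F(c\otimes F^R n)\simeq c\otimes FF^R n\to c\otimes n$, is the mate of the inverse of the structure isomorphism of $F$. The mate is invertible precisely because $c\otimes-$ has an adjoint given by the dual. So the lax structure on $F^R$, which exists for any right adjoint of a module functor, is strict on compact objects.

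The coherence is the main obstacle. A clean route is to avoid manual checking. The lax $\CC$-module structure on the right adjoint is formal: it comes from the oplax/lax doctrinal adjunction in the $(\infty,2)$-category of $\CC$-modules in $\PPr$, as in \cite{GR19}, where a right adjoint of a strict module functor acquires a canonical lax structure. The only additional input is that each structure 2-cell $c\otimes F^R(n)\to F^R(c\otimes n)$ is invertible. Invertibility is a pointwise condition, so it is checked on compact $c$ by the argument above. It then follows for all $c$ because both sides commute with colimits in $c$: $\CC$ is generated under colimits by compacts, and $F^R$ is continuous. This gives the canonical $\CC$-module functor structure on $F^R$, as claimed in the statement (following \cite{BZN09} Lemma 3.5).
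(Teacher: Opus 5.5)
Your argument is correct and is essentially the standard proof behind the cited \cite{BZN09} Lemma 3.5; the paper itself gives no proof beyond that citation. The three steps match it: the right adjoint of a module functor is canonically lax; for compact (hence dualizable) $c$ the lax maps are invertible because, up to the usual mate compatibility, they arise by passing to right adjoints in $F\circ(c^\vee\otimes-)\simeq(c^\vee\otimes-)\circ F$; and invertibility then spreads to all $c$ because both sides are continuous in $c$.

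There are two cosmetic slips. First, your Yoneda chain uses $c^\vee\otimes-$ as the \emph{left} adjoint of $c\otimes-$, not the right adjoint as you first state, so you need the dual on that side; compact-rigidity supplies it. Second, $F^R$ is continuous and hence a morphism of $\PPr$, but it need not preserve compacts, so it is not in general a morphism of $\PPr_c$.
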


Let $\CC$ be a monoidal DG category (i.e. a $E_1$-algebra in $\DGcat$) and $\CM,\CN$ right, respectively left $\CC$-module DG categories with module action $\odot$.
\begin{defn}
 We define the \emph{relative tensor product}, denoted by $\CM\boxtimes_\CC \CN$, to be the DG-category: 
\begin{itemize}
\item objects are pairs $(m,n)$ for $m\in \CM$ and $n\in \CN$;
\item the hom complex from $(m,n)$ to $(m',n')$ is defined by the derived coend
\[ \CM\boxtimes_\CC \CN((m,n),(m',n')):=\int^{a\in \CC}\Hom_\CM(m,m'\odot a)\otimes^\mathbb{L} \Hom_\CN(a\odot n,n')   \]
\end{itemize}
\end{defn}

\begin{lem}\label{coend=relativetensor}
 \[\CM\boxtimes_\CC \CN\simeq  \colim (\xymatrix{\cdots \CM\times \CC\times \CC\times \CN \ar@<-0.5ex>[r] \ar@<0.5ex>[r] \ar[r] & \CM\times \CC\times \CN \ar@<-0.5ex>[r] \ar@<0.5ex>[r]   &\CM\times \CN})\]
\end{lem}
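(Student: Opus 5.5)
We show that the geometric realization of the two-sided bar construction, computed in $\DGcat$ and then passed to $\BBim\simeq\PPr^\circ$ via Theorem \ref{Bim=Prcirc}, has the same objects and the same Hom complexes as $\CM\boxtimes_\CC\CN$. The strategy is to compare both sides after free cocompletion and then take compact objects. Write $\BBar_\bullet(\CM,\CC,\CN)$ for the simplicial DG category with $n$-simplices $\CM\boxtimes\CC^{\boxtimes n}\boxtimes\CN$. Its face maps use the right action $\odot$ on $\CM$, the monoidal product of $\CC$ and the left action on $\CN$; its degeneracies insert the unit. We read the colimit in the statement as the colimit of this diagram in $\BBim$, so that after applying $\widehat{(-)}$ it becomes a colimit in $\PPr^\circ$.

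\textbf{Step 1.} Since $\widehat{(-)}$ is symmetric monoidal and colimits in $\PPr$ are computed as limits along right adjoints, we get
\[\widehat{|\BBar_\bullet|}\simeq \widehat{\CM}\otimes_{\widehat{\CC}}\widehat{\CN},\]
the relative Lurie tensor product. Concretely, $\widehat{\CM}\otimes_{\widehat\CC}\widehat\CN$ is generated under colimits by the images of the objects $m\boxtimes n$. The structure map $\CM\boxtimes\CN\to|\BBar_\bullet|$ is essentially surjective up to idempotents and colimits. So both sides have the same generating objects $(m,n)$, and it remains to compare Hom complexes between generators.

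\textbf{Step 2.} We compute $\Hom((m,n),(m',n'))$ in $|\BBar_\bullet|$. Hom complexes between generators in a colimit of the bar type are computed by the bar resolution of the corresponding bimodules. Using the Yoneda bimodule ${}_{\CM}\odot_{\CC}$ given by $(m,a)\mapsto\Hom_\CM(m,m'\odot a)$, the Hom complex is the realization of the simplicial complex
\[ [k]\mapsto \bigoplus\Hom_\CM(m,m'\odot a_0)\otimes\CC(a_0,a_1)\otimes\cdots\otimes\CC(a_{k-1},a_k)\otimes\Hom_\CN(a_k\odot n,n'). \]
This follows from the realization formula for Homs in a geometric realization of DG categories whose face maps are module actions. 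In the Morita model it amounts to composing the bimodules $\CM\to\CC$ and $\CC\to\CN$ via derived coends in $\BBim$. By definition, the realization of this simplicial complex is $|\BBar_\bullet(\Hom_\CM(m,m'\odot-),\CC,\Hom_\CN(-\odot n,n'))|$, which is exactly the derived coend $\int^{a\in\CC}\Hom_\CM(m,m'\odot a)\otimes^\mathbb{L}\Hom_\CN(a\odot n,n')$ appearing in the definition of $\CM\boxtimes_\CC\CN$. Remark \ref{geometricrelation=doublecomplex} identifies this realization as a direct-sum totalization, so composition on both sides is given by the same concatenation.

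\textbf{Step 3.} The comparison functor $\CM\boxtimes_\CC\CN\to|\BBar_\bullet|$ is therefore fully faithful on generators and essentially surjective up to Morita equivalence. By Theorem \ref{Bim=Prcirc} it is an equivalence in $\BBim$.

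The main obstacle is Step 2: justifying that Homs in the colimit of the simplicial diagram are given by the bar complex of bimodules. Our first attempt is to work in $\PPr$. There we write the colimit as the limit of right adjoints, use that each $\widehat{\CM}\otimes\widehat{\CC}^{\otimes n}\otimes\widehat{\CN}$ is compactly generated, and then apply the Barr--Beck monadicity description of $\widehat\CM\otimes_{\widehat\CC}\widehat\CN$ as modules over the monad $\odot$. Under this description, the Hom from the free object on $m\boxtimes n$ unwinds to the stated coend by the standard bar resolution of free modules.
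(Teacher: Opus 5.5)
Your Step 2 is where the argument fails, and you cannot fix it in the generality you (and the statement) work in. There is no general formula computing Homs in the geometric realization of $\BBar_\bullet(\CM,\CC,\CN)$ as the bar complex of $\Hom_\CM(m,m'\odot-)$ and $\Hom_\CN(-\odot n,n')$: Homs in a colimit of categories are not colimits of Homs. The colimit makes the balancing maps $(m'\odot a,n)\simeq(m',a\odot n)$ invertible. It therefore also contains morphisms built from $\Hom_\CM(m\odot a,m')\otimes\Hom_\CN(n,a\odot n')$, which the one-sided coend only sees when $a$ has a dual.

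Concretely, let $\CC$ be the free $\mathbb{k}$-linear monoidal category on one object $x$, so $\Hom_\CC(x^{\otimes i},x^{\otimes j})$ is $\mathbb{k}$ for $i=j$ and $0$ otherwise, and take $\CM=\CN=\CC$. The augmented bar object over $\CC$ is split (insert $\mathbb{1}$). Hence its colimit in $\DGcat$, $\BBim$ or $\PPr$ is $\CC$ (resp.\ $\widehat\CC$), with $(m,n)\mapsto m\otimes n$, and in particular $(\mathbb{1},x)$ and $(x,\mathbb{1})$ become isomorphic. But
\[ \Hom_{\CM\boxtimes_\CC\CN}\bigl((\mathbb{1},x),(x,\mathbb{1})\bigr)=\int^{a\in\CC}\Hom_\CC(\mathbb{1},x\otimes a)\otimes^{\mathbb{L}}\Hom_\CC(a\otimes x,\mathbb{1})=0, \]
since $\Hom_\CC(\mathbb{1},x\otimes a)=0$ for all $a$. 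More globally, the objects $(x^{\otimes i},x^{\otimes j})$ with $i+j=N$ span the linearized poset $0<1<\cdots<N$. So $\widehat{\CM\boxtimes_\CC\CN}\simeq\prod_N\CD(\mathbb{k}A_{N+1})$, which is not equivalent to $\widehat\CC\simeq\prod_N\VVect$. The lemma therefore needs an extra hypothesis. The paper only says ``well-known'', but it uses the lemma (Proposition \ref{excisionlskcat}) for $\CC=\SSkcat_\CA(P\times I)$, whose cocompletion is compact-rigid (cf.\ Lemma \ref{skcatannisrigid}).

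Under compact-rigidity of $\widehat\CC$ your Barr--Beck fallback is the right route, but the monad is not simply ``$\odot$''; the duals are exactly the idea missing from Step 2. The right adjoints of the face maps are continuous and, by rigidity, $\widehat\CC$-linear (Lemma \ref{FRmodule}). So Beck--Chevalley holds, and the right adjoint of $\iota\colon\widehat\CM\otimes\widehat\CN\to\widehat\CM\otimes_{\widehat\CC}\widehat\CN$ is monadic with monad $(\act_\CM\otimes\id)\circ(\id\otimes\act_\CN)^R$, i.e.
\[ m'\boxtimes n'\longmapsto\int^{a\in\CC}(m'\odot a)\boxtimes(a^\vee\odot n'). \]
This uses $\act_\CN^R(n')\simeq\int^{a}a\boxtimes(a^\vee\odot n')$, where $a^\vee$ is the dual with $a\odot-\dashv a^\vee\odot-$. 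For compact $m,n$ the functor $\Hom(m\boxtimes n,-)$ commutes with this colimit, and $\Hom_\CN(n,a^\vee\odot n')\cong\Hom_\CN(a\odot n,n')$. Hence $\Hom(\iota(m\boxtimes n),\iota(m'\boxtimes n'))$ is exactly the coend defining $\CM\boxtimes_\CC\CN$. Combined with your Steps 1 and 3, this proves the lemma in the rigid case.
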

\begin{proof}
This is well-known.
\end{proof}

Similarly, one can also use derived coend to define relative tensor product in $\PPr$ which we still denote it as $\boxtimes$.
\begin{lem}
There is an equivalence of stable presentable $\mathbb{k}$-linear $\infty$-categories.
\[ \widehat{\CM\boxtimes_\CC \CN}\simeq \widehat{\CM}\boxtimes_{\widehat{\CC}} \widehat{\CN}.  \]
\end{lem}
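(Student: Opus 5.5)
We will prove that free cocompletion takes relative tensor products of module DG categories to relative tensor products in $\PPr$ by showing that $\widehat{(-)}$ commutes with the colimit in Lemma \ref{coend=relativetensor}. The plan has three steps.

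\textbf{Step 1 (bar presentation in $\DGcat$ and in $\BBim$).} By Lemma \ref{coend=relativetensor},
\[\CM\boxtimes_\CC\CN\simeq\colim\bigl(\cdots\rightrightarrows\CM\boxtimes\CC\boxtimes\CN\rightrightarrows\CM\boxtimes\CN\bigr).\]
I will first check that this geometric realization also computes the colimit in $\BBim$. Every DG functor induces a bimodule, so the bar diagram maps into $\BBim$, and the tautological functor $\CM\boxtimes\CN\to\CM\boxtimes_\CC\CN$ gives a cocone there. To see that this cocone is universal in $\BBim$, I will use the explicit derived coend formula for the Hom complexes of $\CM\boxtimes_\CC\CN$. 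A bimodule out of $\CM\boxtimes_\CC\CN$ is then the same as a bimodule out of $\CM\boxtimes\CN$ together with coherent $\CC$-balancing data, which is exactly a cocone on the bar diagram.

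\textbf{Step 2 (transport along Theorem \ref{Bim=Prcirc}).} Theorem \ref{Bim=Prcirc} gives an equivalence $\BBim\simeq\PPr^\circ$, so the colimit from Step 1 is carried to a colimit in $\PPr^\circ$. The functor $\widehat{(-)}$ is symmetric monoidal, so $\widehat{\CM\boxtimes\CC^{\boxtimes k}\boxtimes\CN}\simeq\widehat{\CM}\boxtimes\widehat{\CC}^{\boxtimes k}\boxtimes\widehat{\CN}$, with the Lurie tensor product in $\PPr$. The face maps are the derived Kan extensions of the action functors, hence the action functors of $\widehat{\CC}$ on $\widehat{\CM}$ and $\widehat{\CN}$. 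So $\widehat{\CM\boxtimes_\CC\CN}$ is the colimit in $\PPr^\circ$ of the bar diagram for $\widehat{\CM}\boxtimes_{\widehat{\CC}}\widehat{\CN}$.

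\textbf{Step 3 (comparison with the colimit in $\PPr$).} The right-hand side $\widehat{\CM}\boxtimes_{\widehat{\CC}}\widehat{\CN}$ is defined as the same geometric realization, but computed in $\PPr$. I must show that the inclusion $\PPr^\circ\subset\PPr$ preserves this colimit. The face maps preserve compact objects, since they are extensions of functors between small categories. Therefore the diagram lies in $\PPr_c$. In $\PPr$, a colimit may be computed as the limit along the right adjoints. For diagrams in $\PPr_c$, these right adjoints are themselves colimit-preserving, and the resulting category is compactly generated by the images of compact generators from the terms of the diagram (the standard Lurie/Gaitsgory--Rozenblyum statement). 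Its compact objects are then the idempotent completion of the colimit of the small categories of compact objects. This matches the colimit in $\PPr^\circ$ from Step 2.

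I expect the main obstacle to be Step 3. The difficulty is justifying that colimits of compact-preserving diagrams in $\PPr$ stay compactly generated and agree with the $\BBim$/Morita colimit. I plan to cite the general result on colimits in $\PPr_c$ rather than reprove it. Step 1 is largely formal given the coend description.
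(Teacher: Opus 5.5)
Your skeleton agrees with the paper's: both sides are geometric realizations of the bar diagram of Lemma \ref{coend=relativetensor}, and symmetric monoidality of $\widehat{(-)}$ matches the terms and the face maps. But you have misplaced the difficulty.

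Step 3 is routine. A colimit in $\PPr$ of a diagram in $\PPr_c$ is compactly generated, and $\PPr^\circ\subset\PPr$ is a full subcategory, so that colimit is automatically the colimit in $\PPr^\circ$. The real content sits in Step 1, and the justification you give there does not establish it. The coend formula describes the Hom complexes of $\CM\boxtimes_\CC\CN$, that is, the category itself. It gives no control over Morita mapping spaces \emph{out of} it. Since bimodules are more general than DG functors, universality of the bar colimit among DG functors does not formally transfer to $\BBim$. Read through Theorem \ref{Bim=Prcirc}, your claim that a bimodule out of $\CM\boxtimes_\CC\CN$ is the same as a bimodule out of $\CM\boxtimes\CN$ with coherent balancing data is a restatement of the lemma, not a proof of it.

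The missing ingredient is the universal property of the free cocompletion, and this is the paper's entire proof. For every $\CE\in\PPr$, colimit-preserving functors $\widehat{\CC}\to\CE$ are the same as DG functors $\CC\to\CE$. Hence $\widehat{(-)}\colon\DGcat\to\PPr$ preserves colimits. It therefore sends the bar realization of Lemma \ref{coend=relativetensor} to the bar realization defining $\widehat{\CM}\boxtimes_{\widehat{\CC}}\widehat{\CN}$ in $\PPr$. This same fact is what actually proves your Step 1: a bimodule $\CX\to\CD$ is a DG functor $\CX\to\widehat{\CD}$, and you then apply the universal property of the colimit with that large target. Once you have it, the detour through $\BBim$ and $\PPr^\circ$ in Steps 2 and 3 is unnecessary.
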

\begin{proof}
Free cocompletion is colimit-preserving.
\end{proof}

\begin{lem}\label{compactgeneratorsofrelativetensor}
(\cite{GR19} Cor 8.7.4) Let $\CC$ be a compact-rigid tensor category in $\PPr_c$. Let $\CM$ be a right $\CC$-module, and $\CN$ be a left $\CC$-module. If $\{W_i\}$ are all compact generators of $\CM$ and $\{V_i\}$ are all compact generators of $\CN$, then $\{W_i\boxtimes_{\CC} V_j\}$ are all compact generators of $\CM\boxtimes_\CC\CN$.
\end{lem}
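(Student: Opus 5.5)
The relative tensor product $\CM\boxtimes_\CC\CN$ is the geometric realization in $\PPr$ of the bar construction of Lemma \ref{coend=relativetensor}. Every object $x\in\CM\boxtimes_\CC\CN$ is therefore generated under colimits by images of objects $m\boxtimes n$ under the canonical functor $\pi:\CM\times\CN\to\CM\boxtimes_\CC\CN$. So I plan to prove two things: that each $W_i\boxtimes_\CC V_j=\pi(W_i,V_j)$ is compact, and that these objects generate.

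\textbf{Generation.} I would work with the colimit-preserving functor $\pi_0:\CM\boxtimes\CN\to\CM\boxtimes_\CC\CN$, the structure map from the zeroth term of the bar construction. Since $\CM\boxtimes_\CC\CN$ is a colimit of a simplicial diagram in $\PPr$, the images of the $0$-th term jointly generate; more precisely, the higher terms $\CM\boxtimes\CC^{\boxtimes k}\boxtimes\CN$ map through $\pi_0$ via face maps. Hence $\pi_0$ has generating essential image. Its right adjoint $\pi_0^R$ is then conservative by Lemma \ref{rightadjointdominantfunctorisconservative}. The objects $W_i\boxtimes V_j$ compactly generate $\CM\boxtimes\CN$, which is the standard fact for the Lurie tensor product of compactly generated categories. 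If $\Hom(\pi_0(W_i\boxtimes V_j),y)=\Hom(W_i\boxtimes V_j,\pi_0^Ry)=0$ for all $i,j$, then $\pi_0^Ry=0$, and therefore $y=0$ by conservativity.

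\textbf{Compactness.} It suffices to show that $\pi_0^R$ preserves colimits, since then $\pi_0$ preserves compact objects. This is where compact-rigidity enters, and I expect it to be the main obstacle. Following \cite{BZN09}/\cite{GR19}, I would identify $\CM\boxtimes_\CC\CN$ with a category of modules. The functor $\pi_0^R\pi_0$ is the monad given by the action of the algebra object $\mathcal{R}=\int^{c}c^\vee\boxtimes c$ (the coend/"diagonal" object in $\CC\boxtimes\CC^{\mathrm{mop}}$) on $\CM\boxtimes\CN$. Compact-rigidity, meaning compact objects are dualizable and $\CC$ is compactly generated, implies that the multiplication $\CC\boxtimes\CC\to\CC$ has a colimit-preserving right adjoint, so $\mathcal{R}$ is well defined in $\PPr$. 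The bar construction is then split by Lemma \ref{FRmodule}, applied to the action functors, whose right adjoints are $\CC$-linear.

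Barr–Beck–Lurie then gives $\CM\boxtimes_\CC\CN\simeq\mathcal{R}\text{-mod}(\CM\boxtimes\CN)$, with $\pi_0$ as the free functor and $\pi_0^R$ as the forgetful functor. The forgetful functor is continuous because $\mathcal{R}\otimes-$ is continuous. Free modules on compact objects are compact, which gives compactness. The same identification also recovers generation, since free modules generate.

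The hard step is checking the Barr–Beck hypotheses: $\pi_0^R$ must be conservative, which follows from generation above, and it must preserve geometric realizations, which follows from continuity, which in turn reduces to the multiplication of $\CC$ having a continuous right adjoint. I would establish that last claim first, using that compact objects are dualizable. Concretely, the right adjoint of $\otimes$ on compact generators is $c\mapsto\int^{d}(c\otimes d^\vee)\boxtimes d$, and this formula extends continuously.
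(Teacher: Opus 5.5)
Your generation argument is fine. Every insertion into the geometric realization factors through $\pi_0$, so $\pi_0^R$ is conservative and the objects $\pi_0(W_i\boxtimes V_j)$ generate.

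The gap is in the compactness half, exactly at the step you flag as hard. To get $\pi_0^R$ continuous you invoke Barr--Beck--Lurie. But Theorem \ref{Barrbeck} needs as input that $\pi_0^R$ preserves geometric realizations, and you justify that by ``continuity'', which is the conclusion you are after. The only non-circular content is the claim that this ``reduces to'' $m\colon\CC\boxtimes\CC\to\CC$ having a continuous right adjoint. You give no mechanism that carries a property of $m^R$ over to the right adjoint of the map $\pi_0$ out of a colimit. Moreover, $m^R$ is continuous for any tensor category in $\PPr_c$, so that cannot be where compact-rigidity is used.

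What is missing is the following standard fact. A colimit in $\PPr$ is the limit, in large $\infty$-categories, of the diagram of right adjoints, and $\pi_0^R$ is the projection onto level $0$. Suppose every structure map of the bar construction has a continuous right adjoint. Then this limit is taken along colimit-preserving functors, so colimits in it are computed levelwise and $\pi_0^R$ preserves them. Hence $\pi_0$ preserves compact objects, and together with your generation argument this proves the lemma.

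Compact-rigidity is exactly what makes the structure maps compact-preserving. The maps built from $m$ and the unit are fine because $\CC\in\PPr_c$. For the actions, take compact $c\in\CC$ and compact $w\in\CM$. Then $\Hom(w\odot c,-)\simeq\Hom(w,(-)\odot c^\vee)$ for the appropriate dual $c^\vee$, and $(-)\odot c^\vee$ is continuous, so $w\odot c$ is compact; the same holds for $\CN$.

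If you want to keep your monadic description $\mathcal{R}\text{-mod}(\CM\boxtimes\CN)$, you must get monadicity from the Beck--Chevalley criterion for simplicial diagrams. That criterion does not presuppose that $\pi_0^R$ preserves geometric realizations. Checking it again uses continuity and $\CC$-linearity (Lemma \ref{FRmodule}) of the right adjoints of the action maps, not just properties of $m^R$.
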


\subsection{Barr-Beck theorem}
Let $\CC$ be an $(\infty,1)$-category. The functor category $\fun(\CC,\CC)$ has a natural structure of monoidal category, and $\CC$ that of $\fun(\CC,\CC)$-module. By definition, a monad acting on $\CC$ is an associative algebra $F\in \fun(\CC,\CC)$. Given a monad $F$, we can consider the category $\RMod_F(\CC)$. We denote by
\[ \forg:\RMod_F(\CC)\to \CC   \]
the forgetful functor. This forgetful functor admits a left adjoint, denoted
\[ \Ind_F:\CC\to \RMod_F(\CC)    \]
Then we have the composite functor
\begin{align*}
\forg\circ \Ind_F:\CC&\to \CC  \\    c&\mapsto F(c)
\end{align*}

For any $(\infty,1)$-category $\CD$, the $(\infty,1)$-category $\fun(\CD,\CC)$ is also  module over $\fun(\CC,\CC)$.

One can deduce from the construction that for a given $G\in \fun(\CD,\CC)$, a structure on $G$ of $F$-module, i.e. that of object in
\[ \RMod_F(\fun(\CD,\CC))   \]
is equivalent to that of factoring $G$ as
\[ \CD\to \RMod_F(\CC)\xrightarrow{\forg} \CC   \]

Let $G$ be a functor $\CD\to \CC$. It is easy to see that if $G$ admits a left adjoint, then the internal hom $\intend_{\fun(\CC,\CC)}(G,G)$ exists and identifies with $G\circ G^L$. Note that $G\circ G^L$ admits a structure of associative algebra.

By the above, the functor $G$ canonically factors as
\[ \CD\xrightarrow{\tilde{G}} \RMod_{G\circ G^L}(\CC)\xrightarrow{\forg} \CC   \]
\begin{defn}
Let $\CC,\CD$ be $\infty$-categories, and $G:\CD\to \CC$ be a right adjoint functor. We shall say that $G$ is \emph{monadic} if the above functor
\[ \tilde{G}:\CD\to \RMod_{G\circ G^L}(\CC)  \]
is an equivalence.
\end{defn}

We have Barr-Beck-Lurie monadicity theorem:
\begin{thm}\label{Barrbeck}
(\cite{DAG2} Theorem 3.4.5) Let $\CC,\CD$ be $\infty$-categories both contains geometric realizations, and $G:\CD\to \CC$ be a functor. Then the functor $G$ is monadic provided that the following two conditions hold:
\begin{itemize}
\item $G$ is conservative;
\item $G$ preserves geometric realizations.
\end{itemize}
\end{thm}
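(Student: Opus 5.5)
The statement is the Barr--Beck--Lurie monadicity theorem, quoted from \cite{DAG2}. The plan is to show that the comparison functor $\tilde{G}:\CD\to \RMod_{G\circ G^L}(\CC)$ is an equivalence. First I will construct a left adjoint to $\tilde{G}$, then prove that the unit and counit of this adjunction are equivalences. Throughout, I assume $G$ admits a left adjoint $G^L$; this is implicit in the definition of monadicity given above. Write $T=G\circ G^L$ for the monad.

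\textbf{Step 1: the left adjoint.} Let $M$ be a $T$-module, with action map $T(M)\to M$. Form its bar resolution in $\RMod_T(\CC)$,
\[ \cdots\ T^2(M)\rightrightarrows T(M)\to M. \]
After forgetting to $\CC$, this simplicial object is split: the splitting comes from the unit $\id\to T$. Consequently $M$ is the geometric realization of the free modules $\Ind_T(T^n M)$. Since $\tilde G\circ G^L\simeq \Ind_T$ on free objects, we set
\[ \tilde{G}^L(M):=\bigl|G^L(T^{\bullet}M)\bigr|, \]
the realization taken in $\CD$, which exists by hypothesis. A formal check that $\hhom$ out of this realization is computed as a totalization of mapping spaces shows that $\tilde G^L\dashv\tilde G$.

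\textbf{Step 2: the unit is an equivalence.} We must show $M\to \tilde G\tilde G^L(M)$ is an equivalence. Because $G$ preserves geometric realizations, and the forgetful functor $\RMod_T(\CC)\to\CC$ detects and preserves the relevant colimits, we get
\[ G\bigl|G^L T^\bullet M\bigr|\simeq \bigl|T^{\bullet+1}M\bigr|. \]
This simplicial object is split augmented to $M$, so its realization is $M$. The main obstacle is making this coherent. The simplicial diagram $G^L T^\bullet M$ is $G$-split, and we need to know that $G$ preserves the colimits of $G$-split simplicial objects. This is exactly what the second hypothesis supplies. Lurie's treatment also uses the fact that split simplicial objects are absolute colimits: any functor carries a colimit diagram of a split simplicial object to a colimit diagram. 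I would cite this rather than reprove it.

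\textbf{Step 3: the counit is an equivalence.} For $d\in\CD$, the counit $\tilde G^L\tilde G(d)\to d$ becomes an equivalence after applying $G$, by Step 2 together with the triangle identities. Since $G$ is conservative, the counit itself is an equivalence. Hence $\tilde G$ is an equivalence, which is what the statement asserts.
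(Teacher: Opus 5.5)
Your proposal is correct and follows essentially the same approach as the paper, which gives no proof of its own and simply cites Lurie's DAG II, Theorem 3.4.5, whose proof runs exactly as your sketch does: bar-construction left adjoint, unit via $G$ preserving the realizations of the $G$-split objects $G^L T^\bullet M$, and counit via the triangle identity plus conservativity. You were also right to state explicitly that $G$ must admit a left adjoint, which the paper's statement leaves implicit in its definition of monadicity.
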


\begin{thm}
Let $\CC$ be a compact-rigid tensor category in $\PPr_c$, and let $\CM\in \PPr_c$ be a right $\CA$-module category. Let $m$ be an $\CC$-compact generator. Then there is an equivalence of right $\CC$-module categories
\[ \CM\simeq \LMod_{\intend_\CC(m)}(\CC)  \]
\end{thm}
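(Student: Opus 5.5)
We apply Barr--Beck--Lurie (Theorem \ref{Barrbeck}) to the adjunction
\[ \act_m:\CC\rightleftarrows \CM:\inthom_\CC(m,-), \qquad \act_m(a)=m\odot a .\]
The right adjoint $G:=\inthom_\CC(m,-)$ exists because $\act_m$ is cocontinuous between presentable categories. By Lemma \ref{FRmodule} it carries a canonical right $\CC$-module functor structure, since $\act_m$ is a $\CC$-module functor and $\CC$ is compact-rigid. The plan is:
\begin{enumerate}
\item show that $G$ is monadic;
\item identify the monad $G\circ G^L$ with tensoring by the algebra $\intend_\CC(m)$;
\item conclude that $\RMod_{G\circ G^L}(\CC)\simeq \LMod_{\intend_\CC(m)}(\CC)$, compatibly with the right $\CC$-actions.
\end{enumerate}

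\textbf{Monadicity.} Conservativity of $G$ follows from Corollary \ref{genrator=conservative}, because $m$ is a $\CC$-generator. Equivalently, by Lemma \ref{rightadjointdominantfunctorisconservative}, the image of $\act_m$ generates $\CM$. For the second hypothesis, $\CC$-compactness says $G$ preserves filtered colimits. Since $G$ is an exact functor of stable categories, it also preserves finite colimits, hence all small colimits, and in particular geometric realizations. Theorem \ref{Barrbeck} then gives an equivalence $\CM\simeq \RMod_{G\circ G^L}(\CC)$.

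\textbf{Identifying the monad.} The monad $T=G\circ \act_m$ is a right $\CC$-module endofunctor of $\CC$. It is cocontinuous because both $\act_m$ and $G$ are. A cocontinuous right $\CC$-module endofunctor of $\CC$ is determined by its value on the unit: $T(a)\simeq T(\mathbb{1})\otimes a$, and $\fun_\CC(\CC,\CC)\simeq \CC$ as monoidal categories (with composition of functors corresponding to the tensor product). Hence
\[ T\simeq \intend_\CC(m)\otimes(-), \qquad \intend_\CC(m)=T(\mathbb{1})=\inthom_\CC(m,m),\]
and the monad structure on $T$ corresponds to an $E_1$-algebra structure on $\intend_\CC(m)$, namely the usual composition on the internal endomorphisms. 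Modules for the monad $\intend_\CC(m)\otimes(-)$ are exactly left $\intend_\CC(m)$-modules in $\CC$. The right $\CC$-action on $\LMod_{\intend_\CC(m)}(\CC)$ is right multiplication, and under the equivalence it matches the action on $\CM$, because $G$ is a $\CC$-module functor and $\tilde G$ is built from module functors.

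\textbf{Main obstacle.} The delicate step is making the identification $\fun_\CC(\CC,\CC)\simeq\CC$ monoidal and coherent. This requires the lax module structure on $G$ from Lemma \ref{FRmodule} to be strict, which is where compact-rigidity of $\CC$ is used. The argument also checks that the algebra structure produced by the monad agrees with the composition on $\intend_\CC(m)$. Both are standard in the setting of \cite{BZN09} and \cite{GR19}, and would be cited rather than reproved.
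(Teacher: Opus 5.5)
Your proposal is correct and follows the paper's proof. You apply Barr--Beck--Lurie to $\inthom_\CC(m,-)$, taking conservativity from the generator condition and colimit preservation from $\CC$-compactness, and you identify the monad with $\intend_\CC(m)\otimes(-)$ using the module structure supplied by Lemma \ref{FRmodule}. The only difference is that you spell out steps the paper leaves implicit: exactness upgrading filtered-colimit preservation to all colimits, and a $\CC$-module endofunctor of $\CC$ being tensoring with its value on the unit.
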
\label{Monadicityformodule}
\begin{proof}
Since $\CC$ is compact-rigid, by lemma \ref{FRmodule}, $\act^R_m$ has a canonical $\CC$-module structure. So we have $\act^R_m\circ \act_m=\inthom_\CC(m,\act_m(-))\simeq \intend_\CC(m)\otimes -$.

By assumption, we have $\act^R_m=\inthom_\CC(m,-):\CM\to \CC$ is conservative and colimit-preserving. Hence $\act^R_m$ is monadic. By Theorem \ref{Barrbeck}, we have $\CM\simeq \RMod_{\act^R_m\circ\act_m}(\CC)\simeq \RMod_{\intend_\CC(m)\otimes -}(\CC)\simeq\LMod_{\intend_\CC(m)}(\CC)$.
\end{proof}

\begin{thm}\label{Monadicityforrelativetensorproduct}(\cite{BZBJ18} Theorem 4.12)
 Let $\CC$ be a compact-rigid tensor category in $\PPr_c$, and let $\CM,\CN$ be right and left $\CC$-module categories with $\CC$-compact generator $m\in \CM$ and $n\in \CN$, respectively. Then we have equivalences
\[ \CM\boxtimes_\CC \CN\simeq \LMod_{\intend_\CC(m)}(\CN)      \]
\end{thm}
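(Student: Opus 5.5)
The plan is to reduce the statement to the previous monadicity result (the theorem just before, giving $\CM\simeq \LMod_{\intend_\CC(m)}(\CC)$) and then to use that relative tensor products in $\PPr$ commute with taking module categories over an algebra object. First, applying that theorem to the right $\CC$-module $\CM$ with its $\CC$-compact generator $m$, we obtain an equivalence of right $\CC$-module categories $\CM\simeq \LMod_{A}(\CC)$, where $A:=\intend_\CC(m)$ is an algebra in $\CC$ and $\CC$ acts by right multiplication. Then
\[ \CM\boxtimes_\CC\CN\simeq \LMod_A(\CC)\boxtimes_\CC \CN, \]
and it remains to identify the right-hand side with $\LMod_A(\CN)$, where $A$ acts on $\CN$ through the left $\CC$-action.

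For this second step I would write $\LMod_A(\CC)$ itself as a geometric realization, or equivalently as the category of modules for the monad $A\otimes-$ on $\CC$, and observe that $-\boxtimes_\CC\CN$ is computed by the bar colimit of Lemma \ref{coend=relativetensor}. Concretely, there is a functor
\[ \Phi:\LMod_A(\CC)\boxtimes_\CC\CN\to \LMod_A(\CN),\qquad (M,n)\mapsto M\odot n, \]
induced from the $\CC$-balanced functor $\LMod_A(\CC)\times\CN\to\LMod_A(\CN)$. I would check that $\Phi$ is an equivalence by Barr--Beck (Theorem \ref{Barrbeck}) on both sides, over the common base $\CN$. Via $A\boxtimes_\CC n\leftarrow n$, the category $\LMod_A(\CC)\boxtimes_\CC\CN$ receives a functor $\Ind:\CN\simeq\CC\boxtimes_\CC\CN\to \LMod_A(\CC)\boxtimes_\CC\CN$ (induced from $\Ind_A$ tensored with $\id_\CN$). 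Its right adjoint is $\forg\boxtimes_\CC\id_\CN$; here one needs that $\forg$ is a $\CC$-module functor in $\PPr$, which holds because it preserves colimits ($A\otimes-$ preserves colimits) and, by Lemma \ref{FRmodule} and compact-rigidity, is canonically $\CC$-linear. The monad on $\CN$ is then $(A\otimes-)\boxtimes_\CC\id_\CN = A\odot-$, the same as for $\LMod_A(\CN)$, and $\Phi$ intertwines the two adjunctions.

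The main obstacle is verifying monadicity of $\forg\boxtimes_\CC\id_\CN$, namely conservativity and preservation of geometric realizations; functors of the form $F\boxtimes_\CC\id$ are not obviously conservative. I would first attempt a dominance argument in the sense of Lemma \ref{rightadjointdominantfunctorisconservative}: objects $A\otimes c$ generate $\LMod_A(\CC)$ (free modules), so the image of $\Ind\boxtimes_\CC\id_\CN$ generates the relative tensor product (compare Lemma \ref{compactgeneratorsofrelativetensor}). Hence its right adjoint is conservative, and it preserves all colimits since it is a $\PPr$-morphism. Failing that, I would use the alternative route that $\LMod_A(\CC)$ is dualizable over $\CC$ with dual $\RMod_A(\CC)$, so that $\LMod_A(\CC)\boxtimes_\CC\CN\simeq\fun_\CC(\RMod_A(\CC),\CN)\simeq \LMod_A(\CN)$, the last equivalence holding because a $\CC$-linear functor out of $\RMod_A(\CC)$ is determined by the image of $A$ together with its $A$-action. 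Once both categories are monadic over $\CN$ for the same monad compatibly with $\Phi$, Barr--Beck gives $\CM\boxtimes_\CC\CN\simeq\LMod_{\intend_\CC(m)}(\CN)$.
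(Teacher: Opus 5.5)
Your proposal is correct. The paper gives no argument of its own for this statement; it only cites \cite{BZBJ18}, so your sketch supplies an actual proof. The preceding theorem gives $\CM\simeq\LMod_A(\CC)$ as right $\CC$-modules, with $A=\intend_\CC(m)$. You then prove the standard identification $\LMod_A(\CC)\boxtimes_\CC\CN\simeq\LMod_A(\CN)$ (cf.\ \cite{Lur17}, \S4.8.4) by Barr--Beck over $\CN$.

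Your dominance argument for the conservativity of $\forg\boxtimes_\CC\id_\CN$ is the right one, so the dualizability fallback is not needed. The one point to spell out is why pure tensors generate $\CM\boxtimes_\CC\CN$:
\begin{itemize}
\item The relative tensor product is the geometric realization in $\PPr$ of the bar construction, i.e.\ the limit of the diagram of right adjoints.
\item On that limit, evaluation at $[0]$ is conservative, because each component $x_l$ of an object is the image of $x_0$ under a transition functor $X_0\to X_l$.
\item Hence the image of $\CM\otimes\CN$ generates, and therefore so do the pure tensors $x\boxtimes_\CC y$.
\end{itemize}
Together with the bar resolution $M\simeq|A^{\otimes\bullet+1}\otimes M|$ and the identification $(A\otimes c)\boxtimes_\CC y\simeq\Ind(c\odot y)$, this gives the dominance of $\Ind$.

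A more direct route applies Barr--Beck to $\act_m\boxtimes_\CC\id_\CN:\CN\simeq\CC\boxtimes_\CC\CN\to\CM\boxtimes_\CC\CN$. Its right adjoint $\act_m^R\boxtimes_\CC\id_\CN$ behaves as follows:
\begin{itemize}
\item it makes sense because Lemma \ref{FRmodule} and compact-rigidity make $\act_m^R=\inthom_\CC(m,-)$ a $\CC$-module functor;
\item it preserves colimits because $m$ is $\CC$-compact;
\item it is conservative by the same dominance argument, because $m$ is a $\CC$-generator;
\item its monad is $\intend_\CC(m)\odot-$.
\end{itemize}
Your argument is this same proof run on $(\LMod_A(\CC),A)$ instead of $(\CM,m)$.

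What your packaging buys is that the second step becomes a rigidity-free statement about modules over an algebra. The forgetful functor $\forg:\LMod_A(\CC)\to\CC$ is strictly right $\CC$-linear, so your appeal to Lemma \ref{FRmodule} there is unnecessary; compact-rigidity enters only through the preceding theorem. The direct route avoids the intermediate equivalence altogether.

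Finally, neither argument uses the $\CC$-compact generator $n\in\CN$. The equivalence holds for any left $\CC$-module $\CN$ in $\PPr$. The generator $n$ matters only for the further bimodule description $\LMod_{\intend_\CC(m)}(\CN)\simeq\BMod_{\intend_\CC(m)|\intend_\CC(n)}(\CC)$ that the paper uses later, e.g.\ in Proposition \ref{DSS}.
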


We give an stable $\infty$-version of \cite{DSPS19} Theorem 3.3(3).
\begin{prop}\label{DSS}
 Given a right module category $\CM$, and a left module category $\CN$ over a compact-rigid tensor category $\CC\in \PPr_c$, with $\CC$-compact generator $m,m'\in \CM$, and $n,n'\in \CN$, we have an isomorphism
\[    \Hom_{\CM\boxtimes_\CC \CN}(m\boxtimes_\CC n,m'\boxtimes_\CC n')\cong \Hom_\CC(\mathbb{1},\inthom(m,m')\otimes \inthom(n,n'))      \]
\end{prop}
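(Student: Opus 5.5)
We reduce the computation to a statement about module categories via Theorem \ref{Monadicityforrelativetensorproduct}. Write $A=\intend_\CC(m)$, an algebra in $\CC$. By that theorem the functor $\CM\boxtimes_\CC\CN\to \LMod_A(\CN)$ sends $m\boxtimes_\CC n$ to the free module $A\odot n$. More generally, it sends $m'\boxtimes_\CC n'$ to $\inthom_\CC(m,m')\odot n'$. This uses the $\CC$-linearity of $\act_m^R$, which holds by Lemma \ref{FRmodule} since $\CC$ is compact-rigid. It also uses that $\act^R_m$ commutes with tensoring along $\CN$, because $\act^R_m\boxtimes_\CC \id_\CN$ is the right adjoint of $\act_m\boxtimes_\CC\id_\CN$. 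So I first identify the images of both objects in $\LMod_A(\CN)$.

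Next, apply the free–forgetful adjunction $A\odot -\dashv \forg$ between $\CN$ and $\LMod_A(\CN)$. This gives
\[ \Hom_{\CM\boxtimes_\CC\CN}(m\boxtimes_\CC n,m'\boxtimes_\CC n')\cong \Hom_{\LMod_A(\CN)}(A\odot n,\inthom_\CC(m,m')\odot n')\cong \Hom_\CN(n,\inthom_\CC(m,m')\odot n').\]
Then use the internal-hom adjunction for the $\CC$-action on $\CN$: $\Hom_\CN(c\odot n, n'')\cong\Hom_\CC(c,\inthom_\CC(n,n''))$, applied with $c=\mathbb{1}$. This yields $\Hom_\CC(\mathbb{1},\inthom_\CC(n,\inthom_\CC(m,m')\odot n'))$.

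The remaining step, which is the main obstacle, is the projection formula
\[ \inthom_\CC(n,x\odot n')\cong \inthom_\CC(n,n')\otimes x \quad (\text{or } x\otimes\inthom_\CC(n,n'),\ \text{depending on sidedness}) \]
for $x\in\CC$. Applying it with $x=\inthom_\CC(m,m')$ and reordering the factors gives the stated formula; in the braided setting this reordering is harmless after applying $\Hom(\mathbb{1},-)$.

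To prove the projection formula, I would again use Lemma \ref{FRmodule}. The right adjoint $\act_n^R=\inthom_\CC(n,-)$ of the $\CC$-module functor $\act_n$ is canonically $\CC$-linear, and that linearity is exactly the formula. The rigidity needed is only for compact $x$, via duals. To extend to general $x$, note that $\inthom_\CC(n,-)$ preserves colimits because $n$ is $\CC$-compact. Both sides are therefore colimit-preserving in $x$, and $\CC$ is compactly generated, so the formula extends from compact $x$ to all $x$. The same colimit argument is needed earlier to justify that $\act_m^R$ commutes with $\odot$ for noncompact arguments.

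Finally, I would check that the resulting isomorphism is the one induced by the natural maps. This requires care with left/right conventions, namely that $\CM$ is a right module and $\CN$ is a left module. I would fix these conventions so that the final expression reads $\Hom_\CC(\mathbb{1},\inthom(m,m')\otimes\inthom(n,n'))$ exactly as stated.
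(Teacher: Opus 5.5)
Your argument is correct and is essentially the paper's proof, run through the other tensor factor. The paper adjoins across $-\boxtimes_\CC n$ and reduces to $\Hom_\CM(m,m'\odot\inthom(n,n'))$. You instead adjoin across $m\boxtimes_\CC -$, via the free--forgetful adjunction for $\LMod_{\intend(m)}(\CN)$, and reduce to $\Hom_\CN(n,\inthom(m,m')\odot n')$. Both routes use the same inputs: Theorem \ref{Monadicityforrelativetensorproduct}, and Lemma \ref{FRmodule} applied to both $\inthom(m,-)$ and $\inthom(n,-)$.

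No braiding is needed for the final ordering, and none is assumed on $\CC$. Since $\CN$ is a left module, $\act_n$ is left $\CC$-linear, so $\inthom(n,x\odot n')\cong x\otimes\inthom(n,n')$. Your chain therefore lands exactly on $\Hom_\CC(\mathbb{1},\inthom(m,m')\otimes\inthom(n,n'))$.
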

\begin{proof}
By \ref{FRmodule}, we have internal hom functor is a $\CC$-module functor. So we have 
\begin{align*}
\Hom_\CC(\mathbb{1},\inthom(m,m')\otimes\inthom(n,n'))&\cong \Hom_\CC(\mathbb{1},\inthom(m,m'\otimes \inthom(n,n')))
\end{align*}
Then by definition of internal hom, we have
\[\Hom_\CC(\mathbb{1},\inthom(m,m'\otimes \inthom(n,n'))\cong \Hom_{\LMod_{\intend(m)}(\CC)}(m,m'\otimes \inthom(n,n'))\]
Again, since internal hom functor is a $\CC$-module functor, we have
\begin{align*}
\Hom_{\LMod_{\intend(m)}(\CC)}(m,m'\otimes \inthom(n,n'))&\cong\Hom_{\LMod_{\intend(m)}(\CC)}(m,\inthom(n,m'\otimes n')) \\
& \cong \Hom_{\LMod_{\intend(m)}(\RMod_{\intend(\CN)}(\CC))}(m\otimes n,m'\otimes n').
\end{align*}
By \ref{Monadicityforrelativetensorproduct}, we get the formula.
\end{proof}

\subsection{Derived categories}

Let $\CC$ be a small DG category. Denote by $\Ac(\CC)$ the full DG subcategory of $\Mod_\CC$ consisting of all acyclic DG modules. It is well-known that the homotopy category of DG modules $\rmH^0(\Mod_\CC)$ has a natural structure of a triangulated category and the homotopy category of acyclic complexes $\rmH^0(\Ac(\CC))$ forms a full triangulated subcategory in it.
\begin{defn}
The \emph{derived category} $D(\CC)$ of a small DG category $\CC$ is the Verdier quotient of $\rmH_0(\Mod_\CC)$ by the subcategory $\rmH_0(\Ac(\CC))$.
\end{defn}

\begin{lem}\label{DC=hatC} Let $\mathbb{k}$ be a field.
Let $\CC$ be a small $\mathbb{k}$-linear category viewed as a DG category. There are equivalences of stable $\mathbb{k}$-linear presentable $\infty$-categories
\[ \widehat{\CC}\simeq \CD(\CC)\simeq \CD(\widehat{\CC}^\heartsuit)   \]
Here $\CD(\CC)$ is the unique dg/$\infty$-enhancement of $D(\CC)$, and $\CD(\widehat{\CC}^\heartsuit)$ is the derived $\infty$-category of Grothendieck abelian category $\widehat{\CC}^\heartsuit$, defined in \cite{SAG18}. 
\end{lem}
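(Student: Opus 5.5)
We prove the two equivalences separately: first $\widehat{\CC}\simeq \CD(\CC)$, then $\CD(\CC)\simeq \CD(\widehat{\CC}^\heartsuit)$.

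\emph{First equivalence.} By definition $\widehat{\CC}=\fun(\CC^\op,\VVect)$, and $\VVect$ is the $\infty$-category underlying $\Mod_{\mathbb{k}}$ with the projective model structure. Functor categories into $\VVect$ are computed by localizing the DG category of strict DG functors at objectwise quasi-isomorphisms (Toën \cite{Toe07}, Lurie's rectification). This localization is exactly $\Mod_\CC$ with the acyclic modules killed. Its homotopy category is therefore the Verdier quotient $\rmH_0(\Mod_\CC)/\rmH_0(\Ac(\CC))=D(\CC)$. Using the standard enhancement by cofibrant (e.g. semi-free) DG modules, $\widehat{\CC}$ is a dg/$\infty$-enhancement of $D(\CC)$. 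This gives $\widehat{\CC}\simeq\CD(\CC)$, taking $\CD(\CC)$ to be that enhancement.

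\emph{Second equivalence.}
1. Identify the heart. Since $\CC$ is an ordinary linear category, each representable $h_c=\Hom_\CC(-,c)$ is concentrated in degree $0$. These form a set of compact generators of $\widehat{\CC}$, and $\Hom(h_c,F)\simeq F(c)$ by Yoneda. The $t$-structure on $\widehat{\CC}$ is the objectwise one: $F\in\widehat{\CC}_{\ge0}$ iff every $F(c)\in\VVect_{\ge 0}$. Its heart $\widehat{\CC}^\heartsuit$ is the abelian category $\fun(\CC^\op,\Vect)$ of ordinary additive presheaves. This is Grothendieck abelian, with the $h_c$ as projective generators.
2. Compare. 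By the universal property of the derived $\infty$-category (\cite{SAG18}, the discussion around Prop.~1.3.3.7), the inclusion of the heart extends to a $t$-exact, colimit-preserving functor $\Phi:\CD(\widehat{\CC}^\heartsuit)\to\widehat{\CC}$. Here we use that $\widehat{\CC}$ is right complete and its $t$-structure is compatible with filtered colimits.
3. Full faithfulness. For heart objects $A,B$, we need $\mathrm{Ext}^i_{\widehat{\CC}^\heartsuit}(A,B)\cong \pi_{-i}\Hom_{\widehat{\CC}}(A,B)$. Both sides can be computed by the same resolution of $A$ by sums of the projective representables $h_c$. On these, $\Hom_{\widehat{\CC}}(h_c,B)=B(c)$ is discrete, so the two Ext computations agree.
4. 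Essential surjectivity. $\Phi$ sends the compact projective generators $h_c$ of $\CD(\widehat{\CC}^\heartsuit)$ (they are compact projective in the abelian category) to the compact generators of $\widehat{\CC}$. So $\Phi$ is fully faithful on a set of compact generators and preserves colimits; a standard argument then gives an equivalence.

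\emph{Main obstacle.} This is steps 3–4: one must know that $\CD(\widehat{\CC}^\heartsuit)$ is compactly generated by the $h_c$. In Lurie's framework, $\CD(\mathcal{A})$ for a Grothendieck abelian $\mathcal{A}$ is only left complete, not always compactly generated. Here, however, $\widehat{\CC}^\heartsuit$ has a set of compact projective generators. So I would invoke the result that in this situation $\CD(\mathcal{A})$ is the unbounded derived category, equivalent to modules over the ringoid $\CC$ (\cite{SAG18} C.5 / \cite{Lur17} 7.1.2). This reduces the whole statement to the Morita-type identification $\fun(\CC^\op,\VVect)\simeq$ modules over $\CC$. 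That identification is immediate from the first equivalence. The hypothesis that $\mathbb{k}$ is a field is used so that tensoring over $\mathbb{k}$ is exact and hom complexes need no cofibrant replacement.
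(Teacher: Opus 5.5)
Your proof is correct, but it takes a different route from the paper. The paper argues at the level of triangulated categories and then lifts. Since the objects of $\CC$ form a set of compact generators of $\widehat{\CC}$, the compact-generator criterion \cite{LO10} Prop.~1.17 gives $D(\CC)\simeq \rmH_0(\widehat{\CC})$. Lunts--Orlov uniqueness of enhancements (\cite{LO10} Prop.~2.6) then upgrades this to $\widehat{\CC}\simeq\CD(\CC)$. The comparison with the heart is simply quoted from \cite{LO10} Prop.~1.15.

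You instead build the equivalences directly at the $\infty$-level. Rectification identifies $\fun(\CC^\op,\VVect)$ with DG $\CC$-modules localized at quasi-isomorphisms, and a $t$-structure argument with the compact projective generators $h_c$ handles the heart. This gives you explicit functors and makes you independent of the deep uniqueness-of-enhancement theorem, which the paper needs in order to pass from triangulated equivalences to equivalences of $\infty$-categories. The price is reliance on the rectification theorem for enriched functor categories, and that is exactly where the field hypothesis enters.

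A few details need correcting. Your steps 2--4 do more work than necessary. Lurie's $\CD(\mathcal{A})$ is the localization of $\mathrm{Ch}(\mathcal{A})$ at quasi-isomorphisms, and $\mathrm{Ch}(\widehat{\CC}^\heartsuit)$ is literally the category of DG $\CC$-modules with the same quasi-isomorphisms. So the second equivalence is just your first one again. Likewise, compact generation by the $h_c$ is elementary: $h_c$ is K-projective and $\Hom_{D(\mathcal{A})}(h_c,X[n])\cong\rmH_{-n}(X(c))$.

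Your caveat about completeness is reversed. For a Grothendieck abelian $\mathcal{A}$, $\CD(\mathcal{A})$ is always right complete; it is left completeness, and compact generation, that can fail.

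Finally, the universal property you invoke in step 2 is the one for $\CD^-(\mathcal{A})$ in \cite{Lur17} \S1.3.3, not \cite{SAG18}. It requires a left complete target and only yields a functor out of $\CD^-(\mathcal{A})$. As written, step 2 therefore needs an additional extension argument to reach the unbounded category, which is another reason to use the direct identification instead.
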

\begin{proof}
Since $\CC$ is the full subcategory of $\widehat{\CC}$ and the $\ob(\CC)$ forms a set of compact generators of $\rmH^0(\widehat{\CC})$, by \cite{LO10} Proposition 1.17, we have an equivalence of triangluated categories
\[ D(\CC)\simeq \rmH_0(\widehat{\CC})   \]

By \cite{LO10} Proposition 2.6, $D(\CC)$ has a unique $\infty$-enhancement $\CD(\CC)$, so we have 
\[  \widehat{\CC}\simeq \CD(\CC) . \]

By \cite{LO10} proposition 1.15, we have $\CD(\CC)\simeq \CD(\widehat{\CC}^\heartsuit)$. 
\end{proof}
\begin{rem}
Since $\CD(\CC)$ has a canonical $t$-structure, the above lemma implies $\widehat{\CC}$ has a canonical $t$-structure. Therefore, one can keep track of $t$-structure, and upgrade our $\PPr$ to a refined category $\rm{Groth}^{cg}_\infty$ of compactly generated Grothendieck prestable $\infty$-categories and compact functors. 
\end{rem}

\begin{lem}\label{DLMod=LModhat}
Let $A$ be an algebra in $\widehat{\CC}^\heartsuit$. 
\[ \CD(\LMod_A(\widehat{\CC}^\heartsuit))\simeq \LMod_A(\widehat{\CC})    \]
\end{lem}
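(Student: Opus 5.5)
We prove the equivalence by realizing both sides as stable presentable categories compactly generated by free modules $A\otimes c$, $c\in\CC$, and then comparing endomorphism data on these generators. The tools are Lemma \ref{DC=hatC}, which gives $\widehat{\CC}\simeq\CD(\widehat{\CC}^\heartsuit)$ with $\widehat{\CC}^\heartsuit$ Grothendieck abelian, and the monadicity theorems above.

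\textbf{Step 1: the abelian side.} $\LMod_A(\widehat{\CC}^\heartsuit)$ is a Grothendieck abelian category. Its objects $A\otimes c$, for $c\in\CC$, form a set of compact projective generators. Indeed, $\hhom(A\otimes c,N)=\hhom_{\widehat{\CC}^\heartsuit}(c,N)$. Since $c$ is a representable presheaf, this is evaluation at $c$, which is exact and commutes with direct sums. By the same argument as Lemma \ref{DC=hatC} (via \cite{LO10} Prop.\ 1.15, 1.17), $\CD(\LMod_A(\widehat{\CC}^\heartsuit))$ is then compactly generated by the $A\otimes c$. It is therefore equivalent to modules over the full linear subcategory $\CB\subset\LMod_A(\widehat{\CC}^\heartsuit)$ on these objects, with
\[\hhom_\CB(A\otimes c,A\otimes c')=\hhom(c,A\otimes c')=(A\otimes c')(c),\]
concentrated in degree $0$.

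\textbf{Step 2: the stable side.} The forgetful functor $\LMod_A(\widehat{\CC})\to\widehat{\CC}$ is conservative and preserves all colimits. This holds because $A\otimes -$ preserves colimits in each variable, as the tensor structure on $\widehat{\CC}$ is in $\PPr$. Hence its left adjoint $A\otimes-$ sends the compact generators $c$ to compact generators $A\otimes c$ (Lemma \ref{rightadjointdominantfunctorisconservative}), and $\LMod_A(\widehat{\CC})\simeq\widehat{\CB'}$ for the full DG subcategory $\CB'$ on these objects (Theorem \ref{Bim=Prcirc}). Its mapping complexes are
\[\Hom(A\otimes c,A\otimes c')=\Hom_{\widehat{\CC}}(c,A\otimes c'),\]
where $A\otimes c'$ denotes the derived tensor product in $\widehat{\CC}$.

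\textbf{Step 3: compare the generators.} We must show $\CB'\simeq\CB$ compatibly with composition. This reduces to two facts. First, $c$ is compact projective, so $\Hom_{\widehat{\CC}}(c,-)$ is t-exact evaluation. Second, $A\otimes c'$ lies in the heart, i.e.\ the derived tensor product of the heart objects $A$ and $c'$ agrees with the underived one. Once both hold, $\CB'$ has cohomology only in degree $0$, equal to $\CB$, so it is formal and equivalent to $\CB$. The comparison functor is induced by the t-exact functor $\LMod_A(\widehat{\CC})^\heartsuit\simeq\LMod_A(\widehat{\CC}^\heartsuit)$, which itself holds provided the monoidal structure is t-exact on the heart.

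\textbf{Main obstacle.} The hard step is the t-exactness $A\otimes c'\in\widehat{\CC}^\heartsuit$, meaning $-\otimes c'$ is exact on the heart. To prove it, I would use that $c'$ is dualizable, as $\CC$ is rigid in the cases of interest ($\CC=\CA$, and compact-rigidity is stated above). Then $-\otimes c'$ has both adjoints $-\otimes c'^{\vee}$ and $-\otimes{}^{\vee}c'$, hence is exact on $\widehat{\CC}^\heartsuit$, and the derived tensor product is computed underived. If rigidity is not available, the statement needs a flatness hypothesis, and I would add it.
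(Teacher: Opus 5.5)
Your argument is correct, but it takes a different route from the paper. The paper applies Barr--Beck--Lurie (Theorem~\ref{Barrbeck}) directly. The forgetful functor $U\colon\LMod_A(\widehat{\CC}^\heartsuit)\to\widehat{\CC}^\heartsuit$ is exact and conservative, so its derived functor $\CD(\LMod_A(\widehat{\CC}^\heartsuit))\to\CD(\widehat{\CC}^\heartsuit)\simeq\widehat{\CC}$ is conservative and colimit-preserving, hence monadic. The paper then simply asserts that the monad is $A\otimes-$.

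You instead realize both sides as module categories over the full subcategories on the free modules $A\otimes c$, using Lemma~\ref{DC=hatC} on the abelian side and Theorem~\ref{Bim=Prcirc} on the stable side, and compare Hom complexes. Both arguments hinge on the same point. The Barr--Beck monad is the derived functor of the heart-level free functor $A\otimes^\heartsuit-$. It agrees with the tensor product $A\otimes-$ of $\widehat{\CC}$ exactly when $A\otimes c$ lies in the heart for every $c\in\CC$, which is your ``main obstacle''.

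The paper passes over this point. Your route isolates it and proves it from rigidity: tensoring with the appropriate dual of $c$ is a right t-exact left adjoint of $-\otimes c$, so $-\otimes c$ is left as well as right t-exact. You also correctly flag that the lemma as stated needs a rigidity or flatness hypothesis. That hypothesis holds wherever the paper uses the lemma, since there $\CC=\CA$ is rigid, indeed semisimple.

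The cost of your route is extra bookkeeping on compositions: formality of $\CB'$, and matching $\rmH_0(\CB')$ with $\CB$ through the heart of $\LMod_A(\widehat{\CC})$. The monadic argument packages this more efficiently. For the heart identification $\LMod_A(\widehat{\CC})^\heartsuit\simeq\LMod_A(\widehat{\CC}^\heartsuit)$, right t-exactness of the tensor product already suffices. Full t-exactness is needed only to keep the free modules in the heart.
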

\begin{proof}
A complex of $A$-modules is an $A$-module object in complexes. Since forgetful functor $U:\LMod_A(\widehat{\CC}^\heartsuit)\to \widehat{\CC}^\heartsuit$ is exact (so its derived functor $\CD(\LMod_A(\widehat{\CC}^\heartsuit))\to \widehat{\CC}$ is same with $U$) and reflects isomorphisms, we can use Barr-Beck theorem to prove it, where the monad is $A\otimes -$.
\end{proof}

\section{Derived skein theory}\label{Derivedskeintheory}

\subsection{Derived skein module}\label{DSM}

We give an axiomatic definition for derived skein module.
\begin{defn}\label{axiomdefn}

Let $M$ be an oriented 3-manifold with corners, $\CA$ be a ribbon tensor (1-)category, and $X$ be a $\CA$-labeling of $\partial M$, i.e. $\CA$-labeled framed points of $\partial M$. The \emph{relative derived  $\CA$-skein module $\SSk_\CA(M,X)$} of $M$ is a non-negative chain complex over $\mathbb{k}$ such that
\begin{enumerate}
\item (Functorial property on $M$) Given an oriented embedding from $f:M\hookrightarrow  M'$ whose restriction is an oriented embedding $f|_{\partial M}:\partial M \hookrightarrow \partial M'$ between boundaries, there is a chain map as a part of data
\[  f_*:\SSk_\CA(M,X)\xrightarrow{} \SSk_\CA(M',f(X))    \]
such that if $f$ is a homeomorphism, $f_*$ is a chain homotopy equivalence.
\item (Disjoint union property)
There is a canonical homotopy equivalence as a part of data
\[  \SSk_\CA(M\sqcup M',X\cup X')\cong \SSk_\CA(M,X)\otimes \SSk_\CA(M',X')  \]
\item (Gluing property) If $X\cup X\cup X'$ is a $\CA$-labeling of  $\partial M=\Sigma\cup \Sigma^\op\cup \Sigma'$. Here the two copies of $\Sigma$ are disjoint from each other and $\partial (\Sigma\cup \Sigma^\op)=\partial \Sigma'$.  Gluing the two copies of $\Sigma$ together yields a new 3-manifold $M_{\gl}$ with boundary $\Sigma'_{gl}$, where $\Sigma'_{gl}$ denotes $\Sigma'$ glued to itself (without corners) along two copies of $\partial \Sigma$. There is a natural chain map as a part of data
\[ \gl:\bigoplus_{X} \SSk_\CA(M,X\cup X\cup X')\to \SSk_\CA(M_{\gl}, X' \footnote{\text{Here $Y$ is a $\CA$-labeling of $\Sigma_{gl}'$ which is not necessary equal to $\Sigma'$}.})   \]
such that if $\partial M=\Sigma\cup\Sigma^\op\cup\Sigma'\cup\Sigma'^\op\cup\Sigma''$, then the following diagram commutes
\[ \xymatrix{ & \SSk(M,X \cdots X'') \ar[r] \ar[d] & \bigoplus_X \SSk(M,X\cdots X'') \ar[r]^{\gl} &\SSk(M_{\gl},X'\cup X'\cup X'') \ar[d] \\ & \bigoplus_{X'}\SSk(M,X\cdots X'') \ar[d]_{\gl} & & \bigoplus_{X'}\SSk(M_{gl},X'\cup X'\cup X'') \ar[d]^{\gl} \\ & \SSk(M_{\gl'}, X\cup X\cup X'') \ar[r] & \bigoplus_{X} \SSk_\CA(M_{gl'}, X\cup X\cup X'') \ar[r]_-{\gl} &  \SSk((M_{\gl'})_{gl},X'')= \SSk((M_{\gl})_{\gl '}, X'')  }    \]

The sum is over all $\CA$-labelings $X$ of $\Sigma$ compatible at their 1-dimensional boundaries with $\Sigma'$. Here natural means natural with respect to property 1 functorial property on $M$.
\item (Skein property)
\[ \rmH_0(\SSk_\CA(M,X))\cong \Sk_\CA(M,X)    \]

\item (Excision property) Let $M=M_1\bigcup_\Sigma M_2$. Then there is a chain homotopy equivalence
\[ \SSk_\CA(M,X\cup Y)\cong \Tot(\BBar(\SSk_\CA(M_1,X\cup -),\SSk_\CA(\Sigma\times I,-\cup -),\SSk_\CA(M_2,-\cup Y)) )    \]
Here the right hand side is the total complex of the following simplicial complex:
\begin{itemize}
\item The n-th term:  
\[  \BBar_n=\bigoplus_{Z_0,\cdots Z_n} \SSk_\CA(M_1,X\cup Z_0)\otimes \SSk_\CA(\Sigma\times I,Z_0\cup Z_1)\otimes\cdots\otimes \SSk_\CA(\Sigma\times I,Z_{n-1}\cup Z_n)\otimes \SSk_\CA(M_2,Z_n\cup Y)   \]
an elementary tensor is written as $x\otimes f_1\otimes \cdots\otimes f_n\otimes y$
\item face maps 
\[ d_i:\BBar_n\to \BBar_{n-1}  \]
are as follows.

The first face uses the left action $\rho^L$:
\[ d_0(x\otimes f_1\otimes\cdots\otimes f_n\otimes y)=x\otimes f_1\otimes\cdots\otimes \rho^L(f_1\otimes y) \]
For $1\le i\le n-1$, the i-th face uses composition $\alpha$:
\[ d_i(x\otimes f_1\otimes\cdots\otimes f_{i}\otimes f_{i+1}\otimes \cdots\otimes f_n\otimes y)=x\otimes f_1\otimes\cdots \alpha(f_i\otimes  f_{i+1})\otimes\cdots\otimes f_n\otimes y   \]
the last face uses the right action $\rho^R$:
\[ d_n(x\otimes f_1\otimes\cdots\otimes f_n\otimes y)=\rho^R(x\otimes f_1)\otimes f_2\otimes\cdots f_n\otimes y   \]
\end{itemize}

with actions and compositions given by property 3 gluing maps as follows: For any $\CA$-labelings $Z,W$ of $\Sigma$
\begin{align*}
 \rho^R: \SSk_\CA(M_1,X\cup Z)\otimes \SSk_\CA(\Sigma\times I,Z\cup W)&\to \bigoplus_{Z} \SSk_\CA(M_1,X\cup Z)\otimes \SSk_\CA(\Sigma\times I, Z\cup W)\\
  &\cong \bigoplus_Z \SSk_\CA(M_1\sqcup \Sigma\times I,X\cup Z\cup Z\cup W) \\
  &\xrightarrow{\gl} \SSk_\CA(M_1, X\cup W)
\end{align*}
similarly we have left action $\rho^L$.
and 
\begin{align*}
\alpha:\SSk_\CA(\Sigma\times I,X\cup Z)\otimes \SSk_\CA(\Sigma\times I,Z\cup W)&\to \bigoplus_{Z} \SSk_\CA(\Sigma\times I,X\cup Z)\otimes \SSk_\CA(\Sigma\times I, Z\cup W)\\
  &\cong \bigoplus_Z \SSk_\CA(\Sigma\times I\sqcup \Sigma\times I,X\cup Z\cup Z\cup W) \\
  &\xrightarrow{\gl} \SSk_\CA(\Sigma\times I, X\cup W)
\end{align*}

\item (Unit property)
Consider a labeling $X\cup Y$ with disks $X=\{(x_1,V_1),\cdots,(x_n, V_n)\}$ embedded in $\mathbb{D}\times \{0\}$ and $Y=\{(y_1,W_1),\cdots,(y_m,W_m)\}$ embedded in $\mathbb{D}\times \{1\}$. Here $V_1,\cdots V_n,W_1,\cdots W_n$ are objects of $\CA$.
\[ \SSk_\CA(\mathbb{D}\times I,X\cup Y)\cong \Hom_\CA(V_1\otimes \cdots\otimes V_n,W_1\otimes\cdots\otimes W_m)\cong \rmH_0(\SSk_\CA(\mathbb{D}\times I,X\cup Y))    \]
\end{enumerate}
When $\CA=\rep^\fd_q(G)$ for not a root of unity $q$, we denote the \emph{relative derived $G$-skein module} as $\SSk_G(M,X)$. If $X=\emptyset$, we denote the \emph{derived skein module} by $\SSk_\CA(M)$.
\end{defn}
\begin{rem}\label{homotopycoherent}
We want to emphasize again that we do not consider more difficult problems like homotopy coherence, which are important for full functoriality. We expect Ayala-Francis's upcoming paper on factorization homology of rigid braided $(\infty,1)$-category can give us an answer, and we focus in this paper on a framework for doing computations.
\end{rem}

\begin{rem}\label{Ainftygluing}
Due to the commutative diagram in gluing property, our axioms above is a "DG"-type axioms rather than an $A_\infty$-type axioms. One can consider an $A_\infty$-type axioms. For example the gluing property will become like:
($A_\infty$-Gluing property) If $X_1\cup X_1\cup X_2\cup X_2\cdots X_{n-1}\cup X_{n-1}\cup X'$ is a $\CA$-labeling of  $\partial M=\Sigma_1\cup \Sigma_1^\op\cup\Sigma_2\cup \Sigma_2^\op\cup\cdots\cup \Sigma'$. Gluing the two identical ones together one by one yields an new 3-manifold $M_{\gl}$ with boundary $\Sigma'_{gl}$, where $\Sigma'_{gl}$ denotes $\Sigma'$ glued to itself (without corners) along two copies of $\partial \Sigma$. For each integer $n\ge 2$, There is a natural chain map of degree $n-2$
\[ \gl^n:\bigoplus_{i=1}^{n-1}\bigoplus_{X_i} \SSk_\CA(M,X_1\cup X_1\cup\cdots X_i\cup X_i\cdots\cup X')\to \SSk_\CA(M_{\gl}, X' \footnote{\text{Here $Y$ is a $\CA$-labeling of $\Sigma_{gl}'$ which is not necessary equal to $\Sigma'$}.})   \]
satisfies Stasheff identities. Here natural means natural with respect to property 1 functorial property on $M$. Also see Remark \ref{BlobcomplexAinfty}.
\end{rem}

\begin{rem}
One can try to formulate the above axioms as a symmetric monoidal $\infty$-functor with target $\VVect$. One key issue is how to define morphisms and compositions in the source category. 
\end{rem}
\begin{rem}\label{uniqueness}
We don't carefully prove that the above axiom gives a \emph{unique}, up to isomorphism, chain complex. But we believe it is the case. The reason is the following: every 3-manifold admits a handlebody decomposition. Each handle is homeomorphic to a 3-ball, so unit property tells us what we attach to a handle, and excision property tells us how to glue them together. So finally we should get a unique, up to isomorphism, answer. In other words, we believe the above axioms give the universal property of the derived skein module. Another reason is that our Theorem \ref{computableexcisionforrepqG} is actually a structural theorem for derived skein modules, which we obtained from those axioms.
\end{rem}

The existence of such a \footnote{See Remark \ref{Ainftygluing} and \ref{BlobcomplexAinfty}.} derived skein module is given by Morrison-Walker's Blob complex \cite{MW12} by Property 1.3.2, Theorem 7.2.1 and Proposition 3.2.1. We recall their definition in the following and focus on the 3-dimensional case.

\begin{rem}\label{tanglehypo}
We note that, while Blob complex satisfies our hypotheses, it is only a folklore expectation that a ribbon tensor category such as $\rep^{\fd}_q(G)$ defines a system of coefficients in Blob homology. 

For this reason, the computations in this paper are valid subject to either establishing this expectation, or by an alternative formulation such as the upcoming work on enriched $\beta$-factorization homology of Ayala-Francis.
\end{rem}
\begin{rem}\label{modelindependent}
We wish to emphasise that, while we draw inspiration from the construction of the Blob complex, no theorems in this paper rely on its technical details. Actually in derived case, it would be better to work in axiomatic algebraic way rather than with concrete huge topological model.

The enriched $\beta$-factorization homology, as a developing construction that aims to have the same target category and the same inputs as the skein TQFT, by the cobordism hypothesis \cite{Lur08}, should also give us an instance of our axioms. For framed 3-manifolds, they have a forthcoming model of a derived skein module using enriched $\beta$-factorization homology. Since we are working in a model-independent way, our results should also work for their (oriented variant) construction.
\end{rem}

Now let's say something on low dimensions first. Let $X$ be a $\CA$-labeling of $\partial M$. For $\SSk^0_\CA(M,X)$ is the vector space of all $\CA$-labeled ribbon graphs compatible with $X$. Consider a 3-ball $\mathbb{B}\cong \mathbb{D}\times I\subset M$, and consider a labeling $Y=Y_0\cup Y_1$ with disks $Y_0=\{(x_1,V_1),\cdots, (x_n,V_n)\}$ embedded in $\mathbb{D}\times \{0\}$ and $Y_1=\{(y_1,W_1),\cdots, (y_m, W_m) \}$ embedded in $\mathbb{D}\times \{1\}$. We have a well-defined surjection
\[ \phi_{B,Y}:\SSk^0_\CA(\mathbb{B},Y)\to \Hom_\CA(V_1\otimes \cdots\otimes V_n,W_1\otimes\cdots\otimes W_n)    \]
We call an element in $\ker \phi_{\mathbb{B},Y}$ an \emph{$\CA$-skein relation}.

We say a 1-Blob diagram consists of
\begin{itemize}
\item a closed ball $\mathbb{B}\subset X$;
\item a $\CA$-labeling $Y$ of $\partial \mathbb{B}$;
\item a ribbon graph $r$ relative to $Y$;
\item a skein relation $s\in \ker \phi_{\mathbb{B},Y}$.
\end{itemize}

Then define
\[\SSk^1_\CA(M,X):=\bigoplus_{\mathbb{B}}\bigoplus_Y \ker \phi_{\mathbb{B},Y}\otimes \SSk^0_\CA(M-\mathbb{B},Y)\]
The first direct sum is indexed by all closed balls $\mathbb{B}\subset M$, and the second by all $\CA$-labelings of $\partial \mathbb{B}$. Note that $\SSk^1_\CA(M)$ is spanned by $(\mathbb{B},s,r)$. Here $s$ is a skein relation in $\mathbb{B}$, and $r$ is a $\CA$-labeled ribbon graph in $M-\mathbb{B}$ relative to $X$.

Define the boundary map 
\begin{align*}
d:\SSk^1_\CA(M,X)&\to \SSk^0_\CA(M,X)   \\
(\mathbb{B},s,r)&\mapsto s* r
\end{align*}
where $s*r$ denotes the $\CA$-labeled ribbon graph obtained by gluing $s$ to $r$. 

A 2-Blob diagram comes in one of two types, disjoint and nested. A disjoint 2-Blob diagram consists of 
\begin{itemize}
\item a pair of closed balls $\mathbb{B}_1,\mathbb{B}_2\subset M$ with disjoint interiors;
\item a $\CA$-labeled ribbon graph $r\in \SSk^0_\CA(M-\mathbb{B}_1-\mathbb{B}_2,Y_1\cup Y_2)$; Here $Y_i$ is a $\CA$-labeling of $\partial \mathbb{B}_i$. 
\item two skein relations $s_i\in \ker\phi_{\mathbb{B}_i,Y_i}$.
\end{itemize}
We also identify $(\mathbb{B}_1,\mathbb{B}_2,s_1,s_2,r)$ with $-(\mathbb{B}_2,\mathbb{B}_1,s_2,s_1,r)$. Define
\[ d(\mathbb{B}_1,\mathbb{B}_2,s_1,s_2,r)=(\mathbb{B}_2,s_2,s_1* r)-(\mathbb{B}_1,s_1,s_2*r) \in \SSk^1_\CA(M,X)      \]
It's easy to check that $d^2=0$.

A nested 2-Blob diagram consists of 
\begin{itemize}
\item a pair of nested balls $\mathbb{B}_1\subseteq \mathbb{B}_2\subseteq M$;
\item a $\CA$-labeled ribbon graph $r'\in \SSk^0_\CA(\mathbb{B}_2-\mathbb{B}_1,Y_1\cup Y_2)$;
\item a $\CA$-labeled ribbon graph $r\in \SSk^0_\CA(M-\mathbb{B}_2,Y_2)$;
\item a skein relation $s\in \ker \phi_{\mathbb{B}_1,Y_1}$.
\end{itemize}

Define $\partial(\mathbb{B}_1,\mathbb{B}_2,s,r',r)=(\mathbb{B}_2,s* r',r)-(\mathbb{B}_1,s,r'* r)$. It is again easy to check that $d^2=0$. Now we have 
\begin{align*} \SSk^2_\CA(M,X):=\bigoplus_{\mathbb{B}_1\cap \mathbb{B}_2=\emptyset}\bigoplus_{Y_1,Y_2} \ker\phi_{\mathbb{B}_1,Y_1}\otimes \ker\phi_{\mathbb{B}_2,Y_2}\otimes \SSk^0_\CA(M-\mathbb{B}_1-\mathbb{B}_2,Y_1\cup Y_2)  \\  
\oplus\bigoplus_{\mathbb{B}_1\subset \mathbb{B}_2}\bigoplus_{Y_1,Y_2}\ker\phi_{\mathbb{B}_1,Y_1}\otimes \SSk^0_\CA(\mathbb{B}_2-\mathbb{B}_1,Y_1\cup Y_2)\otimes \SSk^0_\CA(M-\mathbb{B}_2,Y_2).
\end{align*}

\begin{defn}
Let $M$ be an oriented 3-manifold. A \emph{configuration of $k$ Blobs} in $M$ is an ordered collection of $k$ subsets $\{\mathbb{B}_1,\cdots, \mathbb{B}_k\}$ of $M$ such that there exists a relative handlebody decomposition $M_{-1}=\partial M\times I \subset M_0\subset \cdots\subset M$ with the property that for each subset $\mathbb{B}_i$ there is some $0\le l\le m$ and some connected component $M_l'$ of $M_l$ which is a 3-ball, such that $\mathbb{B}_i$ is the image of $M'_l$ in $M$. We say that such a relative handlebody decomposition is  compatible with the configuration. A Blob $\mathbb{B}_i$ is a \emph{twig Blob} if no other Blob $\mathbb{B}_j$ is a strict subset of it.
\end{defn}

The above definition implies that for any two Blobs in a configuration of Blobs in $M$, they either have disjoint interiors, or one Blob is contained in the other. We describe these as disjoint Blobs and nested Blobs.

\begin{defn}
Let $\CA$ be a ribbon tensor category, $M$ an oriented 3-manifold and $X$ is a $\CA$-labeling on $\partial M$. A \emph{k-Blob diagram} on $M$ consists of 
\begin{itemize}
\item a configuration $\{\mathbb{B}_1,\cdots, \mathbb{B}_k\}$ of $k$ Blobs in $M$;
\item a $\CA$-ribbon graph compatible with $X$, which is splittable along some gluing decomposition compatible with that configuration,
\end{itemize}
such that the restriction $u_i$ of $r$ to each twig Blob $\mathbb{B}_i$ is a skein relation in $\mathbb{B}_i$. 
\end{defn}

\begin{defn}
Let $M$ be an oriented 3-manifold, and $\CA$ be a ribbon tensor category. The $k$-th vector space $\SSk^k_\CA(M,X)$ of the derived skein module of $M$ is the direct sum over all configurations of $k$ Blobs in $X$, modulo identifying the vector spaces for configurations that only differ by a permutation of the Blobs by the sign of that permutation. The differential $\SSk^k_\CA(M,X)\to \SSk^{k-1}_\CA(M,X)$ is, as above, the signed sum of ways of forgetting one Blob from the configuration, preserving the $\CA$-labeled ribbon graph $r$:
\[ \partial(\{\mathbb{B}_1,\cdots,\mathbb{B}_k\},r)=\sum\limits^k_{i=1}(-1)^{i+1}(\{\mathbb{B}_1,\cdots,\hat{\mathbb{B}_i},\cdots,\mathbb{B}_k\},r)   \]
\end{defn}

\begin{rem}\label{higherDGinput}
One would be want the input category has derived information, e.g a DG category or $A_\infty$-category. Then one needs to be careful on unit property. For ribbon tensor DG category $\CA$ (There are two possible ways to define a ribbon tensor DG category. The first is to define it as homotopy $SO(3)$-fixed point of 3-dualizable objects in $\Alg_2(\BBim)$ such that its homotopy category is usual ribbon tensor category. One can also define it as a symmetric monoidal functor $\disk^{or}_2\to \BBim$ with additional rigid properties.), the unit property would become $\SSk_\CA(\mathbb{B},X\cup Y)\cong \Hom_\CA(V_1\otimes\cdots\otimes V_n,W_1\otimes\cdots\otimes W_m)$ where right hand side is Hom complex of $\CA$. 

One can also consider input category as a higher category, like a "ribbon" $3$-category. A ribbon tensor category can be then viewed as a ribbon 3-category with only one object and only one morphism.

Due to the long-term view of Morrison-Walker, both  cases have already been considered in Blob complex \cite{MW12} in the setting of disk-like $A_\infty$ n-category. But disk-like $A_\infty$ n-category is not a standard mathematical notion. Just like what we did above, one can modify it into a standard $A_\infty$ n-category, or enriched $(\infty,n)$-category setting. The same ideas also appeared in Ayala-Francis-Rozenblyum's works \cite{AF17}\cite{AFR18} and Fuchs-Schweigert-Yang's work \cite{FSY23}.
\end{rem}

\subsection{Derived skein categories}

\begin{defn}\label{derivedskeincat}
Let $\Sigma$ be an oriented surface. Let $\CA$ be a ribbon tensor category. The \emph{derived skein category} $\SSkcat_\CA(\Sigma)$  of $\Sigma$ is the following DG category:
\begin{itemize}
\item Objects are $\CA$-labelings of $\Sigma$.
\item Hom complex from $X$ to $Y$ is given by $\SSk_\CA(\Sigma\times[0,1],X,Y)$.
\item Compositions are given by gluing map of cylinders based on the axioms of relative derived skein module.
\item Strict associativity is given by the commutative diagram in Gluing property in where $M$ is disjoint union of 3 copies of $\Sigma\times I$.
\end{itemize}

Sometimes, our $\CA$ is a ribbon tensor category in $\PPr_c$, then the notation $\SSkcat_\CA(\Sigma)$ actually means $\SSkcat_{\CA^c}(\Sigma)$. Here $\CA^c$ is the full subcategory of compact objects of $\CA$.
\end{defn}

\begin{defn}
Let $M:\Sigma'\to \Sigma$ be an oriented 3-cobordism. The \emph{derived skein bimodule} of $M$ is the following DG functor
\begin{align*}
\underline{\SSk}_\CA(M):\SSkcat_\CA(\Sigma)\boxtimes\SSkcat_\CA(\Sigma')^\op&\to \VVect\\
(X,Y)&\mapsto \SSk_\CA(M,X\cup Y)
\end{align*}
\end{defn}

Due to Remark \ref{geometricrelation=doublecomplex}, we can from now denote the excision property in Definition \ref{axiomdefn} of derived skein module as follows for convenience
\[  \SSk_\CA(M,X\cup Y)\cong \int^{Z\in \SSkcat_\CA(\Sigma)} \SSk_\CA(M_1,X\cup Z)\otimes^\mathbb{L}\SSk_\CA(M_2,Z\cup Y):=\underline{\SSk}_\CA(M_1)\otimes^\mathbb{L}_{\SSkcat_\CA(\Sigma)}\underline{\SSk}_\CA(M_2).     \]

\begin{rem}\label{BlobcomplexAinfty}
Actually if we use the version of Blob complex in Section \ref{DSM} as a model, what we get on derived skein category should be an $A_\infty$-category rather than a DG category. But it's not a big issue since every $A_\infty$-category is $A_\infty$-equivalent to a DG category; The usual categories they form are equivalent; the Morita categories they form are equivalent; see Appendix \ref{AinftyandDG} for more comparisons. This implies that there should exists another variant for Blob complex, which give us a honest DG category. Choose DG categories as our definition can help us avoid unnecessary homotopy issues, and also DG category is well-developed. In any case, this will not affect the main results of this paper, see Remark \ref{modelindependent}.
\end{rem}

\begin{rem}
In \cite{HRW24}, they use handle attachments and categorical bottom-to-top approach, which follows the classical spirit of Walker \cite{Wal06}\cite{Walker22} and Lurie \cite{Lur08}, to construct a DG category for a punctured surface with a specific monoidal 2-category input. This may provide a potential example for derived skein category of punctured surface with a monoidal 2-category input. Also see Remark \ref{higherDGinput}.
\end{rem}

\begin{lem}
$\SSkcat_\CA(-):\Mfld_2^{or}\to \DGcat$ is a symmetric monoidal functor.
\end{lem}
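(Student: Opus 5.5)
I will construct the functor explicitly and then check the symmetric monoidal structure, all at the level of data supplied by Definition \ref{axiomdefn}; coherence questions are set aside in the spirit of Remark \ref{homotopycoherent}. Here $\Mfld_2^{or}$ has oriented surfaces as objects, oriented embeddings as morphisms, and disjoint union as monoidal product.

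\textbf{Objects.} To a surface $\Sigma$ assign the DG category $\SSkcat_\CA(\Sigma)$ of Definition \ref{derivedskeincat}.

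\textbf{Morphisms.} Let $e:\Sigma\hookrightarrow\Sigma'$ be an oriented embedding.
\begin{itemize}
\item On objects, $e$ sends a labeling $X$ to $e(X)$.
\item On Hom complexes, use $e\times\id_I:\Sigma\times I\hookrightarrow \Sigma'\times I$. This restricts to an embedding of boundaries, so the functorial property (1) of Definition \ref{axiomdefn} gives a chain map
\[ \SSk_\CA(\Sigma\times I,X\cup Y)\to \SSk_\CA(\Sigma'\times I,e(X)\cup e(Y)). \]
\end{itemize}
Three checks are needed.
\begin{itemize}
\item \emph{Composition.} Composition in both categories is the gluing map of stacked cylinders. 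The gluing maps are natural with respect to property (1), so $e_*$ commutes with composition.
\item \emph{Identities.} The identity of $X$ is the image of the identity morphism under the unit property on a disk neighborhood of the labeled points. It is therefore preserved by $e_*$, again by naturality of (1) applied to the inclusions of disks.
\item \emph{Functoriality in $e$.} We need $(e'\circ e)_*=e'_*\circ e_*$ and $\id_*=\id$. I read property (1) as ``functorial on $M$'', so this is part of the data. If only homotopy-functoriality is available, the assignment is functorial after passing to the $\infty$-categorical target $\DGcat$, which is what we want.
\end{itemize}

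\textbf{Monoidal structure.} The object sets match directly: a labeling of $\Sigma\sqcup\Sigma'$ is a pair of labelings, so
\[ \ob\SSkcat_\CA(\Sigma\sqcup\Sigma')=\ob\SSkcat_\CA(\Sigma)\times\ob\SSkcat_\CA(\Sigma'). \]
On Hom complexes, $(\Sigma\sqcup\Sigma')\times I=\Sigma\times I\sqcup\Sigma'\times I$. The disjoint union property (2) then gives a canonical equivalence
\[ \SSk_\CA((\Sigma\sqcup\Sigma')\times I,(X,X')\cup(Y,Y'))\cong \SSk_\CA(\Sigma\times I,X\cup Y)\otimes\SSk_\CA(\Sigma'\times I,X'\cup Y'). \]
The right-hand side is exactly the Hom complex of $\SSkcat_\CA(\Sigma)\boxtimes\SSkcat_\CA(\Sigma')$. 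Two further checks complete the structure.
\begin{itemize}
\item These identifications must be compatible with composition. This is the commutativity of gluing with the disjoint-union identification. Gluing along $\Sigma\sqcup\Sigma'$ splits as the two gluings performed on separate components, and the commutative square in the gluing property (3) lets them be done in either order, giving compatibility with the sign convention of $\boxtimes$.
\item For the empty surface, $\SSkcat_\CA(\emptyset)$ has one object with endomorphisms $\SSk_\CA(\emptyset)$. This should be the unit $\mathbb{k}$, using (2) with $M'=\emptyset$, or using (4) together with the fact that $\Sk_\CA(\emptyset)=\mathbb{k}$ and nonnegativity.
\end{itemize}

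\textbf{Main obstacle.} The hardest step is making the comparison maps of (2) natural in embeddings and compatible with composition and symmetry. That is, we need associativity, unit and symmetry constraints for a symmetric monoidal functor. The axioms only assert the existence of a canonical homotopy equivalence. I would use the word ``canonical'' to mean natural with respect to embeddings and stable under reordering components, via the homeomorphism swapping components and property (1). I would then deduce the hexagon and pentagon identities from the fact that these equivalences are all induced by rearranging disjoint components. Since $\DGcat$ is the $\infty$-nerve, DG functors that are quasi-equivalences suffice, so homotopy equivalences on Hom complexes are enough; higher coherence beyond this is deferred as in Remark \ref{homotopycoherent}.
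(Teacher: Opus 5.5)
Your proposal follows the same route as the paper: the DG functor on Hom complexes comes from the functorial property applied to $e\times\id_I:\Sigma\times I\hookrightarrow\Sigma'\times I$, and the symmetric monoidal structure comes from the disjoint union property; your extra checks (compatibility with gluing via naturality, identities, coherence) go beyond what the paper writes down. One small correction on the unit: nonnegativity together with $\rmH_0(\SSk_\CA(\emptyset))\cong\mathbb{k}$ does not by itself force $\SSk_\CA(\emptyset)\simeq\mathbb{k}$, but your other route works once you instantiate it, since the disjoint union property for $(\mathbb{D}\times I)\sqcup\emptyset$ combined with the unit property $\SSk_\CA(\mathbb{D}\times I)\cong\Hom_\CA(\mathbb{1},\mathbb{1})=\mathbb{k}$ gives exactly this.
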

\begin{proof}
Given an oriented embedding $f|_{\Sigma}:\Sigma\hookrightarrow \Sigma'$, we have an oriented embedding from $f:\Sigma\times I\to \Sigma'\times I$. By Functorial property on $M$ in \ref{axiomdefn}, we have a map $f_*:\SSk_\CA(\Sigma\times I,X)\to \SSk_\CA(\Sigma'\times I, f(X))$.

Then we can define the following DG functor
\begin{align*}
(f|_{\Sigma})_*:\SSkcat_\CA(\Sigma)&\to \SSkcat_\CA(\Sigma')\\
X& \mapsto \iota\circ X\\
a &\mapsto f_*(a)
\end{align*}

The disjoint union property make it a symmetric monoidal functor.
\end{proof}

The excision of ordinary skein categories was proven by Cooke \cite{Coo19} and Brown-Haioun \cite{BH24}. This still holds for derived skein category.
\begin{prop}\label{excisionlskcat}
Let $\Sigma=\Sigma_1\bigcup_{P\times I} \Sigma_2$. There is an equivalence of DG categories
\[ \SSkcat_\CA(\Sigma)\simeq \SSkcat_\CA(\Sigma_1)\boxtimes_{\SSkcat_\CA(P\times I)}\SSkcat_\CA(\Sigma_2)    \]
\end{prop}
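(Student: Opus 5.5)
We construct a comparison DG functor out of the relative tensor product and then check it is an equivalence: essentially surjective (up to Morita/idempotent completion, which is harmless by Theorem \ref{Bim=Prcirc}) and fully faithful, with full faithfulness coming from the excision property in Definition \ref{axiomdefn}.

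\textbf{Step 1 (the functor).} The embeddings $\Sigma_1\hookrightarrow\Sigma$ and $\Sigma_2\hookrightarrow\Sigma$ induce DG functors $\SSkcat_\CA(\Sigma_i)\to\SSkcat_\CA(\Sigma)$ by the functoriality lemma above. The collar $P\times I$ acts on both sides by stacking, using the gluing maps of Definition \ref{axiomdefn}. Since the two images of $P\times I$ can be isotoped to coincide, the two composites $\SSkcat_\CA(\Sigma_1)\times\SSkcat_\CA(P\times I)\times\SSkcat_\CA(\Sigma_2)\rightrightarrows\SSkcat_\CA(\Sigma)$ agree up to homotopy. By Lemma \ref{coend=relativetensor} this yields a functor
\[\Phi:\SSkcat_\CA(\Sigma_1)\boxtimes_{\SSkcat_\CA(P\times I)}\SSkcat_\CA(\Sigma_2)\to\SSkcat_\CA(\Sigma),\qquad (X_1,X_2)\mapsto X_1\cup X_2 .\]

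\textbf{Step 2 (essential surjectivity).} Every $\CA$-labeling of $\Sigma$ is a finite set of framed points. After isotoping points off the collar, which gives an isomorphism in $\SSkcat_\CA(\Sigma)$ since by the functorial property homeomorphisms of the cylinder give homotopy equivalences, the labeling splits as $X_1\cup X_2$.

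\textbf{Step 3 (full faithfulness).} By definition the hom complex on the left is
\[\int^{a\in\SSkcat_\CA(P\times I)}\SSk_\CA(\Sigma_1\times I,X_1\cup Y_1\cup a)\otimes^{\mathbb L}\SSk_\CA(\Sigma_2\times I,a\cup X_2\cup Y_2).\]
Here the $\Hom_\CM(m,m'\odot a)$ and $\Hom_\CN(a\odot n,n')$ are identified with relative derived skein modules of the cylinders $\Sigma_i\times I$, with the $a$-labels placed on the vertical face $P\times I\times I$. The key observation is that $\Sigma\times I=(\Sigma_1\times I)\cup_{(P\times I)\times I}(\Sigma_2\times I)$ is a decomposition of a 3-manifold along the surface $(P\times I)\times I$, which is a thickened copy of $P\times I$ itself, a disjoint union of disks. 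The excision property of Definition \ref{axiomdefn} then gives
\[\SSk_\CA(\Sigma\times I,X\cup Y)\cong \int^{Z\in\SSkcat_\CA(P\times I\times I)}\SSk_\CA(\Sigma_1\times I,\dots\cup Z)\otimes^{\mathbb L}\SSk_\CA(\Sigma_2\times I,Z\cup\dots),\]
and $\SSkcat_\CA(P\times I\times I)\simeq\SSkcat_\CA(P\times I)$ after straightening corners. We then check that, under this identification, $\Phi$ on hom complexes is exactly the excision equivalence. Both maps are built from the same gluing maps, and naturality of gluing with respect to the functorial property handles this compatibility.

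\textbf{Main obstacle.} The hard part is this last identification, namely matching the coend in the definition of $\boxtimes$ with the bar complex of the excision axiom. One must convert labels on the vertical face into labels on the ends of the cylinders by bending through corners. One must also show that the resulting isomorphisms are compatible with composition, so that $\Phi$ is an honest DG functor and not merely a levelwise quasi-isomorphism. I would handle this by choosing a fixed corner-smoothing homeomorphism and applying the functorial property. The commutative diagram in the gluing property then gives compatibility of compositions, in analogy with Cooke's argument \cite{Coo19} for ordinary skein categories. I would not attempt to address higher coherences, in line with Remark \ref{homotopycoherent}.
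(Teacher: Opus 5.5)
Your proposal is correct and follows the same route as the paper. Essential surjectivity comes from pushing labels off the collar, as in Cooke's ordinary argument, and full faithfulness comes from applying the excision axiom to $\Sigma\times I=(\Sigma_1\times I)\cup(\Sigma_2\times I)$ cut along the thickened collar, then matching the resulting bar complex with the coend defining $\boxtimes$ (Lemma \ref{coend=relativetensor}); your treatment of bending labels through corners spells out what the paper leaves implicit. One small slip: $P\times I$ need not be a union of disks (it is an annulus in the decomposition $\Sigma=\Sigma^\circ\cup_{\Ann}\mathbb{D}$ used later), but nothing in your argument depends on this.
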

\begin{proof}
Essentially surjective on objects: the proof is same as ordinary case proved in \cite{Coo19}.

For the fully faithful part: A key observation is that $\Sigma\times I=(\Sigma_1\times I)\cup_{P\times \mathbb{D}}(\Sigma_2\times I)$. Then combining this with the excision property of derived skein module and \ref{coend=relativetensor}, we get the proof.
\end{proof}

\begin{prop}
We have an equivalence of DG categories
\[ \SSkcat_\CA(\mathbb{D})\simeq \CA     \]
\end{prop}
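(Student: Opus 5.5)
We will construct an explicit DG functor $\Phi:\SSkcat_\CA(\mathbb{D})\to \CA$, with $\CA$ regarded as a DG category whose Hom complexes are concentrated in degree $0$. We will then show that $\Phi$ is essentially surjective and quasi-fully faithful, which suffices for an equivalence in $\DGcat$ (and hence also in $\BBim$).

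On objects, $\Phi$ sends an $\CA$-labeling $X=\{(x_1,V_1),\dots,(x_n,V_n)\}$ of $\mathbb{D}$ to the tensor product $V_1\otimes\cdots\otimes V_n$. The points are ordered along a fixed direction in $\mathbb{D}$, and the empty labeling goes to $\mathbb{1}$. Different orderings are identified via the braiding, exactly as in the ordinary case.

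On morphisms, the Hom complex is $\SSk_\CA(\mathbb{D}\times I,X\cup Y)$. It is a non-negative complex, so there is a canonical chain map
\[
\SSk_\CA(\mathbb{D}\times I,X\cup Y)\to \rmH_0(\SSk_\CA(\mathbb{D}\times I,X\cup Y)).
\]
The right-hand side is $\Sk_\CA(\mathbb{D}\times I,X\cup Y)$ by the skein property. We compose with the Reshetikhin--Turaev evaluation
\[
\Sk_\CA(\mathbb{D}\times I,X\cup Y)\cong \Hom_\CA(V_1\otimes\cdots\otimes V_n,W_1\otimes\cdots\otimes W_m).
\]
The unit property of Definition \ref{axiomdefn} says precisely that this composite is a quasi-isomorphism: the whole complex has homology concentrated in degree $0$, and that homology equals the $\CA$-Hom space.

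The next step is to check that $\Phi$ is a DG functor. We must show it respects composition, which is the gluing map of cylinders, and units. Passing to $\rmH_0$ is compatible with composition, so it suffices to show that the gluing map on $\rmH_0$ is stacking of ribbon graphs in $\mathbb{D}\times I$. This requires that the skein property isomorphism be natural with respect to gluing. We will take this naturality as part of the intended meaning of the axioms. Under that assumption, the map agrees with ordinary skein-category composition, and that composition corresponds to composition in $\CA$ by the ordinary result $\Skcat_\CA(\mathbb{D})\simeq\CA$.

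Quasi-full faithfulness then follows from the unit property above. Essential surjectivity holds because every object $V$ of $\CA$ is the image of a single labeled point $(x,V)$. If $\CA$ is viewed only up to idempotent completion, the relevant case is $\rep^\fd_q(G)$, which is already idempotent complete.

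The main obstacle is the compatibility of the identification $\rmH_0\SSk\cong\Sk$ with gluing maps, since the axioms state the skein and unit isomorphisms only objectwise. If this naturality cannot be extracted from the axioms, the fallback is to work with the truncation $\tau_{\le 0}$. The inclusion of $\tau_{\ge 0}$-connective complexes together with the map to $\rmH_0$ is automatically multiplicative, which gives a zigzag of DG functors. Both legs are quasi-equivalences by the unit property, because the Hom complexes have homology only in degree $0$. What remains is to identify the resulting composition on $\rmH_0$ with composition in $\CA$, and for this we would appeal to the construction of the gluing data in the Blob model.
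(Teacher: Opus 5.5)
Your argument is the paper's proof written out in detail. The paper just cites the unit property, and your truncation map $\SSk_\CA(\mathbb{D}\times I,X\cup Y)\to\rmH_0(\SSk_\CA(\mathbb{D}\times I,X\cup Y))$ is what makes that citation work: it is compatible with composition, and it is a quasi-isomorphism because the unit property concentrates homology in degree $0$.

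The compatibility of $\rmH_0\SSk\cong\Sk$ with gluing that you flag is not stated in the axioms, but the paper's one-line proof relies on it tacitly as well, so assuming it explicitly is the honest form of the same argument. Your fallback adds nothing. For non-negative complexes no zigzag is needed, it leaves the same identification open, and appealing to the Blob model runs against the paper's model-independent setup.
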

\begin{proof}
It's due to unit property of derived skein module.
\end{proof}
\begin{cor}
\[ \SSkcat_\CA(-)\simeq \int^{\DGcat}_{(-)}\CA    \]
\end{cor}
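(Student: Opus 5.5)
The plan is to invoke the characterization of factorization homology with values in $\DGcat$ (equivalently, after Theorem \ref{Bim=Prcirc}, in $\BBim\simeq\PPr^\circ$) as the unique symmetric monoidal functor out of oriented 2-manifolds satisfying excision, with prescribed value on the disk. Concretely, I would verify three hypotheses for $\SSkcat_\CA(-)$. First, it is a symmetric monoidal functor $\Mfld_2^{or}\to\DGcat$, which is the lemma proved above. Second, its value on $\mathbb{D}$ is $\CA$, and this identification is compatible with the $E_2$/ribbon structure. Third, it satisfies $\otimes$-excision for collar gluings $\Sigma=\Sigma_1\cup_{P\times I}\Sigma_2$, which is Proposition \ref{excisionlskcat}.

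The key steps, in order, are as follows.

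(i) Restrict $\SSkcat_\CA$ to the full subcategory $\disk_2^{or}$ of finite disjoint unions of disks. By the disjoint union property and the proposition $\SSkcat_\CA(\mathbb{D})\simeq\CA$, this gives a disk algebra whose underlying object is $\CA$.

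(ii) Check that the induced $E_2$-structure on $\SSkcat_\CA(\mathbb{D})$ agrees with the braided monoidal structure of $\CA$, together with the $SO(2)$-structure coming from the balancing. On objects, embeddings $\mathbb{D}\sqcup\mathbb{D}\hookrightarrow\mathbb{D}$ send labelings to juxtaposed labelings, i.e.\ tensor products. On Hom complexes, the unit property identifies $\SSk_\CA(\mathbb{D}\times I,X\cup Y)$ with $\Hom_\CA$ concentrated in degree $0$. So the DG category is formal and all structure maps are determined by the ordinary skein maps on $\rmH_0$, using the skein property. Those ordinary maps are the Reshetikhin--Turaev evaluations, which recover the braiding and twist.

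(iii) Apply the universal property of factorization homology (Ayala--Francis). The left Kan extension $\int_{(-)}^{\DGcat}\CA$ comes with a canonical natural map to $\SSkcat_\CA(-)$, because $\SSkcat_\CA$ restricted to disks is $\CA$. Excision together with the symmetric monoidal property then forces this map to be an equivalence. The argument is by induction on a handle or collar decomposition: each surface is built from disks by collar gluings, and both sides convert such gluings into relative tensor products over the value on $P\times I$. This is where Lemma \ref{coend=relativetensor} is used.

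The main obstacle is step (ii) together with the coherence underlying (iii). Since the axioms of Definition \ref{axiomdefn} only provide maps up to homotopy, and not a homotopy-coherent functor (Remark \ref{homotopycoherent}), I would rely on the strictness built into the DG formulation. The commutative diagram in the gluing property gives strictly associative composition, and on disks everything is concentrated in degree $0$. My first attempt would be to show that the disk algebra is determined by its homotopy category, which is formal because of the unit property. That reduces the comparison to the known ordinary statement $\Skcat_\CA\simeq\int\CA$ of Cooke, since the underived disk values coincide.
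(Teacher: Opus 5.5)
This is essentially the paper's argument. The paper's proof just invokes the Ayala--Francis characterization of factorization homology by the unit property ($\SSkcat_\CA(\mathbb{D})\simeq\CA$) and excision (Proposition \ref{excisionlskcat}), so your steps (i) and (iii) are exactly what it uses; your step (ii), matching the braided/balanced disk structure, and your remarks on coherence are extra care the paper leaves implicit (cf.\ Remark \ref{homotopycoherent}).

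One caution: use Cooke's result only for the disk-level identification of the $E_2$-structure. His factorization homology is taken in linear categories with underived relative tensor products, and it already differs from the $\DGcat$-valued one on $S^2$ (compare Corollary \ref{SkcatS2=DModBG}).
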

\begin{proof}
Factorization homology is uniquely characterized by the unit property and excision property \cite{AF15}.
\end{proof}

\begin{lem}\label{ExcisionFreecoofskeincat}
Let $\Sigma=\Sigma_1\bigcup_{P\times I}\Sigma_2$. There is an equivalence of stable presentable $\mathbb{k}$-linear $\infty$-categories
\[ \widehat{\SSkcat_\CA(\Sigma)}\simeq \widehat{\SSkcat_\CA(\Sigma_1)}\boxtimes_{\widehat{\SSkcat_\CA(P\times I)}} \widehat{\SSkcat_\CA(\Sigma_2)}  \]
\end{lem}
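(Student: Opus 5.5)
The plan is to deduce the statement directly from Proposition \ref{excisionlskcat} by applying the free cocompletion functor $\widehat{(-)}$ and then using that this functor carries relative tensor products of DG categories to relative tensor products in $\PPr$. Concretely, Proposition \ref{excisionlskcat} gives an equivalence of DG categories
\[ \SSkcat_\CA(\Sigma)\simeq \SSkcat_\CA(\Sigma_1)\boxtimes_{\SSkcat_\CA(P\times I)}\SSkcat_\CA(\Sigma_2). \]
Since $\widehat{(-)}:\DGcat\to\PPr$ is a functor, it sends equivalences to equivalences. The left-hand side therefore becomes $\widehat{\SSkcat_\CA(\Sigma)}$, and the right-hand side becomes the free cocompletion of the relative tensor product.

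Next I would invoke the unnumbered lemma following Lemma \ref{coend=relativetensor}, namely $\widehat{\CM\boxtimes_\CC\CN}\simeq\widehat{\CM}\boxtimes_{\widehat{\CC}}\widehat{\CN}$, with $\CM=\SSkcat_\CA(\Sigma_1)$, $\CC=\SSkcat_\CA(P\times I)$ and $\CN=\SSkcat_\CA(\Sigma_2)$. To apply it, the module structures must be in place. $\SSkcat_\CA(P\times I)$ is monoidal via stacking in the $I$-direction, using the symmetric monoidal functoriality of $\SSkcat_\CA(-)$ on $\Mfld_2^{or}$ and the embedding $(P\times I)\sqcup(P\times I)\hookrightarrow P\times I$. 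Likewise $\SSkcat_\CA(\Sigma_1)$ and $\SSkcat_\CA(\Sigma_2)$ are right and left modules via collar embeddings. Because $\widehat{(-)}$ is symmetric monoidal, it transports these $E_1$-algebra and module structures to $\PPr$.

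The real content is the step justifying that lemma, namely commuting $\widehat{(-)}$ with the relative tensor product. I would argue via Lemma \ref{coend=relativetensor}: the relative tensor product is the colimit of the simplicial bar diagram $\CM\times\CC^{\times n}\times\CN$, with $\times$ read as $\boxtimes$. Free cocompletion is symmetric monoidal, so it sends this diagram termwise to $\widehat{\CM}\boxtimes\widehat{\CC}^{\boxtimes n}\boxtimes\widehat{\CN}$. Through the equivalence $\BBim\simeq\PPr^\circ$ of Theorem \ref{Bim=Prcirc}, it also preserves the geometric realization, since colimits in $\PPr$ of compactly generated categories along compact-preserving functors are computed on compact objects, that is, on the Morita side.

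The main obstacle I expect is this colimit-preservation claim. The colimit in Lemma \ref{coend=relativetensor} must be taken in $\BBim$ (Morita) rather than in $\DGcat$ with strict functors, and one must check that the face maps are compact-preserving so that the realization can be computed in $\PPr_c$ and then included into $\PPr$. Once this is granted, the two equivalences compose to the stated one.
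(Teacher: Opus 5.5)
Your proposal is correct and follows the paper's route. The paper proves this in one line: it applies $\widehat{(-)}$ to Proposition \ref{excisionlskcat} and notes that free cocompletion preserves colimits, which is also the entire justification it gives for $\widehat{\CM\boxtimes_\CC\CN}\simeq\widehat{\CM}\boxtimes_{\widehat{\CC}}\widehat{\CN}$. Your extra discussion fills in details the paper omits, and the obstacle you flag is harmless: $\widehat{F}$ of any DG functor sends representables to representables, so every face map in the bar diagram is automatically compact-preserving.
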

\begin{proof}
Free cocompletion preserves colimit.
\end{proof}
\begin{cor}
\[ \widehat{\SSkcat_\CA(-)}\simeq \int^{\PPr}_{(-)}\widehat{\CA}   \]
\end{cor}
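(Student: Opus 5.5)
The plan is to deduce the corollary from the characterization of factorization homology by a unit axiom together with an excision axiom \cite{AF15}, now taking $\PPr$ as the target symmetric monoidal $\infty$-category. I would consider the assignment $\Sigma\mapsto \widehat{\SSkcat_\CA(\Sigma)}$ as the composite of the symmetric monoidal functor $\SSkcat_\CA(-):\Mfld_2^{or}\to \DGcat$ (the lemma above) with the symmetric monoidal free cocompletion $\widehat{(-)}:\DGcat\to\PPr$. This composite is a symmetric monoidal functor $\Mfld_2^{or}\to\PPr$. Evaluating it on the disk, the preceding proposition $\SSkcat_\CA(\mathbb{D})\simeq\CA$ gives $\widehat{\SSkcat_\CA(\mathbb{D})}\simeq\widehat{\CA}$. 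I would also check that the $E_2$/framed-disk algebra structure induced by embeddings of disks agrees with the braided (balanced) structure of $\widehat{\CA}$. This follows because both structures come from the same embeddings acting on $\CA$-labelings, and the free cocompletion is symmetric monoidal.

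Next I would verify the excision axiom in $\PPr$. Lemma \ref{ExcisionFreecoofskeincat} already supplies, for every collar gluing $\Sigma=\Sigma_1\cup_{P\times I}\Sigma_2$, the equivalence
\[\widehat{\SSkcat_\CA(\Sigma)}\simeq \widehat{\SSkcat_\CA(\Sigma_1)}\boxtimes_{\widehat{\SSkcat_\CA(P\times I)}}\widehat{\SSkcat_\CA(\Sigma_2)}.\]
Here $\boxtimes$ is the relative tensor product in $\PPr$, computed as the colimit of the bar construction as in Lemma \ref{coend=relativetensor}. This equivalence should be the one induced by the canonical map from the bar construction, since it arises as the image under $\widehat{(-)}$ of the comparison map in Proposition \ref{excisionlskcat}.

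Once the unit and excision axioms are in place, the characterization theorem of \cite{AF15} provides a unique symmetric monoidal equivalence $\widehat{\SSkcat_\CA(-)}\simeq\int^{\PPr}_{(-)}\widehat{\CA}$. An alternative route is to apply $\widehat{(-)}$ directly to the previous corollary $\SSkcat_\CA(-)\simeq\int^{\DGcat}_{(-)}\CA$. Because $\widehat{(-)}$ is symmetric monoidal and preserves the sifted colimits (bar constructions and filtered colimits) that compute factorization homology, we get $\widehat{\int^{\DGcat}_{\Sigma}\CA}\simeq\int^{\PPr}_{\Sigma}\widehat{\CA}$.

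I expect the main obstacle to be the compatibility of the excision equivalence with the canonical map out of the bar construction, including the required naturality in embeddings. As noted in Remark \ref{homotopycoherent}, the axioms in Definition \ref{axiomdefn} do not provide full homotopy coherence, so the argument controls objects only up to equivalence. I would take the second route, commuting $\widehat{(-)}$ with factorization homology, as the primary argument, since it requires only that free cocompletion is symmetric monoidal and colimit-preserving. I would then remark that the coherence issue is inherited from the previous corollary rather than introduced here.
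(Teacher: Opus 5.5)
Your proposal is correct, and your first route is the paper's argument. The corollary is stated without a separate proof right after Lemma \ref{ExcisionFreecoofskeincat}. It follows from three inputs, exactly as the preceding $\DGcat$-valued corollary was proved:
\begin{itemize}
\item the excision equivalence in $\PPr$, which is obtained from Proposition \ref{excisionlskcat} because free cocompletion preserves colimits;
\item the unit $\widehat{\SSkcat_\CA(\mathbb{D})}\simeq\widehat{\CA}$;
\item the Ayala--Francis characterization \cite{AF15}.
\end{itemize}

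The route you call primary is organized differently. You do not re-verify unit and excision in $\PPr$ and re-invoke the characterization. Instead you push the previous corollary forward along $\widehat{(-)}$, using that a symmetric monoidal functor preserving sifted colimits commutes with factorization homology, since $\int_\Sigma$ is a colimit over the sifted $\infty$-category $\disk_{/\Sigma}$.

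This gains you three things. Naturality in embeddings comes for free. So does the agreement of the $\disk_2^{or}$-algebra structures on $\widehat{\CA}$, because both are $\widehat{(-)}$ applied to the same data in $\DGcat$; this removes the check you flagged in your first route. Lemma \ref{ExcisionFreecoofskeincat} also becomes a consequence rather than an input.

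The cost is that the argument depends on the $\DGcat$-valued corollary. That corollary needs factorization homology to exist and be characterized in $\DGcat$, so $\boxtimes$ there must distribute over sifted colimits. The paper's route applies the characterization only in the $\otimes$-presentable target $\PPr$, and uses the DG level only for individual collar-gluing colimits.

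Both routes rest on the same fact: $\widehat{(-)}$ is symmetric monoidal and colimit-preserving. As you note, both inherit the coherence caveat of Remark \ref{homotopycoherent}.
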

\begin{rem}
A more subtle choice for the target category is $\BBim$ or $\PPr^\circ$. In \cite{AF20} Prop 3.9, factorization homology was proved to exist when target category is a $\otimes$-sifted cocomplete symmetric monoidal $\infty$-category. We don't know if $\BBim$ and $\PPr^\circ$ admit all (sifted) colimits. The linear case has already been proved by Brown-Haioun \cite{BH24} that factorization homology exists in those targets.
\end{rem}

\begin{rem}\label{skeinTQFT}
There is a $(3,2,1)$-TQFT, which we called \emph{derived skein $(3,2,1)$-TQFT}, given by the following:
\begin{align*}
\Bord^{or}_{3,2,1}&\to \Alg_1(\PPr)\\
C&\mapsto \widehat{\SSkcat_\CA(C\times I)}\\
\Sigma&\mapsto \widehat{\SSkcat_\CA(\Sigma)}\\
M&\mapsto \underline{\SSk}_\CA(M)
\end{align*}

Note that giving a $(2,1,0)$-TQFT and $(3,2)$-TQFT where they coincides on dimension 2, does not rigorously give a $(3,2,1,0)$-TQFT. That's why here we don't assign $\widehat{\CA}$ to a point.
\end{rem}

\subsection{Skeins of punctured surfaces}

Let $\Sigma^\circ$ be the punctured surface of $\Sigma$. Let $\act_\emptyset:\CA\to \SSkcat_\CA(\Sigma)$ be the DG functor given by the action of $\CA$ on $\emptyset\in \SSkcat_\CA(\Sigma)$. Applying the symmetric monoidal functor $\widehat{(-)}:\DGcat\to \PPr$, we have $\widehat{\act_\emptyset}:\widehat{\CA}\to \widehat{\SSkcat_\CA(\Sigma^\circ)}$. In particular, $i_*$ in \ref{i*} is a special case of this functor.  By the adjoint functor theorem, the right adjoint $\widehat{\act_\emptyset}^R:\widehat{\SSkcat_\CA(\Sigma)}\to \widehat{\CA}$ exists. But note that in general $\widehat{\act_\emptyset}$ is not a tensor functor.

\begin{lem}\label{emptyasgenerator}
$\emptyset=\mathbb{1}\in \widehat{\SSkcat_\CA(\Sigma^\circ)}$ is a $\widehat{\CA}$-compact generator.
\end{lem}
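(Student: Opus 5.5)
We must show two things: that $\widehat{\act_\emptyset}^R=\inthom_{\widehat{\CA}}(\emptyset,-)$ preserves colimits (compactness), and that it is conservative (generation, via Corollary \ref{genrator=conservative}).

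\emph{Compactness.} The functor $\widehat{\act_\emptyset}$ is the derived left Kan extension of a DG functor $\act_\emptyset:\CA\to\SSkcat_\CA(\Sigma^\circ)$. Its right adjoint is therefore restriction along $\act_\emptyset$:
\[ N\mapsto \big(V\mapsto N(\act_\emptyset(V))\big). \]
Colimits in both presheaf categories $\widehat{\CA}$ and $\widehat{\SSkcat_\CA(\Sigma^\circ)}$ are computed objectwise, so restriction commutes with all colimits. Hence $\inthom(\emptyset,-)$ is cocontinuous, which is exactly $\widehat{\CA}$-compactness. Equivalently, $\act_\emptyset$ sends representables to representables, so $\widehat{\act_\emptyset}$ preserves compact objects, and the right adjoint of a compact-preserving functor between compactly generated categories is cocontinuous.

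\emph{Generation.} By Lemma \ref{rightadjointdominantfunctorisconservative}, it suffices to show that the essential image of $\widehat{\act_\emptyset}$ generates $\widehat{\SSkcat_\CA(\Sigma^\circ)}$. Since the representables $h_X$ for objects $X\in\SSkcat_\CA(\Sigma^\circ)$ compactly generate, it is enough to show that every object $X$ is a retract (or finite colimit) of some $\act_\emptyset(V)$. The objects $X$ are $\CA$-labelings $\{(x_i,V_i)\}$ of $\Sigma^\circ$.

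The key topological step, as in \cite{GJS22} and \cite{BZBJ18} for the ordinary skein category, is an isotopy of $\Sigma^\circ$. It drags all the marked points $x_i$ into a small disk near the puncture, where the $\CA$-action on $\emptyset$ is inserted. This produces an isomorphism $X\cong\act_\emptyset(V_1\otimes\cdots\otimes V_n)$ in $\SSkcat_\CA(\Sigma^\circ)$. Concretely, the isomorphism is the cylinder $\Sigma^\circ\times I$ containing the trace ribbon graph of the isotopy, together with its inverse.

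To check that these are inverse \emph{in the derived category}, I would use the Skein property: $\rmH_0$ of the derived Hom complexes is the ordinary skein module. The compositions are the straightened cylinder, which is the identity in $\rmH_0$. A morphism in a DG category that is invertible in $\rmH_0$ is a homotopy equivalence, so this suffices. Alternatively, one can use Proposition \ref{excisionlskcat} to cut $\Sigma^\circ=\Sigma^{\circ\circ}\cup_{\text{annulus}}\mathbb{D}$ and observe that the labels can be supported in the disk part, which is identified with $\CA$ via the unit property.

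Thus every representable lies in the image of $\widehat{\act_\emptyset}$, the image generates, and $\inthom(\emptyset,-)$ is conservative.

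The main obstacle is making the isotopy argument rigorous within the axioms. The Functorial property only gives $f_*$ for embeddings, so I must check that the isotopy trace cylinder defines a morphism whose $\rmH_0$-class is invertible, and that degree-zero homology invertibility implies invertibility in $\rmH_0(\SSkcat)$. The latter is immediate, since $\rmH_0$ of the Hom complexes computes exactly the homotopy category.
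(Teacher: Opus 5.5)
Your proposal is correct and follows the paper's proof. For compactness, $\emptyset$ is representable, hence compact, and $\widehat{\act_\emptyset}$ preserves compact objects, so $\inthom(\emptyset,-)$ is cocontinuous. For generation, an isotopy drags all labeled points into the disk near the puncture, giving $X\cong\act_\emptyset(V_1\otimes\cdots\otimes V_n)$; the paper shows this only by a picture, and your check via the Skein property that the isotopy cylinder is invertible in $\rmH_0(\SSkcat_\CA(\Sigma^\circ))$ supplies a detail the paper leaves implicit.
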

\begin{proof}
We want to prove that $\emptyset$ is a $\widehat{\CA}$-generator, i.e. any object in $\widehat{\SSkcat_\CA(\Sigma^\circ)}$can be obtained from the action of some $\CA$-labeling of $\mathbb{D}$ on $\emptyset$:
\begin{center}
\begin{tikzpicture}
\draw[smooth,color=yellow, fill opacity=0.3,fill] (0,1) to[out=30,in=150] (2,1) to[out=-30,in=210] (3,1) to[out=30,in=150] (5,1) to[out=-30,in=30] (5,-1) to[out=210,in=-30] (3,-1) to[out=150,in=30] (2,-1) to[out=210,in=-30] (0,-1) to[out=150,in=-150] (0,1);
\draw[smooth,fill=white] (0.4,0.1) .. controls (0.8,-0.25) and (1.2,-0.25) .. (1.6,0.1);
\draw[smooth] (0.5,0) .. controls (0.8,0.2) and (1.2,0.2) .. (1.5,0);
\draw[smooth,fill=white] (3.4,0.1) .. controls (3.8,-0.25) and (4.2,-0.25) .. (4.6,0.1);
\draw[smooth] (3.5,0) .. controls (3.8,0.2) and (4.2,0.2) .. (4.5,0);

\draw (2.5,-0.85) arc(270:90:0.3 and 0.85);
\draw[dashed] (2.5,-0.85) arc(270:450:0.3 and 0.85);

\draw[fill=white] (5.3,0) ellipse (0.18 and 0.4);

\draw[fill=yellow,fill opacity=0.3,dashed] (8.5,0) circle [radius=0.5];

\draw[fill] (8.3,0) circle [radius=0.05];
\draw[fill] (8.7,0) circle [radius=0.05];
\node[below] at (8.3,0) {$V$};
\node[below] at (8.7,0) {$W$};

\draw[dashed,->] (8,0) -- (5,0);
\node[above] at (6.5,0) {action};

\draw[line width=1.5,->] (5,-2) -- (5,-4);

\draw[smooth,color=yellow,fill, fill opacity=0.3] (0,-5) to[out=30,in=150] (2,-5) to[out=-30,in=210] (3,-5) to[out=30,in=150] (5,-5) to[out=-30,in=30] (5,-7) to[out=210,in=-30] (3,-7) to[out=150,in=30] (2,-7) to[out=210,in=-30] (0,-7) to[out=150,in=-150] (0,-5);
\draw[smooth,fill=white] (0.4,-5.9) .. controls (0.8,-6.25) and (1.2,-6.25) .. (1.6,-5.9);
\draw[smooth] (0.5,-6) .. controls (0.8,-5.8) and (1.2,-5.8) .. (1.5,-6);
\draw[smooth,fill=white] (3.4,-5.9) .. controls (3.8,-6.25) and (4.2,-6.25) .. (4.6,-5.9);
\draw[smooth] (3.5,-6) .. controls (3.8,-5.8) and (4.2,-5.8) .. (4.5,-6);

\draw (2.5,-6.85) arc(270:90:0.3 and 0.85);
\draw[dashed] (2.5,-6.85) arc(270:450:0.3 and 0.85);

\draw[fill=white] (5.3,-6) ellipse (0.18 and 0.4);

\draw[fill=yellow,fill opacity=0.3,dashed] (4.9,-6) circle [radius=0.2];
\draw[fill] (4.8,-6) circle [radius=0.02];
\draw[fill] (5,-6) circle [radius=0.02];

\draw[smooth,color=yellow,fill opacity=0.3,fill] (8,-5) to[out=30,in=150] (10,-5) to[out=-30,in=210] (11,-5) to[out=30,in=150] (13,-5) to[out=-30,in=30] (13,-7) to[out=210,in=-30] (11,-7) to[out=150,in=30] (10,-7) to[out=210,in=-30] (8,-7) to[out=150,in=-150] (8,-5);
\draw[smooth,fill=white] (8.4,-5.9) .. controls (8.8,-6.25) and (9.2,-6.25) .. (9.6,-5.9);
\draw[smooth] (8.5,-6) .. controls (8.8,-5.8) and (9.2,-5.8) .. (9.5,-6);
\draw[smooth,fill=white] (11.4,-5.9) .. controls (11.8,-6.25) and (12.2,-6.25) .. (12.6,-5.9);
\draw[smooth] (11.5,-6) .. controls (11.8,-5.8) and (12.2,-5.8) .. (12.5,-6);

\draw (10.5,-6.85) arc(270:90:0.3 and 0.85);
\draw[dashed] (10.5,-6.85) arc(270:450:0.3 and 0.85);

\draw[fill=white] (13.3,-6) ellipse (0.18 and 0.4);

\draw[fill] (12,-5) circle [radius=0.02];

\node[above] at (12,-5) {$V$};

\draw[fill] (12.5,-7) circle [radius=0.02];

\node[below] at (12.5,-7) {$W$};

\node[] at (6.5,-6) {$\simeq$};

\end{tikzpicture}
\end{center}

Now we prove $\emptyset \in \widehat{\SSkcat_\CA(\Sigma^\circ)}$ is a $\widehat{\CA}$-compact object. Since $\widehat{\CA}$ is compactly generated and $\widehat{\act_{\emptyset}}:\widehat{\CA}\times \widehat{\SSkcat_\CA(\Sigma^\circ)}\to \widehat{\SSkcat_\CA(\Sigma^\circ)}$ is a compact functor, every compact object in $\widehat{\SSkcat_\CA(\Sigma^\circ)}$ is a $\widehat{\CA}$-compact object. By Theorem \ref{Bim=Prcirc}, we have $\emptyset\in \SSkcat_\CA(\Sigma^\circ)\subset \widehat{\SSkcat_\CA(\Sigma^\circ)}$ is a compact object in $\widehat{\SSkcat_\CA(\Sigma^\circ)}$. Thus $\emptyset$ is a $\widehat{\CA}$-compact object.
\end{proof}

\begin{defn}
The \emph{internal derived skein algebra} is the DG functor
\begin{align*}
\SSkalg^\Int_\CA(\Sigma^\circ):\CA^\op&\to \VVect\\
V&\mapsto \Hom_{\SSkcat_\CA(\Sigma^\circ)}(\act_\emptyset(V),\emptyset)
\end{align*}
\end{defn}

\begin{lem}\label{intskalg=RL}
$\SSkalg^\Int_\CA(\Sigma^\circ)\cong \widehat{\act_\emptyset}^R\circ\widehat{\act_\emptyset}(\mathbb{1})$. 
\end{lem}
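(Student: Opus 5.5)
The plan is to identify both sides as objects of $\widehat{\CA}=\fun(\CA^\op,\VVect)$ and compare them objectwise on the compact generators $V\in\CA\subset\widehat{\CA}$, using only the adjunction $\widehat{\act_\emptyset}\dashv\widehat{\act_\emptyset}^R$ and the Yoneda lemma. Under this identification a presheaf $F\in\widehat{\CA}$ is recovered from its values $F(V)\simeq \Hom_{\widehat{\CA}}(V,F)$ for $V\in\CA$. So it suffices to produce quasi-isomorphisms
\[ \Hom_{\widehat{\CA}}\bigl(V,\widehat{\act_\emptyset}^R\widehat{\act_\emptyset}(\mathbb{1})\bigr)\simeq \SSkalg^\Int_\CA(\Sigma^\circ)(V)=\Hom_{\SSkcat_\CA(\Sigma^\circ)}(\act_\emptyset(V),\emptyset) \]
that are natural in $V\in\CA$. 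Naturality then assembles these into an equivalence of DG functors $\CA^\op\to\VVect$.

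The computation runs in four steps. First, the adjunction gives $\Hom_{\widehat{\CA}}(V,\widehat{\act_\emptyset}^R X)\simeq \Hom_{\widehat{\SSkcat_\CA(\Sigma^\circ)}}(\widehat{\act_\emptyset}(V),X)$ with $X=\widehat{\act_\emptyset}(\mathbb{1})$. Second, since $\widehat{\act_\emptyset}$ is the derived left Kan extension of $\act_\emptyset$ (Example \ref{widehatF=derivedfunctor}), it restricts to $\act_\emptyset$ on representables. Hence $\widehat{\act_\emptyset}(V)$ is the Yoneda image of $\act_\emptyset(V)$, and $\widehat{\act_\emptyset}(\mathbb{1})$ is the Yoneda image of $\act_\emptyset(\mathbb{1})$. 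Third, the unit property gives $\act_\emptyset(\mathbb{1})\simeq\emptyset$, because acting by the unit object inserts an empty (unit-labelled) disk, which is isomorphic to the empty labeling. Fourth, since the Yoneda embedding $\SSkcat_\CA(\Sigma^\circ)\hookrightarrow\widehat{\SSkcat_\CA(\Sigma^\circ)}$ is fully faithful (Theorem \ref{Bim=Prcirc}), the Hom complex computed in the cocompletion agrees with $\Hom_{\SSkcat_\CA(\Sigma^\circ)}(\act_\emptyset(V),\emptyset)$.

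Naturality in $V$ needs some care, and I expect it to be the main obstacle, though it is mostly bookkeeping. Each of the four identifications is natural in its first variable: the adjunction isomorphism, the Kan-extension identification $\widehat{\act_\emptyset}\circ y\simeq y\circ\act_\emptyset$ (a natural equivalence of DG functors), and Yoneda. Their composite is therefore natural in $V\in\CA$. To avoid coherence issues at the level of DG functors, I would phrase the whole argument as an equivalence of corepresented functors on $\widehat{\CA}$: namely
\[ \Hom_{\widehat{\CA}}(-,\widehat{\act_\emptyset}^R\widehat{\act_\emptyset}\mathbb{1})\simeq\Hom_{\widehat{\SSkcat_\CA(\Sigma^\circ)}}(\widehat{\act_\emptyset}(-),\emptyset), \]
restricted along $\CA\hookrightarrow\widehat{\CA}$, and conclude by Yoneda in $\widehat{\CA}$.
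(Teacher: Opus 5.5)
Your proposal is correct and follows the paper's own argument. The paper simply says the identity holds by definition once one notes $\widehat{\act_\emptyset}^R=\inthom(\emptyset,-)$ and $\widehat{\act_\emptyset}(\mathbb{1})\simeq\emptyset$; your adjunction/Yoneda steps just unpack this, and the naturality you flag as the main obstacle is automatic, since the right adjoint of the left Kan extension $\widehat{\act_\emptyset}$ is restriction along $\act_\emptyset$, so $\widehat{\act_\emptyset}^R(\emptyset)$ is literally the presheaf $V\mapsto\Hom_{\SSkcat_\CA(\Sigma^\circ)}(\act_\emptyset(V),\emptyset)$.
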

\begin{proof}
This is by definition. Note that $\widehat{\act_\emptyset}^R=\inthom(\emptyset,-)$.
\end{proof}

\begin{thm}\label{hatSkcat=LModhat}
Let $\CA$ be a ribbon tensor category, $\Sigma$ be an oriented surface, and $\Sigma^\circ$ its punctured surface. There is an equivalence 
\[ \widehat{\SSkcat_\CA(\Sigma^\circ)}\simeq \LMod_{\SSkalg^{\Int}_\CA(\Sigma^\circ)}(\widehat{\CA})    \]
\end{thm}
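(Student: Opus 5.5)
The plan is to deduce the statement from the monadicity theorem for module categories (Theorem \ref{Monadicityformodule}), applied with $\CC=\widehat{\CA}$, $\CM=\widehat{\SSkcat_\CA(\Sigma^\circ)}$ and $m=\emptyset$. First I will set up the module structure. The action of $\CA$ on $\SSkcat_\CA(\Sigma^\circ)$ comes from inserting labeled points in a collar of the puncture, which is a $\CA=\SSkcat_\CA(\mathbb{D})$-module structure by the gluing and excision properties. Applying the symmetric monoidal functor $\widehat{(-)}$ makes $\widehat{\SSkcat_\CA(\Sigma^\circ)}$ a module category over $\widehat{\CA}$ in $\PPr_c$.

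Next I will check the hypotheses of Theorem \ref{Monadicityformodule}. Since $\CA$ is rigid, $\widehat{\CA}$ is compact-rigid, by the example following the definition of compact-rigid. Lemma \ref{emptyasgenerator} gives that $\emptyset$ is a $\widehat{\CA}$-compact generator. The theorem then yields an equivalence
\[\widehat{\SSkcat_\CA(\Sigma^\circ)}\simeq \LMod_{\intend_{\widehat{\CA}}(\emptyset)}(\widehat{\CA}),\]
induced by $\widehat{\act_\emptyset}^R=\inthom(\emptyset,-)$. Here conservativity is Corollary \ref{genrator=conservative}, preservation of colimits is $\widehat{\CA}$-compactness, and Barr--Beck is Theorem \ref{Barrbeck}.

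It then remains to identify $\intend_{\widehat{\CA}}(\emptyset)=\inthom(\emptyset,\emptyset)=\widehat{\act_\emptyset}^R\widehat{\act_\emptyset}(\mathbb{1})$ with $\SSkalg^\Int_\CA(\Sigma^\circ)$, as an algebra and not only as an object. On objects this is Lemma \ref{intskalg=RL}: the functor of points of $\inthom(\emptyset,\emptyset)$ on $V\in\CA$ is $\Hom(\act_\emptyset(V),\emptyset)$, which is the defining formula. For the algebra structure I will use that the monad multiplication $RLRL\to RL$ is induced by the counit. Under Yoneda, this is stacking cylinders $\Sigma^\circ\times I$ with the $\CA$-labels pushed into the boundary collar, that is, the gluing composition in $\SSkcat_\CA(\Sigma^\circ)$. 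This is the multiplication one puts on $\SSkalg^\Int_\CA(\Sigma^\circ)$.

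I expect the main obstacle to be the module-functor compatibility. Rewriting the monad $\widehat{\act_\emptyset}^R\widehat{\act_\emptyset}$ as $\intend(\emptyset)\otimes-$ requires $\widehat{\act_\emptyset}^R$ to be an $\widehat{\CA}$-module functor, which Lemma \ref{FRmodule} provides once $\widehat{\act_\emptyset}$ is an $\widehat{\CA}$-module functor. The latter holds because the action is by a disk near the puncture, so acting first and then applying $\act_\emptyset$ agrees with applying $\act_\emptyset(a\otimes-)$, using the braiding and the excision of Proposition \ref{excisionlskcat}. I would present this step only up to homotopy, consistent with Remark \ref{homotopycoherent}.
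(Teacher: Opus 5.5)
Your proposal is correct and is essentially the paper's proof. The paper invokes Theorem \ref{Monadicityformodule} with $m=\emptyset$, uses Lemma \ref{emptyasgenerator} for the $\widehat{\CA}$-compact generator hypothesis, and uses Lemma \ref{intskalg=RL} to identify $\intend_{\widehat{\CA}}(\emptyset)$ with $\SSkalg^\Int_\CA(\Sigma^\circ)$. Your extra checks (the algebra structure via the counit, and the module-functor structure needed for Lemma \ref{FRmodule}, which is already used inside the proof of Theorem \ref{Monadicityformodule}) only make explicit what the paper leaves implicit. One small correction: the module-functor structure on $\widehat{\act_\emptyset}$ is just the associativity constraint $\emptyset\odot(a\otimes b)\simeq(\emptyset\odot a)\odot b$ of the action, so no braiding is needed there.
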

\begin{proof}
Following the theorem \ref{Monadicityformodule}, Lemma \ref{emptyasgenerator}, Lemma \ref{intskalg=RL}. 
\end{proof}

\begin{lem}\label{skcatannisrigid}
$\widehat{\SSkcat_\CA(\Ann)}$ is a compact-rigid tensor category in $\PPr_c$.
\end{lem}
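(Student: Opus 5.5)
We must show that $\widehat{\SSkcat_\CA(\Ann)}$ is a tensor category in $\PPr_c$ (an $E_1$-algebra there) and that every compact object of it has a dual. The plan is to get the monoidal structure from topology, transfer it to the free cocompletion, and reduce rigidity to rigidity of $\CA$.

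\textbf{Monoidal structure.} First, stacking annuli gives an oriented embedding $\Ann\sqcup\Ann\hookrightarrow\Ann$, e.g. nesting radially; together with the empty embedding this makes $\Ann=S^1\times I$ an $E_1$-algebra in $\Mfld_2^{or}$. The functor $\SSkcat_\CA(-):\Mfld_2^{or}\to\DGcat$ is symmetric monoidal, so $\SSkcat_\CA(\Ann)$ is an $E_1$-algebra in $\DGcat$. Applying the symmetric monoidal functor $\widehat{(-)}$ then makes $\widehat{\SSkcat_\CA(\Ann)}$ an $E_1$-algebra in $\PPr$. It lies in $\PPr_c$ because it is compactly generated by Theorem \ref{Bim=Prcirc}. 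The tensor product is the left Kan extension of a DG functor on representables, so it sends representables to representables and hence preserves compact objects.

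\textbf{Duals of generators.} The compact objects form the idempotent completion of $\SSkcat_\CA(\Ann)$, i.e. retracts of finite colimits of representables. Duals are stable under shifts, finite colimits and retracts in a stable monoidal category, so it suffices to dualize the representable objects. These are $\CA$-labelings $X$ of $\Ann$, i.e. finitely many framed points labelled by $V_1,\dots,V_n$.

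For such an $X$, I would take $X^\vee$ to be the reflected labeling $I\mapsto -I$ with labels $V_i^*$. The evaluation and coevaluation come from ribbon graphs in $\Ann\times I$: arcs connecting each point to its partner, the "cup" and "cap" tangles. These are degree $0$ cycles in $\SSk_\CA(\Ann\times I,\dots)$, since a zero-chain in nonnegative complexes is automatically a cycle. The zig-zag identities then hold in $\rmH_0$ by isotopy of ribbon graphs, using the skein property $\rmH_0\SSk=\Sk$.

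\textbf{Main obstacle.} The delicate step is whether identities holding in $\rmH_0$ suffice. Duality only needs to be checked in the homotopy category, since being a dual is a property detectable on $\rmH_0$ of mapping complexes. So I would verify that the zig-zag composites are homotopic to the identity. Their classes in $\Sk_\CA$ are equal by isotopy, and in $\rmH_0$ of the Hom complex a class is determined exactly by its skein. This closes the argument.

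A cleaner alternative for the local input is as follows. Each representable is the image of the tensor functor $\CA\to\SSkcat_\CA(\Ann)$ induced by inserting a disk into $\Ann$. Tensor functors preserve duals, and $\CA$ is rigid, so every single-disk labeling is dualizable. A general $X$ is isomorphic to a tensor product of single-point labelings by isotoping the points to different radii, and tensor products of dualizable objects are dualizable.
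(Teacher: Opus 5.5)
Your proposal is correct and takes the same route as the paper: the tensor structure comes from stacking annuli via functoriality of $\SSkcat_\CA(-)$, and compact-rigidity of $\widehat{\SSkcat_\CA(\Ann)}$ is deduced from rigidity of $\SSkcat_\CA(\Ann)$ itself. The paper only asserts that rigidity and its passage to all compact objects. Your argument supplies those missing steps: duals of representables via cups and caps, or via the tensor functor $i_*$ from the rigid category $\CA$, followed by closure of dualizable objects under shifts, finite colimits and retracts, checked in the homotopy category.
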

\begin{proof}
Topologically, the natural embedding of the stack $\Ann\sqcup \Ann\hookrightarrow \Ann$ gives a the monoidal structure \[\widehat{\SSkcat_\CA(\Ann)}\boxtimes \widehat{\SSkcat_\CA(\Ann)}\to \widehat{\SSkcat_\CA(\Ann)}\]
by functoriality of $\SSkcat_\CA(-)$. The tensor unit is given by empty skein.

Algebraically, the tensor product of $\widehat{\SSkcat_\CA(\Ann)}\simeq \LMod_{\SSkalg^\Int_\CA(\Ann)}(\widehat{\CA})$ is given by $M,N\mapsto \tau_{lr}(M)\otimes^{\mathbb{L}}_{\SSkalg^\Int_\CA(\Ann)} N$ where  $\tau_{lr}:\LMod_{\SSkalg^\Int_\CA(\Ann)}(\widehat{\CA})\xrightarrow{\sim}\RMod_{\SSkalg^\Int_\CA(\Ann)}(\widehat{\CA})$ is the identification provided by field goal transform. Here we take derived tensor product is due to \ref{DC=hatC}. And tensor unit is given by $\SSkalg^\Int_\CA(\Ann)$.

$\SSkcat_\CA(\Ann)$ is rigid, so $\widehat{\SSkcat_\CA(\Ann)}$ is compact-rigid.
\end{proof}

Thanks to $\widehat{\SSkcat_\CA(\Ann)}$ is compact-rigid, this helps us to simplify the study of $\widehat{\SSkcat_\CA(\Sigma)}$.
\begin{lem}\label{1boxtimesVarecompactgenerators}
$\{\mathbb{1}_{\Sigma^\circ}\boxtimes_{\widehat{\SSkcat_\CA(\Ann)}}V\}_{V\in \widehat{\CA}^c}$ are all compact generators of $\widehat{\SSkcat_\CA(\Sigma)}$. Here $\mathbb{1}_{\Sigma^\circ}$ is the distinguished object in $\widehat{\SSkcat_\CA(\Sigma^\circ)}$.
\end{lem}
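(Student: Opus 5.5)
We decompose $\Sigma = \Sigma^\circ \cup_{\Ann} \mathbb{D}$, gluing the punctured surface to the disk along a collar annulus of the boundary circle. Then we apply the excision results for free cocompleted derived skein categories and read off compact generators from Lemma \ref{compactgeneratorsofrelativetensor}.

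First, Lemma \ref{ExcisionFreecoofskeincat} (with $P\times I$ replaced by the annular collar, a thickened circle, which the excision of Proposition \ref{excisionlskcat} allows) gives
\[ \widehat{\SSkcat_\CA(\Sigma)}\simeq \widehat{\SSkcat_\CA(\Sigma^\circ)}\boxtimes_{\widehat{\SSkcat_\CA(\Ann)}}\widehat{\SSkcat_\CA(\mathbb{D})}. \]
By the proposition $\SSkcat_\CA(\mathbb{D})\simeq\CA$, the right factor is $\widehat{\CA}$, with its $\widehat{\SSkcat_\CA(\Ann)}$-module structure coming from the embedding $\Ann\sqcup\mathbb{D}\hookrightarrow\mathbb{D}$. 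On the other side, $\Sigma^\circ$ is a right module via the collar $\Sigma^\circ\sqcup\Ann\hookrightarrow\Sigma^\circ$.

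Second, we verify the hypotheses of Lemma \ref{compactgeneratorsofrelativetensor}. The tensor category $\widehat{\SSkcat_\CA(\Ann)}$ is compact-rigid by Lemma \ref{skcatannisrigid}. By Theorem \ref{Bim=Prcirc}, the compact objects of $\widehat{\CA}$ are the idempotent completion of $\CA$, so the objects $V\in\widehat{\CA}^c$ form a set of compact generators of $\widehat{\CA}$. For the factor $\widehat{\SSkcat_\CA(\Sigma^\circ)}$ we use the single object $\mathbb{1}_{\Sigma^\circ}=\emptyset$.

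This is the step I expect to be the main obstacle. Lemma \ref{compactgeneratorsofrelativetensor} asks for plain compact generators, but Lemma \ref{emptyasgenerator} only shows that $\emptyset$ is a compact $\widehat{\CA}$-generator, not a compact generator of $\widehat{\SSkcat_\CA(\Sigma^\circ)}$ over $\VVect$. I would bypass this as follows. The $\CA$-action on $\Sigma^\circ$ near the puncture factors through the $\Ann$-action, via $\CA\to\SSkcat_\CA(\Ann)$, $V\mapsto$ the skein $V$ placed in the annulus. Hence the compact generators $\act_\emptyset(W)$, for $W\in\CA$, of $\widehat{\SSkcat_\CA(\Sigma^\circ)}$ (given by Lemma \ref{emptyasgenerator} together with Theorem \ref{hatSkcat=LModhat}, since free $\SSkalg^\Int$-modules on compact objects generate) have the form $\emptyset\otimes W_{\Ann}$. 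Lemma \ref{compactgeneratorsofrelativetensor} then gives the generators $(\emptyset\otimes W_{\Ann})\boxtimes V\cong \emptyset\boxtimes(W_{\Ann}\cdot V)$, using the balancing of the relative tensor product. Topologically, $W_{\Ann}\cdot V$ is the disk labeling obtained by pushing $W$ into the disk next to $V$, so it is again an object of $\widehat{\CA}^c$, namely $W\otimes V$. Therefore the family $\{\mathbb{1}_{\Sigma^\circ}\boxtimes V\}_{V\in\widehat{\CA}^c}$ already contains all these generators.

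Finally, each $\mathbb{1}_{\Sigma^\circ}\boxtimes V$ is compact, because it is the image of a compact object under the compact functor $\widehat{\SSkcat_\CA(\Sigma^\circ)}\boxtimes\widehat{\CA}\to$ relative tensor product. Together with the generation statement above, this yields the lemma.
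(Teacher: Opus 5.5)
Your proposal is correct and matches the paper's proof. The paper likewise applies Lemma \ref{compactgeneratorsofrelativetensor} to $\widehat{\SSkcat_\CA(\Sigma^\circ)}\boxtimes_{\widehat{\SSkcat_\CA(\Ann)}}\widehat{\CA}$, using the compact-rigidity of $\widehat{\SSkcat_\CA(\Ann)}$ and the compact generators $\mathbb{1}_{\Sigma^\circ}\otimes W$ supplied by Theorem \ref{hatSkcat=LModhat}, and then moves $W$ across the balanced tensor product to obtain $\mathbb{1}_{\Sigma^\circ}\boxtimes_{\widehat{\SSkcat_\CA(\Ann)}} W\otimes V$; this resolves the obstacle you flagged in exactly the way you propose.
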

\begin{proof}
By \ref{compactgeneratorsofrelativetensor}, \ref{skcatannisrigid} and \ref{hatSkcat=LModhat}, we know 
\[ \{(\mathbb{1}_{\Sigma^\circ}\otimes W)\boxtimes_{\widehat{\SSkcat_\CA(\Ann)}}V\}=\{\mathbb{1}_{\Sigma^\circ}\boxtimes_{\widehat{\SSkcat_\CA(\Ann)}}W\otimes V\}     \]
are all compact generators of $\widehat{\SSkcat_\CA(\Sigma)}$.
\end{proof}

\begin{lem}\label{imageofactgenerates}
The essential image of $\widehat{\act_\emptyset}:\widehat{\CA}\to \widehat{\SSkcat_\CA(\Sigma)}$ generates $\widehat{\SSkcat_\CA(\Sigma)}$.
\end{lem}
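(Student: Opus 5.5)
The plan is to deduce the statement from Lemma \ref{1boxtimesVarecompactgenerators}, by showing that every compact generator listed there lies in the essential image of $\widehat{\act_\emptyset}$. Recall from Definition \ref{generatingcategory} that a collection of objects generates if nothing nonzero is right-orthogonal to all of them. So it suffices to show that each generator $\mathbb{1}_{\Sigma^\circ}\boxtimes_{\widehat{\SSkcat_\CA(\Ann)}}V$, for $V\in\widehat{\CA}^c$, is equivalent to $\widehat{\act_\emptyset}(V')$ for some $V'\in\widehat{\CA}$. Then any $y$ with $\Hom(\widehat{\act_\emptyset}(V'),y)\simeq 0$ for all $V'$ is orthogonal to all these generators, and hence $y=0$.

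The key identification is topological. Write $\Sigma=\Sigma^\circ\cup_{\Ann}\mathbb{D}$, and use Lemma \ref{ExcisionFreecoofskeincat} to get
\[\widehat{\SSkcat_\CA(\Sigma)}\simeq \widehat{\SSkcat_\CA(\Sigma^\circ)}\boxtimes_{\widehat{\SSkcat_\CA(\Ann)}}\widehat{\SSkcat_\CA(\mathbb{D})}\simeq \widehat{\SSkcat_\CA(\Sigma^\circ)}\boxtimes_{\widehat{\SSkcat_\CA(\Ann)}}\widehat{\CA}.\]
Under this equivalence, the object $\mathbb{1}_{\Sigma^\circ}\boxtimes V$ corresponds to the labeling of $\Sigma$ given by the empty labeling on $\Sigma^\circ$ together with the labeling $V$ placed in the glued-in disk. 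Meanwhile, $\widehat{\act_\emptyset}(V)$ is, by definition, the action of a disk-labeling $V$ on $\emptyset\in\SSkcat_\CA(\Sigma)$, which is exactly inserting $V$ in a chosen disk. Since the disk used for the action and the disk $\mathbb{D}$ can be isotoped to coincide, the functoriality axiom (homeomorphisms induce equivalences) identifies $\widehat{\act_\emptyset}(V)\simeq \mathbb{1}_{\Sigma^\circ}\boxtimes V$ for $V\in\CA$. For general compact $V$, both sides are exact in $V$, so the identification extends from $\CA$ to its idempotent-complete thick closure $\widehat{\CA}^c$, and then to all of $\widehat{\CA}$ by colimits.

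The main obstacle is making this identification precise. One must check that the excision equivalence of Proposition \ref{excisionlskcat} sends the pair $(\emptyset,V)$ to the disjoint-union labeling, and that it intertwines the $\widehat{\CA}$-action on $\emptyset$ with the $\widehat{\CA}=\widehat{\SSkcat_\CA(\mathbb{D})}$-factor. I would argue on objects, where the essential surjectivity in Proposition \ref{excisionlskcat} is the Cooke construction, which is literally the union of labelings. No further coherence is needed, since generation only requires objectwise equivalences.

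As a fallback, I would use Lemma \ref{rightadjointdominantfunctorisconservative} instead. This reduces the statement to showing that $\widehat{\act_\emptyset}^R=\inthom(\emptyset,-)$ is conservative. Conservativity would follow because $\Hom(\widehat{\act_\emptyset}(V),y)\simeq\Hom_{\widehat{\CA}}(V,\widehat{\act_\emptyset}^R y)$, and these Hom spaces are tested exactly against the compact generators above.
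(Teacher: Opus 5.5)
Your proposal is correct and follows the paper's route. The paper proves this in one line as a direct consequence of Lemma \ref{1boxtimesVarecompactgenerators}, leaving implicit the identification $\widehat{\act_\emptyset}(V)\simeq \mathbb{1}_{\Sigma^\circ}\boxtimes_{\widehat{\SSkcat_\CA(\Ann)}}V$ (both are the pushforward of $V$ along a disk $\mathbb{D}\hookrightarrow\Sigma$), which you make explicit. One simplification: you need not extend the objectwise identification from $\CA$ to $\widehat{\CA}^c$, a step that would really require naturality, because testing $y$ against $V\in\CA$ already forces $\Hom(\mathbb{1}_{\Sigma^\circ}\boxtimes V,y)\simeq 0$ for all compact $V$ by exactness of $\mathbb{1}_{\Sigma^\circ}\boxtimes -$.
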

\begin{proof}
This is a direct result from \ref{1boxtimesVarecompactgenerators}.
\end{proof}

\begin{prop}\label{semisimpleintskalg}
If $\CA$ is semisimple, we have 
\[ \SSkalg^\Int_\CA(\Sigma^\circ)\cong \Skalg^\Int_\CA(\Sigma^\circ)    \]
\end{prop}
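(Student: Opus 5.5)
The plan is to compute $\SSkalg^\Int_\CA(\Sigma^\circ)(V)=\Hom_{\SSkcat_\CA(\Sigma^\circ)}(\act_\emptyset(V),\emptyset)=\SSk_\CA(\Sigma^\circ\times I,X_V\cup\emptyset)$ and show that it is concentrated in homological degree $0$. The skein property then identifies $\rmH_0$ with $\Sk_\CA(\Sigma^\circ\times I,X_V\cup\emptyset)$, which is by definition $\Skalg^\Int_\CA(\Sigma^\circ)(V)$. Since a nonnegative complex with homology only in degree $0$ is quasi-isomorphic to its $\rmH_0$, and the identification is natural in $V$, this gives the claimed isomorphism of functors $\CA^\op\to\VVect$, compatible with the algebra structures because both come from stacking.

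To show vanishing of higher homology, I would use that $\Sigma^\circ$ has a handle decomposition from a disk by attaching $2g$ one-handles (bands) along its boundary; equivalently, $\Sigma^\circ\times I$ is a $3$-ball with $2g$ one-handles glued along disks in its boundary. I would apply the excision property, or \ref{excisionlskcat}, one band at a time. Each step expresses the hom complexes of $\SSkcat_\CA(\Sigma^\circ)$ as a derived coend over $\SSkcat_\CA(\mathbb{D}\sqcup\mathbb{D})\simeq\CA\boxtimes\CA$, or over $\SSkcat_\CA(P\times I)$ with $P$ two intervals, which is again $\CA^{\boxtimes 2}$ by the unit property. The pieces are hom complexes of $\CA$ by the unit property, hence concentrated in degree $0$. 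By induction, the pieces at each stage are the hom spaces of an ordinary category. So $\SSkalg^\Int(V)$ is an iterated derived coend $\int^{W\in\CA}\Hom_\CA(V,\,\cdots W\otimes W^*\cdots)$ of degree-$0$ functors, built in the same way as the known formula $\Skalg^\Int_\CA(\Sigma^\circ)\cong\bigotimes^{2g}\int^W W\otimes W^*$ for ordinary internal skein algebras.

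The key step, and the main obstacle, is that for semisimple $\CA$ these derived coends have no higher homology. For $\CA$ semisimple, every $\CA$-module (functor $\CA^{\op}\to\Vect$) is projective, because representables $\Hom(-,S)$ for simple $S$ split the category. Hence the bar complex computing $F\otimes^\mathbb{L}_{\CA} G$ is quasi-isomorphic to the ordinary coend $F\otimes_\CA G=\bigoplus_{S\in\Irr}F(S)\otimes G(S)$, which sits in degree $0$. Equivalently, $\CA$ is Morita equivalent to $\bigoplus_{S}\mathbb{k}$, and one can apply Example \ref{coendmorita}: tensoring over a semisimple, separable, product-of-fields algebra is exact. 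I would also check that the intermediate modules at each induction step really are degree-$0$ functors, so that the argument iterates. A global alternative would be to identify $\widehat{\SSkcat_\CA(\Sigma^\circ)}$ with $\int_{\Sigma^\circ}\widehat\CA$ via the corollary following \ref{ExcisionFreecoofskeincat}, compare it with the derived category of the ordinary skein category using \ref{DC=hatC}, and read off $\Hom(\act_\emptyset V,\emptyset)$ there. However, the band-by-band induction is more concrete, so I would try it first.
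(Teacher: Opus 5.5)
Your argument is correct in outline, and it takes a genuinely different route from the paper's.

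\textbf{The paper's route.} The paper works globally with the adjunction $\widehat{\act_\emptyset}\dashv\widehat{\act_\emptyset}^R$. It asserts that these are the derived functors of the corresponding heart-level adjunction. For semisimple $\CA$, the heart-level left adjoint is exact. The heart-level right adjoint is also exact: it preserves colimits because $\emptyset$ is $\widehat{\CA}$-compact, and it is left exact as a right adjoint. So deriving changes nothing, and $\widehat{\act_\emptyset}^R\widehat{\act_\emptyset}(\mathbb{1})$ agrees with its underived counterpart.

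\textbf{Your route.} You work locally. You decompose $\Sigma^\circ\times I$ as a ball plus one-handles, apply the excision axiom along pairs of disks, and use the vanishing of higher $\mathrm{Tor}$ over a semisimple category.

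\textbf{What your route buys.}
\begin{itemize}
\item It proves more: every hom complex of $\SSkcat_\CA(\Sigma^\circ)$ is concentrated in degree $0$, i.e.\ $\SSkcat_\CA(\Sigma^\circ)\simeq\Skcat_\CA(\Sigma^\circ)$.
\item This is exactly the input the paper's first step tacitly relies on. Identifying $\widehat{\act_\emptyset}^R$ with a derived functor of the heart-level adjoint amounts to knowing that the representable at $\emptyset$ lies in the heart. Exactness of heart-level functors cannot supply that. Compare $\SSkcat_{\mathbb{G}_m}(S^2)$, whose free cocompletion is not the derived category of its heart.
\item It makes the later equivalence $\CD(\Skcat_\CA(\Sigma^\circ))\simeq\widehat{\SSkcat_\CA(\Sigma^\circ)}$ an immediate consequence of Lemma \ref{DC=hatC}.
\item It explains why the puncture matters. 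Closing up the surface requires a coend over the non-semisimple $\SSkcat_\CA(\Ann)$, and that is where the higher homology of Lemma \ref{SkalgS^2=C(g)} comes from.
\end{itemize}
The paper's route, in exchange, avoids handle decompositions and the bookkeeping with corners.

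\textbf{Points to make precise when you write it up.}
\begin{itemize}
\item State the induction hypothesis for \emph{all} labelings of $\partial(\Sigma_k\times I)$, including points on the side disks where the gluing label $Z$ sits. Use homeomorphism invariance to move boundary points to the top and bottom before invoking the unit property; this applies to the band piece too.
\item Spell out why degree-$0$ inputs give a degree-$0$ bar construction. Over a field, each simplicial level has homology only in degree $0$ by K\"unneth. The spectral sequence of the simplicial complex then has $E^2_{p,0}=\mathrm{Tor}_p$ over $\rmH_0\SSkcat_\CA(\mathbb{D}\sqcup\mathbb{D})$, which is Morita equivalent to $\CA\boxtimes\CA$.
\item That $\mathrm{Tor}$ vanishes for $p>0$ if $\CA\boxtimes\CA$ is semisimple (true in characteristic $0$). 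Alternatively, iterate the coend over one copy of $\CA$ at a time.
\item For $\Sigma=\Sigma_{g,r}$ you need $2g+r$ bands, not $2g$.
\item Example \ref{coendmorita} does not literally apply when $\CA$ has infinitely many simples, since there is no unital algebra. Your direct projectivity argument already covers that case.
\end{itemize}
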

\begin{proof}
$\widehat{\act_\emptyset}$ (resp $\widehat{\act_\emptyset}^R$) is (left) derived functor of $\widehat{\act_\emptyset}^\heartsuit$ (resp $(\widehat{\act_\emptyset}^\heartsuit)^R$). Since $\CA$ is semisimple, $\widehat{\act_\emptyset}^\heartsuit:\widehat{\CA}^\heartsuit\to \widehat{\SSkcat_\CA(\Sigma^\circ)^\heartsuit}$ is exact. Since $\widehat{\CA}^\heartsuit$ is compactly generated and $\widehat{\act}^\heartsuit:\widehat{\CA}^\heartsuit\times \widehat{\SSkcat_\CA(\Sigma^\circ)^\heartsuit}\to \widehat{\SSkcat_\CA(\Sigma^\circ)^\heartsuit}$ is a compact functor, every compact object in $\widehat{\SSkcat_\CA(\Sigma^\circ)^\heartsuit}$ is a $\widehat{\CA}^\heartsuit$-compact object. By definition of free cocompletion, $\emptyset\in \widehat{\SSkcat_\CA(\Sigma^\circ)^\heartsuit}$ is a compact object, so is a $\widehat{\CA}^\heartsuit$-compact object. Since $\emptyset\in \widehat{\SSkcat_\CA(\Sigma^\circ)^\heartsuit}$ is a $\widehat{\CA}^\heartsuit$-compact, the right adjoint $(\widehat{\act_\emptyset}^\heartsuit)^R$ is also colimit-preserving, so in particular right exact. A right adjoint is left exact, so $(\widehat{\act_\emptyset}^\heartsuit)^R$ is also exact. Since $\mathbb{1}\in \widehat{\CA}^\heartsuit\subset \widehat{\CA}$, we have 
\[\SSkalg^\Int_\CA(\Sigma^\circ)\cong\widehat{\act_\emptyset}^R\circ \widehat{\act_\emptyset} (\mathbb{1})\simeq (\widehat{\act_\emptyset}^\heartsuit)^R\circ \widehat{\act_\emptyset}^\heartsuit(\mathbb{1})\cong \Skalg^\Int_\CA(\Sigma^\circ) \]
\end{proof}

\begin{rem}\label{internal skein algebra as derived coend}
We can write the internal derived skein algebra as a derived coend,
\[ \SSkalg^\Int_\CA(\Sigma^\circ)\cong\int^{X\in \CA}\Hom_{\SSkcat_\CA(\Sigma^\circ)}(\act_\emptyset(X),\mathbb{1})\otimes^\mathbb{L} X     \]
In particular, when $\CA$ is semisimple, 
\[  \SSkalg^\Int_\CA(\Sigma^\circ)\cong \bigoplus_{X\in \Irr(\CA)} \Hom_{\SSkcat_\CA(\Sigma^\circ)}(\act_\emptyset(X),\mathbb{1})\otimes X  \]
Here $\Irr(\CA)$ is the set of all isomorphism classes of simple objects.
\end{rem}

an oriented surface is classified by genus $g$ and punctured $r$, so we can denote an oriented surface $\Sigma$ by $\Sigma_{g,r}$. In particular, $\Sigma_{g,0}$ are closed surfaces. Combine Proposition \ref{semisimpleintskalg} and \cite{BZBJ18} Section 6, we have
\begin{expl}
Let's consider closed surface $\Sigma=\Sigma_{g,0}$. Let $\CA=\rep_q^{\fd}(G)$ where $q$ is not a root of unity. We have \[\SSkalg^\Int_G(\Sigma_{g,0}^\circ)\cong\mathcal{D}_q(G)^{\otimes g}\]
where $\mathcal{D}_q(G)$ is the algebra of quantum differential operators on $G$. Here $\otimes$ denotes the braided tensor product of $\widehat{\CA}$.
\end{expl}
\begin{expl}
Let's consider punctured disk $\Sigma=\Sigma_{0,r}$ $(r\ge 1)$. Let $\CA=\rep^{\fd}_q(G)$ where $q$ is not a root of unity. We have 
\[ \SSkalg^\Int_G(\Sigma^\circ)\cong\mathcal{O}_q(G)^{\otimes r}  \]
where $\mathcal{O}_q(G)$ is the quantum coordinate algebra of $G$. Here $\otimes$ denotes the braided tensor product of $\widehat{\CA}$.
\end{expl}
\begin{expl}\label{LskalgintG}
In general, let's consider $\Sigma=\Sigma_{g,r}$. Let $\CA=\rep^{\fd}_q(G)$ where $q$ is not a root of unity. We have
\[ \SSkalg^\Int_\CA(\Sigma^\circ_{g,r})\cong \mathcal{D}_q(G)^{\otimes g}\otimes \mathcal{O}_q(G)^{\otimes r}    \]
Here $\otimes$ denotes the braided tensor product of $\widehat{\CA}$.
\end{expl}

We have the following non-trivial bridge between ordinary skein category and derived skein category:
\begin{thm}
Let $\CA$ be semisimple ribbon tensor category, $\Sigma$ an oriented surface, and $\Sigma^\circ$ its punctured surface.
\[ \CD(\Skcat_\CA(\Sigma^\circ))\simeq \widehat{\SSkcat_\CA(\Sigma^\circ)}   \]
\end{thm}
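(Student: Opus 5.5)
The plan is to identify both sides with module categories over the same algebra in $\widehat{\CA}$ and then compare these module categories. By Theorem \ref{hatSkcat=LModhat}, the right-hand side satisfies
\[ \widehat{\SSkcat_\CA(\Sigma^\circ)}\simeq \LMod_{\SSkalg^\Int_\CA(\Sigma^\circ)}(\widehat{\CA}). \]
By Proposition \ref{semisimpleintskalg}, since $\CA$ is semisimple, we have $\SSkalg^\Int_\CA(\Sigma^\circ)\cong \Skalg^\Int_\CA(\Sigma^\circ)$. The latter is an algebra in the heart $\widehat{\CA}^\heartsuit$: it is the ordinary internal skein algebra, given objectwise by ordinary skein modules, which are concentrated in degree $0$. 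Lemma \ref{DLMod=LModhat} then gives
\[ \LMod_{\Skalg^\Int_\CA(\Sigma^\circ)}(\widehat{\CA})\simeq \CD\bigl(\LMod_{\Skalg^\Int_\CA(\Sigma^\circ)}(\widehat{\CA}^\heartsuit)\bigr). \]

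For the left-hand side, I run the ordinary analogue of the same argument. The ordinary (underived) version of Theorem \ref{hatSkcat=LModhat} is \cite{BZBJ18}/\cite{GJS22}. It follows from linear Barr–Beck applied to $(\widehat{\act_\emptyset}^\heartsuit)^R$: the empty object is an $\widehat{\CA}^\heartsuit$-generator by the same picture as in Lemma \ref{emptyasgenerator}, and $(\widehat{\act_\emptyset}^\heartsuit)^R$ is exact and colimit-preserving, as shown in the proof of Proposition \ref{semisimpleintskalg}. This yields
\[ \widehat{\Skcat_\CA(\Sigma^\circ)}^\heartsuit\simeq \LMod_{\Skalg^\Int_\CA(\Sigma^\circ)}(\widehat{\CA}^\heartsuit). \]
Lemma \ref{DC=hatC} gives $\CD(\Skcat_\CA(\Sigma^\circ))\simeq \CD(\widehat{\Skcat_\CA(\Sigma^\circ)}^\heartsuit)$. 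Chaining these equivalences,
\[ \CD(\Skcat_\CA(\Sigma^\circ))\simeq \CD\bigl(\LMod_{\Skalg^\Int}(\widehat{\CA}^\heartsuit)\bigr)\simeq \LMod_{\Skalg^\Int}(\widehat{\CA})\simeq \LMod_{\SSkalg^\Int}(\widehat{\CA})\simeq \widehat{\SSkcat_\CA(\Sigma^\circ)}. \]

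The main obstacle is the third equivalence in this chain. Proposition \ref{semisimpleintskalg} as stated gives an isomorphism of underlying objects, but here I need an isomorphism of algebras in $\widehat{\CA}$. To get it, I would check that the multiplication on $\SSkalg^\Int_\CA(\Sigma^\circ)=\widehat{\act_\emptyset}^R\widehat{\act_\emptyset}(\mathbb{1})$ comes from the monad structure. On $\rmH_0$, the monad structure is computed by the skein property, so it restricts to the ordinary monad $(\widehat{\act_\emptyset}^\heartsuit)^R\widehat{\act_\emptyset}^\heartsuit$. Since the algebra is discrete (concentrated in degree $0$), an $E_1$-structure on it is determined by its $\rmH_0$-level multiplication, and so the two algebras agree.

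A secondary point is that Lemma \ref{DLMod=LModhat} requires the monad $A\otimes-$ on $\widehat{\CA}$ to be $t$-exact. This holds because $\CA$ is semisimple, so tensoring is exact on $\widehat{\CA}^\heartsuit$.
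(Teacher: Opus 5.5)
Your proposal is correct and follows the paper's proof, which is the same chain through Theorem \ref{hatSkcat=LModhat}, Proposition \ref{semisimpleintskalg} and Lemma \ref{DLMod=LModhat}. You additionally spell out steps the paper leaves implicit: the underived identification $\widehat{\Skcat_\CA(\Sigma^\circ)}^\heartsuit\simeq\LMod_{\Skalg^\Int_\CA(\Sigma^\circ)}(\widehat{\CA}^\heartsuit)$ via linear Barr--Beck, the passage through Lemma \ref{DC=hatC}, and the upgrade of Proposition \ref{semisimpleintskalg} from an isomorphism of objects to an isomorphism of algebras using discreteness.
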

\begin{proof}
Following  \ref{DLMod=LModhat}, \ref{semisimpleintskalg}, \ref{hatSkcat=LModhat}.
\end{proof}

\begin{rem}
Note that the above theorem is not a corollary of lemma \ref{DC=hatC} in which the input category is a $\mathbb{k}$-linear category but $\SSkcat_\CA(\Sigma^\circ)$ is a non-trivial DG category usually.
\end{rem}

\subsection{Derived skein category of Annulus}

In particular, derived skein category of the Annulus is specifically important. It is one of elementary ingredients for derived skein category of a closed surface which we will analysis in Section \ref{skeincatofclosed}. It also has rich structure which leads directly a lot of miracles in skein theory. For example, in the previous Section Lemma \ref{skcatannisrigid}, we show that $\widehat{\SSkcat_\CA(\Ann)}$ is a compact-rigid tensor category in $\PPr_c$. This leads a lot of interesting results in last section. For ordinary case, this was already studied deeply in \cite{BZBJ18}\cite{BZBJ18a}\cite{Saf19}\cite{GJS22}. In this section, we view $\widehat{(-)}$ as a cocomplete DG category for simplicity.

Let $\FZ_1(\widehat{\CA})$ be the Drinfeld center of the monoidal DG category $\widehat{\CA}$. Since $\widehat{\CA}$ is braided, we have a natural braided monoidal functor $\widehat{\CA}\otimes \widehat{\CA}^{\sigma op}\to \FZ_1(\widehat{\CA})$, see \cite{EGNO15} Prop 8.6.1., given by the left and right action of $\widehat{\CA}$ on itself. Unpacking this braided monoidal functor, we have a monoidal functor $T:\widehat{\CA}\otimes \widehat{\CA}^{\sigma op}\to \widehat{\CA}$ and natural isomorphisms $V\otimes T(x)\to T(x)\otimes V$ for all $x\in \widehat{\CA}\otimes \widehat{\CA}^{\sigma op}$ and $V\in \widehat{\CA}$. Let $T^R$ be the right adjoint of $T$. For instance,  we obtain the \emph{field goal transform}
\[ \tau_V:V\otimes T(T^R(\mathbb{1}))\to T(T^R(\mathbb{1}))\otimes V    \]

\begin{prop}
There is an isomorphism
\[  T(T^R(\mathbb{1}))\cong \SSkalg^{\Int}_\CA(\Ann)      \]
\end{prop}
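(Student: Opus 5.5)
We will identify both sides as the same monad applied to the unit. The plan is to realize $\widehat{\SSkcat_\CA(\Ann)}$ as a relative tensor product and use Theorem \ref{Monadicityforrelativetensorproduct}, together with Lemma \ref{intskalg=RL}.

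The first step is topological. Cut the annulus along a segment so that $\Ann$ is obtained from the disk $\mathbb{D}$ by gluing two boundary intervals. Equivalently, $\Ann$ is the trace of $\mathbb{D}$ as a $\mathbb{D}\sqcup\mathbb{D}$-module, where the two intervals carry the left and right actions. Applying Proposition \ref{excisionlskcat} and Lemma \ref{ExcisionFreecoofskeincat}, and identifying $\SSkcat_\CA(\mathbb{D})\simeq\CA$ via the unit property, we obtain
\[ \widehat{\SSkcat_\CA(\Ann)}\simeq \widehat{\CA}\boxtimes_{\widehat{\CA}\otimes\widehat{\CA}^{\sigma op}}\widehat{\CA}, \]
where both module structures on $\widehat{\CA}$ come from the monoidal functor $T$, i.e.\ from the left and right actions. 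Under this equivalence the distinguished object $\emptyset$ corresponds to $\mathbb{1}\boxtimes\mathbb{1}$. Moreover, $\act_\emptyset:\widehat{\CA}\to\widehat{\SSkcat_\CA(\Ann)}$, induced by a disk embedded in $\Ann$, corresponds to $V\mapsto V\boxtimes\mathbb{1}$.

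The second step applies Theorem \ref{Monadicityforrelativetensorproduct} with $\CC=\widehat{\CA}\otimes\widehat{\CA}^{\sigma op}$. This tensor category is compact-rigid because $\CA$ is rigid. The right module is $\CM=\widehat{\CA}$ with $\CC$-compact generator $m=\mathbb{1}$. The action functor $\CC\to\widehat{\CA}$, $x\mapsto T(x)$, is dominant, since $V\boxtimes\mathbb{1}\mapsto V$. By Corollary \ref{genrator=conservative}, $T^R$ is conservative. $T^R$ also preserves colimits, because $T$ preserves compact objects. Hence $\intend_\CC(\mathbb{1})=T^R(\mathbb{1})$, and we get
\[ \widehat{\SSkcat_\CA(\Ann)}\simeq\LMod_{T^R(\mathbb{1})}(\widehat{\CA}), \]
with $T^R(\mathbb{1})$ acting on $\widehat{\CA}$ through $T$. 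Under this equivalence the generator $\emptyset$ goes to the free module $T(T^R(\mathbb{1}))$. The functor $\act_\emptyset$ becomes $V\mapsto T(T^R(\mathbb{1}))\otimes V$, and its right adjoint is the forgetful functor to $\widehat{\CA}$.

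The third step computes $\widehat{\act_\emptyset}^R\widehat{\act_\emptyset}(\mathbb{1})$. By Lemma \ref{intskalg=RL} this equals $\SSkalg^\Int_\CA(\Ann)$, and by the second step it is the underlying object $T(T^R(\mathbb{1}))$. This gives the claimed isomorphism. As a consistency check, comparing with Theorem \ref{hatSkcat=LModhat} shows that the two module descriptions agree, provided the algebra structures match.

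The main obstacle is making the identification of $\act_\emptyset$ precise under the equivalences, and checking that the monad $T^RT$ built in the second step is the same as the monad in Lemma \ref{intskalg=RL}. I would first try to verify this through the explicit coend formula for $\SSkalg^\Int_\CA(\Ann)$ (Remark \ref{internal skein algebra as derived coend}). That formula, $\int^{X}\Hom(\act_\emptyset X,\emptyset)\otimes X$, should be compared with $T(T^R\mathbb{1})\cong\int^{X}X^\vee\otimes X$, which is the standard formula for the right adjoint of the tensor functor.
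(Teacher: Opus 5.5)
Your argument is correct, but it takes a different route from the paper.

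\textbf{The paper's route.} The paper does not rebuild the annulus category. It quotes the underived identification $T^\heartsuit(T^R)^\heartsuit(\mathbb{1})\cong(\widehat{\act_\emptyset}^\heartsuit)^R\widehat{\act_\emptyset}^\heartsuit(\mathbb{1})\cong\Skalg^\Int_\CA(\Ann)$ from \cite{BZBJ18} (Cor.~6.4). It then asserts that both sides of the proposition are the values at $\mathbb{1}$ of the derived functors of these composites (Example \ref{widehatF=derivedfunctor}). That gives a two-line proof. However, it rests on the claim that $TT^R$ and $\widehat{\act_\emptyset}^R\widehat{\act_\emptyset}$ are obtained by deriving their heart-level versions. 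This is transparent when $\CA$ is semisimple (compare Proposition \ref{semisimpleintskalg}) and otherwise needs an argument.

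\textbf{Your route.} You reprove the Ben-Zvi--Brochier--Jordan statement directly in $\PPr$, using only the paper's own tools: Proposition \ref{excisionlskcat}, Theorem \ref{Monadicityforrelativetensorproduct} and Lemma \ref{intskalg=RL}. What this buys:
\begin{itemize}
\item No comparison between derived and underived functors is ever needed, and the argument works for any ribbon $\CA$ satisfying the axioms.
\item Your equivalence intertwines $\widehat{\act_\emptyset}$ with the free-module functor. It therefore identifies the monads $\widehat{\act_\emptyset}^R\widehat{\act_\emptyset}$ and $T(T^R(\mathbb{1}))\otimes(-)$, not just their values at $\mathbb{1}$. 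This is the natural way to match the algebra structures too, which is the point you left as a ``consistency check''.
\end{itemize}

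\textbf{Two small remarks.}
\begin{enumerate}
\item Proposition \ref{excisionlskcat} is stated for gluing two surfaces, so cut $\Ann$ into two disks glued along two disjoint intervals (take $P$ to be two intervals), rather than self-gluing one disk. Identifying the resulting bimodule structures on $\widehat{\CA}$ with restriction along $T$ uses the braiding.
\item The step you call the main obstacle is already settled by Lemma \ref{FRmodule}. Since $T^R$ is a right module functor, $T^R(V)\cong T^R(\mathbb{1})\otimes(V\boxtimes\mathbb{1})$. The equivalence of Theorem \ref{Monadicityforrelativetensorproduct} sends $m'\boxtimes n$ to $\inthom(m,m')$ acting on $n$, so it takes $V\boxtimes\mathbb{1}$ to $T(T^R(\mathbb{1}))\otimes V$, the free module. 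Hence the right adjoint of $\widehat{\act_\emptyset}$ becomes the forgetful functor, and the coend comparison is unnecessary.
\end{enumerate}
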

\begin{proof}
Left hand side is the value of the  derived functor of $T^\heartsuit\circ (T^R)^\heartsuit$ on $\mathbb{1}$, see Example \ref{widehatF=derivedfunctor}. The right hand side is the value of derived functor of $(\widehat{\act_\emptyset}^\heartsuit)^R\circ (\widehat{\act_\emptyset})^\heartsuit$ on $\mathbb{1}$. By \cite{BZBJ18} Cor 6.4, we have $T^\heartsuit\circ (T^R)^\heartsuit(\mathbb{1})\cong (\widehat{\act_\emptyset}^\heartsuit)^R\circ (\widehat{\act_\emptyset})^\heartsuit(\mathbb{1})\cong \Skalg^\Int_\CA(\Ann)$. Taking derived functors on both sides, we get the above equivalence.
\end{proof}

\begin{defn}\label{i*}
Let $D^2\to \Ann$ be an inclusion given by the including a disk $D^2$ into an annulus along some small band, as depicted in Fig. By functoriality of derived skein category, this inclusion induces a functor $i_*:\widehat{\CA}\to \widehat{\SSkcat_\CA(\Ann)}$ in $\PPr_c$ Note that it's same as $\widehat{\act_\emptyset}:\widehat{\CA}\to \widehat{\SSkcat_\CA(\Sigma^\circ)}$ where $\Sigma$ is a closed disk see Section \ref{SectionDSA}.
\end{defn}
\begin{lem}\label{i*dominant}
The functor $i_*$ is a tensor functor whose essential image generates $\widehat{\SSkcat_\CA(\Ann)}$ see Definition \ref{generatingcategory}.
\end{lem}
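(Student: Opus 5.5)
The claim has two parts: $i_*$ is a tensor functor, and its essential image generates $\widehat{\SSkcat_\CA(\Ann)}$. I would prove them separately.

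\emph{Tensor structure.} The monoidal structure on $\widehat{\SSkcat_\CA(\Ann)}$ in Lemma \ref{skcatannisrigid} comes from the embedding $\Ann\sqcup\Ann\hookrightarrow\Ann$, stacking annuli radially. The tensor structure on $\widehat{\CA}$ comes from $\mathbb{D}\sqcup\mathbb{D}\hookrightarrow\mathbb{D}$. The inclusion $D^2\to\Ann$ along the small band intertwines these embeddings up to isotopy: place the two disks as two radially nested sub-bands of the annulus. Since $\SSkcat_\CA(-)$ is a symmetric monoidal functor on $\Mfld_2^{or}$, isotopic embeddings give equivalent functors, using the functoriality axiom (homeomorphisms induce homotopy equivalences). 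Applying $\widehat{(-)}$, which is symmetric monoidal, yields the monoidal structure maps $i_*(V)\otimes i_*(W)\simeq i_*(V\otimes W)$ and $i_*(\mathbb{1})\simeq\emptyset$. I would not address coherence beyond the level the paper already works at (Remark \ref{homotopycoherent}).

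\emph{Generation.} I would identify $i_*$ with $\widehat{\act_\emptyset}:\widehat{\CA}\to\widehat{\SSkcat_\CA(\Sigma^\circ)}$, where $\Sigma$ is a closed disk, so $\Sigma^\circ\cong\Ann$, as noted in Definition \ref{i*}. There are then two possible routes.
\begin{itemize}
\item By Lemma \ref{emptyasgenerator}, $\emptyset$ is a $\widehat{\CA}$-compact generator. Corollary \ref{genrator=conservative} then says $\widehat{\act_\emptyset}^R=\inthom(\emptyset,-)$ is conservative, and Lemma \ref{rightadjointdominantfunctorisconservative} converts this into the statement that the essential image generates.
\item Alternatively, Theorem \ref{hatSkcat=LModhat} gives $\widehat{\SSkcat_\CA(\Ann)}\simeq\LMod_{\SSkalg^\Int_\CA(\Ann)}(\widehat{\CA})$, with $i_*$ corresponding to the free-module functor $\SSkalg^\Int_\CA(\Ann)\otimes-$. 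Free modules generate any module category under colimits via the bar resolution, and the forgetful functor is conservative.
\end{itemize}

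The main obstacle is the identification $i_*\simeq\widehat{\act_\emptyset}$ together with the generator statement: every labeling of $\Ann$ must be equivalent to one supported in the band disk. This is the step illustrated geometrically in the proof of Lemma \ref{emptyasgenerator}, where labeled points are isotoped into a small disk. I would rely on that lemma rather than reprove it. The compatibility of the band inclusion with the action of $\CA$ on $\emptyset$ follows because both are induced by the same embedding of a disk near the boundary.
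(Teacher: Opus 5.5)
Your generation argument (the first route: Lemma \ref{emptyasgenerator}, then Corollary \ref{genrator=conservative}, then Lemma \ref{rightadjointdominantfunctorisconservative}) is exactly the paper's proof. Your free-module route via Theorem \ref{hatSkcat=LModhat} is a valid repackaging that rests on the same lemma, and the paper offers no argument for the tensor-functor part; your sketch there is reasonable, although the claim ``isotopic embeddings give equivalent functors'' is not supplied by the axioms (they only make homeomorphisms induce equivalences), so you should avoid it by choosing the band and the two stacking embeddings so that the relevant square of embeddings commutes on the nose.
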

\begin{proof}
Since $\emptyset$ is a $\widehat{\CA}$-compact generator see \ref{emptyasgenerator}, then by \ref{rightadjointdominantfunctorisconservative} and \ref{genrator=conservative}, we get the claim. 
\end{proof}

\begin{rem}
Note that there is another functor which has the same source and target as $i_*$. Consider a functor 
\begin{align*}
\coinv_r:\widehat{\SSkcat_\CA(\Ann)}&\to \widehat{\CA} \\
M&\mapsto M\otimes^\mathbb{L}_{\SSkalg^\Int_\CA(\Ann)}\mathbb{1}
\end{align*}
given by derived coinvariants on the right. Its right adjoint $\widehat{\CA}\to \widehat{\SSkcat_\CA(\Ann)}$ is given by sending $V\in \widehat{\CA}$ to the trivial right $\SSkalg^\Int_\CA(\Ann)$-module.
\end{rem}

By Theorem \ref{hatSkcat=LModhat}, we know that $\widehat{\SSkcat_\CA(\Ann)}\simeq \LMod_{\SSkalg^\Int_\CA(\Ann)}(\widehat{\CA})$. If we use this form to represent $\widehat{\SSkcat_\CA(\Ann)}$, we have a more algebraic definition for $i_*$ which is a free module functor $V\mapsto \SSkalg^\Int_\CA(\Ann)\otimes V$. Then the right adjoint of $i_*$ identifies simply with the forgetful functor from $\LMod_{\SSkalg^\Int_\CA(\Ann)}(\widehat{\CA})\to \widehat{\CA}$.

As we mentioned in the beginning of this section, our goal is to understand derived skein category of closed surfaces $\widehat{\SSkcat_\CA(\Sigma)}\simeq \widehat{\SSkcat_\CA(\Sigma^\circ)}\boxtimes_{\widehat{\SSkcat_\CA(\Ann)}}\widehat{\CA}$. Thus we also need to understand module structure over $\widehat{\SSkcat_\CA(\Ann)}$.
\begin{defn}
A \emph{braided module category} over $\widehat{\CA}$ is a module over $\widehat{\SSkcat_{\CA}(\Ann)}$.
\end{defn}

\begin{expl}\label{explBMC}
\begin{enumerate}
\item $\widehat{\CA}$ is a braided module category over $\widehat{\CA}$.
\item $\widehat{\SSkcat_\CA(\Sigma^\circ)}$ is a braided module category over $\widehat{\CA}$. Here the module structure is given by insertion of annuli along the puncture. \footnote{ Meanwhile, $\widehat{\act_\emptyset}:\widehat{\CA}\to \widehat{\SSkcat_\CA(\Sigma^\circ)}$ also gives an natural right $\widehat{\CA}$-module structure on $\widehat{\SSkcat_\CA(\Sigma^\circ)}$. These two right action are compatible.}
\end{enumerate}
\end{expl}

\begin{defn}\label{QMM}
Let $A$ be an algebra in cocomplete DG category $\widehat{\CA}$. A \emph{quantum moment map} is a homomorphism of DG algebras in $\widehat{\CA}$
\[ \mu:\SSkalg^\Int_\CA(\Ann)\to A   \]
such that the following diagram commutes in cocomplete DG category $\widehat{\CA}$
\[ \xymatrix{ & A\otimes \SSkalg^\Int_\CA(\Ann) \ar[r]^-{\id\otimes \mu} \ar[dd]_{\tau} & A\otimes A \ar[rd]^{m} & \\ & & & A \\ & \SSkalg^\Int_\CA(\Ann)\otimes A \ar[r]_-{\mu\otimes \id} & A\otimes A \ar[ur]_m  }   \]
\end{defn}
\begin{lem}
Let $A$ be an algebra in $\widehat{\CA}$, and $\mu:\SSkalg^\Int_\CA(\Ann)\to A$ be a quantum moment map. Then $\LMod_A(\widehat{\CA})$ becomes a $\widehat{\SSkcat_\CA(\Ann)}\simeq \LMod_{\SSkalg^\Int_\CA(\Ann)}(\widehat{\CA})$-module category.
\end{lem}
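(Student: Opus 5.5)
The plan is to build the action explicitly, working in the algebraic model $\widehat{\SSkcat_\CA(\Ann)}\simeq \LMod_{\SSkalg^\Int_\CA(\Ann)}(\widehat{\CA})$ from Theorem \ref{hatSkcat=LModhat}. Recall that in Lemma \ref{skcatannisrigid} the monoidal structure is $M\otimes N=\tau_{lr}(M)\otimes^{\mathbb{L}}_{\SSkalg^\Int_\CA(\Ann)}N$, where $\tau_{lr}$ comes from the field goal transform $\tau$. Write $B:=\SSkalg^\Int_\CA(\Ann)$. Given $M\in \LMod_B(\widehat{\CA})$ and $N\in\LMod_A(\widehat{\CA})$, I would set
\[ M\triangleright N:=\tau_{lr}(M)\otimes^{\mathbb{L}}_{B}\mu^*(N). \]
Here $\mu^*(N)$ is $N$ with $B$ acting through $\mu$. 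The object $\tau_{lr}(M)\otimes N$ carries a left $A$-action: $A$ acts on $N$, and is moved past $M$ using the braiding of $\widehat{\CA}$. The key point is that this $A$-action descends to the relative tensor product over $B$.

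The steps are as follows.

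\textbf{Step 1.} Check that the right $B$-action on $\tau_{lr}(M)$ and the left $B$-action on $N$ commute with the left $A$-action just described, so that the bar complex $\BBar_\bullet(\tau_{lr}(M),B,N)$ is a simplicial object in $\LMod_A(\widehat{\CA})$.
\begin{itemize}
\item On $\tau_{lr}(M)$, the right $B$-action is the left action twisted by $\tau$. So commuting it past $A$ only needs the braiding and the naturality of $\tau$.
\item On $N$, commuting $B$ (acting via $\mu$) past $A$ is exactly the moment map axiom of Definition \ref{QMM}: $m\circ(\id\otimes\mu)=m\circ(\mu\otimes\id)\circ\tau$.
\end{itemize}
Since the forgetful functor $\LMod_A(\widehat{\CA})\to\widehat{\CA}$ preserves colimits and is conservative, the geometric realization computed in $\widehat{\CA}$ lifts to $\LMod_A(\widehat{\CA})$.

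\textbf{Step 2.} Give the unit and associativity constraints.
\begin{itemize}
\item The unit is $B\triangleright N=\tau_{lr}(B)\otimes^{\mathbb{L}}_B N\simeq N$.
\item Associativity is $(M\otimes M')\triangleright N\simeq M\triangleright(M'\triangleright N)$. It follows from associativity of relative tensor products, once one knows that $\tau_{lr}$ is compatible with the monoidal structure, which was implicitly used already in Lemma \ref{skcatannisrigid}.
\end{itemize}
Everything is colimit-preserving in each variable, so this defines an action in $\PPr$.

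A cleaner alternative repackages Steps 1 and 2 at once. Realize the action as $\LMod_A(\widehat{\CA})\simeq \LMod_B(\widehat{\CA})\boxtimes_{\LMod_B(\widehat{\CA})}\LMod_A(\widehat{\CA})$, obtained by restriction along $\mu$. Alternatively, note that $\mu$ makes $A$ an algebra in the Drinfeld center $\FZ_1$ relative to $T$. This mirrors the underived argument of \cite{Saf19}/\cite{BZBJ18} for quantum moment maps.

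\textbf{Main obstacle.} The hard part is the homotopy coherence of Step 2, in particular the compatibility of $\tau$ with multiplication on $B$ and on $A$, needed to get a genuine $\infty$-categorical module structure rather than one defined only up to homotopy. I would handle this by noting the following. $B$ and $A$ are ordinary algebras when $\CA$ is semisimple, by Proposition \ref{semisimpleintskalg}, and the moment map condition is a strict commutative diagram. One can therefore build the module structure strictly on the abelian level, following \cite{BZBJ18}. Lemma \ref{DLMod=LModhat} then passes to derived categories, and left-deriving the action gives the result. In the general DG case I would only claim the structure at the level of the axioms used in the paper, in line with Remark \ref{homotopycoherent}.
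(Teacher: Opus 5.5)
Your Step 1 does not go through, and this is a genuine gap, not a coherence issue. In the bar object $\tau_{lr}(M)\otimes B^{\otimes n}\otimes\mu^*(N)$ you let $A$ act by moving it past $\tau_{lr}(M)\otimes B^{\otimes n}$ with the braiding $c$ of $\widehat{\CA}$. Naturality of $c$ is exactly what makes this compatible with the faces $\tau_{lr}(M)\otimes B\to\tau_{lr}(M)$ and $B\otimes B\to B$. But then compatibility with the last face $B\otimes N\to N$ (test on $N=A$, $n=1$) is the identity
\[ m\circ(\mu\otimes\id)\circ c_{A,B}=m\circ(\id\otimes\mu), \]
i.e.\ $\mu$ is central with respect to the ordinary braiding. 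The axiom in Definition \ref{QMM} is this identity with $c_{A,B}$ replaced by the field goal transform $\tau_A$. That map is the half-braiding of $B$ as an object of the Drinfeld centre, and it differs from $c_{A,B}$ by a monodromy on one leg of $B=\int^V V^*\otimes V$.

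The two conditions are not equivalent. For the tautological moment map $\mu=\id_B$, yours says $m_B\circ c_{B,B}=m_B$. For $\mathcal{O}_q(\Sl_2)$ at generic $q$ this would kill all products of two elements of the adjoint summand $\mathfrak{g}\subset V^*\otimes V$, because $c_{\mathfrak{g},\mathfrak{g}}$ acts on each of the three isotypic summands of $\mathfrak{g}\otimes\mathfrak{g}$ by a scalar $\neq 1$. Using $\tau_A$ on the $B$-factors while still passing $M$ by $c$ does not help, since then the face $\tau_{lr}(M)\otimes B\to\tau_{lr}(M)$ breaks. So the $A$-action does not even descend to the underived coequalizer, and Step 2 has nothing to build on. Your ``cleaner alternative'' is circular: the relative tensor product $\LMod_B(\widehat{\CA})\boxtimes_{\LMod_B(\widehat{\CA})}\LMod_A(\widehat{\CA})$ already presupposes the module structure being constructed.

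The repair, which is the paper's proof, is to apply $\tau$ to the $A$-module rather than to the $B$-module, so that $A$ never has to be braided past anything. For $N\in\LMod_A(\widehat{\CA})$ define a right $B$-action
\[ N\otimes B\xrightarrow{\tau_N}B\otimes N\xrightarrow{\mu\otimes\id}A\otimes N\to N. \]
Use naturality of $\tau$ along the action map and $\tau_{A\otimes N}=(\tau_A\otimes\id_N)\circ(\id_A\otimes\tau_N)$. Then the requirement that this right action commute with the left $A$-action reduces precisely to $m\circ(\id\otimes\mu)=m\circ(\mu\otimes\id)\circ\tau_A$. The same computation, together with multiplicativity of the half-braiding of $B$ and the fact that $\mu$ is an algebra map, gives associativity of the right action. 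Thus $N$ is an $(A,B)$-bimodule, and for $M\in\LMod_B(\widehat{\CA})$ one sets $(N,M)\mapsto N\otimes^{\mathbb{L}}_B M$. In every bar term $N\otimes B^{\otimes n}\otimes M$ the algebra $A$ acts on the leftmost factor only, so it commutes with all face maps and descends to the realization. This gives a right module structure, which is all the lemma asks for.

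Your fallback to the semisimple, underived situation would not cover the statement anyway. The lemma is for an arbitrary ribbon category $\CA$ and an arbitrary, possibly DG, algebra $A$ in $\widehat{\CA}$.
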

\begin{proof}
We want to define an action
\[ \LMod_A(\widehat{\CA})\otimes \LMod_{\SSkalg^\Int_\CA(\Ann)}(\widehat{\CA})\to \LMod_A(\widehat{\CA})    \]

Given $M\in \LMod_A(\widehat{\CA})$, $\mu$ gives $M$ a left $\SSkalg^\Int_\CA(\Ann)$-module structure. The point is that $M$ already has a left $A$-action and now also a left $F$-action. To tensor $M$ over $\SSkalg^\Int_\CA(\Ann)$ with a left $\SSkalg^\Int_\CA(\Ann)$-module $V$, we need a right $\SSkalg^\Int_\CA(\Ann)$-action on $M$. This is given by field goal transformation. Now we must check this right $F$-action is compatible with the left $A$-action, i.e. that $M$ is an $(A,\SSkalg^\Int_\CA(\Ann))$-bimodule. This is exactly the quantum moment map equation.

Now let $V\in \LMod_{\SSkalg^\Int_\CA(\Ann)}(\widehat{\CA})$. Since $M$ is a right $\SSkalg^\Int_\CA(\Ann)$-module and $V$ is a left $\SSkalg^\Int_\CA(\Ann)$-module, we can define $M\otimes^\mathbb{L}_{\SSkalg^\Int_\CA(\Ann)}V$. Since the left $A$-action on $M$ commutes with the right $\SSkalg^\Int_\CA(\Ann)$-action, the $A$-action descends to the colimit defining $M\otimes^\mathbb{L}_{\SSkalg^\Int_\CA(\Ann)}V$. Hence
\[ M\otimes^\mathbb{L}_{\SSkalg^\Int_\CA(\Ann)} V \in \LMod_A(\widehat{\CA})  \]
So we obtain the functor
\begin{align*}
\LMod_A(\widehat{\CA})\boxtimes \LMod_{\SSkalg^\Int_\CA(\Ann)}(\widehat{\CA})&\to \LMod_A(\widehat{\CA}) \\ 
M\boxtimes V&\mapsto M\otimes^\mathbb{L}_{\SSkalg^\Int_\CA(\Ann)}V
\end{align*}

\end{proof}

\begin{expl} 
Conversely, an important class of quantum moment maps is given by braided module category. The tensor functor $i_*:\widehat{\CA}\to \widehat{\SSkcat_\CA(\Ann)}$ gives a braided module category a  $\widehat{\CA}$-module structure. By definition, given a braided module category $\CM$, and $M\in \CM$ be a $\widehat{\CA}$-compact generator i.e. $i_*^R\circ \act^R_M:\CM\to \widehat{\CA}$ is conservative and cocontinuous, we can consider its internal endomorphism algebra $\intend_{\widehat{\CA}}(M)$ in $\widehat{\CA}$. 

We want to construct a DG algebra map $\SSkalg^\Int_\CA(\Ann):=\intend_{\widehat{\CA}}(\mathbb{1})\to \intend_{\widehat{\CA}}(M)$. This map comes from a right $\SSkalg^\Int_\CA(\Ann)$-action on $\intend_{\widehat{\CA}}(M)$, $\intend_{\widehat{\CA}}(M)\otimes \intend_{\widehat{\CA}}(\mathbb{1})\to \intend_{\widehat{\CA}}(M)$. By the defining adjuncation of internal hom, it's enough to construct a map $M\odot_{\widehat{\CA}} \intend_{\widehat{\CA}}(M)\otimes\intend_{\widehat{\CA}}(\mathbb{1})=M\odot_{\widehat{\SSkcat_\CA(\Ann)}} (\intend_{\widehat{\CA}}(\mathbb{1})\otimes\intend_{\widehat{\CA}}(M)\otimes \intend_{\widehat{\CA}}(\mathbb{1}))\to M$ in $\CM$. This map is given by the following:
\begin{align*}
M\odot (\intend_{\widehat{\CA}}(\mathbb{1})\otimes\intend_{\widehat{\CA}}(M)\otimes \intend_{\widehat{\CA}}(\mathbb{1}))&\xrightarrow{\tau} M\odot \intend_{\widehat{\CA}}(\mathbb{1})\otimes \intend_{\widehat{\CA}}(\mathbb{1})\otimes \intend_{\widehat{\CA}}(M)\\
&\xrightarrow{\mu} M\odot \intend_{\widehat{\CA}}(\mathbb{1})\otimes \intend_{\widehat{\CA}}(M):=M\odot \intend_{\widehat{\CA}}(M)\to M
\end{align*}
The last map corresponds to the $\id_{\intend_{\widehat{\CA}}(M)}\in \Hom_{\widehat{\CA}}(\intend_{\widehat{\CA}}(M),\intend_{\widehat{\CA}}(M))$.

Note that the action map $\intend_{\widehat{\CA}}(M)\otimes \intend_{\widehat{\CA}}(\mathbb{1})\to \intend_{\widehat{\CA}}(M)$ commutes with the right $\intend_{\widehat{\CA}}(M)$-action on itself. This exactly means the algebra map $\mu_M:\SSkalg^\Int_\CA(\Ann)\to \intend_{\widehat{\CA}}(M)$ is a quantum moment map.
\end{expl}

By the examples in Example \ref{explBMC} for braided module category, we can also get examples for quantum moment map
\begin{expl}
\begin{enumerate}
\item The quantum moment map for $\widehat{\CA}$ attached to $\mathbb{1}$ is the counit homomorphism $\varepsilon:\SSkalg^\Int_\CA(\Ann)\to \mathbb{1}$.
\item The quantum moment map for $\widehat{\SSkcat_\CA(\Sigma^\circ)}$ attached to $\mathbb{1}$ is the canonical algebra map $\SSkalg^\Int_\CA(\Ann)\to \SSkalg^\Int_\CA(\Sigma^\circ)$.
\end{enumerate}
\end{expl}

\begin{prop}\label{quantummomentmapcommutes}
Let $A$ be an algebra of  $\widehat{\CA}$ in $\PPr_c$ with a quantum moment map $\SSkalg^\Int_\CA(\Ann)\to A$. Then we have 
\[ \LMod_{\SSkalg^\Int_\CA(\Ann)}(\LMod_A(\widehat{\CA}))\simeq \LMod_A(\LMod_{\SSkalg^\Int_\CA(\Ann)}(\widehat{\CA}))\simeq \LMod_A(\widehat{\SSkcat_\CA(\Ann)})     \]
\end{prop}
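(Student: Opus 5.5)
The plan is to recognise all three categories as categories of modules for one and the same algebra in $\widehat{\CA}$. Write $F=\SSkalg^\Int_\CA(\Ann)$ and let $\mu:F\to A$ be the quantum moment map. The second equivalence
\[\LMod_A(\LMod_F(\widehat{\CA}))\simeq \LMod_A(\widehat{\SSkcat_\CA(\Ann)})\]
is immediate. Theorem \ref{hatSkcat=LModhat}, applied with $\Sigma^\circ=\Ann$, gives a monoidal equivalence $\widehat{\SSkcat_\CA(\Ann)}\simeq \LMod_F(\widehat{\CA})$. Here the monoidal structure is the one from Lemma \ref{skcatannisrigid}, namely $M,N\mapsto \tau_{lr}(M)\otimes^{\mathbb{L}}_F N$. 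The first step is to say precisely what "$A$-module in $\LMod_F(\widehat{\CA})$" means: $A$ must become an algebra object there. Via $\mu$, $A$ is a left $F$-module, and $\tau$ turns it into an $F$-bimodule. The quantum moment map square in Definition \ref{QMM} is exactly the statement that the multiplication $m$ of $A$ descends to $\tau_{lr}(A)\otimes^{\mathbb{L}}_F A\to A$, so $A$ is an algebra in $\widehat{\SSkcat_\CA(\Ann)}$. Transporting along the monoidal equivalence then gives the second equivalence.

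For the first equivalence, I would identify both sides with $\LMod_{B}(\widehat{\CA})$ for a single algebra $B$. The natural candidate is the "braided semidirect" algebra $B=F\,\tilde\otimes\, A$: its underlying object is $F\otimes A$, and its multiplication uses the field goal transform $\tau$ to move $A$ past $F$. I would obtain $B$ by Barr--Beck (Theorem \ref{Barrbeck}) in both cases.

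On the left side, $\LMod_F(\LMod_A(\widehat{\CA}))$ uses the $\widehat{\SSkcat_\CA(\Ann)}$-action on $\LMod_A(\widehat{\CA})$ from the preceding lemma. The composite forgetful functor to $\widehat{\CA}$ is conservative and preserves all colimits, since each forgetful functor does. Its monad is $V\mapsto F\otimes A\otimes V$, with the monad structure determined by the action, that is by $\tau$ and $\mu$. On the right side, the composite forgetful functor $\LMod_A(\LMod_F(\widehat{\CA}))\to\widehat{\CA}$ is likewise conservative and cocontinuous. Its monad is the free $A$-module functor in $\LMod_F(\widehat{\CA})$ applied to the free $F$-module, namely $\tau_{lr}(A)\otimes^{\mathbb{L}}_F(F\otimes V)\simeq A\otimes V$, viewed with its induced $F$-action, so the underlying functor is again $F\otimes A\otimes V$ up to reordering by $\tau$. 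Barr--Beck then identifies each side with modules over its monad, and it remains to compare the two monad structures. Using Lemma \ref{FRmodule}, the forgetful functors are $\widehat{\CA}$-linear, so both monads are of the form $B_i\otimes -$ for algebras $B_i$ in $\widehat{\CA}$ with underlying object $F\otimes A$. The task is then to check $B_1\simeq B_2$ as algebras.

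The main obstacle is this comparison of the two multiplications on $F\otimes A$, carried out coherently at the $\infty$-level. I would first reduce it to the heart. When $F$ and $A$ lie in $\widehat{\CA}^\heartsuit$, both multiplications are built from $m_F$, $m_A$, $\mu$ and $\tau$. Their agreement is then a diagram chase, in which the moment map axiom of Definition \ref{QMM} is used to commute $\mu(F)$ past $A$, and one can pass to derived categories by Lemma \ref{DLMod=LModhat}. In general, I would argue that both algebras are the internal endomorphism algebra $\intend_{\widehat{\CA}}(F\otimes A)$ of the same compact generator. Concretely, the free object on $\mathbb{1}$ in each category maps to the same object under the equivalences of forgetful functors, so $B_1\simeq B_2$ by Theorem \ref{Monadicityformodule}. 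This sidesteps the explicit coherence check, provided one verifies that the two free objects correspond.
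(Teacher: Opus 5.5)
Your second equivalence is fine; note that the monoidal structure on $\LMod_F(\widehat{\CA})$ is the transported one of Lemma~\ref{skcatannisrigid}, not something Theorem~\ref{hatSkcat=LModhat} provides. Your observation that the moment-map square is exactly what lets $m$ descend to $\tau_{lr}(A)\otimes_F A\to A$ is the real content of the statement.

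The Barr--Beck comparison for the first equivalence, however, breaks at the right-hand monad. You compute the free object on $V$ as $\tau_{lr}(A)\otimes^{\mathbb{L}}_F(F\otimes V)\simeq A\otimes V$. Its image in $\widehat{\CA}$ is therefore $A\otimes V$, not $F\otimes A\otimes V$, because the relative tensor product absorbs the free $F$. Conceptually: the unit object of $\LMod_F(\widehat{\CA})$ is $F$, and the unit of $A$ as an algebra there is $\mu$. The unit axiom for an $A$-module $N$ then forces $f\cdot n=\mu(f)\cdot n$. So the $F$-action is not extra data, $\LMod_A(\LMod_F(\widehat{\CA}))\simeq\LMod_A(\widehat{\CA})$, and the monad is $A\otimes -$.

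Your left-hand monad $F\otimes A\otimes -$ instead has $F$ acting independently through $\widehat{\CA}$, so it describes $\LMod_{F\,\tilde{\otimes}\,A}(\widehat{\CA})$. Hence $B_1$ and $B_2$ do not even have the same underlying object, and no diagram chase can match their multiplications. With these two readings the first equivalence is actually false. Take $A=\mathbb{1}$ with moment map $\varepsilon$: your left side is $\LMod_F(\widehat{\CA})\simeq\widehat{\SSkcat_\CA(\Ann)}$, while the right side is $\widehat{\CA}$, and these already differ for $\CA=\rep^{\fd}_q(\mathbb{G}_m)$. Your fallback argument is also circular. Saying both algebras are $\intend_{\widehat{\CA}}$ of corresponding free objects presupposes an equivalence compatible with the forgetful functors, which is what you are trying to prove.

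What you need is one consistent meaning for the outer $\LMod$'s. Suppose $F$ acts on $\LMod_A(\widehat{\CA})$ through the $\widehat{\SSkcat_\CA(\Ann)}$-module structure of the preceding lemma. Then $F$ is the monoidal unit of $\widehat{\SSkcat_\CA(\Ann)}\simeq\LMod_F(\widehat{\CA})$, so $\LMod_F(\LMod_A(\widehat{\CA}))=\LMod_A(\widehat{\CA})$. Combined with the collapse above, all three categories are $\LMod_A(\widehat{\CA})$.

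The only input is then that the moment-map condition makes the $F$-action induced on each $A$-module via $\mu$ compatible with the $A$-action. Equivalently, it makes $A$ an algebra in $\LMod_F(\widehat{\CA})$. This is exactly what the paper's one-line proof asserts. It needs neither a semidirect product algebra nor a comparison of two multiplications on $F\otimes A$.
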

\begin{proof}
The commutation of the  $A$ and $\SSkalg^\Int_\CA(\Ann)$-module structures is exactly the
quantum moment map condition.
\end{proof}

In Lemma \ref{emptyasgenerator}, we show that $\mathbb{1}\in \widehat{\SSkcat_\CA(\Sigma^\circ)}$ is a $\widehat{\CA}$-compact generator, we also want to know if it is a $\widehat{\SSkcat_\CA(\Ann)}$-compact generator. This leads the following lemma:
\begin{lem}\label{Acompactgenerator->Anncompactgenerator}
Let $\CM$ be a braided module category, and $M\in \CM$ be a $\widehat{\CA}$-compact generator. Then $M$ is also a $\widehat{\SSkcat_\CA(\Ann)}$-compact generator, i.e. $ \act^R_M:\CM\to \widehat{\SSkcat_\CA(\Ann)}$ is conservative and cocontinuous. 
\end{lem}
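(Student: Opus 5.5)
The plan is to factor the $\widehat{\CA}$-valued internal hom through the annulus category and then use the conservativity of the forgetful functor. Write $\mathcal{S}:=\widehat{\SSkcat_\CA(\Ann)}$. The $\widehat{\CA}$-module structure on $\CM$ is obtained by restricting the $\mathcal{S}$-action along the tensor functor $i_*:\widehat{\CA}\to\mathcal{S}$ of Definition \ref{i*}. Hence the action functor $\act^{\widehat{\CA}}_M:\widehat{\CA}\to\CM$ factors as
\[ \act^{\widehat{\CA}}_M\simeq \act_M\circ i_* , \]
where $\act_M:\mathcal{S}\to\CM$ is $X\mapsto M\odot X$. Both functors are colimit-preserving between presentable categories, so right adjoints exist. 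Passing to right adjoints gives
\[ \inthom_{\widehat{\CA}}(M,-)\simeq i_*^R\circ \act^R_M . \]
This identity, together with two properties of $i_*^R$, is the whole argument.

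\textbf{Conservativity.} By Lemma \ref{i*dominant}, the essential image of $i_*$ generates $\mathcal{S}$. Lemma \ref{rightadjointdominantfunctorisconservative} then shows that $i_*^R$ is conservative; concretely, under Theorem \ref{hatSkcat=LModhat} it is the forgetful functor $\LMod_{\SSkalg^\Int_\CA(\Ann)}(\widehat{\CA})\to\widehat{\CA}$. Now suppose $f$ is a morphism in $\CM$ with $\act^R_M(f)$ an isomorphism. Then $i_*^R\act^R_M(f)=\inthom_{\widehat{\CA}}(M,f)$ is an isomorphism. Since $M$ is a $\widehat{\CA}$-generator, Corollary \ref{genrator=conservative} says $\inthom_{\widehat{\CA}}(M,-)$ is conservative, so $f$ is an isomorphism. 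Alternatively, and more cleanly in the stable setting: if $\act^R_M(N)\simeq 0$ then $\inthom_{\widehat{\CA}}(M,N)\simeq 0$, hence $N\simeq 0$.

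\textbf{Cocontinuity.} The forgetful functor $i_*^R$ preserves all colimits, since colimits of modules over an algebra in $\widehat{\CA}$ are computed underlying. It is also conservative, and it is exact between stable categories. Take a colimit diagram $N_\alpha$ in $\CM$ and consider the comparison map $\colim \act^R_M(N_\alpha)\to\act^R_M(\colim N_\alpha)$. Applying $i_*^R$ turns it into the comparison map $\colim\inthom_{\widehat{\CA}}(M,N_\alpha)\to\inthom_{\widehat{\CA}}(M,\colim N_\alpha)$. This is an isomorphism because $M$ is $\widehat{\CA}$-compact. Conservativity of $i_*^R$ then shows the original comparison map is an isomorphism, so $\act^R_M$ is cocontinuous.

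\textbf{Main obstacle.} I expect the delicate point to be the first step: justifying that the $\widehat{\CA}$-action on $\CM$ used in the hypothesis really is the restriction along $i_*$, so that the right adjoints compose as claimed. This requires $i_*$ to be a tensor functor, which is Lemma \ref{i*dominant}, and the module structures to be compatible, as in the footnote to Example \ref{explBMC}. I would state this as the convention defining the $\widehat{\CA}$-module structure on a braided module category, exactly as in the example preceding Definition \ref{QMM}. After that, the argument is purely formal.
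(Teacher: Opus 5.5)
Your proof is correct and follows essentially the paper's argument: the paper also uses the factorization $\inthom_{\widehat{\CA}}(M,-)\simeq i_*^R\circ\act^R_M$. It concludes from the conservativity and cocontinuity of $i_*^R$ (citing Lemmas \ref{i*dominant} and \ref{rightadjointdominantfunctorisconservative}) that $\act^R_M$ inherits both properties. Your write-up spells out the paper's one-line ``reflection'' step, correctly notes that conservativity of $\act^R_M$ needs only conservativity of the composite, and justifies cocontinuity of $i_*^R$ by identifying it with the forgetful functor, which the paper's citations do not actually supply.
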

\begin{proof}
By \ref{i*dominant} and \ref{rightadjointdominantfunctorisconservative}, we know $i^R_*$ is conservative and cocontinuous. Since conservative and cocontinuous functors themselves reflects conservativity and cocontinuity, we have $\act^R_M$ is conservative and cocontinuous.
\end{proof}

Thus we have \[\CM\simeq \LMod_{\intend_{\widehat{\SSkcat_\CA(\Ann)}}(M)}(\widehat{\SSkcat_\CA(\Ann)})\simeq \LMod_{\intend_{\widehat{\CA}}(M)}(\widehat{\CA})\]
and we have $i_*^R(\intend_{\widehat{\SSkcat_\CA(\Ann)}}(M))=\intend_{\widehat{\CA}}(M)$ where $i_*^R$ is forgetful functor. This leads the following proposition
\begin{prop}\label{algebrainAnn=algebrawithqmm}
An algebra $A$ in $\widehat{\SSkcat_\CA(\Ann)}$ is the same with its underlying algebra $A$ in $\widehat{\CA}$ with a quantum moment map $\SSkalg^\Int_\CA(\Ann)\to A$.
\end{prop}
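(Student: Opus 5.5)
We prove the two directions separately, using the monadic description $\widehat{\SSkcat_\CA(\Ann)}\simeq \LMod_{\SSkalg^\Int_\CA(\Ann)}(\widehat{\CA})$ from Theorem \ref{hatSkcat=LModhat}, under which $i_*$ is the free-module functor and $i_*^R$ is the forgetful functor.

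\emph{From algebras in $\widehat{\SSkcat_\CA(\Ann)}$ to moment maps.} Let $A$ be an algebra in $\widehat{\SSkcat_\CA(\Ann)}$. Since $i_*$ is a tensor functor (Lemma \ref{i*dominant}), its right adjoint $i_*^R$ is lax monoidal. Hence $i_*^R(A)$ is an algebra in $\widehat{\CA}$. The unit $\mathbb{1}_{\Ann}\to A$ of $A$, after applying $i_*^R$, gives an algebra map
\[ \mu:\SSkalg^\Int_\CA(\Ann)=i_*^R(\mathbb{1}_{\Ann})\to i_*^R(A), \]
using Lemma \ref{intskalg=RL} to identify $i_*^R(\mathbb{1}_{\Ann})$. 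To check the moment map condition, we regard $\CM=\LMod_A(\widehat{\SSkcat_\CA(\Ann)})$ as a braided module category. The object $A\in\CM$ is a $\widehat{\SSkcat_\CA(\Ann)}$-compact generator, so $\CM$ is also a braided module category over $\widehat{\CA}$ with $\intend_{\widehat{\CA}}(A)=i_*^R(A)$. This is exactly the identity $i_*^R(\intend_{\widehat{\SSkcat_\CA(\Ann)}}(M))=\intend_{\widehat{\CA}}(M)$ noted just before the statement. The Example constructing $\mu_M$ then shows that $\mu$ satisfies the field goal compatibility of Definition \ref{QMM}. We still need $A\in\CM$ to be $\widehat{\CA}$-compact. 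This holds because $i_*^R$ is cocontinuous: its left adjoint $i_*$ preserves compacts, since $\mathbb{1}$ is $\widehat{\CA}$-compact by Lemma \ref{emptyasgenerator}.

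\emph{From moment maps to algebras in $\widehat{\SSkcat_\CA(\Ann)}$.} Conversely, let $\mu:\SSkalg^\Int_\CA(\Ann)\to A$ be a quantum moment map. The moment map condition makes $A$ into an $\SSkalg^\Int_\CA(\Ann)$-bimodule whose right action is obtained from the left one via $\tau$. So $A$ is an object of $\LMod_{\SSkalg^\Int_\CA(\Ann)}(\widehat{\CA})$. The multiplication $m:A\otimes A\to A$ coequalizes the two actions and therefore factors through
\[ \tau_{lr}(A)\otimes^{\mathbb{L}}_{\SSkalg^\Int_\CA(\Ann)}A, \]
which is the monoidal product of Lemma \ref{skcatannisrigid}. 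This gives an algebra structure on $A$ in $\widehat{\SSkcat_\CA(\Ann)}$. Equivalently, Proposition \ref{quantummomentmapcommutes} identifies $\LMod_A(\widehat{\SSkcat_\CA(\Ann)})$ with $\LMod_{\SSkalg^\Int_\CA(\Ann)}(\LMod_A(\widehat{\CA}))$. This exhibits $A$ as the endomorphism algebra, in $\widehat{\SSkcat_\CA(\Ann)}$, of the compact generator $A$ of the braided module category $\LMod_A(\widehat{\CA})$. Here we use Lemma \ref{Acompactgenerator->Anncompactgenerator} together with the monadicity theorem \ref{Monadicityformodule}.

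\emph{The two constructions are inverse.} Going from $\mu$ to an algebra and back, $i_*^R$ forgets to the underlying $A$, and the unit map recovers $\mu$ by construction. Going from an algebra $A$ to $\mu$ and back, both algebras present the same module category $\LMod_A(\widehat{\SSkcat_\CA(\Ann)})\simeq \LMod_{i_*^R A}(\widehat{\CA})$ with the same generator. By Barr--Beck uniqueness of the monad, $A$ is recovered from $(i_*^R A,\mu)$.

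The main obstacle is homotopy coherence. Showing that a strictly commuting square in Definition \ref{QMM} really determines an $E_1$-algebra in $\widehat{\SSkcat_\CA(\Ann)}$, rather than only a homotopy-associative one, requires the field goal isomorphisms to be coherently compatible with the multiplication of $\SSkalg^\Int_\CA(\Ann)$. I would handle this by working throughout with module categories and endomorphism algebras, as in the two arguments above, instead of with multiplication maps directly. Then the only input needed is the monadicity theorem \ref{Monadicityformodule}, and all coherences are inherited from the braided module structure.
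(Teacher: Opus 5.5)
Your proposal follows the paper's route. The paper deduces the proposition from the monadic descriptions $\CM\simeq\LMod_{\intend_{\widehat{\SSkcat_\CA(\Ann)}}(M)}(\widehat{\SSkcat_\CA(\Ann)})\simeq\LMod_{\intend_{\widehat{\CA}}(M)}(\widehat{\CA})$, obtained via Lemma \ref{Acompactgenerator->Anncompactgenerator}, and the identity $i_*^R(\intend_{\widehat{\SSkcat_\CA(\Ann)}}(M))=\intend_{\widehat{\CA}}(M)$, combined with the earlier Lemma and Example passing between quantum moment maps and braided module categories; this is the skeleton of both your directions, with your lax-monoidal construction of $\mu$, explicit factorization of $m$, and inverse check as extra detail. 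One small caution: citing Proposition \ref{quantummomentmapcommutes} already presupposes that $A$ is an algebra in $\widehat{\SSkcat_\CA(\Ann)}$, so the non-circular construction is the one you then give, taking $\intend_{\widehat{\SSkcat_\CA(\Ann)}}(A)$ for the braided module category $\LMod_A(\widehat{\CA})$.
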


one may be curious about what $\LMod_{i_*(\intend_{\widehat{\CA}}(M))}(\widehat{\SSkcat_\CA(\Ann)})$ is?
\begin{prop}
Let $\CM$ be a braided module category, and $M\in \CM$ be a $\CA$-compact generator.
\[   \LMod_{i_*(\intend_{\widehat{\CA}}(M))}(\widehat{\SSkcat_\CA(\Ann)})\simeq \CM\boxtimes_{\widehat{\CA}}\widehat{\SSkcat_\CA(\Ann)}     \]
\end{prop}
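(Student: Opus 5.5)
Write $B:=\intend_{\widehat{\CA}}(M)$. The plan is to identify $\CM$ with $\LMod_B(\widehat{\CA})$ and then compute the relative tensor product $\LMod_B(\widehat{\CA})\boxtimes_{\widehat{\CA}}\widehat{\SSkcat_\CA(\Ann)}$ by Theorem \ref{Monadicityforrelativetensorproduct}.

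\textbf{Step 1.} Since $M$ is a $\widehat{\CA}$-compact generator and $\widehat{\CA}$ is compact-rigid, Theorem \ref{Monadicityformodule} gives an equivalence $\CM\simeq \LMod_{B}(\widehat{\CA})$ of $\widehat{\CA}$-module categories. Under it, $M$ corresponds to $B$ itself, and the $\widehat{\CA}$-action is by tensoring on the right.

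\textbf{Step 2.} View $\widehat{\SSkcat_\CA(\Ann)}$ as a left $\widehat{\CA}$-module via the tensor functor $i_*$ of Lemma \ref{i*dominant}. The relevant compact generator is the unit $\mathbb{1}_{\Ann}$, i.e. $\emptyset$. It is a $\widehat{\CA}$-compact generator by Lemma \ref{emptyasgenerator}, and its internal endomorphism algebra is $\SSkalg^\Int_\CA(\Ann)$ by Lemma \ref{intskalg=RL}. Apply Theorem \ref{Monadicityforrelativetensorproduct} with the right module $\CM$ and generator $m=M$, and the left module $\CN=\widehat{\SSkcat_\CA(\Ann)}$. This gives
\[ \CM\boxtimes_{\widehat{\CA}}\widehat{\SSkcat_\CA(\Ann)}\simeq \LMod_{B}\bigl(\widehat{\SSkcat_\CA(\Ann)}\bigr), \]
where $B$ acts on $\widehat{\SSkcat_\CA(\Ann)}$ through the $\widehat{\CA}$-module structure.

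\textbf{Step 3.} Check that the monad on $\widehat{\SSkcat_\CA(\Ann)}$ arising in Step 2, namely $N\mapsto B\odot N$ via the $\widehat{\CA}$-action, is tensoring with the algebra $i_*(B)$. Since $i_*$ is monoidal and $\widehat{\CA}$ acts on $\widehat{\SSkcat_\CA(\Ann)}$ through $i_*$ and the monoidal product, we have $B\odot N\simeq i_*(B)\otimes N$. Here $i_*(B)$ inherits an algebra structure because $i_*$ is a tensor functor. Modules for the two monads then agree, which yields the claimed equivalence with $\LMod_{i_*(B)}(\widehat{\SSkcat_\CA(\Ann)})$.

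\textbf{Main obstacle.} The hard part is the bookkeeping in Step 3. Theorem \ref{Monadicityforrelativetensorproduct} describes modules over $B$ acting through the module structure, and we must verify that this agrees with modules over $i_*(B)$ as an algebra in $\widehat{\SSkcat_\CA(\Ann)}$, with the correct side (left versus right) and with the braided monoidal product of the annulus category. I would first reduce to compact generators. By Lemma \ref{compactgeneratorsofrelativetensor}, the objects $M\boxtimes i_*V$ for $V\in\widehat{\CA}^c$ generate the left-hand side. Under Step 2 they go to the free modules $i_*(B)\otimes i_*V$. It then suffices to compare Hom-spaces using Proposition \ref{DSS}, and the comparison follows from monoidality of $i_*$ together with the fact that $i_*^R$ is the forgetful functor. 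The $i_*^R$-forgetful identification also guarantees that the two free-module functors agree. By Barr--Beck (Theorem \ref{Barrbeck}), this identifies the two module categories.
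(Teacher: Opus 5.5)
Your proposal is correct and is essentially the paper's argument. The paper simply cites Lemma \ref{i*dominant} together with \cite{BZBJ18} Corollary 4.13. That corollary amounts to Theorem \ref{Monadicityforrelativetensorproduct} applied with $\CN=\widehat{\SSkcat_\CA(\Ann)}$ and generator $\mathbb{1}_{\Ann}$ (supplied by dominance of $i_*$), followed by identifying the monad $B\odot(-)$ with $i_*(B)\otimes(-)$. Your Step 1 and the compact-generator/Barr--Beck verification in Step 3 are unnecessary: that monad identification is immediate, because $\widehat{\CA}$ acts on $\widehat{\SSkcat_\CA(\Ann)}$ by definition through the tensor functor $i_*$.
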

\begin{proof}
This is due to \ref{i*dominant} and corollary 4.13 in \cite{BZBJ18}.

\end{proof}

\subsection{Derived skein algebra}\label{SectionDSA}
\begin{defn}\label{DSA}
Let $\Sigma$ be an oriented surface, and $\CA$ be a ribbon tensor category. The \emph{derived skein algebra} is defined as
\[ \SSkalg_\CA(\Sigma):=\End_{\widehat{\SSkcat_\CA(\Sigma)}}(\emptyset)=\End_{\SSkcat_\CA(\Sigma)}(\emptyset).   \]
\end{defn}
\begin{rem}
The derived skein algebra $\SSkalg_\CA(\Sigma^\circ)$ of $\Sigma^\circ$ is the value of internal derived skein algebra $\SSkalg^\Int_\CA(\Sigma^\circ)$ on $\mathbb{1}\in \CA$. 
\end{rem}

By \ref{1boxtimesVarecompactgenerators}, if we curious on compact generation of $\widehat{\SSkcat_\CA(\Sigma)}$, we would be in particular interested in the endomorphism spaces of those compact generators $\mathbb{1}\boxtimes_{\widehat{\SSkcat_\CA(\Ann)}}V$. This motivates the following theorem which gives a derived version for theorem 5.4 in \cite{BZBJ18a}: 
\begin{thm}\label{QHR}
Let $\Sigma$ be an oriented surface, and $\CA$ ribbon tensor category. 
\[ \Hom_{\widehat{\SSkcat_\CA(\Sigma)}}(\mathbb{1}_{\Sigma^\circ}\boxtimes_{\widehat{\SSkcat_\CA(\Ann)}} V,\mathbb{1}_{\Sigma^\circ}\boxtimes_{\widehat{\SSkcat_\CA(\Ann)}} V)\simeq \Hom_{\widehat{\CA}}(\mathbb{1}, \SSkalg^\Int_\CA(\Sigma^\circ)\otimes^\mathbb{L}_{\SSkalg^\Int_\CA(\Ann)}\intend_{\widehat{\CA}}(V))   \]
\end{thm}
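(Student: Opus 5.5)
The plan is to reduce the Hom computation in the relative tensor product to internal Homs in $\widehat{\SSkcat_\CA(\Ann)}$, using Proposition \ref{DSS} with $\CC=\widehat{\SSkcat_\CA(\Ann)}$, $\CM=\widehat{\SSkcat_\CA(\Sigma^\circ)}$, $\CN=\widehat{\CA}$ (or a module category containing $V$), $m=m'=\mathbb{1}_{\Sigma^\circ}$ and $n=n'=V$. By Lemma \ref{ExcisionFreecoofskeincat} we have $\widehat{\SSkcat_\CA(\Sigma)}\simeq \widehat{\SSkcat_\CA(\Sigma^\circ)}\boxtimes_{\widehat{\SSkcat_\CA(\Ann)}}\widehat{\CA}$, and Lemma \ref{skcatannisrigid} says $\widehat{\SSkcat_\CA(\Ann)}$ is compact-rigid in $\PPr_c$, so the hypotheses of Proposition \ref{DSS} concern only compact generation. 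For $\mathbb{1}_{\Sigma^\circ}$, Lemma \ref{emptyasgenerator} gives that it is an $\widehat{\CA}$-compact generator, and Lemma \ref{Acompactgenerator->Anncompactgenerator} upgrades it to a $\widehat{\SSkcat_\CA(\Ann)}$-compact generator. For $V$, the object is compact in $\widehat{\CA}$, and $\widehat{\CA}$ is a braided module category (Example \ref{explBMC}). I will use the internal Hom over $\widehat{\SSkcat_\CA(\Ann)}$; if generation fails for general $V$, I will argue directly with the relative tensor product of $\intend(V)$-modules, since Proposition \ref{DSS} only uses the module-functor structure of internal Homs together with Theorem \ref{Monadicityforrelativetensorproduct}.

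Proposition \ref{DSS} then gives
\[ \Hom(\mathbb{1}_{\Sigma^\circ}\boxtimes V,\mathbb{1}_{\Sigma^\circ}\boxtimes V)\simeq \Hom_{\widehat{\SSkcat_\CA(\Ann)}}\bigl(\mathbb{1}_{\Ann},\ \inthom(\mathbb{1}_{\Sigma^\circ},\mathbb{1}_{\Sigma^\circ})\otimes \inthom(V,V)\bigr), \]
where the internal Homs are taken in $\widehat{\SSkcat_\CA(\Ann)}$ and $\otimes$ is its monoidal product. Next I identify each term algebraically inside $\LMod_{\SSkalg^\Int_\CA(\Ann)}(\widehat{\CA})$ (Theorem \ref{hatSkcat=LModhat}). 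By the discussion after Lemma \ref{Acompactgenerator->Anncompactgenerator}, $i_*^R$ sends $\intend_{\widehat{\SSkcat_\CA(\Ann)}}(\mathbb{1}_{\Sigma^\circ})$ to $\intend_{\widehat{\CA}}(\mathbb{1}_{\Sigma^\circ})=\SSkalg^\Int_\CA(\Sigma^\circ)$ (Lemma \ref{intskalg=RL}), which carries the module structure coming from the canonical quantum moment map. In the same way, $\inthom(V,V)$ is $\intend_{\widehat{\CA}}(V)$, with the module structure coming from the counit moment map. The monoidal product in $\LMod_{\SSkalg^\Int_\CA(\Ann)}(\widehat{\CA})$ is $M,N\mapsto \tau_{lr}(M)\otimes^{\mathbb{L}}_{\SSkalg^\Int_\CA(\Ann)}N$ (Lemma \ref{skcatannisrigid}), and the unit is $\SSkalg^\Int_\CA(\Ann)=i_*(\mathbb{1})$. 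Since $i_*$ is the free-module functor, adjunction gives
\[ \Hom_{\widehat{\SSkcat_\CA(\Ann)}}(i_*\mathbb{1},X)\simeq \Hom_{\widehat{\CA}}(\mathbb{1},i_*^RX). \]
Combining these yields $\Hom_{\widehat{\CA}}\bigl(\mathbb{1},\SSkalg^\Int_\CA(\Sigma^\circ)\otimes^{\mathbb{L}}_{\SSkalg^\Int_\CA(\Ann)}\intend_{\widehat{\CA}}(V)\bigr)$, which is the claim.

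I expect the main obstacle to be the middle identification. One has to check that the internal endomorphism objects computed in $\widehat{\SSkcat_\CA(\Ann)}$ really are the algebras $\SSkalg^\Int_\CA(\Sigma^\circ)$ and $\intend_{\widehat{\CA}}(V)$ with the correct $\SSkalg^\Int_\CA(\Ann)$-module structures, namely the ones given by the moment maps and the field goal transform $\tau$. This should follow from Proposition \ref{algebrainAnn=algebrawithqmm} applied to both objects and from the compatibility of $i_*^R$ with internal Homs, which Lemma \ref{FRmodule} provides. The signs and sides in $\tau_{lr}$ will also need care, so that the right $\SSkalg^\Int_\CA(\Ann)$-action on $\SSkalg^\Int_\CA(\Sigma^\circ)$ matches the one in the derived tensor product.
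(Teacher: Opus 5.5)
Your route matches the paper's proof step for step. You apply Proposition \ref{DSS} over the compact-rigid $\widehat{\SSkcat_\CA(\Ann)}$, identify $\intend(\mathbb{1}_{\Sigma^\circ})\otimes\intend(V)$ with $\SSkalg^\Int_\CA(\Sigma^\circ)\otimes^{\mathbb{L}}_{\SSkalg^\Int_\CA(\Ann)}\intend_{\widehat{\CA}}(V)$, recognise the unit as $i_*(\mathbb{1})$, and use the adjunction $i_*\dashv i_*^R$ with $i_*^R$ forgetful. You are also more careful than the paper about the generator hypotheses for $V$, which the paper never checks.

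One step as written is wrong, however: the $\SSkalg^\Int_\CA(\Ann)$-module structure on $\intend_{\widehat{\CA}}(V)$ does not come from the counit moment map. The counit is the moment map attached to $\mathbb{1}$, which is the case of Theorem \ref{DQHR}. For general $V$, the object $\inthom(V,V)$ in $\widehat{\SSkcat_\CA(\Ann)}$ carries the moment map $\mu_V$ of the braided module category $\widehat{\CA}$. There a loop around the annulus acts by encircling $V$, that is, through the double braiding with $V$.

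This is not cosmetic. In the proof of Lemma \ref{Gmskcatmorskalg}, the paper applies this theorem with $V$ a non-transparent character. The action there is $x\mapsto\lambda\neq 1$, so the derived tensor product is contractible; the counit action $x\mapsto 1$ would instead give a nonzero answer. Replace the counit by $\mu_V$, which is what your last paragraph (via Proposition \ref{algebrainAnn=algebrawithqmm}) actually produces, and the argument goes through as in the paper.
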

\begin{proof}
Since $\widehat{\SSkcat_\CA(\Ann)}$ is a compact-rigid tensor category (Lemma \ref{skcatannisrigid}), by Prop \ref{DSS}, we have 
\begin{align*}
\End_{\widehat{\SSkcat_\CA(\Sigma)}}(\mathbb{1}_{\Sigma^\circ}\boxtimes_{\widehat{\SSkcat_\CA(\Ann)}} V)&\cong \Hom_{\widehat{\SSkcat_\CA(\Ann)}}(\SSkalg^\Int_\CA(\Ann),\intend(\mathbb{1}_{\Sigma^\circ})\otimes \intend(V) ) \\
&
\cong \Hom_{\widehat{\SSkcat_\CA(\Ann)}}(\SSkalg^\Int_\CA(\Ann),\SSkalg^\Int_\CA(\Sigma^\circ)\otimes^\mathbb{L}_{\SSkalg^\Int_\CA(\Ann)}\intend(V))\\
& \cong \Hom_{\widehat{\SSkcat_\CA(\Ann)}}(i_*(\mathbb{1}_\mathbb{D}),\SSkalg^\Int_\CA(\Sigma^\circ)\otimes^\mathbb{L}_{\SSkalg^\Int_\CA(\Ann)}\intend(V)) 
\\&
\cong \Hom_{\widehat{\CA}}(\mathbb{1}_\mathbb{D},i^R_*(\SSkalg^\Int_\CA(\Sigma^\circ)\otimes^\mathbb{L}_{\SSkalg^\Int_\CA(\Ann)}\intend(V))\\
&\cong \Hom_{\widehat{\CA}}(\mathbb{1}_\mathbb{D},\SSkalg^\Int_\CA(\Sigma^\circ)\otimes^{\mathbb{L}}_{\SSkalg^\Int_\CA(\Ann)}\intend(V))
\end{align*}
Here $i_*$ is from Definition \ref{i*}.
\end{proof}
In particular, we have
\begin{thm}\label{DQHR} If $V=\mathbb{1}$, then
\[  \SSkalg_\CA(\Sigma)\cong \Hom_{\widehat{\CA}}(\mathbb{1},\SSkalg^\Int_\CA(\Sigma^\circ)\otimes^{\mathbb{L}}_{\SSkalg^\Int_\CA(\Ann)}\mathbb{1})   \]
\end{thm}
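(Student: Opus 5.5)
The plan is to specialize Theorem \ref{QHR} to $V=\mathbb{1}$ and then identify both sides with the quantities appearing in Theorem \ref{DQHR}. There are three steps.

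\textbf{Step 1: the left-hand side.} For $V=\mathbb{1}\in\widehat{\CA}^c$, I will show that $\mathbb{1}_{\Sigma^\circ}\boxtimes_{\widehat{\SSkcat_\CA(\Ann)}}\mathbb{1}$ is the empty labeling $\emptyset\in\widehat{\SSkcat_\CA(\Sigma)}$. Here $\mathbb{1}\in\widehat{\CA}$ should be read as the unit of $\widehat{\CA}$, viewed as a $\widehat{\SSkcat_\CA(\Ann)}$-module as in Example \ref{explBMC}(1). Topologically, this is the image of the empty skein under the gluing $\Sigma=\Sigma^\circ\cup_{\Ann}\mathbb{D}$. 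Algebraically, Proposition \ref{excisionlskcat} and Lemma \ref{ExcisionFreecoofskeincat} give
\[\widehat{\SSkcat_\CA(\Sigma)}\simeq\widehat{\SSkcat_\CA(\Sigma^\circ)}\boxtimes_{\widehat{\SSkcat_\CA(\Ann)}}\widehat{\SSkcat_\CA(\mathbb{D})},\]
and the functor induced on objects sends the pair of empty labelings to the empty labeling. Combined with Definition \ref{DSA}, the left-hand side of Theorem \ref{QHR} becomes $\End_{\widehat{\SSkcat_\CA(\Sigma)}}(\emptyset)=\SSkalg_\CA(\Sigma)$.

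\textbf{Step 2: the internal endomorphism algebra.} Next I will identify $\intend(V)$ for $V=\mathbb{1}$. In the proof of Theorem \ref{QHR}, $\intend(V)$ is the internal endomorphism object of $V$ relative to the $\widehat{\SSkcat_\CA(\Ann)}$-action, with underlying algebra in $\widehat{\CA}$ as in Proposition \ref{algebrainAnn=algebrawithqmm}. Since $\mathbb{1}$ is the unit of $\widehat{\CA}$, we have $\intend_{\widehat{\CA}}(\mathbb{1})\cong\mathbb{1}$ by rigidity. By the first part of the example following Example \ref{explBMC}, the corresponding quantum moment map is the counit $\varepsilon:\SSkalg^\Int_\CA(\Ann)\to\mathbb{1}$. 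Therefore the relative tensor product $\SSkalg^\Int_\CA(\Sigma^\circ)\otimes^{\mathbb{L}}_{\SSkalg^\Int_\CA(\Ann)}\intend(\mathbb{1})$ is exactly $\SSkalg^\Int_\CA(\Sigma^\circ)\otimes^{\mathbb{L}}_{\SSkalg^\Int_\CA(\Ann)}\mathbb{1}$, where $\mathbb{1}$ carries the module structure given by $\varepsilon$.

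\textbf{Step 3: conclusion.} Substituting the results of Steps 1 and 2 into Theorem \ref{QHR} gives the stated formula.

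I expect Step 2 to be the main obstacle. One has to make sure that the $\SSkalg^\Int_\CA(\Ann)$-module structure on $\mathbb{1}$ appearing in the derived tensor product is the counit (trivial) one, and not some twisted structure coming from the field goal transform. The first check is that the field goal transform $\tau$ restricted to the unit is the identity, so that the quantum moment map condition of Definition \ref{QMM} is satisfied trivially by $\varepsilon$. A second check uses the identification $\widehat{\SSkcat_\CA(\mathbb{D})}\simeq\widehat{\CA}$ as a $\widehat{\SSkcat_\CA(\Ann)}$-module: the action of the annulus on the disk simply caps off the puncture, and this corresponds to $\varepsilon$.
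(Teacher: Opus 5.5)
Your proposal is correct and follows the paper, which obtains this statement as the $V=\mathbb{1}$ case of Theorem \ref{QHR}: the object $\mathbb{1}_{\Sigma^\circ}\boxtimes_{\widehat{\SSkcat_\CA(\Ann)}}\mathbb{1}$ is the empty object, and $\intend_{\widehat{\CA}}(\mathbb{1})\cong\mathbb{1}$ carries the counit moment map. One minor point: $\intend_{\widehat{\CA}}(\mathbb{1})\cong\mathbb{1}$ follows from the Yoneda lemma alone, since $\Hom_{\widehat{\CA}}(a\otimes\mathbb{1},\mathbb{1})\cong\Hom_{\widehat{\CA}}(a,\mathbb{1})$, so rigidity is not needed there.
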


\begin{lem}\label{SkalgS^2=C(g)}
Let $G$ be connected reductive group over a field $\mathbb{k}$, and $\mathbb{k}\hookrightarrow \mathbb{C}$ (e.g. $q$ is not a root of unity) 
\[ \SSkalg_G(S^2)\cong C_\bullet(\mathfrak{g};\mathbb{k})\cong \rmH_\bullet(G_{\mathbb{C}}(\mathbb{C});\mathbb{k})\]
where $C_\bullet(\mathfrak{g};\mathbb{k})$ is the Lie algebra chain complex, and $\rmH_\bullet(G_{\mathbb{C}}(\mathbb{C});\mathbb{k})$ is singular homology of $G_{\mathbb{C}}(\mathbb{C})$.
\end{lem}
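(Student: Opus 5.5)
We apply Theorem \ref{DQHR} with $\Sigma=S^2$, so that $\Sigma^\circ=\mathbb{D}$ is a disk. Then $\SSkalg^\Int_G(\mathbb{D})=\mathbb{1}$, because $\SSkcat_G(\mathbb{D})\simeq\CA$ and $\emptyset$ corresponds to the unit. The formula therefore reduces to
\[ \SSkalg_G(S^2)\cong \Hom_{\widehat{\CA}}\big(\mathbb{1},\mathbb{1}\otimes^{\mathbb{L}}_{\SSkalg^\Int_G(\Ann)}\mathbb{1}\big), \]
where both copies of $\mathbb{1}$ carry the counit (trivial) module structure, as in the examples of quantum moment maps. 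Since $q$ is not a root of unity, $\CA=\rep^\fd_q(G)$ is semisimple. Proposition \ref{semisimpleintskalg} then identifies $\SSkalg^\Int_G(\Ann)$ with the ordinary internal skein algebra of the annulus, which by \cite{BZBJ18} Section 6 (and the Example above with $r=1$) is the reflection equation algebra $\mathcal{O}_q(G)$ with its braided (field goal) structure. Moreover, $\mathbb{1}$ is projective in $\widehat{\CA}$ and $\Hom_{\widehat{\CA}}(\mathbb{1},-)$ is exact, so taking invariants commutes with the derived tensor product. This leaves the task of computing
\[ \big(\mathbb{k}\otimes^{\mathbb{L}}_{\mathcal{O}_q(G)}\mathbb{k}\big)^{G}, \]
the invariant part of the derived $\mathrm{Tor}$ of the reflection equation algebra at the augmentation character.

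To compute this, I would resolve $\mathbb{k}$ by a bar resolution inside $\widehat{\CA}$ and then argue by deformation. Over $\mathbb{k}[[\hbar]]$, or generically, $\mathcal{O}_q(G)$ is a flat deformation of $\mathcal{O}(G)$ that is equivariant for the adjoint coaction. On the associated graded level, $\mathrm{Tor}^{\mathcal{O}(G)}_\bullet(\mathbb{k},\mathbb{k})\cong\Lambda^\bullet\mathfrak{g}$, with the adjoint action, because $G$ is smooth at the identity (Koszul/HKR). A semicontinuity argument on the deformation shows that $\mathrm{Tor}^{\mathcal{O}_q(G)}_\bullet(\mathbb{k},\mathbb{k})$ is still $\Lambda^\bullet\mathfrak{g}$ as a $U_q\mathfrak{g}$-module. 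Alternatively, one can use Kolb–Stokman/Joseph–Letzter style results, which identify the locally finite part and give a PBW-type flatness. Since $\CA$ is semisimple, the invariants pass through the spectral sequence, so the homology is $(\Lambda^\bullet\mathfrak{g})^{\mathfrak{g}}$.

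The main obstacle is this deformation step: one must make sure the generic quantum Tor and its invariants have the same size as the classical ones. Here I would rely on the fact that the multiplicities of simple modules are locally constant in $q$ under semisimplicity, so the invariant dimensions agree.

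Finally, I would identify $(\Lambda^\bullet\mathfrak{g})^{\mathfrak{g}}$ with the invariant part of Chevalley–Eilenberg chains. For reductive $\mathfrak{g}$ in characteristic zero, the inclusion $(\Lambda^\bullet\mathfrak{g})^{\mathfrak{g}}\hookrightarrow C_\bullet(\mathfrak{g};\mathbb{k})$ is a quasi-isomorphism, and the invariant forms carry zero differential. This gives $\SSkalg_G(S^2)\simeq C_\bullet(\mathfrak{g};\mathbb{k})$. The second isomorphism is classical (Cartan, Chevalley–Eilenberg): $\rmH_\bullet(\mathfrak{g};\mathbb{C})\cong\rmH_\bullet(K;\mathbb{C})$ for the compact form $K\simeq G_\mathbb{C}(\mathbb{C})$, and this descends to $\mathbb{k}$ by rationality of both sides. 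Formality of the Lie algebra chains, which are an exterior algebra on primitive generators, upgrades the comparison to the chain level.
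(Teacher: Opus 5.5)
Your reduction via Theorem \ref{DQHR} to the invariant part of $\mathbb{k}\otimes^{\mathbb{L}}_{\mathcal{O}_q(G)}\mathbb{k}$ is fine. So is your last paragraph: invariant Chevalley--Eilenberg chains, Cartan's theorem, and descent to $\mathbb{k}$. The gap is the deformation step, and that step is not merely unjustified but false: at generic $q$, $\mathrm{Tor}^{\mathcal{O}_q(G)}_\bullet(\mathbb{k},\mathbb{k})$ is \emph{not} $\Lambda^\bullet\mathfrak{g}$.

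Semicontinuity only bounds each generic $\dim\mathrm{Tor}_i$ above by its value at $q=1$. Equality holds exactly when the Tor over $\mathbb{k}[[\hbar]]$ has no $\hbar$-torsion, and ``multiplicities are locally constant'' presupposes that flatness. Flatness already fails in degree one. Write $L=\mathrm{Id}+X$, with the entries of $X$ in the augmentation ideal $I$ of the reflection equation algebra. Modulo $I^2$, the reflection equation becomes $[R_{21}R_{12},\,1\otimes X]\equiv 0$. For the defining representation $V$ of $\Gl_2$ at generic $q$, the double braiding has distinct eigenvalues on $\mathrm{Sym}^2_qV$ and $\Lambda^2_qV$. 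So $1\otimes X$ must preserve the line $\Lambda^2_qV$, and this forces $X\equiv c\cdot\mathrm{Id}$ modulo $I^2$. Hence $\mathrm{Tor}_1=I/I^2$ is at most one-dimensional, and zero for $\Sl_2$ once $\det_qL=1$ is imposed. At $q=1$ it is $\mathfrak{g}^*$, of dimension $4$ (resp.\ $3$). The non-invariant part of $\Lambda^\bullet\mathfrak{g}$ therefore does not survive. Any argument that treats all isotypic components uniformly, as yours does, cannot be correct. That the \emph{trivial} isotypic component keeps its classical size needs a separate reason, and your proposal does not give one.

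The paper's route supplies this by going the other way. It identifies $\mathbb{k}\otimes^{\mathbb{L}}_{\mathcal{O}_q(G)}\mathbb{k}$ directly with $\mathbb{k}\otimes^{\mathbb{L}}_{U(\mathfrak{g})}\mathbb{k}$: near the counit, the generic reflection equation algebra behaves like an enveloping algebra rather than like functions on $G$. Joseph--Letzter type results, identifying $\mathcal{O}_q(G)$ with the ad-locally finite part of $U_q\mathfrak{g}$, are what justify this. Note that $\mathcal{O}_q(G)$ degenerates to $\mathcal{O}(G)$ while $U_q^{\mathrm{lf}}$ degenerates to $U(\mathfrak{g})$, along different integral forms. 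So the result you cite argues against, not for, flatness relative to $\mathcal{O}(G)$. In the paper's picture the whole Tor is already $\rmH_\bullet(\mathfrak{g})$ with trivial action, so $\Hom_{\widehat{\CA}}(\mathbb{1},-)$ changes nothing (compare Corollary \ref{Hom(1,-)useless}). The Chevalley--Eilenberg complex then appears directly as the Koszul complex of $U(\mathfrak{g})$, not via invariants of $\Lambda^\bullet\mathfrak{g}$. With that replacement for your middle step, your second half is a valid alternative to the paper's Iwasawa/de Rham/Poincar\'e-duality argument for the comparison with $\rmH_\bullet(G_{\mathbb{C}}(\mathbb{C});\mathbb{k})$.
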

\begin{proof}
For the first $\cong$, $\SSkalg_G(S^2)\cong\mathbb{k}\otimes^{\mathbb{L}}_{\mathcal{O}_q(G)}\mathbb{k}\cong \mathbb{k}\otimes^{\mathbb{L}}_{U(\mathfrak{g})}\mathbb{k}=: C_\bullet(\mathfrak{g};\mathbb{k})$. 

For the second $\cong$, by definition, the Lie algebra $\mathfrak{g}_{\mathbb{C}}$ of algebraic group $G_\mathbb{C}:=G\otimes_{\mathbb{k}}\mathbb{C}$ is just given by usual lie algebra of the lie group $G_{\mathbb{C}}(\mathbb{C})$ of $\mathbb{C}$-points of $G$. By the Iwasawa decomposition, a connected Lie group $G_{\mathbb{C}}(\mathbb{C})$ deformation retracts onto its maximal compact Lie subgroup $\mathbb{S}G$. Since G is reductive, we have $\mathfrak{t}\otimes_{\mathbb{R}}\mathbb{C}\cong \mathfrak{g}_{\mathbb{C}}$ where $\mathfrak{t}$ is the Lie algebra of $\mathbb{S}G$. Then we have $C^\bullet(\mathfrak{t};\mathbb{R})\otimes_\mathbb{R}\mathbb{C}\cong C^\bullet(\mathfrak{g}_{\mathbb{C}};\mathbb{C})$. $C^\bullet(\mathfrak{t};\mathbb{R})$is quasi-isomorphic to de Rham cochain complex $\Omega^\bullet(\mathbb{S}G)$. By de Rham theorem, $\Omega^\bullet(\mathbb{S}G)$ is quasi-isomorphic to singular cochains $C^\bullet(\mathbb{S}G;\mathbb{R})$. By universal coefficient theorem,  $C^\bullet(\mathbb{S}G);\mathbb{C})\cong C^\bullet(\mathbb{S}G;\mathbb{R})\otimes_\mathbb{R}\mathbb{C}$, . So we have $C^\bullet(\mathfrak{g}_{\mathbb{C}};\mathbb{C})\cong C^\bullet(\mathbb{S}G;\mathbb{C})$. Since Lie group is formal, we have $C^\bullet(\mathfrak{g}_{\mathbb{C}};\mathbb{C})\cong \rmH^\bullet(\mathfrak{g}_{\mathbb{C}};\mathbb{C}) \cong \rmH^\bullet(\mathbb{S}G;\mathbb{C})\simeq \rmH^\bullet(G_{\mathbb{C}}(\mathbb{C});\mathbb{C})$.  Then by Poincar\'e duality, we get $C_\bullet(\mathfrak{g}_{\mathbb{C}};\mathbb{C})\cong \rmH_\bullet(G_{\mathbb{C}}(\mathbb{C});\mathbb{C})$. Note that since G is reductive, it is unimodular, so the twist is trivial. Thus we have $C_\bullet(\mathfrak{g};\mathbb{k})\otimes_{\mathbb{k}}\mathbb{C}\cong C_\bullet(\mathfrak{g}_{\mathbb{C}};\mathbb{C})\cong \rmH_\bullet(G_{\mathbb{C}}(\mathbb{C});\mathbb{C})\cong \rmH_\bullet(G_\mathbb{C}(\mathbb{C});\mathbb{k})\otimes_{\mathbb{k}}\mathbb{C}$. Since $G_\mathbb{C}(\mathbb{C})$ has finite-dimensional homologies in each degree and graded dimensions match, the comparison over $\mathbb{C}$ descends to $\mathbb{k}$. Hence $\rmH_\bullet(\mathfrak{g};\mathbb{k})\cong \rmH_\bullet(G_\mathbb{C}(\mathbb{C});\mathbb{k})$.
\end{proof}

\begin{rem}
Using \ref{DQHR}, we also compute $\SSkalg_{\mathbb{G}_m}(\Sigma_g)\cong\mathcal{D}_q(\mathbb{G}_m)^{\otimes g}\otimes \mathbb{k}[\varepsilon]/(\varepsilon^2)$ where $\varepsilon$ has homological degree 1, see Equation \ref{SkalgGm(Sigmag)}.
\end{rem}

\subsection{Derived skein category of closed surfaces}\label{skeincatofclosed}

In this section, we study derived skein category of a closed surface $\Sigma$. All surfaces $\Sigma$ in this section are closed. By excision theorem, this means we want to understand the relative tensor product $\widehat{\SSkcat_\CA(\Sigma^\circ)}\boxtimes_{\widehat{\SSkcat_\CA(\Ann)}} \widehat{\CA}$. There are two important things to understand: $\widehat{\SSkcat_\CA(\Ann)}$ itself, and the action of $\widehat{\SSkcat_\CA(\Ann)}$ on both left and right. Notice that there is not just an action of $\widehat{\CA}$ on $\widehat{\SSkcat_\CA(\Ann)}$, but conversely, also an action of $\widehat{\CA}$ on $\widehat{\SSkcat_\CA(\Ann)}$ induced by a tensor functor.

\begin{thm}\label{Skcat=BMod}
Let $\Sigma$ be closed, and $\CA$ be a ribbon tensor category. There is an equivalence of stable presentable $\mathbb{k}$-linear $\infty$-categories
\[ \widehat{\SSkcat_\CA(\Sigma)}\simeq \BMod_{\SSkalg^{\Int}_\CA(\Sigma^\circ)|\mathbb{1}_\CA}(\widehat{\SSkcat_\CA(\Ann)})
\]
Here although $\SSkalg^\Int_\CC(\Sigma^\circ),\mathbb{1}_\CA$ are algebras in $\widehat{\CA}$ but they both admit a quantum moment map, by \ref{algebrainAnn=algebrawithqmm}, we can regard them as algebras in $\widehat{\SSkcat_\CA(\Ann)}$, so the above notation makes sense.
\end{thm}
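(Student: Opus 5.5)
We would prove the statement by excision followed by two applications of monadicity over the annulus category. Cut the closed surface along the boundary circle of the chosen disk, so that $\Sigma=\Sigma^\circ\cup_{\Ann}\mathbb{D}$. Lemma \ref{ExcisionFreecoofskeincat} (with $P\times I$ a collar of the circle, i.e.\ an annulus) then gives
\[ \widehat{\SSkcat_\CA(\Sigma)}\simeq \widehat{\SSkcat_\CA(\Sigma^\circ)}\boxtimes_{\widehat{\SSkcat_\CA(\Ann)}}\widehat{\SSkcat_\CA(\mathbb{D})}\simeq \widehat{\SSkcat_\CA(\Sigma^\circ)}\boxtimes_{\widehat{\SSkcat_\CA(\Ann)}}\widehat{\CA}. \]
The second equivalence uses $\SSkcat_\CA(\mathbb{D})\simeq\CA$. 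Both factors are braided module categories in the sense of Example \ref{explBMC}, and $\widehat{\SSkcat_\CA(\Ann)}$ is compact-rigid in $\PPr_c$ by Lemma \ref{skcatannisrigid}. This is exactly the setting of Theorem \ref{Monadicityforrelativetensorproduct}.

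Next we identify each factor as a module category over $\widehat{\SSkcat_\CA(\Ann)}$. By Lemma \ref{emptyasgenerator}, $\mathbb{1}_{\Sigma^\circ}$ is a $\widehat{\CA}$-compact generator of $\widehat{\SSkcat_\CA(\Sigma^\circ)}$. Likewise $\mathbb{1}_\CA$ is a $\widehat{\CA}$-compact generator of $\widehat{\CA}$, since $\widehat{\CA}$ is compact-rigid and $\inthom(\mathbb{1},-)=\id$. Lemma \ref{Acompactgenerator->Anncompactgenerator} upgrades both to $\widehat{\SSkcat_\CA(\Ann)}$-compact generators. Their internal endomorphism algebras in $\widehat{\SSkcat_\CA(\Ann)}$ are, by Proposition \ref{algebrainAnn=algebrawithqmm}, the underlying algebras in $\widehat{\CA}$ together with their quantum moment maps. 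For $\mathbb{1}_{\Sigma^\circ}$ this is $\SSkalg^\Int_\CA(\Sigma^\circ)$, using Lemma \ref{intskalg=RL}, the relation $i_*^R\circ\intend_{\widehat{\SSkcat_\CA(\Ann)}}=\intend_{\widehat{\CA}}$, and the canonical map $\SSkalg^\Int_\CA(\Ann)\to\SSkalg^\Int_\CA(\Sigma^\circ)$. For $\mathbb{1}_\CA$ it is $\mathbb{1}_\CA$ with the counit $\varepsilon$. The monadicity theorem (Theorem \ref{Monadicityformodule}, in its left and right versions) then gives equivalences of $\widehat{\SSkcat_\CA(\Ann)}$-module categories
\[ \widehat{\SSkcat_\CA(\Sigma^\circ)}\simeq \RMod_{\SSkalg^\Int_\CA(\Sigma^\circ)}(\widehat{\SSkcat_\CA(\Ann)}),\qquad \widehat{\CA}\simeq\LMod_{\mathbb{1}_\CA}(\widehat{\SSkcat_\CA(\Ann)}). \]
Which side is left and which is right is a matter of convention; it must be made consistent with the order $\SSkalg^\Int_\CA(\Sigma^\circ)|\mathbb{1}_\CA$ in $\BMod$.

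Finally we combine the two identifications. Theorem \ref{Monadicityforrelativetensorproduct} gives
\[ \RMod_{A}(\CC)\boxtimes_\CC\CN\simeq\RMod_A(\CN) \]
for $\CC=\widehat{\SSkcat_\CA(\Ann)}$ and $A=\SSkalg^\Int_\CA(\Sigma^\circ)$. Applying it with $\CN=\LMod_{\mathbb{1}_\CA}(\CC)$ yields
\[ \RMod_A(\LMod_{\mathbb{1}_\CA}(\CC))\simeq \BMod_{A|\mathbb{1}_\CA}(\CC). \]
This last step is the standard identification of bimodules as modules over one algebra in modules over the other, valid because the two actions commute in $\CC$. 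This is the analogue of Proposition \ref{quantummomentmapcommutes}, and equivalently of the usual formula $\RMod_A(\CC)\boxtimes_\CC\LMod_B(\CC)\simeq{}_B\BMod_A(\CC)$ for compact-rigid $\CC$.

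The main obstacle is the middle step. We must check that the equivalences from monadicity are genuinely $\widehat{\SSkcat_\CA(\Ann)}$-linear, with the braided-module action corresponding to the action on module categories induced by the quantum moment map. Otherwise the relative tensor product cannot be computed termwise. To handle this we would use Lemma \ref{FRmodule}: it makes $\act^R_{\mathbb{1}}$ canonically $\widehat{\SSkcat_\CA(\Ann)}$-linear, so that monadicity can be run directly in $\widehat{\SSkcat_\CA(\Ann)}$ rather than in $\widehat{\CA}$. We would then compare the resulting algebras via $i_*^R$ and the field goal transform, as in the example following Definition \ref{QMM}.
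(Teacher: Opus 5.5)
Your proposal is correct and follows essentially the same route as the paper. Both arguments use excision to reach $\widehat{\SSkcat_\CA(\Sigma^\circ)}\boxtimes_{\widehat{\SSkcat_\CA(\Ann)}}\widehat{\CA}$, Lemma \ref{Acompactgenerator->Anncompactgenerator} to make the distinguished objects $\widehat{\SSkcat_\CA(\Ann)}$-compact generators, Theorem \ref{Monadicityforrelativetensorproduct}, and commutation of the two actions via the quantum moment maps (Proposition \ref{quantummomentmapcommutes}); the only difference is bookkeeping, since you identify each factor as a module category over the annulus and then apply the bimodule formula, whereas the paper rewrites $\BMod$ inside $\LMod_{\SSkalg^\Int_\CA(\Ann)}(\widehat{\CA})$ and commutes the actions there.
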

\begin{proof}
We have $\widehat{\SSkcat_\CA(\Sigma)}\simeq \widehat{\SSkcat_\CA(\Sigma^\circ)}\boxtimes_{\widehat{\SSkcat_\CA(\Ann)}}\widehat{\CA}$. So our goal is to prove
\[ \widehat{\SSkcat_\CA(\Sigma^\circ)}\boxtimes_{\widehat{\SSkcat_\CA(\Ann)}}\widehat{\CA}\simeq \BMod_{\SSkalg^\Int_\CA(\Sigma^\circ)|\mathbb{1}_\CA}(\widehat{\SSkcat_\CA(\Ann)})  \]

By \ref{quantummomentmapcommutes}, we have that
\begin{align*}
\BMod_{\SSkalg^\Int_\CA(\Sigma^\circ)|\mathbb{1}}(\widehat{\SSkcat_\CA(\Ann)})&\simeq \BMod_{\SSkalg^\Int_\CA(\Sigma^\circ)|\mathbb{1}}(\LMod_{\SSkalg^\Int_\CA(\Ann)}(\widehat{\CA}))\\
&\simeq  \LMod_{\SSkalg^\Int_\CA(\Ann)}(\BMod_{\SSkalg^\Int_\CA(\Sigma^\circ)|\mathbb{1}}(\widehat{\CA}))\\
&\simeq \LMod_{\SSkalg^\Int_\CA(\Ann)}(\widehat{\SSkcat_\CA(\Sigma^\circ)})
\end{align*}

By \ref{Acompactgenerator->Anncompactgenerator}, we know that $\mathbb{1}$ is a $\widehat{\SSkcat_\CA(\Ann)}-$compact generator, so we can use theorem \ref{Monadicityforrelativetensorproduct} to get the claim.
\end{proof}

\begin{lem}\label{SkcatS^2=ModA}
Let $\CA$ be a ribbon tensor category. 
\[ \widehat{\SSkcat_\CA(S^2)}\simeq\BMod_{\mathbb{1}_\mathbb{D}|\mathbb{1}_\mathbb{D}}(\widehat{\SSkcat_\CA(\Ann)}) \simeq \Mod_{\mathbb{1}_{\mathbb{D}}\otimes^{\mathbb{L}}_{\SSkalg^\Int_\CA(\Ann)}\mathbb{1}_{\mathbb{D}}}(\hat{\CA})   \]
\end{lem}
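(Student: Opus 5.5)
The plan is to treat $S^2$ as the closed surface obtained by capping a disk: $S^2=\mathbb{D}\cup_{\Ann}\mathbb{D}$, so that $(S^2)^\circ\cong\mathbb{D}$. The first equivalence should then be Theorem \ref{Skcat=BMod} with $\Sigma=S^2$. In that case the punctured surface is the disk, and $\widehat{\SSkcat_\CA(\mathbb{D})}\simeq\widehat{\CA}$ by the unit property. The internal derived skein algebra $\SSkalg^\Int_\CA(\mathbb{D})=\widehat{\act_\emptyset}^R\widehat{\act_\emptyset}(\mathbb{1})$ is therefore $\mathbb{1}_\mathbb{D}$. Its quantum moment map $\SSkalg^\Int_\CA(\Ann)\to\mathbb{1}_\mathbb{D}$ is the counit $\varepsilon$, as in the list of quantum moment map examples. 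So the right-hand side of Theorem \ref{Skcat=BMod} becomes $\BMod_{\mathbb{1}_\mathbb{D}|\mathbb{1}_\mathbb{D}}(\widehat{\SSkcat_\CA(\Ann)})$, both algebras being viewed in $\widehat{\SSkcat_\CA(\Ann)}$ via $\varepsilon$ and Proposition \ref{algebrainAnn=algebrawithqmm}. I will check that the identification $\SSkalg^\Int_\CA(\mathbb{D})\cong\mathbb{1}$ is compatible with the moment map. This should follow from the construction of $\mu_M$ in the braided-module example applied to $\CM=\widehat{\CA}$, $M=\mathbb{1}$.

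For the second equivalence, I will use the description $\widehat{\SSkcat_\CA(\Ann)}\simeq\LMod_{F}(\widehat{\CA})$ with $F:=\SSkalg^\Int_\CA(\Ann)$, from Theorem \ref{hatSkcat=LModhat}. Its monoidal structure is $M\otimes N=\tau_{lr}(M)\otimes^{\mathbb{L}}_F N$, as in Lemma \ref{skcatannisrigid}. Under this description, $\mathbb{1}_\mathbb{D}$ viewed as an algebra in $\widehat{\SSkcat_\CA(\Ann)}$ is $\mathbb{1}$ with the trivial $F$-module structure given by $\varepsilon$. A bimodule for $\mathbb{1}_\mathbb{D}|\mathbb{1}_\mathbb{D}$ in $\widehat{\SSkcat_\CA(\Ann)}$ is then a left module over the tensor product algebra $\mathbb{1}_\mathbb{D}\otimes\mathbb{1}_\mathbb{D}^{\op}$. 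Computed with the monoidal structure above, this algebra is $\mathbb{1}_\mathbb{D}\otimes^{\mathbb{L}}_F\mathbb{1}_\mathbb{D}=:B$, and it lives in $\widehat{\SSkcat_\CA(\Ann)}$. Concretely, $B=\mathbb{1}\otimes^{\mathbb{L}}_{\SSkalg^\Int_\CA(\Ann)}\mathbb{1}$, where the right $F$-action on the left factor comes from the field goal transform.

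The remaining step is to pass from $B$-modules in $\widehat{\SSkcat_\CA(\Ann)}$ to $B$-modules in $\widehat{\CA}$. Here $B$ carries a moment map $F\to B$, so by Proposition \ref{quantummomentmapcommutes} we get $\LMod_B(\LMod_F(\widehat{\CA}))\simeq\LMod_F(\LMod_B(\widehat{\CA}))$. This is not yet the claim, because the $F$-action on a $B$-module should be redundant. It should factor through $B$ itself, since the $F$-action on $B$ is via $\varepsilon$ on the outer factor, which is absorbed. An alternative, cleaner route is Theorem \ref{Monadicityforrelativetensorproduct}. We have
\[\widehat{\SSkcat_\CA(S^2)}\simeq\widehat{\CA}\boxtimes_{\widehat{\SSkcat_\CA(\Ann)}}\widehat{\CA}.\]
By Lemma \ref{Acompactgenerator->Anncompactgenerator}, $\mathbb{1}$ is a $\widehat{\SSkcat_\CA(\Ann)}$-compact generator of $\widehat{\CA}$, so this relative tensor product is $\LMod_{\intend_{\widehat{\SSkcat_\CA(\Ann)}}(\mathbb{1})}(\widehat{\CA})$. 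It then remains to compute this internal endomorphism algebra, pushed into $\widehat{\CA}$ through the module structure, as $\mathbb{1}\otimes^{\mathbb{L}}_F\mathbb{1}$. I would do this by resolving $\mathbb{1}$ by free $F$-modules with the bar resolution, using that $i_*$ is the free module functor and $i_*^R$ is the forgetful functor.

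I expect the main obstacle to be this identification of the acting algebra on $\widehat{\CA}$ with $\mathbb{1}_\mathbb{D}\otimes^{\mathbb{L}}_{F}\mathbb{1}_\mathbb{D}$ as an algebra, rather than just as an object. The object-level statement follows from the adjunction $i_*\dashv i_*^R$ and the bar resolution. The multiplication, however, involves the field goal twist $\tau_{lr}$, and one has to check it is the one induced by the $E_1$-structure on the relative tensor product. If a direct check is too hard, I would first settle for the equivalence of underlying categories, which is all the statement asserts.
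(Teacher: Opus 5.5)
Your second route is essentially the paper's proof. The paper also starts from $\widehat{\SSkcat_\CA(S^2)}\simeq\widehat{\CA}\boxtimes_{\widehat{\SSkcat_\CA(\Ann)}}\widehat{\CA}$, gets monadicity from Beck--Chevalley using compact-rigidity of $\widehat{\SSkcat_\CA(\Ann)}$ (the argument behind Theorem \ref{Monadicityforrelativetensorproduct}), and reads off the monad as the trivial-module functor via $\varepsilon$ followed by $\coinv_r$; equivalently, $\intend_{\widehat{\SSkcat_\CA(\Ann)}}(\mathbb{1})$ is $\mathbb{1}$ with the trivial module structure (the right adjoint of the action functor $\coinv_r$ applied to $\mathbb{1}$), so its action on $\widehat{\CA}$ is $(\mathbb{1}\otimes^{\mathbb{L}}_{\SSkalg^\Int_\CA(\Ann)}\mathbb{1})\otimes -$ and no separate bar-resolution computation is needed. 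Drop your first route, though: identifying $\BMod_{A|B}$ with left modules over $A\otimes B^{\op}$ needs a braiding, which the stacking monoidal structure on $\widehat{\SSkcat_\CA(\Ann)}$ does not supply.
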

\begin{proof}
The relative tensor product $\widehat{\SSkcat_\CA(S^2)}\simeq \widehat{\CA}\boxtimes_{\widehat{\SSkcat_\CA(\Ann)}}\widehat{\CA}$ is obtained as the geometric realization of the simplicial object \[\xymatrix{\widehat{\CA} & \widehat{\CA}\boxtimes_{\widehat{\CA}}\widehat{\SSkcat_\CA(\Ann)}\simeq \widehat{\SSkcat_\CA(\Ann)}\simeq \RMod_{\SSkalg^{\Int}_\CA(\Ann)}(\widehat{\CA}) \ar@<-0.5ex>[l] \ar@<0.5ex>[l] & \ar@<0.5ex>[l] \ar[l] \ar@<-0.5ex>[l]  } \]
in $\PPr$. Since $\widehat{\SSkcat_\CA(\Ann)}$ is compact-rigid (Lemma \ref{skcatannisrigid}), this diagram admits right adjoints which satisfy the Beck-Chevalley conditions. Therefore, the right adjoint to the projection
\[ \widehat{\CA}\to \widehat{\CA}\boxtimes_{\widehat{\SSkcat_\CA(\Ann)}}\widehat{\CA}   \]
is monadic. This monad is given by
\[ \widehat{\CA}\to \widehat{\SSkcat_\CA(\Ann)}\xrightarrow{\coinv_r} \widehat{\CA}   \]
Here the first functor equips an object in $\widehat{\CA}$ with the right $\SSkalg^\Int_\CA(\Ann)$-module structure coming from the quantum moment map $\SSkalg^\Int_\CA(\Ann)\to \mathbb{1}$.
\end{proof}

\begin{lem}\label{skcatS^2=Modskalg}
Let $\CC=\rep^{\fd}_q(G)$ for $q$ not a root of unity. We have
\[ \widehat{\SSkcat_G(S^2)}\simeq \Mod_{\SSkalg_G(S^2)}(\VVect)    \]
i.e. $\SSkcat_G(S^2)$ is Morita equivalent to $B\SSkalg_G(S^2)$.
\end{lem}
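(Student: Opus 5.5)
Starting from Lemma \ref{SkcatS^2=ModA}, we have $\widehat{\SSkcat_G(S^2)}\simeq \Mod_{B}(\widehat{\CA})$ with $B:=\mathbb{1}_{\mathbb{D}}\otimes^{\mathbb{L}}_{\SSkalg^\Int_G(\Ann)}\mathbb{1}_{\mathbb{D}}$ and $\CA=\rep^\fd_q(G)$. By Proposition \ref{semisimpleintskalg}, $\SSkalg^\Int_G(\Ann)\cong \mathcal{O}_q(G)$ with its reflection-equation algebra structure. The plan is to show that $\mathbb{1}_\mathbb{D}$, regarded as an object of $\widehat{\SSkcat_G(S^2)}$, is a compact generator in the ordinary ($\VVect$-) sense. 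Then Schwede--Shipley/Barr--Beck (Theorem \ref{Barrbeck} applied to $\Hom(\mathbb{1}_\mathbb{D},-)$) gives $\widehat{\SSkcat_G(S^2)}\simeq \Mod_{\End(\mathbb{1}_\mathbb{D})}(\VVect)$. Since $\End(\emptyset)=\SSkalg_G(S^2)$ by Definition \ref{DSA}, this is the claimed equivalence, and Theorem \ref{Bim=Prcirc} turns it into the Morita statement $\SSkcat_G(S^2)\simeq_{\mathrm{Mor}} B\SSkalg_G(S^2)$.

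Compactness is immediate, since $\emptyset\in \SSkcat_G(S^2)$ is representable and hence compact in the free cocompletion. Generation is the real content. By Lemma \ref{1boxtimesVarecompactgenerators} (with $\Sigma^\circ=\mathbb{D}$), the objects $\mathbb{1}_\mathbb{D}\boxtimes_{\widehat{\SSkcat(\Ann)}}V$ for $V\in\CA$ generate. Since $q$ is not a root of unity, $\CA$ is semisimple, so it suffices to take $V=V_\lambda$ simple. Under Lemma \ref{SkcatS^2=ModA} these generators are the free $B$-modules $B\otimes V_\lambda$. We must therefore show that each $B\otimes V_\lambda$ lies in the localizing subcategory generated by $B$; equivalently, that the functor $\Hom_{\widehat{\CA}}(\mathbb{1},-):\Mod_B(\widehat{\CA})\to\VVect$ is conservative.

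To prove this I would compute $B$ explicitly. Using the Koszul-type resolution of the counit module over $\mathcal{O}_q(G)$, and the fact that the $\mathcal{O}_q(G)$-module $\mathbb{1}$ has a resolution whose terms are free on invariant generators, I would show that $B$ carries the trivial $G$-action up to quasi-isomorphism. Concretely, I would show $B\simeq \mathbb{k}\otimes^{\mathbb{L}}_{\mathcal{O}_q(G)}\mathbb{k}\simeq C_\bullet(\mathfrak{g})$ sitting in $\widehat{\CA}$ with trivial $\CA$-action; this is the input of Lemma \ref{SkalgS^2=C(g)}. Once $B$ lies in the image of $\VVect\to\widehat{\CA}$ as a trivial algebra, we obtain $\Mod_B(\widehat{\CA})\simeq \Mod_B(\VVect)\otimes\widehat{\CA}$. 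In these terms the non-trivial simples $V_\lambda$ give additional orthogonal summands, and these summands must be shown to be killed by the skein relations on $S^2$.

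This is where I expect the main obstacle. Naively, the equivalence $\widehat{\CA}\boxtimes_{\widehat{\SSkcat(\Ann)}}\widehat{\CA}$ should only see the $G$-invariant part, as in the ordinary statement that $\Skcat_G(S^2)$ is equivalent to $\Vect$ for generic $q$. The point is that a point labelled $V_\lambda$ on $S^2$ with $\lambda\neq 0$ is zero in the skein category: one can sweep the strand around the sphere, and the resulting skein gives an isomorphism $V_\lambda\cong 0$ via the $\mathcal{O}_q(G)$-action. I would try to prove this by showing that $\Hom_{\widehat{\SSkcat(S^2)}}(\mathbb{1}\boxtimes V_\lambda,\mathbb{1}\boxtimes V_\lambda)$ is acyclic for $\lambda\neq0$, using Theorem \ref{QHR}. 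That theorem gives $\Hom_{\widehat{\CA}}(\mathbb{1},\mathbb{1}\otimes^{\mathbb{L}}_{\mathcal{O}_q(G)}\intend(V_\lambda))$, so the task is to check that the derived $\mathcal{O}_q(G)$-coinvariants of $\intend(V_\lambda)\cong V_\lambda\otimes V_\lambda^*$ (with its moment-map module structure) have vanishing invariant part. Via the quantum Harish-Chandra/Peter--Weyl decomposition and generic-$q$ central character arguments, the center $\mathcal{O}_q(G)^G$ should act on the $\lambda$-isotypic piece by a character distinct from the counit, making the derived tensor product vanish. With $\mathbb{1}\boxtimes V_\lambda\simeq0$ for $\lambda\neq0$, the object $\emptyset$ alone generates, and the lemma follows.
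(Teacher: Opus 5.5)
Your overall architecture matches the paper's. You show that $\emptyset$ is a compact generator by proving $\mathbb{1}\boxtimes_{\widehat{\SSkcat_G(\Ann)}}V_\lambda\simeq 0$ for every non-trivial simple $V_\lambda$, then apply Schwede--Shipley. However, the middle of your proposal contains a step that fails, and taken literally it contradicts the lemma.

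You read Lemma~\ref{SkcatS^2=ModA} as $\Mod_B(\widehat{\CA})$ with monad $B\otimes(-)$. You then identify the generators with the free modules $B\otimes V_\lambda$ and propose to show that $\Hom_{\widehat{\CA}}(\mathbb{1},-)$ is conservative on $\Mod_B(\widehat{\CA})$. Suppose $B$ has trivial $G$-action, as you assert (and as Lemma~\ref{SkalgS^2=C(g)} implicitly uses). Then $\Hom_{\Mod_B}(B,B\otimes V_\lambda)\simeq\Hom_{\widehat{\CA}}(\mathbb{1},B\otimes V_\lambda)\simeq 0$, while $\End_{\Mod_B}(B\otimes V_\lambda)\simeq\Hom_{\widehat{\CA}}(V_\lambda,B\otimes V_\lambda)\simeq B\neq 0$. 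So these objects are non-zero and right orthogonal to $B$, and the functor is not conservative. Nor can the extra summands of $\Mod_B(\VVect)\otimes\widehat{\CA}$ later be ``killed by skein relations'': inside that model they are simply non-zero.

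The error is the identification of the monad. Lemma~\ref{SkcatS^2=ModA} is stated loosely. The monad its proof actually produces (equivalently, Theorem~\ref{Monadicityforrelativetensorproduct} with $\intend(\mathbb{1})=\mathbb{1}$ acting on the braided module $\widehat{\CA}$) is $N\mapsto N\otimes^{\mathbb{L}}_{\SSkalg^\Int_G(\Ann)}\mathbb{1}$. Here $N$ is a module through the annulus action on $\widehat{\CA}$, i.e.\ through the double braiding, not through the counit. This agrees with $B\otimes N$ only for transparent $N$. For $N=V_\lambda$ it is exactly the complex your last paragraph shows to be acyclic. You should therefore delete the computation of $B$ and the $\Mod_B(\VVect)\otimes\widehat{\CA}$ discussion entirely.

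What remains is correct and is a genuinely different route from the paper's. By Theorem~\ref{QHR}, $\End(\mathbb{1}\boxtimes V_\lambda)\simeq\Hom_{\widehat{\CA}}(\mathbb{1},\mathbb{1}\otimes^{\mathbb{L}}_{\mathcal{O}_q(G)}\intend(V_\lambda))$. A central quantum trace $z_W$ (a $W$-loop around the puncture) acts by $\dim W$ on $\mathbb{1}$ and by the monodromy scalar on $\intend(V_\lambda)$, so the derived tensor product vanishes whenever these differ. This is the algebraic argument the paper mentions in the remark after its proof and carries out for $\mathbb{G}_m$ in Lemma~\ref{Gmskcatmorskalg}.

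The paper's own proof is skein-theoretic. A $W$-loop around the marked point of $X_V$ evaluates locally to $\lambda\dim W\cdot\id_{X_V}$. Swept over the back of $S^2$, it evaluates to $\dim W\cdot\id_{X_V}$. Hence $(1-\lambda)\dim W\cdot\id_{X_V}=0$ already in $\rmH_0$, which forces $X_V\simeq 0$.

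The two arguments share a mechanism: the sweep is precisely the statement that $\mathbb{1}$ sees the counit character. The paper's version needs only degree-zero skein relations, with no resolutions and no description of the centre of $\mathcal{O}_q(G)$. Your version needs, and should state explicitly rather than attribute to ``Harish-Chandra'', that for $\lambda\neq 0$ some $W$ has monodromy scalar on $V_\lambda$ different from $\dim W$. This is the same input the paper extracts from $V\notin\FZ_2(\rep_q(G))$ via Schur's lemma and $\dim W\neq 0$.
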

\begin{proof}
Since $\{\mathbb{1}\boxtimes_{\widehat{\SSkcat_\CA(\Ann)}}V\}_{V\in \rep_q(G)}$ are generators of $\widehat{\SSkcat_G(S^2)}$, it's enough to prove that the one framed point labeling $X_V$ of $S^2$ labeled by $V$ is 0 in $\widehat{\SSkcat_G(S^2)}$ if $V\notin \Vect=\FZ_2(\rep_q(G))$. To show $X_V\simeq0$, it's also enough to show that $\id_{X_V}\simeq0\in \widehat{\SSkcat_G(S^2)}$. 

By Schur's Lemma, if $V\notin \FZ_2(\rep_q(G))$, then there is a nonzero simple $W\in \rep_q(G)$ and $\lambda\ne 1$ such that
\begin{center}
\begin{tikzpicture}
\draw[fill=yellow,fill opacity=0.3] (0,0) circle [radius=1];
\draw[line width=1.5,color=red] (0,1.3) ellipse (0.3 and 0.15);
\node[left,color=red] at (-0.3,1.3) {$W$};

		\draw[line width=1,dashed] (0,0) ellipse (1 and 0.25);
		\node[color=blue] at (0,0.8) {$\bullet$};
	   \node[right,color=blue] at (0,0.8) {$V$};
       \draw[line width=1.5,color=blue] (0,0.8) -- (0,1.8);

\draw[line width=1.5,color=red] (3.5,1.3) ellipse (0.3 and 0.15);
\node[left,color=red] at (3.2,1.3) {$W$};
\draw[fill=yellow,fill opacity=0.3] (4,0) circle [radius=1];
\draw[line width=1,dashed] (4,0) ellipse (1 and 0.25);
		\node[color=blue] at (4,0.8) {$\bullet$};
	   \node[right,color=blue] at (4,0.8) {$V$};
       \draw[line width=1.5,color=blue] (4,0.8) -- (4,1.8);

\node[] at (2,0) {$\simeq$};
\node[] at (2.5,0) {$\lambda$};

\draw[fill=yellow,fill opacity=0.3] (9,0) circle [radius=1];
\draw[line width=1,dashed] (9,0) ellipse (1 and 0.25);
		\node[color=blue] at (9,0.8) {$\bullet$};
	   \node[right,color=blue] at (9,0.8) {$V$};
       \draw[line width=1.5,color=blue] (9,0.8) -- (9,1.8);

\node[] at (6,0) {$\simeq$};
\node[right] at (6,0) {$\lambda\dim(W)$};
\end{tikzpicture}
\end{center}

On the other hand,
\begin{center}
\begin{tikzpicture}
\draw[fill=yellow,fill opacity=0.3] (0,0) circle [radius=1];
\draw[line width=1.5,color=red] (0,1.3) ellipse (0.3 and 0.15);
\node[left,color=red] at (-0.3,1.3) {$W$};

		\draw[line width=1,dashed] (0,0) ellipse (1 and 0.25);
		\node[color=blue] at (0,0.8) {$\bullet$};
	   \node[right,color=blue] at (0,0.8) {$V$};
       \draw[line width=1.5,color=blue] (0,0.8) -- (0,1.8);

\draw[fill=yellow,fill opacity=0.3] (4,0) circle [radius=1];
\draw[line width=1,dashed] (4,0) ellipse (1 and 0.25);
		\node[color=blue] at (4,0.8) {$\bullet$};
	   \node[right,color=blue] at (4,0.8) {$V$};
       \draw[line width=1.5,color=blue] (4,0.8) -- (4,1.8);

\draw[line width=1.5,color=red] (4,-0.7) ellipse (1.3 and 0.15);
\node[below,color=red] at (3.2,-0.7) {$W$};

   \node[] at (2,0) {$\simeq$};

\draw[fill=yellow,fill opacity=0.3] (8,0) circle [radius=1];
\draw[line width=1,dashed] (8,0) ellipse (1 and 0.25);
		\node[color=blue] at (8,0.8) {$\bullet$};
	   \node[right,color=blue] at (8,0.8) {$V$};
       \draw[line width=1.5,color=blue] (8,0.8) -- (8,1.8);
\node[] at (5.5,0) {$\simeq$};
 \node[right] at (5.6,0) {$\dim W$};  
\end{tikzpicture}
\end{center}
Then we have $(1-\lambda)\dim W\id_{X_V}\simeq0$. Since $\rep_q(G)$ is semisimple, $\dim W\ne 0$. So $\id_{X_V}\simeq0$ if $V\notin \FZ_2(\rep_q(G))=\Vect$.
\end{proof}
\begin{rem}
One can also prove the above purely algebraically. By \ref{QHR}, its enough to show that $\mathbb{k}\otimes^\mathbb{L}_{O_q(G)}V^*\otimes V\simeq 0$ is not exact if $V\notin \FZ_2(\rep_q(G))$. If $q$ is not a root of unity, all differentials will become isomorphisms. See \ref{Gmskcatmorskalg} for a proof when $G=\mathbb{G}_m$.
\end{rem}

\begin{cor}\label{Hom(1,-)useless}
For $q$ is not a root of unity and Heegaard splitting $M=H_1\cup_{\Sigma} H_2$, we have
\[  \Hom_{\Rep_q(G)}(\mathbb{1},\Sk^\Int_G(H_1)\otimes^\mathbb{L}_{\Skalg^\Int_G(\Sigma^\circ)}\Sk^\Int_G(H_2))\cong \Sk^\Int_G(H_1)\otimes^\mathbb{L}_{\Skalg^\Int_G(\Sigma^\circ)}\Sk^\Int_G(H_2)     \]
\end{cor}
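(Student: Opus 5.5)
The plan is to show that the complex $N:=\Sk^\Int_G(H_1)\otimes^\mathbb{L}_{\Skalg^\Int_G(\Sigma^\circ)}\Sk^\Int_G(H_2)$, viewed as an object of $\Rep_q(G)=\CD(\rep_q(G))$, has all its homology objects in the trivial isotypic part, i.e.\ in $\Vect\subset\rep_q(G)$. Then $\Hom_{\Rep_q(G)}(\mathbb{1},-)$ restricted to such objects is the identity on the underlying complex. Here the right-hand side of the statement is read as the underlying complex of $N$.

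Since $q$ is not a root of unity, $\rep_q(G)$ is semisimple. Hence $\Hom_{\Rep_q(G)}(\mathbb{1},-)$ is exact and computes degreewise the multiplicity space of the trivial summand:
\[ \rmH_n\Hom_{\Rep_q(G)}(\mathbb{1},N)\cong \hhom_{\rep_q(G)}(\mathbb{1},\rmH_n N). \]
It therefore suffices to show that each $\rmH_n N\in\rep_q(G)$ is a direct sum of copies of $\mathbb{1}$. Equivalently, for every nontrivial simple $V$ the $V$-isotypic part $\hhom(V,\rmH_nN)$ vanishes. That part is $\rmH_n$ of
\[ \Hom_{\Rep_q(G)}(V,N)\cong \Hom(\mathbb{1},V^*\otimes N). \]

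To see the vanishing, I would use the same mechanism as in Lemma \ref{skcatS^2=Modskalg}. Gluing the internal modules along $\Sigma^\circ$ leaves a single boundary puncture. An isotypic component of type $V$ corresponds to a skein in $M$ minus a ball, with one strand labelled $V$ exiting through the boundary sphere. Equivalently, the relevant object is a module over $\widehat{\SSkcat_G(S^2)}$ at the spherical boundary.

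By Lemma \ref{skcatS^2=Modskalg}, every labeling $X_V$ of $S^2$ with $V\notin\FZ_2(\rep_q(G))=\Vect$ is zero in $\widehat{\SSkcat_G(S^2)}$. The argument there is a Schur-lemma/encircling computation: $(1-\lambda)\dim W\cdot\id_{X_V}\simeq 0$. Since $N(V)$ is obtained as $\Hom$ out of $X_V$ in a $\widehat{\SSkcat_G(S^2)}$-module, it vanishes.

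Concretely, I would realize $N$ as a bimodule over the boundary sphere. The $\widehat{\SSkcat(\Ann)}$-action, or quantum moment map, at the puncture makes $N$ a module over $\SSkalg^\Int(\Ann)$ restricted through $S^2$. Lemma \ref{SkcatS^2=ModA} then identifies the relevant category with $\Mod_{\mathbb{1}\otimes^\mathbb{L}_{\SSkalg^\Int(\Ann)}\mathbb{1}}(\widehat\CA)$. Finally, Lemma \ref{skcatS^2=Modskalg} shows that only trivial isotypic components survive.

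The main obstacle is making rigorous that $N$ carries this $S^2$-boundary module structure compatible with the $G$-action, so that the transparency argument applies to the isotypic components. What needs checking is that the field goal/encircling action of a $W$-loop on the $V$-component is scalar, both by $\lambda$ via the ribbon twist and by $\dim W$ via isotopy over the sphere. I would try this first at the level of homology objects, where it reduces to the ordinary argument, and only then pass to the derived statement using exactness from semisimplicity.
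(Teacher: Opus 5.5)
Your proposal is correct and is essentially the paper's argument. The paper's one-line proof observes that $\Sk^\Int_G(H_1)\otimes^\mathbb{L}_{\Skalg^\Int_G(\Sigma^\circ)}\Sk^\Int_G(H_2)\cong\SSk^\Int_G(M-\mathbb{B})$ lives over $\widehat{\SSkcat_G(S^2)}\simeq\Mod_{\rmH_\bullet(G)}(\VVect)$ (Lemma \ref{skcatS^2=Modskalg}), so nontrivial isotypic parts vanish and $\Hom_{\Rep_q(G)}(\mathbb{1},-)$ changes nothing; your semisimplicity/isotypic argument is that step written out. The structure you flag as the main obstacle is supplied by Theorem \ref{Sk^intGM-B} (via Proposition \ref{InternalskeinM-B}): the $V$-component is $\SSk_G(M-\mathbb{B},\act_\emptyset(V))$, the value of the DG functor $\underline{\SSk}_G(M-\mathbb{B})$ on a labeling of $S^2$, so the vanishing of $\id_{X_V}$ already in $\rmH_0\SSkcat_G(S^2)$ kills that component directly, exactly as in your homology-level fallback.
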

\begin{proof}
This is due to $\Sk^\Int_G(H_1)\otimes^{\mathbb{L}}_{\Skalg^\Int_G(\Sigma^\circ)}\Sk_G^\Int(H_2)\in \widehat{\SSkcat_G(S^2)}\cong \Mod_{\rmH_\bullet(G)}(\Rep_q(G))\cong \Mod_{\rmH_\bullet(G)}(\VVect)$.
\end{proof}

\begin{cor}\label{SkcatS2=DModBG}
\[ \widehat{\SSkcat_G(S^2)}\simeq \Mod_{\rmH_\bullet(G)}(\VVect)\simeq \DDMod(BG):=\QQCoh((BG)_{dR})   \]
Here $(BG)_{dR}$ is the de Rham stack of $BG$.
\end{cor}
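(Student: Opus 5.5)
The proof has two halves: the first equivalence is already in hand, and the real content is identifying modules over $\rmH_\bullet(G)$ with D-modules on $BG$.

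\textbf{First equivalence.} By Lemma \ref{skcatS^2=Modskalg}, $\widehat{\SSkcat_G(S^2)}\simeq \Mod_{\SSkalg_G(S^2)}(\VVect)$. By Lemma \ref{SkalgS^2=C(g)}, $\SSkalg_G(S^2)\simeq C_\bullet(\mathfrak{g};\mathbb{k})$, and this is quasi-isomorphic to $\rmH_\bullet(G)$. We need this as an equivalence of $E_1$-algebras, not merely of complexes, since module categories only depend on the algebra up to quasi-isomorphism of algebras. The natural algebra structure is the one coming from stacking in $S^2\times I$. On the Lie side it corresponds to the cocommutative Hopf structure on $\mathbb{k}\otimes^{\mathbb{L}}_{U\mathfrak{g}}\mathbb{k}$, which is Koszul dual to $C^\bullet(\mathfrak{g})$. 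Since $C^\bullet(\mathfrak{g})\simeq \rmH^\bullet(G)$ is formal (a free graded-commutative algebra on odd primitive generators), the dual algebra $\rmH_\bullet(G)\cong \Lambda^\bullet(\text{primitives})$ is formal as well. This gives $\Mod_{\SSkalg_G(S^2)}(\VVect)\simeq \Mod_{\rmH_\bullet(G)}(\VVect)$.

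\textbf{Second equivalence.} Here I would use Koszul duality together with the standard description of D-modules on $BG$ (Ben-Zvi–Nadler, Gaitsgory–Rozenblyum).

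1. Write $BG=\pt/G$, so $(BG)_{dR}=\pt/G_{dR}$ and $\QQCoh((BG)_{dR})\simeq \Rep(G_{dR})$. Since $G$ is connected, $G_{dR}$-representations are local systems on $G$ with convolution. Equivalently, $\DDMod(BG)$ is the category of modules over $\mathbb{k}=\mathbb{k}\otimes_{\DDMod(G)}$ in the sense of descent along $\pt\to BG$.

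2. Apply Barr–Beck (Theorem \ref{Barrbeck}) to the pushforward along $p:\pt\to BG$ for D-modules. The functor $p_*$ is colimit-preserving because $p$ is smooth and proper up to shift, and it is generating because $BG$ is connected. This yields $\DDMod(BG)\simeq \Mod_{\End(p_*\mathbb{k})}$.

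3. Compute $\End(p_*\mathbb{k})$ by base change: $p^!p_*\mathbb{k}\simeq C_\bullet(G)$ up to shift, with the convolution algebra structure. Since $G$ is connected, $C_\bullet(G)\simeq \rmH_\bullet(G)$ is formal and commutative. This matches the algebra from the first step.

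The main obstacle is the compatibility of algebra structures: showing that the skein-theoretic stacking product on $\SSkalg_G(S^2)$ agrees with the Pontryagin/convolution product on $\rmH_\bullet(G)$, rather than only matching the underlying complexes. My plan is to sidestep a direct comparison by using the structure of the algebra: $\rmH_\bullet(G)$ is an exterior algebra on odd generators. For $\mathbb{k}$ of characteristic zero, every $E_1$-algebra whose homology is graded commutative and free on odd generators is formal. Any such algebra structure is therefore determined up to equivalence, so it suffices to check that the stacking product induces the exterior multiplication on homology. I would verify this last point through the Tor-algebra structure on $\mathrm{Tor}^{U\mathfrak{g}}(\mathbb{k},\mathbb{k})$.
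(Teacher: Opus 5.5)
Your route is the paper's: the first equivalence is Lemma~\ref{skcatS^2=Modskalg} combined with Lemma~\ref{SkalgS^2=C(g)}, and the second is what the paper simply calls well known. You are right that one needs $\SSkalg_G(S^2)\simeq\rmH_\bullet(G)$ as $E_1$-algebras, which the paper's proof does not address. However, the device you propose to secure it is false: an $E_1$-algebra whose homology is free graded-commutative on odd generators need not be formal.

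Here is a counterexample. Take $\Lambda=\rmH_\bullet(\Gl_2)=\Lambda(x_1,x_3)$, let $m_2$ be the product, set $m_3(a,b,c)=x_1x_3\,\partial_1a\,\partial_1b\,\partial_1c$ (with Koszul signs), and set $m_n=0$ for $n\ge 4$.
\begin{itemize}
\item This $m_3$ has degree $1$ and is a multiderivation, hence a Hochschild cocycle.
\item Every composite $m_3\circ_i m_3$ vanishes, because the outer $m_3$ multiplies the top class $x_1x_3$ by an element of positive degree. So the Stasheff identities hold.
\item By HKR, the class $x_1x_3\,\partial_1^{3}\in\HH^3(\Lambda,\Lambda)$ is nonzero, and a nonzero class of $m_3$ obstructs formality.
\end{itemize}
The same construction works for $\Sl_4$, using $\Lambda(x_3,x_5,x_7)$ and $m_3(a,b,c)=x_3x_7\,\partial_3a\,\partial_3b\,\partial_3c$.

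Your criterion is valid only when the obstruction groups (Hochschild degree $n\ge3$, internal degree $n-2$) vanish. For degree reasons this happens in low rank, e.g.\ $\mathbb{G}_m$, $\Sl_2$, $\Sl_3$. In general you need structural input on the stacking product. Possible inputs: that it is commutative (a CDGA with free graded-commutative homology is formal, by lifting generators to cycles); or that it is $U$ of an $L_\infty$-algebra (an $L_\infty$-structure on an oddly graded space vanishes by parity); or a direct comparison of the two algebras. Relatedly, $\mathbb{k}\otimes^{\mathbb{L}}_{U\mathfrak g}\mathbb{k}=C_\bullet(\mathfrak g)$ is the linear dual, not the Koszul dual, of $C^\bullet(\mathfrak g)$. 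Formality of the CDGA $C^\bullet(\mathfrak g)$ gives formality of the coalgebra $C_\bullet(\mathfrak g)$ and says nothing about a product on it. The Pontryagin product is dual to the coproduct on $\rmH^\bullet(G)$ induced by $G\times G\to G$.

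For the second equivalence your outline is the standard one (cf.\ \cite{DG13}, \S7.2), but the justifications need repair. The map $p\colon\pt\to BG$ is a $G$-torsor, hence affine and not proper. Also, continuity of $p_*$ is not what Barr--Beck needs: you need $p_*\mathbb{k}$ to be compact, i.e.\ the right adjoint of $p_*$ to be continuous.

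The clean route uses $p^!$, which is evaluation at the zeroth term of the descent diagram for $\pt\to BG$. It preserves limits and colimits, and it is conservative because $p$ is an atlas (not because $BG$ is connected). Hence $p^!$ has a left adjoint $p_!$, Theorem~\ref{Barrbeck} applies to $p^!$, and base change identifies $p^!p_!\mathbb{k}$ with $C_\bullet(G)$ under convolution. Your $p^!p_*\mathbb{k}$ is instead Borel--Moore chains, i.e.\ $C^\bullet(G)$ up to shift. Moreover, $\End(p_*\mathbb{k})$ is the dual of $p^*p_*\mathbb{k}$, where $p^*=p^![-2\dim G]$ is left adjoint to $p_*$, not $p^!p_*\mathbb{k}$. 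That $p_*\mathbb{k}\simeq p_!\mathbb{k}$ up to shift holds here only as a consequence of $\rmH_\bullet(G)$ being Frobenius.

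Finally, $E_1$-formality of $C_\bullet(G)$ does not follow from connectedness alone. It follows from rational coformality of $BG$, i.e.\ $C_\bullet(\Omega BG)\simeq U(\pi_\bullet(G)\otimes\mathbb{k})$ with abelian odd bracket, together with the parity argument above.
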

\begin{proof}
The first $\simeq$ is given by \ref{SkalgS^2=C(g)}, \ref{skcatS^2=Modskalg}.

The second $\simeq$ is well-known.
\end{proof}
\begin{rem}
This is one of the obvious differences between derived skein theory and ordinary skein theory. In ordinary skein theory, we have $\widehat{\Skcat_G(S^2)^\heartsuit}\simeq \Vect$.
\end{rem}
\begin{rem}\label{stringtopologycategoryonBG}
One can show that $\widehat{\SSkcat_G(S^2)}$ is Morita equivalent to the string topology category $\mathcal{S}_{BG}$ constructed by Blumberg-Cohen-Teleman \cite{BCT09}.
\end{rem}

\begin{rem}\label{Skein6functor}
Combining \ref{SkcatS^2=ModA} and \ref{skcatS^2=Modskalg}, there are similarities with \cite{DG13} Section 7.2.2 and \cite{BGR22} Lemma 11. In \cite{DG13} Section 7.2.2, they use six functor formalism for D-modules. Their natural forgetful functor Equation 7.7 seems to correspond to the natural forgetful functor \[\widehat{\SSkcat_G(S^2)}\simeq \Mod_{\mathbb{1}\otimes^\mathbb{L}_{\mathcal{O}_q(G)}\mathbb{1}}(\Rep_q(G))\to \Rep_q(G)\]
although it is not straightforward that Equation 7.7 in $\cite{DG13}$ is a "forgetful" functor. Do we have a skein theoretical six-functor formalism? We would like to return to this question in the future.
\end{rem}

\subsection{Internal derived skein module}

Suppose now $N$ is a compact oriented 3-manifold with $\partial N=\Sigma$. The relative derived skein module defines a DG functor $\underline{\SSk}_\CA(N):\SSkcat_\CA(\Sigma)^\op\to \VVect$ which we can restrict to a DG functor $\SSkcat_\CA(\Sigma^\circ)^\op\to \VVect$. By Theorem \ref{hatSkcat=LModhat}, we obtain a $\SSkalg^\Int_\CA(\Sigma^\circ)$-module.
\begin{defn}
Let $N$ be a compact oriented 3-manifold with $\partial N=\Sigma$. The \emph{internal derived skein module} of $N$ is the DG functor
\begin{align*}
\SSk^\Int_\CA(N):\CA^\op&\to \VVect \\  V&\mapsto \SSk_\CA(N,\act_\emptyset(V))
\end{align*}
\end{defn}

It is a left $\SSkalg^\Int_\CA(\Sigma^\circ)$-module via the map
\[ \Hom_{\SSkcat_\CA(\Sigma^\circ)}(\act_\emptyset(V),\mathbb{1})\otimes \SSk_\CA(N,\act_\emptyset(W))\to \SSk_\CA(N,\act_\emptyset(V\otimes W))   \]
given by gluing map. In particular, the derived skein module is recovered as 
\[ \SSk_\CA(N)\cong \SSk^\Int_\CA(N)(\mathbb{1})    \]
Similarly, one can also define a \emph{right} $\SSkalg^\Int_\CA(\Sigma^\circ)$-module if $N$ is a 3-manifold with $\partial N=\overline{\Sigma}$.

The goal of what follows is to give an explicit description of $\SSk^\Int(\Sigma^\circ)$ in terms of $\SSkalg^\Int(\Sigma^\circ)$. Consider a handlebody $H$ with $\partial H=\Sigma_g$. Punctured surfaces have a gluing pattern presentation. Each gluing pattern presentation gives us a system of $a$ and $b$ cycles in $\Sigma^\circ$, i.e. $2g$ embeddings 
\[ a_1,b_1,\cdots,a_g,b_g:\Ann\to \Sigma^\circ    \]
such that for all $i=1,\cdots, n$ the images of the $a_i$ (respectively $b_i$) are pairwise disjoint, and the intersection of $a_i$ and $b_i$ is a single disk. We choose the $b$-cycles to be contractible in $H$. This induces a map
\[  \SSkalg^\Int_\CA(\Ann)^{\otimes 2g}\to \SSkalg^\Int_\CA(\Sigma^\circ)    \]
of objects in $\widehat{\CA}$. This is an isomorphism. In \cite{BZBJ18}, they proved the ordinary case but the proof still holds for derived case.
\begin{lem}\
(\cite{BZBJ18} Theorem 5.14)\label{2gSkalgAnn=SkalgSigmacirc} The map defined above 
\[ \SSkalg^\Int_\CA(\Ann)^{\otimes 2g}\cong \SSkalg^\Int_\CA(\Theta)\otimes\SSkalg^\Int_\CA(\Theta) \xrightarrow{a\otimes b} \SSkalg^\Int_\CA(\Sigma^\circ)   \]
is an isomorphism of right $\SSkalg^\Int_\CA(b(\Theta))$-modules.
\end{lem}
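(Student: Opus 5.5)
The plan is to follow the strategy of \cite{BZBJ18} Theorem 5.14: decompose $\Sigma^\circ$ into $2g$ annular pieces glued along disks, and use the excision property of the derived skein category (Proposition \ref{excisionlskcat}, Lemma \ref{ExcisionFreecoofskeincat}) to reduce to the case of the annulus. The point is that gluing along disks only involves $\widehat{\CA}$, whose skein theory is underived by the unit property. No genuinely new derived phenomenon can therefore appear.

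\textbf{Step 1: the case $g=1$.} First handle a single handle. Let $\Theta$ denote the punctured torus $\Sigma_{1,0}^\circ$, the neighbourhood of $a_1\cup b_1$. Cut $\Theta$ along a band so that $\Theta\cong \Ann\cup_{\mathbb{D}\sqcup\mathbb{D}}\Ann$, where the two annuli are the images of $a_1,b_1$ and they overlap in a single disk. Apply excision to write $\widehat{\SSkcat_\CA(\Theta)}$ as a relative tensor product of two copies of $\widehat{\SSkcat_\CA(\Ann)}$ over $\widehat{\CA}$ (or $\widehat{\CA}\boxtimes\widehat{\CA}$). Then identify the distinguished object $\emptyset_\Theta$ with $\emptyset_\Ann\boxtimes\emptyset_\Ann$.

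\textbf{Step 2: compute the internal endomorphism object.} Compute $\inthom_{\widehat{\CA}}(\emptyset,\emptyset)$ in this relative tensor product via Proposition \ref{DSS}, or via Theorem \ref{Monadicityforrelativetensorproduct} together with Theorem \ref{hatSkcat=LModhat}. The object is $\widehat{\CA}$-compact generating by Lemma \ref{emptyasgenerator}. The internal endomorphism object should then come out as $\SSkalg^\Int_\CA(\Ann)\otimes\SSkalg^\Int_\CA(\Ann)$, using the braided tensor product. The map realizing this is exactly the one induced by $a\otimes b$, sending the internal hom factors in along the two annuli and multiplying in $\SSkalg^\Int_\CA(\Theta)$.

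\textbf{Step 3: general genus.} Write $\Sigma^\circ$ as a boundary-connected sum of $g$ copies of $\Theta$ glued along disks. Iterating the same excision argument gives $\SSkalg^\Int_\CA(\Sigma^\circ)\cong\SSkalg^\Int_\CA(\Theta)^{\otimes g}$, which yields the composite in the statement.

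\textbf{Step 4: module structure.} Right $\SSkalg^\Int_\CA(b(\Theta))$-linearity holds because the map is given by multiplication in $\SSkalg^\Int_\CA(\Sigma^\circ)$, with the $b$-factors placed on the right. Hence right multiplication by $b$-cycle skeins commutes with it by associativity of gluing (the commutative square in the gluing axiom).

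\textbf{Main obstacle.} The hardest step is Step 2. One must check that the relative tensor product over $\widehat{\CA}$ produces the \emph{braided} tensor product of the internal algebras as an object, with no derived corrections. I would argue this as follows. Since the gluing is along disks, the relevant bar construction is over $\widehat{\CA}$ acting freely. The derived coend $\int^{V\in\CA}$ of representables then collapses by the co-Yoneda lemma, so $\otimes^\mathbb{L}$ reduces to the ordinary tensor. This is the same mechanism that makes \cite{BZBJ18}'s ordinary proof work.

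One must also verify that the resulting identification agrees with the map $a\otimes b$ and not merely some abstract isomorphism. For this I would check compatibility on the generating objects $\act_\emptyset(V)$, using the skein property on $\rmH_0$ as a sanity check.
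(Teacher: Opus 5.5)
Your overall route is the one the paper intends. Its proof is only the assertion that the argument of \cite{BZBJ18} Theorem 5.14 goes through with derived skein categories, and your Steps 3 and 4 are fine in spirit. However, the genus-one step fails as written.

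\textbf{The notation for $\Theta$, and where the real content is.} In the paper, $\Theta$ is the disk with $g$ holes, i.e.\ the boundary-connected sum of $g$ annuli, not the punctured torus. Your interval-gluing argument does correctly give $\SSkalg^\Int_\CA(\Ann)^{\otimes g}\cong\SSkalg^\Int_\CA(\Theta)$, the first isomorphism in the statement. That argument is excision along a single disk collar, followed by computing the internal endomorphism object of $\emptyset\boxtimes\emptyset$, and it works because the $a$-annuli are pairwise disjoint. The actual content of the lemma is that $a\otimes b$ is an isomorphism, where $a_i$ crosses $b_i$.

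\textbf{Why your decomposition of the punctured torus fails.} The decomposition you use does not exist:
\begin{itemize}
\item a collar gluing of two annuli along one interval is planar, so it is the pair of pants $\Sigma_{0,3}$;
\item a gluing along two intervals has Euler characteristic $-2\neq -1$;
\item the plumbing picture, in which the $a_1$- and $b_1$-annuli overlap in a square, is an open cover, not a gluing $\Sigma_1\cup_{P\times I}\Sigma_2$, so Proposition \ref{excisionlskcat} does not apply to it.
\end{itemize}
So $\widehat{\SSkcat_\CA(\Ann)}\boxtimes_{\widehat{\CA}}\widehat{\SSkcat_\CA(\Ann)}$ computes $\widehat{\SSkcat_\CA(\Sigma_{0,3})}$. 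This is exactly why you find a braided tensor product. For the punctured torus the algebra is the crossed one ($\mathcal{D}_q(G)$ for $\CA=\rep^\fd_q(G)$), and, as the paper remarks, the map is not an algebra isomorphism.

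\textbf{The missing step: the 1-handle.} Write $\Sigma_{1,0}^\circ$ as the $a_1$-annulus with a band $I\times I$ attached along two intervals on its two different boundary circles. Alternatively, as in \cite{BZBJ18}, write $\Sigma^\circ$ as a disk with $4g$ boundary intervals glued to $2g$ bands according to the gluing pattern. Then $\widehat{\SSkcat_\CA(\Sigma_{1,0}^\circ)}\simeq\widehat{\SSkcat_\CA(\Ann)}\boxtimes_{\widehat{\CA}\boxtimes\widehat{\CA}}\widehat{\CA}$, with $\widehat{\CA}\boxtimes\widehat{\CA}$ acting on the band through $T$.

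Your proposed mechanism fails here. The band bimodule is not free: $(X,Y)\mapsto\Hom_\CA(X\otimes Y,\mathbb{1})$ is the restricted Yoneda functor of the ind-object $T^R(\mathbb{1})$, not a representable. So there is no co-Yoneda collapse of the kind you describe.

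Instead, apply Theorem \ref{Monadicityforrelativetensorproduct}. Here $\mathbb{1}$ is a compact generator of $\widehat{\CA}$ over the compact-rigid $\widehat{\CA}^{\boxtimes 2}$, with $\intend(\mathbb{1})=T^R(\mathbb{1})$. Combine this with the $\widehat{\CA}$-compactness of $\emptyset$ (Lemma \ref{emptyasgenerator}) and Lemma \ref{FRmodule} to get $\SSkalg^\Int_\CA(\Sigma_{1,0}^\circ)\cong\SSkalg^\Int_\CA(\Ann)\otimes T(T^R(\mathbb{1}))$ as objects. The derived identification $T(T^R(\mathbb{1}))\cong\SSkalg^\Int_\CA(\Ann)$, proved just before Definition \ref{i*}, then supplies the $b$-factor.

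\textbf{What remains.} You still have to check that the resulting isomorphism is the one induced by $a$ and $b$. You also need to reorder $a_1b_1\cdots a_gb_g$ into $a_1\cdots a_g\,b_1\cdots b_g$ using braiding isomorphisms. After that, your Step 4 on right $\SSkalg^\Int_\CA(b(\Theta))$-linearity goes through as stated.
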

\begin{rem}
Note that the above not an isomorphism of DG algebras.
\end{rem}

 Let $\Theta$ denote a disk with $g$ smaller disks removed from its interior. The $a$-cycles and the $b$-cycles can be combined to form two embeddings
 \[  a,b:\Theta\to \Sigma   \]

Note that $\Theta$ naturally carries the structure of a right $\mathbb{D}$-module, by inserting disks inside the "outer" annulus in $\Theta$. We can choose the embeddings $a$ and $b$ to be compatible with the right $\mathbb{D}$-module structure on $\Theta$ and $\Sigma^\circ$. We obtain the following maps on internal skeins:
\begin{itemize}
\item The embeddings $a,b:\Theta\hookrightarrow \Sigma^\circ$ determine maps of  internal derived skein algebras
\[ a_*,b_*:\SSkalg^\Int_\CA(\Theta)\to \SSkalg^\Int_\CA(\Sigma^\circ)    \]
\item The embedding of $\Theta\hookrightarrow \mathbb{D}$ determines a map of internal derived skein algebras
\[ \varepsilon:\SSkalg^\Int_\CA(\Theta)\to \SSkalg^\Int_\CA(\mathbb{D})=\mathbb{1}   \]
\item The composite $\Sigma^\circ\times I\hookrightarrow \Sigma\times I\hookrightarrow (\Sigma\times I)\cup_{\Sigma\times \{1\}}H\cong H$ determines a map of left $\SSkalg^\Int_\CA(\Theta)$-modules
\[ \SSkalg^\Int_\CA(\Sigma^\circ)\to \SSk^\Int_\CA(H).   \]
\end{itemize}

\begin{lem}\label{mapg}
The composition
\[  \SSkalg^\Int_\CA(\Theta)\xrightarrow{a_*}\SSkalg^\Int_\CA(\Sigma^\circ)\to \SSk^\Int_\CA(H)    \]
is an isomorphism of left $\SSkalg^\Int_\CA(\Theta)$-modules in $\widehat{\CA}$.
\end{lem}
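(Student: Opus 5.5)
We will prove the lemma by excision: cut the handlebody along a system of meridian discs, so that everything reduces to the unit property on a ball together with the monadicity description of Theorem \ref{hatSkcat=LModhat}. This mirrors the ordinary-case argument of \cite{BZBJ18} (Theorem 5.14 and its handlebody analogue). Write $H\cong(\Sigma\times I)\cup_{\Sigma\times\{1\}}H$. Choose $g$ meridian discs $D_1,\dots,D_g\subset H$ bounded by the $b$-cycles; these exist because the $b$-cycles were chosen contractible in $H$. Cutting $H$ along them leaves a single $3$-ball $\mathbb{B}$. Dually, a regular neighbourhood of the $a$-cycles in $\Sigma^\circ$ is the image of $\Theta$, and $H$ deformation retracts onto a thickening $\Theta\times I$ attached to $\Sigma^\circ\times\{0\}$ along $a$. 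Concretely, $H\cong(\Theta\times I)\cup_{\mathbb{D}\times I}\mathbb{B}$, where the gluing happens only along the outer disc region, i.e.\ along the right $\mathbb{D}$-module structure of $\Theta$. In this form the composite $\Theta\times I\xrightarrow{a\times\id}\Sigma^\circ\times I\hookrightarrow H$ is isotopic to the inclusion of the thickened $a$-cycles.

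\textbf{Step 1: compute $\SSk^\Int_\CA(H)$ by excision.} For $V\in\CA$, the boundary labeling $\act_\emptyset(V)$ sits in the chosen disc on $\Sigma^\circ$, which we may place inside $a(\Theta)$. The excision property, in its coend form, then gives
\[
\SSk_\CA(H,\act_\emptyset(V))\cong \underline{\SSk}_\CA(\Theta\times I)\otimes^{\mathbb{L}}_{\SSkcat_\CA(\mathbb{D})}\underline{\SSk}_\CA(\mathbb{B}).
\]
The unit property together with the equivalence $\SSkcat_\CA(\mathbb{D})\simeq\CA$ identifies $\underline{\SSk}_\CA(\mathbb{B})$ with the representable $\Hom_\CA(-,\mathbb{1})$. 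The coend form of the Yoneda lemma (compare Example \ref{exampleofgeneratedbyinvariants}) then collapses the right-hand side to $\Hom_{\SSkcat_\CA(\Theta)}(\act_\emptyset(V),\emptyset)=\SSkalg^\Int_\CA(\Theta)(V)$. This yields an objectwise quasi-isomorphism $\SSkalg^\Int_\CA(\Theta)\simeq\SSk^\Int_\CA(H)$ in $\widehat{\CA}$, natural in $V$.

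\textbf{Step 2: identify the map.} By naturality of the gluing maps with respect to the functorial property, the quasi-isomorphism of Step 1 is induced by the embedding $\Theta\times I\hookrightarrow H$. By the isotopy noted above, this embedding factors as the composite $\SSkalg^\Int_\CA(\Theta)\xrightarrow{a_*}\SSkalg^\Int_\CA(\Sigma^\circ)\to\SSk^\Int_\CA(H)$. Left $\SSkalg^\Int_\CA(\Theta)$-linearity holds because both actions are given by stacking cylinders $\Theta\times I$ on the $\Sigma^\circ\times\{0\}$ side, which commutes with all gluings involved. This uses the commutative square in the gluing property.

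\textbf{Main obstacle.} The delicate point is Step 1's topological decomposition. We must check that cutting along the meridian discs really is an instance of the excision axiom along a surface of the form $\mathbb{D}\times I$ meeting the boundary compatibly with the corners. We must also check that the homeomorphism $H\cong(\Theta\times I)\cup\mathbb{B}$ is compatible with the chosen puncture disc, so that the functorial property turns it into a chain homotopy equivalence. We would handle this by taking the excision along the $g$ discs $b(\Theta)\cap H$ all at once, as a single surface $\sqcup_i D_i$ thickened. We would then use the disjoint union property to reduce to a $\CA^{\boxtimes g}$-coend, and apply the unit property once for each disc.
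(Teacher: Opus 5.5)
\textbf{There is a genuine gap: your Step 1 applies excision to a decomposition that is not the one attached to the map in the lemma.}

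The composite is induced by the thin embedding $\Theta\times I\xrightarrow{a\times\id}\Sigma^\circ\times I\hookrightarrow(\Sigma\times I)\cup H\cong H$. For $g\ge 1$, the closure of the complement of its image is not a ball attached along a disc. It is a collar $\Theta'\times[0,1]$ on the complementary planar surface $\Theta'=\overline{\partial(\Theta\times I)\setminus(\Theta\times\{0\})}$, which is again a disc with $g$ holes, attached along all of $\Theta'$. This collar complement is exactly what ``deformation retract'' means in the paper's one-line proof.

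Your meridian-disc picture is a third, unrelated decomposition. Cutting along $D_1,\dots,D_g$ leaves a ball glued to itself along $2g$ discs. That gives a Hochschild-type coend over $\CA^{\boxtimes 2g}$, not your displayed formula, so the remedy in your last paragraph does not repair Step 1.

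If instead you read $H\cong(\Theta\times I)\cup_D\mathbb{B}$ as an abstract homeomorphism (true, since gluing a ball along a boundary disc changes nothing), two problems remain:
\begin{enumerate}
\item Step 1 then only shows that an empty ball bump is invisible, which is not the content of the lemma.
\item Step 2 must compare this ``fat'' embedding with the thin one ``by the isotopy''. The axioms give no isotopy invariance of induced maps, and the fat embedding does not factor through $\Sigma^\circ\times I$, since its image contains a spine of $H$.
\end{enumerate}
So the statement about the specific composite is never actually reached.

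\textbf{How to fix it.} Excise along $\Theta'$ itself:
\[
H=(\Theta\times I)\cup_{\Theta'}(\Theta'\times[0,1]).
\]
The second piece carries the empty labeling on its outer faces $\Theta'\times\{1\}\cup\partial\Theta'\times[0,1]\subset\partial H$, away from the marked disc. Its bimodule is therefore the representable functor $\Hom_{\SSkcat_\CA(\Theta')}(-,\emptyset)$. The same derived co-Yoneda collapse you already use then gives
\[
\SSk_\CA(H,\act_\emptyset(V))\simeq\SSk_\CA(\Theta\times I,\act_\emptyset(V)\cup\emptyset)=\SSkalg^\Int_\CA(\Theta)(V).
\]
The comparison map here is precisely the gluing-with-empty map that defines the composite, so no isotopy is needed. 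Your linearity argument in Step 2 then goes through unchanged.

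This corrected version is an axiom-level rendering of the paper's proof. The paper simply observes that $\Theta\times I\hookrightarrow H$ is a deformation retract and implicitly appeals to homeomorphism invariance. Your route, once repaired, buys an explicit identification of the quasi-isomorphism using only excision and Yoneda.
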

\begin{proof}
This is due to the embedding $\Theta\times I\hookrightarrow H$ is a deformation retract.
\end{proof}

\begin{lem}\label{mapf}
The following diagram commutes in $\LMod_{\SSkalg^\Int_\CA(\Sigma^\circ)}(\widehat{\CA})$:
\[  \xymatrix{ & \SSkalg^\Int_\CA(\Sigma^\circ) \ar[r] &\SSk^\Int_\CA(H) \\ & \SSkalg^\Int_\CA(\Theta) \ar[u]_b \ar[r]_-{\varepsilon} & \mathbb{1} \ar[u] }   \]
Thus, by universal property of pushout, there is a unique morphism of left $\SSkalg^\Int_\CA(\Sigma^\circ)$-modules from pushout
\[ \SSkalg^\Int_\CA(\Sigma^\circ)\sqcup_{\SSkalg^\Int_\CA(\Theta)}\mathbb{1}\to \SSk^\Int_\CA(H)      \]
Then by universal property of relative tensor product, we have a well-defined morphism of left $\SSkalg^\Int_\CA(\Sigma^\circ)$-modules
\[  f:\SSkalg^\Int_\CA(\Sigma^\circ)\otimes^\mathbb{L}_{b,\SSkalg^\Int_\CA(\Theta),\varepsilon} \mathbb{1}\to \SSkalg^\Int_\CA(\Sigma^\circ)\sqcup_{\SSkalg^\Int_\CA(\Theta)}\mathbb{1}\to  \SSk^\Int_\CA(H)   \]
\end{lem}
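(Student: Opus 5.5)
The statement to prove is the commutativity of the square in Lemma \ref{mapf}. Once the square commutes, the existence of $f$ follows formally from the universal properties. I will check commutativity at the level of the underlying DG functors $\CA^\op\to\VVect$, i.e.\ objectwise in $V$ and naturally in $V$, using the functoriality axiom of Definition \ref{axiomdefn}.

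For each labeling $\act_\emptyset(V)$, the two composites $\SSkalg^\Int_\CA(\Theta)(V)\to\SSk^\Int_\CA(H)(V)$ are pushforwards along embeddings of 3-manifolds.

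\begin{itemize}
\item The upper composite is induced by $\Theta\times I\xrightarrow{b}\Sigma^\circ\times I\hookrightarrow H$.
\item The lower composite is induced by $\Theta\times I\hookrightarrow \mathbb{D}\times I\hookrightarrow H$. Here the second map comes from the unit map $\mathbb{1}=\SSkalg^\Int_\CA(\mathbb{D})\to\SSk^\Int_\CA(H)$, obtained by pushing the disk collar into $H$.
\end{itemize}

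The key step is topological. The $b$-cycles were chosen to bound disks in $H$, and $b(\Theta)$ is a disk with $g$ holes whose inner boundaries are the $b$-curves. Capping these holes by the meridian disks of $H$ gives an embedded disk $\mathbb{D}\supset b(\Theta)$. Thickening it, the embedding $b(\Theta)\times I\hookrightarrow H$ factors, up to isotopy rel the outer boundary (which carries the labeling), as $\Theta\times I\hookrightarrow \mathbb{D}\times I\hookrightarrow H$. This is exactly the factorization defining $\varepsilon$ followed by the unit map.

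Functoriality of $f_*$ in the embedding, together with the fact that isotopic embeddings induce homotopic chain maps (homeomorphisms induce homotopy equivalences), then shows that the two composites agree in $\widehat{\CA}$. Naturality in $V$ holds because the isotopy fixes the boundary disk through which $\act_\emptyset(V)$ enters. Compatibility with the left $\SSkalg^\Int_\CA(\Theta)$-action (and hence with the $\SSkalg^\Int_\CA(\Sigma^\circ)$-module structure on the relevant maps) follows from naturality of the gluing maps with respect to embeddings.

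Given commutativity, the induced map out of the pushout $\SSkalg^\Int_\CA(\Sigma^\circ)\sqcup_{\SSkalg^\Int_\CA(\Theta)}\mathbb{1}$ comes from its universal property. The map $f$ is then obtained by precomposing with the canonical comparison from the bar construction $\SSkalg^\Int_\CA(\Sigma^\circ)\otimes^\mathbb{L}_{\SSkalg^\Int_\CA(\Theta)}\mathbb{1}$. Concretely, the bar complex augments to its $0$-th simplicial level, and the two face maps $\SSkalg^\Int(\Sigma^\circ)\otimes\SSkalg^\Int(\Theta)\rightrightarrows\SSkalg^\Int(\Sigma^\circ)$ (via $b$ and $\varepsilon$) become equal after mapping to $\SSk^\Int(H)$, by the square.

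The main obstacle is that the axioms yield only strict chain maps with no specified homotopy coherence (Remark \ref{homotopycoherent}). The isotopy therefore produces only a homotopy, not an equality, and a map out of the derived tensor product strictly needs a coherent system of higher homotopies. I would handle this by observing that $\SSk^\Int(H)$ receives the module action through gluing maps that are strictly natural under embeddings (the gluing axiom's commutative diagram). Choosing the capping disks once and for all makes the factorization hold on the nose as embeddings, not merely up to isotopy, so the simplicial maps agree strictly and $f$ is defined levelwise on the bar construction.
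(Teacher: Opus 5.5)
Your core argument matches the paper's. The paper's proof is just the remark that, since the $b$-cycles are contractible in $H$, the embedding $\Theta\hookrightarrow\Sigma\hookrightarrow H$ factors through a disk. Your capping of the inner boundary circles of $b(\Theta)$ by disjoint meridian disks, then thickening, is exactly that factorization, and functoriality then gives the square. At the level of rigor the paper works at (no homotopy coherence, Remark \ref{homotopycoherent}), that is the whole proof, and $f$ is then obtained formally as in the statement.

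Your final paragraph, however, claims more than it delivers.

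First, the unit property identifies $\SSkalg^\Int_\CA(\mathbb{D})$ with $\mathbb{1}$ only up to homotopy equivalence. So a strict factorization $\Theta\times I\hookrightarrow\mathbb{D}\times I\xrightarrow{\iota}H$ makes the square commute on the nose only with $\SSkalg^\Int_\CA(\mathbb{D})$ and $\iota_*$ in the right-hand column. With $\mathbb{1}$ there, the lower composite kills all positive-degree chains, which the pushforward along $b$ need not.

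Second, suppose you use $\SSkalg^\Int_\CA(\mathbb{D})$ as the right-hand module. Then the augmentation $x\otimes m\mapsto x\cdot\iota_*(m)$ of the bar complex is simplicial only if $\iota_*(\varepsilon(y)\cdot m)=b(y)\cdot\iota_*(m)$ for \emph{every} $m$. That is, $\iota_*$ must be a strict map of left $\SSkalg^\Int_\CA(\Theta)$-modules; the square is just the case $m=\emptyset$. This is the point your earlier appeal to naturality of gluing glosses over. By that naturality, the two sides are pushforwards of one glued chain along two different embeddings of $(\Theta\times I)\cup(\mathbb{D}\times I)$ into $H$:
\begin{itemize}
\item stacking inside $\mathbb{D}\times I$ followed by $\iota$;
\item $\iota$ followed by the homeomorphism $(\Sigma\times I)\cup_{\Sigma\times\{1\}}H\cong H$ that defines the module structure.
\end{itemize}
These embeddings are isotopic but not equal, and fixing the capping disks does not make them equal. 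Nor do the axioms make isotopic embeddings induce equal, or even homotopic, chain maps. Homeomorphism invariance, which you cite in your parenthetical, does not imply this.

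So the coherence issue you correctly raise is not resolved by your argument. Either drop the on-the-nose claim and argue at the paper's level, or construct a genuinely homotopy-coherent module map from $\mathbb{1}$ to $\SSk^\Int_\CA(H)$, viewed as a $\SSkalg^\Int_\CA(\Theta)$-module via $b$.
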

\begin{proof}
This is due to the $b$-cycles embedding
\[ \Theta\hookrightarrow \Sigma\hookrightarrow H  \]
factors through the inclusion of a disk since $b$-cycles are contractible.
\end{proof}

\begin{lem}\label{freea-action}
The composition
\[ \SSkalg^\Int_\CA(\Theta)\cong \SSkalg^\Int_\CA(\Theta)\otimes \SSkalg^\Int_\CA(\Theta)\otimes^\mathbb{L}_{b,\SSkalg^\Int_\CA(\Theta),\varepsilon}\mathbb{1}\xrightarrow{a}\SSkalg^\Int_\CA(\Sigma^\circ)\otimes^\mathbb{L}_{b,\SSkalg^\Int_\CA(\Theta),\varepsilon}\mathbb{1}    \]
is an isomorphism in $\widehat{\CA}$
\end{lem}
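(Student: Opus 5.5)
The plan is to reduce the statement to Lemma \ref{2gSkalgAnn=SkalgSigmacirc} together with the fact that base change along a free module is trivial. By Lemma \ref{2gSkalgAnn=SkalgSigmacirc}, the map
\[ a\otimes b:\SSkalg^\Int_\CA(\Theta)\otimes\SSkalg^\Int_\CA(\Theta)\to \SSkalg^\Int_\CA(\Sigma^\circ) \]
is an isomorphism of right $\SSkalg^\Int_\CA(b(\Theta))$-modules. Here the right action on the source is through the second tensor factor, by right multiplication on $\SSkalg^\Int_\CA(\Theta)$. So $\SSkalg^\Int_\CA(\Sigma^\circ)$, viewed as a right module via $b_*$, is identified with the free right module $\SSkalg^\Int_\CA(\Theta)\otimes\SSkalg^\Int_\CA(\Theta)_{\SSkalg^\Int_\CA(\Theta)}$ on the underlying object $\SSkalg^\Int_\CA(\Theta)\in\widehat{\CA}$.

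The first step is to apply the functor $-\otimes^\mathbb{L}_{b,\SSkalg^\Int_\CA(\Theta),\varepsilon}\mathbb{1}$ to this isomorphism. This functor is exact and preserves isomorphisms, so it yields
\[ \SSkalg^\Int_\CA(\Theta)\otimes\SSkalg^\Int_\CA(\Theta)\otimes^\mathbb{L}_{\SSkalg^\Int_\CA(\Theta),\varepsilon}\mathbb{1}\xrightarrow{\sim}\SSkalg^\Int_\CA(\Sigma^\circ)\otimes^\mathbb{L}_{b,\SSkalg^\Int_\CA(\Theta),\varepsilon}\mathbb{1}. \]
The second step is the standard identification $N\otimes A\otimes^\mathbb{L}_A P\simeq N\otimes P$ for the free module. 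It follows from the bar resolution: the augmented simplicial object $N\otimes A\otimes A^{\otimes\bullet}\otimes P$ has an extra degeneracy given by the unit of $A$. Alternatively, it follows because $A\otimes^\mathbb{L}_A-$ is the identity, and $N\otimes-$ commutes with the geometric realization, since the monoidal product of $\widehat{\CA}$ preserves colimits in each variable. Taking $N=\SSkalg^\Int_\CA(\Theta)$ and $P=\mathbb{1}$, the left-hand side becomes $\SSkalg^\Int_\CA(\Theta)\otimes\mathbb{1}\cong\SSkalg^\Int_\CA(\Theta)$, which is exactly the first isomorphism in the displayed composition.

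The third step is to check that the composite of these identifications equals the map written in the statement, namely $x\mapsto a_*(x)\cdot 1\otimes 1$. Under the inverse of the free-module identification, $x$ corresponds to $x\otimes 1\otimes 1$, where the middle $1$ is the unit of $\SSkalg^\Int_\CA(\Theta)$. Applying $a\otimes b$ gives $a_*(x)\,b_*(1)=a_*(x)$, as required.

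I expect the main obstacle to be the compatibility of Lemma \ref{2gSkalgAnn=SkalgSigmacirc} with the right module structure in the braided setting. The multiplication on $\SSkalg^\Int_\CA(\Sigma^\circ)$ is a braided one, and we need right multiplication by $b_*(\SSkalg^\Int_\CA(\Theta))$ to act only on the second factor, with no field-goal twist mixing it into the $a$-factor. The cited lemma is stated precisely as an isomorphism of right $\SSkalg^\Int_\CA(b(\Theta))$-modules, so this is where I would invoke it. The cited lemma is the ordinary (underived) statement. Its derived version is then justified as in the paper's remark: via Proposition \ref{semisimpleintskalg} when $\CA$ is semisimple, or in general by the same topological stacking argument applied to $\Theta\sqcup\Theta\hookrightarrow\Sigma^\circ$ using functoriality. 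After that, the rest of the argument is formal homological algebra in $\widehat{\CA}$.
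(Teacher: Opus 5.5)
Your proposal is correct and takes the same route as the paper, whose entire proof is the one-line citation of Lemma~\ref{2gSkalgAnn=SkalgSigmacirc}. The three steps you add are exactly what that citation leaves implicit: base-changing the right-module isomorphism $a\otimes b$ along $\varepsilon$, the free-module identification $N\otimes A\otimes^{\mathbb{L}}_A\mathbb{1}\simeq N$, and the check that the resulting composite is $x\mapsto a_*(x)\otimes 1$.
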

\begin{proof}
This is due to \ref{2gSkalgAnn=SkalgSigmacirc}.
\end{proof}

\begin{thm}\label{SkintH=derivedtensorproduct}
There is an isomorphism of left $\SSkalg^\Int_\CA(\Sigma^\circ)$-modules in $\widehat{\CA}$
\[ \SSk^\Int_\CA(H)\cong \SSkalg^\Int_\CA(\Sigma^\circ)\otimes^\mathbb{L}_{b,\SSkalg^\Int_\CA(\Theta),\varepsilon}\mathbb{1}\cong \SSkalg^\Int_\CA(\Sigma^\circ)\otimes_{b,\SSkalg^\Int_\CA(\Theta),\varepsilon}\mathbb{1}     \]
\end{thm}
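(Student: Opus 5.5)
We will show that the morphism $f$ of Lemma \ref{mapf} is an isomorphism, and then that the derived relative tensor product agrees with the underived one.

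\textbf{Step 1: $f$ is an isomorphism.} Compose $f$ with the isomorphism of Lemma \ref{freea-action}. The resulting map
\[ \SSkalg^\Int_\CA(\Theta)\xrightarrow{a}\SSkalg^\Int_\CA(\Sigma^\circ)\otimes^\mathbb{L}_{b,\SSkalg^\Int_\CA(\Theta),\varepsilon}\mathbb{1}\xrightarrow{f}\SSk^\Int_\CA(H) \]
should agree with the composite $\SSkalg^\Int_\CA(\Theta)\xrightarrow{a_*}\SSkalg^\Int_\CA(\Sigma^\circ)\to\SSk^\Int_\CA(H)$ of Lemma \ref{mapg}. This agreement holds because $f$ is induced from the gluing map $\SSkalg^\Int_\CA(\Sigma^\circ)\to \SSk^\Int_\CA(H)$, and on the image of $a_*\otimes \mathbb{1}$ it is just that map. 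Lemma \ref{mapg} says this composite is an isomorphism, and the first arrow is an isomorphism by Lemma \ref{freea-action}. By two-out-of-three, $f$ is an isomorphism in $\widehat{\CA}$.

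Since $f$ is $\SSkalg^\Int_\CA(\Sigma^\circ)$-linear by construction, and the forgetful functor $\LMod_{\SSkalg^\Int_\CA(\Sigma^\circ)}(\widehat{\CA})\to\widehat{\CA}$ is conservative, $f$ is an isomorphism of left modules. This gives the first isomorphism. The compatibility check, that the triangle commutes up to homotopy, is topological: both maps come from the embedding $\Theta\times I\hookrightarrow \Sigma^\circ\times I\hookrightarrow H$ together with the naturality of the gluing maps. I would verify it using the functoriality and gluing axioms of Definition \ref{axiomdefn}.

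\textbf{Step 2: derived equals underived.} By Lemma \ref{2gSkalgAnn=SkalgSigmacirc}, $\SSkalg^\Int_\CA(\Sigma^\circ)\cong\SSkalg^\Int_\CA(\Theta)\otimes\SSkalg^\Int_\CA(\Theta)$ as right $\SSkalg^\Int_\CA(b(\Theta))$-modules. So as a right module over $\SSkalg^\Int_\CA(\Theta)$ via $b$, it is free on the object $\SSkalg^\Int_\CA(\Theta)$ (the $a$-part) of $\widehat{\CA}$. For a free module $N\otimes B$ over an algebra $B$, the bar complex computing $(N\otimes B)\otimes^\mathbb{L}_B\mathbb{1}$ is $N\otimes$ (the bar resolution of $\mathbb{1}$ over $B$), which is contractible onto $N$. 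The same reasoning shows that the underived tensor product, that is, the coequalizer in the heart when everything lies there, is also $N$. Hence both tensor products equal $\SSkalg^\Int_\CA(\Theta)$, and the canonical map from the derived to the underived one is an isomorphism.

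Strictly speaking, $\otimes_{b,\SSkalg^\Int_\CA(\Theta),\varepsilon}$ only makes sense when the objects lie in the heart, for instance when $\CA$ is semisimple as in Proposition \ref{semisimpleintskalg}. I would interpret it as $\pi_0$ of the derived tensor product, or as the naive coequalizer. Freeness makes the higher terms vanish in either reading.

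\textbf{Main obstacle.} I expect the hard part to be Step 1's compatibility: showing that $f\circ a$ coincides with the map of Lemma \ref{mapg} at the chain level rather than only on $\rmH_0$. If strict commutativity is awkward, I would instead check the claim on each $\Hom(V,-)$ for $V\in\CA$ compact. There it reduces to naturality of gluing maps under the embedding $a$, since the $b$-disks are capped off in $H$.
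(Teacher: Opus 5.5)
Your proposal is correct and follows essentially the same route as the paper. The paper also gets the first isomorphism from the commutative triangle $g=f\circ a$, with $g$ the isomorphism of Lemma \ref{mapg} and $a$ the isomorphism of Lemma \ref{freea-action}, and it deduces the second isomorphism from the freeness of the $b$-action; your conservativity remark and your flagging of the chain-level commutativity of the triangle (which the paper simply asserts) are sensible additions, not a different argument.
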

\begin{proof}
By \ref{mapf}, there is a morphism of left $\SSkalg^\Int_\CA(\Sigma^\circ)$-modules
\[ f:\SSkalg^\Int_\CA(\Sigma^\circ)\otimes^{\mathbb{L}}_{b,\SSkalg^\Int_\CA(\Theta),\varepsilon}\mathbb{1} \to \SSk^\Int_\CA(H)  \]

By \ref{mapg}, we have an isomorphism of left $\SSkalg^\Int_\CA(\Theta)$-modules in $\widehat{\CA}$
\[ g:\SSkalg^\Int_\CA(\Theta)\xrightarrow{\sim} \SSk^\Int_\CA(H)   \]

The inclusion of $a$-cycles define a commutative diagram
\[ \xymatrix{& \SSkalg^\Int_\CA(\Theta)\ar[rr]^{g} \ar[rd]_a & & \SSk^\Int_\CA(H) \\ & & \SSkalg^\Int_\CA(\Sigma^\circ)\otimes^\mathbb{L}_{b,\SSkalg^\Int_\CA(\Theta),\varepsilon} \mathbb{1} \ar[ur]_f &  }   \]
Since $g,a$ are both isomorphisms, $f$ is an isomorphism.

The second isomorphism is due to the $b$-action is a free action by \ref{freea-action}.
\end{proof}

The following theorem builds a relation between internal derived skein module and internal skein module
\begin{prop}\label{LSkint=Skint}
Let $\CA$ be semisimple, and $H$ be a handlebody.
\[  \SSk^\Int_\CA(H)\cong \Sk^\Int_\CA(H)     \]
\end{prop}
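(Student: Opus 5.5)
The plan is to combine Theorem \ref{SkintH=derivedtensorproduct} with Proposition \ref{semisimpleintskalg}, and then compare with the ordinary presentation of the internal skein module of a handlebody from \cite{GJS22}/\cite{BZBJ18}.

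\textbf{Step 1.} By Theorem \ref{SkintH=derivedtensorproduct} we have
\[ \SSk^\Int_\CA(H)\cong \SSkalg^\Int_\CA(\Sigma^\circ)\otimes_{b,\SSkalg^\Int_\CA(\Theta),\varepsilon}\mathbb{1}. \]
Here the second isomorphism in that theorem already tells us the derived and underived relative tensor products agree. The reason is that the $b$-action is free by Lemma \ref{freea-action} and Lemma \ref{2gSkalgAnn=SkalgSigmacirc}: $\SSkalg^\Int_\CA(\Sigma^\circ)$ is, as a right $\SSkalg^\Int_\CA(\Theta)$-module, isomorphic to $\SSkalg^\Int_\CA(\Theta)\otimes\SSkalg^\Int_\CA(\Theta)$.

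\textbf{Step 2.} Since $\CA$ is semisimple, Proposition \ref{semisimpleintskalg} identifies $\SSkalg^\Int_\CA(\Sigma^\circ)\cong\Skalg^\Int_\CA(\Sigma^\circ)$. The same argument applies verbatim to $\Theta$, a punctured surface of genus $0$, giving $\SSkalg^\Int_\CA(\Theta)\cong \Skalg^\Int_\CA(\Theta)$. All three of these objects therefore lie in the heart $\widehat{\CA}^\heartsuit$. I will check that the identifications are compatible with the maps $b_*$ and $\varepsilon$, which are induced by the same embeddings in both worlds. This follows because Proposition \ref{semisimpleintskalg} is obtained by taking derived functors of the heart-level adjunction, and that construction is natural in embeddings of surfaces.

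\textbf{Step 3.} Substituting into Step 1, we get
\[ \SSk^\Int_\CA(H)\cong \Skalg^\Int_\CA(\Sigma^\circ)\otimes_{b,\Skalg^\Int_\CA(\Theta),\varepsilon}\mathbb{1}. \]
This is now an ordinary relative tensor product in $\widehat{\CA}^\heartsuit$. The ordinary version of Theorem \ref{SkintH=derivedtensorproduct} (\cite{GJS22}, via \cite{BZBJ18} Theorem 5.14) states that this is exactly $\Sk^\Int_\CA(H)$, which concludes the proof. As an alternative route, one can reach the same conclusion directly. Lemma \ref{mapg} gives $\SSk^\Int_\CA(H)\cong\SSkalg^\Int_\CA(\Theta)\cong\Skalg^\Int_\CA(\Theta)$, and the ordinary analogue gives $\Sk^\Int_\CA(H)\cong \Skalg^\Int_\CA(\Theta)$. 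One then only has to check that the comparison map $\SSk^\Int_\CA(H)\to \rmH_0(\SSk^\Int_\CA(H))\cong\Sk^\Int_\CA(H)$ coming from the skein property is compatible with these isomorphisms.

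\textbf{Main obstacle.} The delicate point is Step 2's compatibility: we need the isomorphisms of Proposition \ref{semisimpleintskalg} to be isomorphisms of algebras and modules, not merely of objects of $\widehat{\CA}$. I would handle this by noting that $\widehat{\act_\emptyset}^R\circ\widehat{\act_\emptyset}(\mathbb{1})$ carries its monad-induced algebra structure. Both functors are exact in the semisimple case, so this structure agrees with the underived one, and naturality in the surface then follows from functoriality of $\SSkcat_\CA(-)$.
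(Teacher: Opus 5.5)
Your proposal is correct and is essentially the paper's argument: the paper's proof is the one-line combination of Theorem~\ref{SkintH=derivedtensorproduct} with Proposition~\ref{semisimpleintskalg}. Your Steps 1--3 only make explicit what the paper leaves implicit, namely applying the semisimple comparison to both $\Sigma^\circ$ and $\Theta$, checking compatibility with $b_*$ and $\varepsilon$, and invoking the ordinary handlebody presentation from \cite{GJS22}.
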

\begin{proof}
This follows by \ref{SkintH=derivedtensorproduct} and \ref{semisimpleintskalg}.
\end{proof}

\section{Computable formula and results}\label{computeableformulaandresults}

\begin{prop}\label{InternalskeinM-B}
Let $M$ be closed and $M=H_1\bigcup_\Sigma H_2$, $\CA$ be a ribbon tensor category.
\[ \SSk^\Int_\CA(M-\mathbb{B})\cong \SSk^\Int_\CA(H_1)\otimes^\mathbb{L}_{\SSkalg^\Int_\CA(\Sigma^\circ)}\SSk^\Int_\CA(H_2)  \]
\end{prop}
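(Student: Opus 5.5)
We prove the statement by excision along the punctured Heegaard surface, and then by Morita theory reduce the coend over the derived skein category to a relative tensor product over a single algebra object.

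First we set up the decomposition. Removing a ball $\mathbb{B}$ that meets the Heegaard surface $\Sigma$ in a disk, we write $M-\mathbb{B}=H_1'\cup_{\Sigma^\circ}H_2'$. Here $H_i'$ is $H_i$ with a half-ball removed, and is homeomorphic to $H_i$. The new boundary sphere $\partial\mathbb{B}=S^2$ is cut by $\Sigma$ into two disks, one in each $H_i$. The internal derived skein module $\SSk^\Int_\CA(M-\mathbb{B})(V)$ is by definition $\SSk_\CA(M-\mathbb{B},X_V)$. Here $X_V$ is an $\CA$-labeling of $\partial\mathbb{B}$ by a point labeled $V$, which we place on the disk lying in $H_1$. (Moving it to the $H_2$ side, or splitting $V$ across both disks, gives equivalent answers by the functoriality axiom.)

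Next we apply the excision property of Definition \ref{axiomdefn} along $\Sigma^\circ$, with its corner on the equator of $\partial\mathbb{B}$. This gives
\[ \SSk_\CA(M-\mathbb{B},X_V)\cong \int^{Z\in\SSkcat_\CA(\Sigma^\circ)}\SSk_\CA(H_1,X_V\cup Z)\otimes^\mathbb{L}\SSk_\CA(H_2,Z), \]
that is, the coend $\underline{\SSk}_\CA(H_1)_V\otimes^\mathbb{L}_{\SSkcat_\CA(\Sigma^\circ)}\underline{\SSk}_\CA(H_2)$. Here $\underline{\SSk}_\CA(H_1)_V$ denotes the restricted bimodule, which we regard as a module over $\SSkcat_\CA(\Sigma^\circ)$ by absorbing the extra $V$-point into the puncture through the $\CA$-action.

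Then we convert this coend into a relative tensor product of algebras. By Theorem \ref{hatSkcat=LModhat}, we have $\widehat{\SSkcat_\CA(\Sigma^\circ)}\simeq\LMod_{\SSkalg^\Int_\CA(\Sigma^\circ)}(\widehat{\CA})$. Under this equivalence, the cocontinuous extensions of $\underline{\SSk}_\CA(H_i)$ are determined by their values on the generators $\act_\emptyset(W)$, $W\in\CA$; those values are precisely the internal modules $\SSk^\Int_\CA(H_i)$ with the gluing-map actions. A coend over $\SSkcat_\CA(\Sigma^\circ)$ is a coend over any Morita-equivalent presentation. Since $\emptyset$ is an $\widehat{\CA}$-compact generator (Lemma \ref{emptyasgenerator}), the relevant presentation is relative to $\widehat{\CA}$. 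We then run the argument of Proposition \ref{G1End1F1=GCF} internally in $\widehat{\CA}$: we replace $\Hom_\CC(-,\mathbb{1})$ by the internal hom, and $\End(\mathbb{1})$ by $\SSkalg^\Int_\CA(\Sigma^\circ)$. The two bar complexes, one over $\SSkcat_\CA(\Sigma^\circ)$ and one over $\SSkalg^\Int_\CA(\Sigma^\circ)$ in $\widehat{\CA}$, are then identified. The result is that the coend equals the $V$-component of $\SSk^\Int_\CA(H_1)\otimes^\mathbb{L}_{\SSkalg^\Int_\CA(\Sigma^\circ)}\SSk^\Int_\CA(H_2)$, computed in $\widehat{\CA}$. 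Naturality in $V$ gives the isomorphism of functors $\CA^\op\to\VVect$.

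The main obstacle is this last step. We must check that the residual $\CA$-labeling on the removed ball is exactly what remains uncontracted in the internal relative tensor product. Concretely, the coend has to be taken over $\SSkcat_\CA(\Sigma^\circ)$ relative to $\widehat{\CA}$, not over $\SSkcat_\CA(\Sigma)$, and the outcome should be an object of $\widehat{\CA}$ rather than its invariants. To handle this, I would first reduce to the case $V=\mathbb{1}$ with the $\CA$-equivariance tracked separately. I would then verify, using the internal-hom adjunction and the $\CA$-module structure of $\widehat{\act_\emptyset}^R$ supplied by Lemma \ref{FRmodule}, that the simplicial objects coincide levelwise.
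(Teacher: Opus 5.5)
Your proposal is correct and takes essentially the same route as the paper: excision along $\Sigma^\circ$ writes $\SSk_\CA(M-\mathbb{B},X_V)$ as the bar construction (coend) over $\SSkcat_\CA(\Sigma^\circ)$, and that coend is then identified, naturally in $V$, with the value at $V$ of $\SSk^\Int_\CA(H_1)\otimes^\mathbb{L}_{\SSkalg^\Int_\CA(\Sigma^\circ)}\SSk^\Int_\CA(H_2)$. The paper simply asserts this last identification as ``easy to see'', whereas you justify it via Theorem \ref{hatSkcat=LModhat} and an internal version of Proposition \ref{G1End1F1=GCF}; note that the two bar complexes agree only after geometric realization (for instance after resolving by the free objects $\act_\emptyset(W)$), not literally levelwise as your last sentence suggests.
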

\begin{proof}
It's enough to check it in generating objects. Let $V$ be an object in $\CC$. Then we have
\[ \SSk^\Int_\CA(M-\mathbb{B})(V):=  \SSk_\CA(M-\mathbb{B},\mathcal{P}(V))\cong \underline{\SSk}_\CA(M-\mathbb{B})(\mathcal{P}(V))    \]

On the other hand, if we unpack the following two derived relative tensor products,
\[ \SSk^\Int_\CA(H_1)\otimes^{\mathbb{L}}_{\SSkalg^\Int_\CA(\Sigma^\circ)}\SSk^\Int_\CA(H_2)(V)\cong \underline{\SSk}_\CA(H_1)\otimes^\mathbb{L}_{\SSkcat_\CA(\Sigma^\circ)}\underline{\SSk}_\CA(H_2)(\mathcal{P}(V))  \]
It is easy to see they all are equivalent to
\[ \Tot(\BBar(\SSk_\CA(H_1,\mathcal{P}(V)|_{\partial H_1},-),\SSk_\CA(\Sigma^\circ\times I,-,-),\SSk_\CA(H_2,-,\mathcal{P}(V)|_{\partial H_2}) )   \]

By excision property, we have 
\[ \underline{\SSk}_\CA(M-\mathbb{B})\cong \underline{\SSk}_\CA(H_1)\otimes^\mathbb{L}_{\SSkcat_\CA(\Sigma^\circ)}\underline{\SSk}_\CA(H_2)   \]

Thus we have 
\[\SSk^\Int_\CA(M-\mathbb{B})(V)\cong \SSk^\Int_\CA(H_1)\otimes^\mathbb{L}_{\SSkalg^\Int_\CA(\Sigma^\circ)}\SSk^\Int_\CA(H_2)(V) \]
\end{proof}

We have the following important corollary:
\begin{thm}\label{Sk^intGM-B}
Let $M=H_1\bigcup_{\Sigma}H_2$ be a Heegaard splitting, and $\CA=\rep^\fd_q(G)$ for not a root of unity $q$.
\[\SSk_G^\Int(M-\mathbb{B})\cong \Sk^\Int_G(H_1)\otimes^\mathbb{L}_{\Skalg^\Int_G(\Sigma^\circ)}\Sk^\Int_G(H_2)\]
\end{thm}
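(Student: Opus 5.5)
The plan is to deduce Theorem \ref{Sk^intGM-B} from Proposition \ref{InternalskeinM-B} by replacing each derived internal object with its ordinary counterpart, using semisimplicity. For $q$ not a root of unity, $\CA=\rep^\fd_q(G)$ is a semisimple ribbon tensor category. Proposition \ref{InternalskeinM-B} gives
\[ \SSk^\Int_G(M-\mathbb{B})\cong \SSk^\Int_G(H_1)\otimes^\mathbb{L}_{\SSkalg^\Int_G(\Sigma^\circ)}\SSk^\Int_G(H_2). \]
The proof therefore reduces to identifying the three ingredients on the right-hand side.

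First, by Proposition \ref{semisimpleintskalg}, the algebra $\SSkalg^\Int_G(\Sigma^\circ)$ is isomorphic to $\Skalg^\Int_G(\Sigma^\circ)$. Second, by Proposition \ref{LSkint=Skint}, we have $\SSk^\Int_G(H_i)\cong \Sk^\Int_G(H_i)$ for $i=1,2$. The second identification must be an isomorphism of modules, not merely of underlying objects in $\widehat{\CA}$. For this I use Theorem \ref{SkintH=derivedtensorproduct}, which presents $\SSk^\Int_G(H_i)$ as the left $\SSkalg^\Int_G(\Sigma^\circ)$-module
\[ \SSkalg^\Int_G(\Sigma^\circ)\otimes_{b,\SSkalg^\Int_G(\Theta),\varepsilon}\mathbb{1}. \]
The analogous ordinary presentation of $\Sk^\Int_G(H_i)$ is \cite{BZBJ18}, Theorem 5.14 and its corollary. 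The isomorphism of Proposition \ref{semisimpleintskalg}, applied to both $\Sigma^\circ$ and $\Theta$, intertwines the maps $b$ and $\varepsilon$, because these maps are induced by the same embeddings in both settings. It follows that the two module structures agree.

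The remaining step, which I expect to be the main obstacle, is to check that the derived tensor product is insensitive to these replacements. Since all three objects are concentrated in the heart, the left-hand side $\Sk^\Int_G(H_1)\otimes^\mathbb{L}_{\Skalg^\Int_G(\Sigma^\circ)}\Sk^\Int_G(H_2)$ is to be understood in $\Rep_q(G)=\CD(\rep_q(G))$. Lemma \ref{DLMod=LModhat} lets us view the heart-level modules as objects of $\LMod(\widehat{\CA})$, so the derived relative tensor products agree once the algebra and module identifications are known to be compatible with the algebra structure and the $\mathbb{E}_1$-actions.

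I would check this compatibility by examining the proof of Proposition \ref{semisimpleintskalg}. There the comparison map comes from the derived functor $\widehat{\act_\emptyset}^R\circ\widehat{\act_\emptyset}$ agreeing with its heart version, because both functors are exact. This identification is one of monads, so it respects multiplication and therefore yields an isomorphism of algebras, not just of objects. The same argument, applied to the actions on $\SSk^\Int_G(H_i)$, shows that the module structures are transported as well. Substituting all three identifications into the bar construction $\BBar(\SSk^\Int_G(H_1),\SSkalg^\Int_G(\Sigma^\circ),\SSk^\Int_G(H_2))$ then produces the claimed formula for $\SSk^\Int_G(M-\mathbb{B})$.
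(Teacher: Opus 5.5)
Your proposal is correct and is essentially the paper's argument: the paper states this theorem as an immediate corollary of Proposition~\ref{InternalskeinM-B}. It relies, as you do, on Proposition~\ref{semisimpleintskalg} and Proposition~\ref{LSkint=Skint} (the latter deduced from Theorem~\ref{SkintH=derivedtensorproduct}) to replace the derived internal skein algebra and handlebody modules by their ordinary counterparts for semisimple $\CA$. The paper leaves implicit the compatibility of algebra and module structures that you check via monads; it is in fact automatic here, since $\otimes$ on $\widehat{\CA}$ is t-exact for semisimple $\CA$, so $E_1$-algebra and module structures on heart objects are ordinary ones and can be compared on $\rmH_0$, where the skein property identifies them with the usual stacking maps.
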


\begin{thm}\label{Computableexcision}
Let $M$ be closed and $M=N_1\bigcup_\Sigma N_2$, $\CA$ be a ribbon tensor category.
\[ \SSk_\CA(M)\cong \underline{\SSk}^\Int_\CA(M-\mathbb{B})\otimes^\mathbb{L}_{\SSkcat_\CA(S^2)}\underline{\SSk}_\CA(\mathbb{B})
\]
Here $\underline{\SSk}^\Int_\CA(M-\mathbb{B})$ is the corresponding skein bimodule functor of 
\[\SSk^\Int_\CA(M-\mathbb{B})\cong \SSk^\Int_\CA(N_1)\otimes^{\mathbb{L}}_{\SSkalg^\Int_\CA(\Sigma^\circ)}\SSk^\Int_\CA(N_2).\]
\end{thm}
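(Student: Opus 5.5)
The plan is to apply the excision property of Definition \ref{axiomdefn} to the decomposition $M=(M-\mathbb{B})\cup_{S^2}\mathbb{B}$, where $\mathbb{B}$ is a small closed ball in $M$. Combined with the coend notation introduced after the definition of derived skein bimodules, excision gives
\[ \SSk_\CA(M)\cong \underline{\SSk}_\CA(M-\mathbb{B})\otimes^{\mathbb{L}}_{\SSkcat_\CA(S^2)}\underline{\SSk}_\CA(\mathbb{B}). \]
It then remains to identify the bimodule $\underline{\SSk}_\CA(M-\mathbb{B})$, a DG functor on $\SSkcat_\CA(S^2)$, with the bimodule built from the internal module $\SSk^\Int_\CA(M-\mathbb{B})$. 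The second displayed isomorphism in the statement should be Proposition \ref{InternalskeinM-B}, stated for an arbitrary splitting $M=N_1\cup_\Sigma N_2$. Its proof used nothing about handlebodies beyond the excision property, so I would rerun it verbatim with $N_i$ in place of $H_i$.

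To make the identification, view $S^2=\partial(M-\mathbb{B})$ as $\mathbb{D}\cup_{S^1\times I}\mathbb{D}$, with the first disk playing the role of the puncture. By Proposition \ref{excisionlskcat} and Lemma \ref{ExcisionFreecoofskeincat}, $\widehat{\SSkcat_\CA(S^2)}\simeq \widehat{\CA}\boxtimes_{\widehat{\SSkcat_\CA(\Ann)}}\widehat{\CA}$. By Lemma \ref{1boxtimesVarecompactgenerators}, the objects $\mathcal{P}(V)=\act_\emptyset(V)$ for $V\in\CA$ are compact generators. A DG functor on $\SSkcat_\CA(S^2)$, equivalently an object of $\widehat{\SSkcat_\CA(S^2)}$, is therefore determined by its restriction along $\act_\emptyset\colon\CA\to\SSkcat_\CA(S^2)$, together with its module structure over the annulus algebra. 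That restriction is exactly $V\mapsto \SSk_\CA(M-\mathbb{B},\act_\emptyset(V))=\SSk^\Int_\CA(M-\mathbb{B})(V)$. So I would define $\underline{\SSk}^\Int_\CA(M-\mathbb{B})$ as the image of $\SSk^\Int_\CA(M-\mathbb{B})$ under the equivalence of Lemma \ref{SkcatS^2=ModA}, with its $\mathbb{1}\otimes^{\mathbb{L}}_{\SSkalg^\Int_\CA(\Ann)}\mathbb{1}$-module structure coming from gluing annuli near the boundary sphere. I would then check that this image agrees with $\underline{\SSk}_\CA(M-\mathbb{B})$: the canonical comparison map is an isomorphism on the generators $\act_\emptyset(V)$, and both sides are exact functors, so it is an isomorphism.

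Finally, I would substitute Proposition \ref{InternalskeinM-B} for the internal module, with $N_1,N_2$ in place of $H_1,H_2$, and conclude.

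The main obstacle is the middle step. Knowing a module on the generators $\act_\emptyset(V)$ does not automatically recover the full bimodule: one must also show that the extra structure (the action of the annulus algebra, or equivalently of all of $\SSkcat_\CA(S^2)$) obtained from the internal description matches the one given by gluing on $M-\mathbb{B}$. I would handle this with the monadicity statement of Lemma \ref{SkcatS^2=ModA}, verifying that both actions are induced by the same gluing maps of collars $S^2\times I$ onto $M-\mathbb{B}$.
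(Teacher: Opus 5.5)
Your proposal is correct and is essentially the paper's argument: excision along $M=(M-\mathbb{B})\cup_{S^2}\mathbb{B}$, followed by Proposition \ref{InternalskeinM-B}, whose proof uses only excision and so applies to any splitting $N_1\cup_\Sigma N_2$. The identification you flag as the main obstacle holds essentially by definition in the paper: $\underline{\SSk}^\Int_\CA(M-\mathbb{B})$ is just $\underline{\SSk}_\CA(M-\mathbb{B})$ viewed through its restriction along $\act_\emptyset$, and the monadic equivalence for $\widehat{\SSkcat_\CA(S^2)}$ sends a functor to exactly that restriction together with its induced module structure, so no separate matching of structures is needed.
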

\begin{proof}
Directly from excision property of derived skein module, and Proposition \ref{InternalskeinM-B}.
\end{proof}

The next corollary will be our main computation tool in this paper.
\begin{thm}\label{computableexcisionforrepqG}
Let $M$ be closed and $M=H_1\bigcup_\Sigma H_2$ is a Heegaard splitting, $\CA=\rep^\fd_q(G)$ for $q$ not a root of unity.
\begin{align*} 
\SSk_G(M)&\cong \Hom_{\Rep_q(G)}(\mathbb{1},\Sk^\Int_G(H_1)\otimes^{\mathbb{L}}_{\Skalg^\Int_G(\Sigma^\circ)}\Sk^\Int_G(H_2))\otimes^\mathbb{L}_{\rmH_\bullet(G)} \mathbb{k}\\
&\cong \Sk^\Int_G(H_1)\otimes^\mathbb{L}_{\Skalg^\Int_G(\Sigma^\circ)}\Sk_G^\Int(H_2)\otimes^{\mathbb{L}}_{\rmH_\bullet(G)}\mathbb{k}
\end{align*}
\end{thm}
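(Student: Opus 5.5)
We start from Theorem \ref{Computableexcision} applied to the Heegaard splitting $M=H_1\cup_\Sigma H_2$. It expresses $\SSk_G(M)$ as the derived coend
\[ \underline{\SSk}^\Int_G(M-\mathbb{B})\otimes^\mathbb{L}_{\SSkcat_G(S^2)}\underline{\SSk}_G(\mathbb{B}). \]
By Lemma \ref{skcatS^2=Modskalg}, $\SSkcat_G(S^2)$ is Morita equivalent to $B\SSkalg_G(S^2)$ for $q$ not a root of unity. By Lemma \ref{SkalgS^2=C(g)}, $\SSkalg_G(S^2)\cong C_\bullet(\mathfrak{g})\cong \rmH_\bullet(G)$, and we use formality of this exterior algebra to replace the DG algebra by $\rmH_\bullet(G)$. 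Example \ref{coendmorita}, or equivalently Proposition \ref{G1End1F1=GCF} combined with Example \ref{exampleofgeneratedbyinvariants}, then lets us evaluate the coend at the distinguished object $\emptyset$ of $\SSkcat_G(S^2)$:
\[ \SSk_G(M)\cong \underline{\SSk}^\Int_G(M-\mathbb{B})(\emptyset)\otimes^\mathbb{L}_{\rmH_\bullet(G)}\SSk_G(\mathbb{B},\emptyset). \]

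Next we identify the two factors.
\begin{itemize}
\item For the ball, the unit property gives $\SSk_G(\mathbb{B},\emptyset)\cong\Hom_\CA(\mathbb{1},\mathbb{1})=\mathbb{k}$. The $\rmH_\bullet(G)$-module structure on it comes from gluing $S^2\times I$ onto $\mathbb{B}$, and it must be the augmentation. The reason is that the positive-degree part of $\End_{\SSkcat_G(S^2)}(\emptyset)$ acts on a complex concentrated in degree $0$, so it acts by zero.
\item For the complement, the value of $\underline{\SSk}^\Int_G(M-\mathbb{B})$ at the empty labeling is $\Hom_{\Rep_q(G)}(\mathbb{1},\SSk^\Int_G(M-\mathbb{B}))$. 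This is the derived-invariants functor $\widehat{\act_\emptyset}^R$ applied to the internal object, evaluated at $\mathbb{1}$. Substituting Theorem \ref{Sk^intGM-B}, which rests on Propositions \ref{InternalskeinM-B}, \ref{LSkint=Skint} and \ref{semisimpleintskalg} via semisimplicity, rewrites it as $\Hom_{\Rep_q(G)}(\mathbb{1},\Sk^\Int_G(H_1)\otimes^\mathbb{L}_{\Skalg^\Int_G(\Sigma^\circ)}\Sk^\Int_G(H_2))$.
\end{itemize}
Together these give the first line of the statement. The second line then follows directly from Corollary \ref{Hom(1,-)useless}, which drops the $\Hom(\mathbb{1},-)$.

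The main obstacle is the first step: justifying that the bimodule $\underline{\SSk}^\Int_G(M-\mathbb{B})$ and $\underline{\SSk}_G(\mathbb{B})$ are generated by invariants, so that the coend collapses to a tensor product over $\End(\emptyset)$, and checking that the module structures on $\underline{\SSk}^\Int_G(M-\mathbb{B})(\emptyset)$ match the ones coming from gluing. I would argue the collapse from the Morita equivalence of Lemma \ref{skcatS^2=Modskalg}. The proof of that lemma shows that every labeling $X_V$ with $V\notin\FZ_2(\rep_q(G))=\Vect$ is zero in $\widehat{\SSkcat_G(S^2)}$, so every module over $\SSkcat_G(S^2)$ is determined by its value at $\emptyset$, and Example \ref{exampleofgeneratedbyinvariants} applies. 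Some care is also needed to see that replacing $C_\bullet(\mathfrak{g})$ by $\rmH_\bullet(G)$ is compatible with the module structures. I would handle this by noting that any quasi-isomorphism of DG algebras induces equivalent derived tensor products.
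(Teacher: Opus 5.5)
Your proposal is correct and follows essentially the same route as the paper. You start from the excision formula of Theorem~\ref{Computableexcision} and collapse the coend over $\SSkcat_G(S^2)$ to a derived tensor product over $\SSkalg_G(S^2)\cong\rmH_\bullet(G)$, using the Morita equivalence of Lemma~\ref{skcatS^2=Modskalg}, Example~\ref{exampleofgeneratedbyinvariants} and Proposition~\ref{G1End1F1=GCF}. You then identify the complement factor via Yoneda and the handlebody comparison (Proposition~\ref{LSkint=Skint}), and remove $\Hom(\mathbb{1},-)$ with Corollary~\ref{Hom(1,-)useless}, exactly as the paper does. Your extra justifications fill in details the paper's terse proof leaves implicit: the ball factor is the augmentation module by a connectivity argument, and the formality quasi-isomorphism transports the module structures.
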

\begin{proof}
Since $\SSkcat_G(S^2)$ is Morita equivalent to $B\SSkalg_G(S^2)$ see Lemma \ref{skcatS^2=Modskalg}, by Example \ref{exampleofgeneratedbyinvariants}, Lemma \ref{SkalgS^2=C(g)}, Prop \ref{G1End1F1=GCF}, Theorem \ref{LSkint=Skint} and Yoneda lemma, we get the formula.

The second $\cong$ is due to Corollary \ref{Hom(1,-)useless}
\end{proof}

\begin{expl}
For $q$ not a root of unity, we use Theorem \ref{computableexcisionforrepqG} to compute the homological dimension (which is also called Poincar\'e polynomial) of $\SSk_{\mathbb{G}_m}(S^3)$. The 3-dimensional Schoenflies theorem of Alexander implies that the 3-sphere has a unique Heegaard splitting of genus-0. This implies that 
\begin{align*}
\underline{\SSk}_{\mathbb{G}_m}(S^3-\mathbb{B}):\SSkcat_{\mathbb{G}_m}(S^2)&\to \VVect     \\
X &\mapsto \mathbb{k}
\end{align*}

By Lemma \ref{Gmskcatmorskalg}, we know that $\widehat{\SSkcat_{\mathbb{G}_m}(S^2)}\simeq \LMod_{\SSkalg_{\mathbb{G}_m}(S^2)}(\VVect)$. Thus
\begin{align*}
\SSk_{\mathbb{G}_m}(S^3)&\cong \underline{\SSk}_{\mathbb{G}_m}(S^3-\mathbb{B})\otimes_{\SSkcat_{\mathbb{G}_m}(S^2)}\underline{\SSk}_{\mathbb{G}_m}(\mathbb{B})  \\
&\cong \mathbb{k}\otimes^{\mathbb{L}}_{\SSkalg_{\mathbb{G}_m}(S^2)}\mathbb{k}\\
&\cong \mathbb{k}\otimes^{\mathbb{L}}_{\mathbb{k}[\varepsilon]/(\varepsilon^2)}\mathbb{k}
\end{align*}
Here $\varepsilon$ has homological degree 1. 

It's well-known that $\mathbb{k}[\varepsilon]/(\varepsilon^2)$ is a Koszul algebra, and there exists an infinite length projective resolution of $\mathbb{k}$ by free $\mathbb{k}[\varepsilon]/(\varepsilon^2)$-modules
\[   \cdots \xrightarrow{\cdot \varepsilon} \mathbb{k}[\varepsilon]/(\varepsilon^2)[2]\xrightarrow{\cdot \varepsilon}\mathbb{k}[\varepsilon]/(\varepsilon^2)[1]\xrightarrow{\cdot \varepsilon}\mathbb{k}[\varepsilon]/(\varepsilon^2)\to \mathbb{k} \]

Then $\SSk_{\mathbb{G}_m}(S^3)$ is the total complex of the following double complex
\[   \cdots \xrightarrow{0} \mathbb{k}[2]\xrightarrow{0}\mathbb{k}[1]\xrightarrow{0}\mathbb{k} \]

Therefore the homological dimension of $\SSk_{\mathbb{G}_m}(S^3)$ is
\[  \Sigma_{i\in \mathbb{N}^0} \dim \rmH_i(\SSk_{\mathbb{G}_m}(S^3))\cdot x^i= 1+x^2+x^4+x^6\cdots     \]
\end{expl}

\begin{expl}
For $q$ is not a root of unity, let's compute $\SSk_{\Sl_2}(S^3)$. We first compute $\SSkalg_{\Sl_2}(S^2)$. By Lemma \ref{SkalgS^2=C(g)},
\begin{equation}\label{Skalgsl2S^2}
\SSkalg_{\Sl_2}(S^2)\simeq \mathbb{k}[\varepsilon]/(\epsilon^2)    
\end{equation}
where $\varepsilon$ has homological degree 3.

Now $\mathbb{k}[\varepsilon]/(\varepsilon^2)$ is a Koszul algebra, and admits an infinite free resolution of $\mathbb{k}$:
\[   \cdots \xrightarrow{\cdot \varepsilon} \mathbb{k}[\varepsilon]/(\varepsilon^2)[6]\xrightarrow{\cdot \varepsilon}\mathbb{k}[\varepsilon]/(\varepsilon^2)[3]\xrightarrow{\cdot \varepsilon}\mathbb{k}[\varepsilon]/(\varepsilon^2)\to \mathbb{k} \]

Then $\SSk_{\Sl_2}(S^3)$ is the total complex of the following double complex:
\[   \cdots \xrightarrow{0} \mathbb{k}[6]\xrightarrow{0}\mathbb{k}[3]\xrightarrow{0}\mathbb{k}\to 0 \]

The homological dimension of $\SSk_{\Sl_2}(S^3)$ is
\[ \Sigma_{i\in\mathbb{N}^0}\dim \rmH_i(\SSk_{\Sl_2}(S^3))\cdot x^i=1+x^4+x^8+x^{12}+\cdots    \]
\end{expl}

\begin{rem}
Note that this is same as singular homology of the classifying space $B\Sl_2$ of $\Sl_2$, which we compute in the following:

There is a homological spectral sequence, the \emph{Serre spectral sequence}, with the $E^2$ page
\[ E^2_{p,q}=\rmH_p(B\Sl_2;\mathbb{Z})\otimes\rmH_q(S^3;\mathbb{Z})  \]
and which has the $E^\infty$ page (since total space, which is path space in this case, is contractible)
\[ E^\infty_{p,q}=\begin{cases} \mathbb{Z}, & p=q=0 \\ 0 & \text{otherwise} \end{cases}   \]

We use the fact that 
\[ \rmH_q(S^3;\mathbb{Z})=\begin{cases} \mathbb{Z} & q=0,3 \\ 0 & \text{otherwise}   \end{cases}   \]
compute $\rmH_p(B\Sl_2;\mathbb{Z})$.

Let's write the $E^2$ page. We denote $\rmH_p(B\Sl_2;\mathbb{Z})$ by $\rmH_p$.
\begin{center}
\begin{tikzpicture}
\draw[line width=1,->] (0,0) -- (6,0);
\draw[line width=1,->] (0,0) -- (0,6);

\node[below] at (6,0) {$p$};
\node[left] at (0,6) {$q$};
\node[below] at (1,0) {0};
\node[] at (1,0) {$\bullet$};
\node[below] at (2,0) {1};
\node[] at (2,0) {$\bullet$};
\node[below] at (3,0) {2};
\node[] at (3,0) {$\bullet$};
\node[below] at (4,0) {3};
\node[] at (4,0) {$\bullet$};
\node[below] at (5,0) {4};
\node[] at (5,0) {$\bullet$};

\node[left] at (0,1) {0};
\node[] at (0,1) {$\bullet$};
\node[left] at (0,2) {1};
\node[] at (0,2) {$\bullet$};
\node[left] at (0,3) {2};
\node[] at (0,3) {$\bullet$};
\node[left] at (0,4) {3};
\node[] at (0,4) {$\bullet$};
\node[left] at (0,5) {4};
\node[] at (0,5) {$\bullet$};

\node[] at (1,1) {$\mathbb{Z}$};

\node[] at (2,1) {$\rmH_1$};
\node[] at (3,1) {$\rmH_2$};
\node[] at (4,1) {$\rmH_3$};
\node[] at (5,1) {$\rmH_4$};

\node[] at (1,3) {$\mathbb{Z}$};
\node[] at (2,3) {$\rmH_1$};
\node[] at (3,3) {$\rmH_2$};
\node[] at (4,3) {$\rmH_3$};
\node[] at (5,3) {$\rmH_4$};
\end{tikzpicture}
\end{center}

It is easy to see that $E^3$ page is the same with $E^2$ page. Nontrivial things happen on the $E^4$ page:
\begin{center}
\begin{tikzpicture}
\draw[line width=1,->] (0,0) -- (6,0);
\draw[line width=1,->] (0,0) -- (0,6);

\node[below] at (6,0) {$p$};
\node[left] at (0,6) {$q$};
\node[below] at (1,0) {0};
\node[] at (1,0) {$\bullet$};
\node[below] at (2,0) {1};
\node[] at (2,0) {$\bullet$};
\node[below] at (3,0) {2};
\node[] at (3,0) {$\bullet$};
\node[below] at (4,0) {3};
\node[] at (4,0) {$\bullet$};
\node[below] at (5,0) {4};
\node[] at (5,0) {$\bullet$};

\node[left] at (0,1) {0};
\node[] at (0,1) {$\bullet$};
\node[left] at (0,2) {1};
\node[] at (0,2) {$\bullet$};
\node[left] at (0,3) {2};
\node[] at (0,3) {$\bullet$};
\node[left] at (0,4) {3};
\node[] at (0,4) {$\bullet$};
\node[left] at (0,5) {4};
\node[] at (0,5) {$\bullet$};

\node[] at (1,1) {$\mathbb{Z}$};

\node[] at (2,1) {$\rmH_1$};
\node[] at (3,1) {$\rmH_2$};
\node[] at (4,1) {$\rmH_3$};
\node[] at (5,1) {$\rmH_4$};

\node[] at (1,4) {$\mathbb{Z}$};
\node[] at (2,4) {$\rmH_1$};
\node[] at (3,4) {$\rmH_2$};
\node[] at (4,4) {$\rmH_3$};
\node[] at (5,4) {$\rmH_4$};

\draw[->] (5,1) -- (1,4);

\end{tikzpicture}
\end{center}
According to the $E^\infty$ page, we need to vanish all terms except the term at $(0,0)$, which implies:
\begin{itemize}
\item the arrow in the figure is an isomorphism, i.e. $\rmH_4=\mathbb{Z}$. Iteratively, $\rmH_8,\rmH_{12},\cdots=\mathbb{Z}$.
\item $\rmH_1,\rmH_2,\rmH_3,=0$; Iteratively, $\rmH_5,\rmH_6,\rmH_7,\rmH_9,\cdots=0$.
\end{itemize}
All in all,
\[ \rmH_p(B\Sl_2;\mathbb{Z})=\begin{cases} \mathbb{Z} & p=0,4,8, 12\cdots \\ 0 & \text{otherwise}   \end{cases}   \]
Then by universal coefficient theorem for homology
\[ \rmH_p(B\Sl_2;\mathbb{k})=\begin{cases} \mathbb{k} & p=0,4,8, 12\cdots \\ 0 & \text{otherwise}   \end{cases}   \]

\end{rem}

\begin{thm}\label{skS^3}
For $q$ not a root of unity,
\[ \rmH_\bullet(\SSk_G(S^3))\cong \rmH_\bullet(BG;\mathbb{k})\cong \rmH^{G}_\bullet(\rm{pt})    \]
\end{thm}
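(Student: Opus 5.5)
Apply Theorem \ref{computableexcisionforrepqG} to the genus-$0$ Heegaard splitting $S^3=\mathbb{B}^3\cup_{S^2}\mathbb{B}^3$ and reduce everything to homological algebra over $\rmH_\bullet(G)$.

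\textbf{Step 1: reduce to a Tor computation.} For $g=0$ the punctured surface $\Sigma^\circ$ is a disk. Hence $\Skalg^\Int_G(\Sigma^\circ)\cong\mathbb{1}$ and $\Sk^\Int_G(H_i)\cong\mathbb{1}$, exactly as in the $\mathbb{G}_m$ example preceding the statement. The inner derived tensor product $\Sk^\Int_G(H_1)\otimes^\mathbb{L}_{\Skalg^\Int_G(\Sigma^\circ)}\Sk^\Int_G(H_2)$ is then $\mathbb{1}$, and $\Hom_{\Rep_q(G)}(\mathbb{1},\mathbb{1})=\mathbb{k}$. The formula gives
\[ \SSk_G(S^3)\cong \mathbb{k}\otimes^\mathbb{L}_{\SSkalg_G(S^2)}\mathbb{k}, \]
where by Lemma \ref{SkalgS^2=C(g)} we have $\SSkalg_G(S^2)\simeq C_\bullet(\mathfrak{g};\mathbb{k})\simeq \rmH_\bullet(G_\mathbb{C}(\mathbb{C});\mathbb{k})$. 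This identification uses formality. I will use it as a quasi-isomorphism of DG algebras and transport the derived tensor product along it, so that $\SSk_G(S^3)\simeq \mathrm{Tor}^{\rmH_\bullet(G)}(\mathbb{k},\mathbb{k})$ with total grading.

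\textbf{Step 2: structure of $\rmH_\bullet(G)$.} By Hopf's theorem, in characteristic zero the Pontryagin ring $\rmH_\bullet(G;\mathbb{k})$ of the connected compact Lie group $\mathbb{S}G$ is an exterior algebra $\Lambda(x_1,\dots,x_r)$ on primitive generators of odd degrees $2d_i-1$. Here $d_i$ are the fundamental degrees, and $r$ is the rank of $G$, including the central torus, whose factors contribute degree-$1$ generators. The Pontryagin product should correspond to the algebra structure on $C_\bullet(\mathfrak{g})$ coming from gluing along $S^2$; I will take this as part of Lemma \ref{SkalgS^2=C(g)}.

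\textbf{Step 3: Koszul computation.} Since $\Lambda(x_1,\dots,x_r)\cong\bigotimes_i \mathbb{k}[x_i]/(x_i^2)$ and $\mathrm{Tor}$ over a tensor product of algebras is the tensor product of the $\mathrm{Tor}$'s, the computation splits into rank-one factors. For a single factor, the periodic resolution used in the $\mathbb{G}_m$ and $\Sl_2$ examples gives $\mathrm{Tor}^{\mathbb{k}[x]/x^2}(\mathbb{k},\mathbb{k})$. It is a divided power algebra $\Gamma[y]$ with $y$ of total degree $|x|+1=2d_i$, and all differentials in the double complex vanish. Thus
\[ \rmH_\bullet(\SSk_G(S^3))\cong \bigotimes_i \Gamma[y_i],\qquad |y_i|=2d_i. \]
By Borel's theorem, $\rmH^\bullet(BG;\mathbb{k})\cong\mathbb{k}[c_1,\dots,c_r]$ with $|c_i|=2d_i$. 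Its graded dual, $\rmH_\bullet(BG;\mathbb{k})$, is $\bigotimes_i\Gamma[y_i]$ as graded vector spaces, which is the first isomorphism. Alternatively, Eilenberg–Moore/Rothenberg–Steenrod gives $\mathrm{Tor}^{C_\bullet(G)}(\mathbb{k},\mathbb{k})\simeq C_\bullet(BG)$ directly. The second isomorphism, $\rmH_\bullet(BG)\cong\rmH^G_\bullet(\mathrm{pt})$, holds by definition of equivariant homology.

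\textbf{Main obstacle.} The delicate step is Step 1's use of Lemma \ref{SkalgS^2=C(g)} at the level of DG algebras rather than just homology. One needs that the gluing algebra structure on $\SSkalg_G(S^2)$ is formal and matches the Pontryagin product. Otherwise $\mathrm{Tor}$ could carry Massey-type corrections. I would first try to prove this via the identification $\mathbb{k}\otimes^\mathbb{L}_{\mathcal{O}_q(G)}\mathbb{k}\simeq\mathbb{k}\otimes^\mathbb{L}_{U\mathfrak{g}}\mathbb{k}$. Here graded-commutativity of the Chevalley–Eilenberg coalgebra dual, together with intrinsic formality of free graded-commutative algebras on odd generators, should force formality.
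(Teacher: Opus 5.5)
Your proposal is correct and follows the paper's route. The paper applies Theorem \ref{computableexcisionforrepqG} to the genus-$0$ splitting to get $\SSk_G(S^3)\cong\mathbb{k}\otimes^{\mathbb{L}}_{\rmH_\bullet(G)}\mathbb{k}$ and then simply cites Koszul duality; your Step 3 (Hopf's theorem, the rank-one periodic resolutions, and Borel's theorem) spells that step out at the level of graded vector spaces. The formality issue you flag is not treated separately in the paper either: it is absorbed into Lemma \ref{SkalgS^2=C(g)}, and hence into the statement of Theorem \ref{computableexcisionforrepqG}, which is already written with $\otimes^{\mathbb{L}}_{\rmH_\bullet(G)}$.
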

\begin{proof}
By Koszul duality,
\[\rmH_\bullet(\mathbb{k}\otimes^\mathbb{L}_{\rmH_\bullet(G)}\mathbb{k}) \cong \rmH_\bullet(BG;\mathbb{k})    \]
\end{proof}

\begin{rem}
Since $\Loc_G(S^3)=[\bullet/G]=BG$ is smooth, the DT sheaf on it is the shifted constant sheaf $\underline{\mathbb{k}}[\dim G]$. Therefore, the above result is an evidence for the conjecture \ref{GSconj}.
\end{rem}

\begin{thm}\label{SkHH}
Let $\Sigma$ be closed oriented surface, and $\CA$ ribbon tensor category. 
\[ \SSk_\CA(\Sigma\times S^1)\simeq \HC(\SSkcat_\CA(\Sigma))    \]
\end{thm}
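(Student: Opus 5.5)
We cut $\Sigma\times S^1$ along two parallel copies of $\Sigma$ and apply the excision property of Definition \ref{axiomdefn}; the result then identifies with the derived coend defining the Hochschild complex. Write $S^1=I_1\cup I_2$ as two intervals meeting at their endpoints, so that
\[\Sigma\times S^1=(\Sigma\times I_1)\cup_{\Sigma\times\{0\}\sqcup\Sigma\times\{1\}}(\Sigma\times I_2).\]
The gluing surface is $\Sigma\sqcup\Sigma$. By the disjoint union property, $\SSkcat_\CA(\Sigma\sqcup\Sigma)\simeq\SSkcat_\CA(\Sigma)\boxtimes\SSkcat_\CA(\Sigma)^{\op}$, where orientations are reversed on one copy. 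The opposite category appears because reversing the interval direction identifies $\SSk_\CA(\Sigma\times I,X\cup Y)$ with the hom complex from $Y$ to $X$.

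The key steps, in order, are as follows.
\begin{enumerate}
\item Apply excision, in the derived-coend notation introduced after Definition \ref{derivedskeincat}:
\[\SSk_\CA(\Sigma\times S^1)\cong \underline{\SSk}_\CA(\Sigma\times I_1)\otimes^{\mathbb{L}}_{\SSkcat_\CA(\Sigma)\boxtimes\SSkcat_\CA(\Sigma)^{\op}}\underline{\SSk}_\CA(\Sigma\times I_2).\]
\item Identify each $\underline{\SSk}_\CA(\Sigma\times I_j)$, viewed as a functor on $\SSkcat_\CA(\Sigma)\boxtimes\SSkcat_\CA(\Sigma)^{\op}$, with the diagonal bimodule ${}_{\SSkcat_\CA(\Sigma)}\id_{\SSkcat_\CA(\Sigma)}$, i.e.\ $(X,Y)\mapsto\Hom_{\SSkcat_\CA(\Sigma)}(Y,X)$. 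This holds by the definition of the hom complexes of $\SSkcat_\CA(\Sigma)$, using the functoriality property for the homeomorphism $\Sigma\times I_j\cong\Sigma\times[0,1]$.
\item Check that the bimodule actions coincide. The actions used in the bar construction of the excision axiom are the gluing maps $\rho^L,\rho^R$, and these are exactly the compositions $\alpha$ defining $\SSkcat_\CA(\Sigma)$. By the naturality clause of the gluing property, they agree under the identification of step 2.
\item Conclude by the example after Example \ref{widehatF=derivedfunctor}: the coend of the diagonal bimodule with itself over $\SSkcat_\CA(\Sigma)\boxtimes\SSkcat_\CA(\Sigma)^{\op}$ is $\HC(\SSkcat_\CA(\Sigma))$.
\end{enumerate}

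The main obstacle is that the excision axiom is stated for a single connected gluing surface, and literally as a two-sided bar construction over $\SSkcat_\CA(\Sigma\sqcup\Sigma)$. Its labelings are pairs $(Z,Z')$, and one must check that $\SSk_\CA((\Sigma\sqcup\Sigma)\times I, (Z,Z')\cup(W,W'))$ factors as a tensor product of the two hom complexes, one of them taken in the opposite direction. This factorization comes from the disjoint union property, but it has to be compatible with the gluing maps; I would verify the compatibility using the commutative square in the gluing property.

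A cleaner alternative, which I would try if the first route becomes messy, is to glue in two steps. First glue the two cylinders along one copy of $\Sigma$, obtaining $\Sigma\times I$ with the diagonal bimodule by excision and the unit-type Yoneda identity $\id\otimes^{\mathbb{L}}_{\SSkcat}\id\simeq\id$. Then self-glue the remaining two boundary copies, using the self-gluing form of the gluing property. This second step produces the cyclic bar complex directly. The cyclic bar complex is the standard model of $\HC$, namely the geometric realization of Remark \ref{geometricrelation=doublecomplex}.

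Finally, I would note that Morita invariance of $\HC$ allows replacing $\SSkcat_\CA(\Sigma)$ by its idempotent completion, via Theorem \ref{Bim=Prcirc}. Hence the statement is insensitive to whether one uses $\SSkcat_\CA(\Sigma)$ or compact objects of $\widehat{\SSkcat_\CA(\Sigma)}$.
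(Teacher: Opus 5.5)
Your proposal is correct and is the paper's argument. The paper also applies excision along $\Sigma\times\{0\}\sqcup\Sigma\times\{1\}$ to write $\SSk_\CA(\Sigma\times S^1)$ as the coend over $\SSkcat_\CA(\Sigma)\boxtimes\SSkcat_\CA(\Sigma)^{\op}$ of two diagonal bimodules, then collapses one variable to get $\int^{X\in\SSkcat_\CA(\Sigma)}\SSk_\CA(\Sigma\times I,X,X)=\HC(\SSkcat_\CA(\Sigma))$, leaving implicit the compatibility checks you spell out. Your fallback two-step route is unnecessary, and as stated it would need a self-gluing form of excision that the axioms do not supply, since the gluing property only gives a chain map.
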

\begin{proof}
By the excision property of derived skein module,
\begin{align*}
\SSk_\CA(\Sigma\times S^1)&\simeq \int^{(X,Y)\in \SSkcat_\CA(\Sigma)\boxtimes \SSkcat_\CA(\Sigma)^\op} \SSk_\CA(\Sigma\times I,X,Y)\otimes^\mathbb{L}\SSk_\CA(\Sigma\times I,Y,X) \\
&\simeq \int^{X\in \SSkcat_\CA(\Sigma)}\SSk_\CA(\Sigma\times I,X,X)\\
&=\HC(\SSkcat_\CA(\Sigma))
\end{align*}

\end{proof}

\begin{thm}\label{SkGS^2timesS1}
For $q$ not a root of unity,
\[ \rmH_\bullet(\SSk_G(S^2\times S^1))\cong \rmH_\bullet(LBG)\cong \rmH^G_\bullet(G)\cong\rmH_\bullet(BG;\mathbb{k})\otimes \rmH_\bullet(G;\mathbb{k})\]
\end{thm}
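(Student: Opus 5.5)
We compute $\SSk_G(S^2\times S^1)$ by combining Theorem \ref{SkHH} with the Morita description of the derived skein category of $S^2$. By Theorem \ref{SkHH}, $\SSk_G(S^2\times S^1)\simeq \HC(\SSkcat_G(S^2))$. By Lemma \ref{skcatS^2=Modskalg}, $\SSkcat_G(S^2)$ is Morita equivalent to $B\SSkalg_G(S^2)$. Hochschild chains are Morita invariant (as noted in the example following the definition of $\BBim$). Hence
\[ \SSk_G(S^2\times S^1)\simeq \HC(\SSkalg_G(S^2)). \]
By Lemma \ref{SkalgS^2=C(g)}, $\SSkalg_G(S^2)\simeq C_\bullet(\mathfrak g;\mathbb{k})\simeq \rmH_\bullet(G;\mathbb{k})$. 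So we are reduced to computing the Hochschild homology of the graded algebra $\Lambda:=\rmH_\bullet(G;\mathbb{k})$.

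The key step is to identify this algebra and compute its Hochschild homology. Since $G$ is a connected Lie group, its homology with field coefficients (characteristic zero) is an exterior algebra $\Lambda=\Lambda(x_1,\dots,x_r)$ on odd generators, with degrees $2d_i-1$ given by the exponents. The Pontryagin product makes it graded-commutative. Hochschild homology of a tensor product is the tensor product of Hochschild homologies, so it suffices to treat $\Lambda(x)$ with $|x|$ odd. I would use the Hochschild–Kostant–Rosenberg theorem for the graded-commutative smooth (free graded-commutative) algebra $\Lambda(x)=\Sym(\mathbb{k}x)$. It gives
\[ \HH_\bullet(\Lambda(x))\cong \Lambda(x)\otimes \Sym(\mathbb{k}\,\sigma x) = \Lambda(x)\otimes\mathbb{k}[y],\qquad |y|=|x|+1, \]
where $y$ has even degree $2d_i$. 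Tensoring over $i$ then yields
\[ \HH_\bullet(\rmH_\bullet(G))\cong \rmH_\bullet(G;\mathbb{k})\otimes \mathbb{k}[y_1,\dots,y_r]. \]
The factor $\mathbb{k}[y_1,\dots,y_r]$ with $|y_i|=2d_i$ is exactly $\rmH_\bullet(BG;\mathbb{k})$, additively via Borel.

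The remaining identifications are classical topology. Goodwillie's theorem gives $\HH_\bullet(C_\bullet(\Omega X))\cong \rmH_\bullet(LX)$, applied with $X=BG$ and $\Omega BG\simeq G$. Formality lets us replace $C_\bullet(G)$ by $\rmH_\bullet(G)$; this is already packaged in Lemma \ref{SkalgS^2=C(g)}. For $BG$ we have $LBG\simeq G/\!/G=EG\times_G G$ with the adjoint action, so $\rmH_\bullet(LBG)\cong \rmH^G_\bullet(G)$. The Serre spectral sequence of the fibration $G\to LBG\to BG$ collapses, because both $E_2$ rows and columns are concentrated in degrees forcing no differentials: the fibration has a section and $G$ acts trivially on homology. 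This gives $\rmH^G_\bullet(G)\cong \rmH_\bullet(BG)\otimes\rmH_\bullet(G)$, which agrees with the algebraic computation.

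The main obstacle is justifying that the DG algebra $\SSkalg_G(S^2)$ may be replaced by the strictly graded-commutative formal algebra $\rmH_\bullet(G)$ for the Hochschild computation. HKR needs commutativity, and Lemma \ref{SkalgS^2=C(g)} a priori only gives an identification of complexes. I would first try to argue that $\mathbb{k}\otimes^{\mathbb{L}}_{\mathcal O_q(G)}\mathbb{k}$ is the Koszul dual of a commutative (Hopf) algebra, hence an $E_\infty$/cocommutative Hopf DG algebra. Rationally, such an algebra is formal and free graded-commutative by Milnor–Moore. If that fails, the fallback is to bypass HKR and directly compute $\HC$ of the exterior algebra via its explicit small bimodule Koszul resolution, which only uses the associative structure up to quasi-isomorphism.
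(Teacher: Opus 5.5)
Your argument is essentially the paper's proof: Theorem \ref{SkHH}, the description $\widehat{\SSkcat_G(S^2)}\simeq\Mod_{\rmH_\bullet(G)}(\VVect)$, the string-topology isomorphism $\HH_\bullet(C_\bullet(G))\cong\rmH_\bullet(LBG)$, the equivalence $LBG\simeq EG\times_G G$, and a Serre spectral sequence, with your HKR computation as an explicit check of the final answer. On the obstacle you flag, the paper simply cites Corollary \ref{SkcatS2=DModBG}, which is where the algebra-level identification is taken for granted; your first proposed fix would not work as stated, since $\mathcal{O}_q(G)$ is only braided-commutative, and the Koszul-resolution fallback presupposes the exterior algebra structure rather than proving it.

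One correction: your reason for the Serre collapse is not valid. The universal fibration $G\to EG\to BG$ has the same $E^2$-page yet a nonzero $d_4$ for $G=\Sl_2$, so the degree pattern forces nothing, and a section only kills the differentials leaving the base row. This is harmless, because your HKR computation combined with Goodwillie's theorem (and rational formality of $C_\bullet(G)$) already gives $\rmH_\bullet(LBG)\cong\rmH_\bullet(BG;\mathbb{k})\otimes\rmH_\bullet(G;\mathbb{k})$.
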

\begin{proof}
The first $\simeq$ is due to \ref{SkHH} ,\ref{SkcatS2=DModBG} and combine with a classical result in string topology:
\[ \HH_\bullet(C_\bullet(G))\cong \rmH_\bullet(LBG)\] 

The second $\simeq$ is because it is a well-known folk theorem that there is a homotopy equivalence between $(G\times EG)/G$, where $G$ acts by conjugation, and the free loop space of $BG$.

The third $\cong$ can be obtained from Serre spectral sequence.
\end{proof}
\begin{rem}
Since $G$ is smooth, $\Loc_G(S^2\times S^1)=[G/G]$ is smooth. So the DT sheaf on it is a constant sheaf. Therefore, the above result is an evidence for the conjecture \ref{GSconj}.
\end{rem}

\begin{rem}\label{stableunderL}
Note that we have 
\[ \rmH^\bullet_G(X)\cong \rmH^\bullet_T(X)^W   \]
where $X$ is a smooth manifold, $T$ is a maximal torus of $G$, $W$ is Weyl group of $G$, and right hand side is $W$-invariant elements of equivariant cohomology. Under Langlands duality, maximal torus are identical (but not canonically), combined with universal coefficient theorem, we know the \ref{skS^3} and \ref{SkGS^2timesS1} are stable under Langlands duality.
\end{rem}

In the following, we will use Theorem \ref{SkHH} to prove that
\begin{thm}
Let $\CA=\rep^{\fd}_q(\mathbb{G}_m)$ for not a root of unity $q$. 
\[  \SSk_{\mathbb{G}_m}(\Sigma_g\times S^1)\cong C_\bullet(B\mathbb{G}_m)\otimes \Lambda^\bullet \mathbb{k}^{2g+1}     \]
\end{thm}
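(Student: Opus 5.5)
Using Theorem \ref{SkHH}, we have $\SSk_{\mathbb{G}_m}(\Sigma_g\times S^1)\simeq \HC(\SSkcat_{\mathbb{G}_m}(\Sigma_g))$, so the task reduces to computing the Hochschild homology of the derived skein category of the closed surface. The first step is to show that, as for $S^2$ in Lemma \ref{skcatS^2=Modskalg}, $\SSkcat_{\mathbb{G}_m}(\Sigma_g)$ is Morita equivalent to $B\SSkalg_{\mathbb{G}_m}(\Sigma_g)$. By Lemma \ref{1boxtimesVarecompactgenerators}, the objects $\mathbb{1}_{\Sigma^\circ}\boxtimes_{\widehat{\SSkcat(\Ann)}}V$ with $V$ a character $\mathbb{k}_n$ of $\mathbb{G}_m$ compactly generate. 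By Theorem \ref{QHR}, their endomorphisms are $\Hom(\mathbb{1},\SSkalg^\Int(\Sigma^\circ)\otimes^{\mathbb{L}}_{\SSkalg^\Int(\Ann)}\intend(V))$. For $\mathbb{G}_m$ we have $\intend(\mathbb{k}_n)=\mathbb{k}$, but the quantum moment map to it is twisted by the braiding with $\mathbb{k}_n$: the generator $z^{\pm1}$ of $\mathcal{O}_q(\mathbb{G}_m)=\mathbb{k}[z^{\pm1}]$ acts by $q^{\pm 2n}$ rather than by $1$. I would then run the argument of Lemma \ref{skcatS^2=Modskalg} (the encircling-loop trick with $W=\mathbb{k}_1$), or the purely algebraic variant mentioned after it. For $n\neq 0$, this should show that $\mathbb{k}_n$-labelled generators vanish whenever the relevant moment-map character is nontrivial, so only $V=\mathbb{1}$ survives. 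The caveat is that, for $g\ge1$, the $a,b$ holonomies may shift weights, so I would verify that the surviving generators are all isomorphic to $\emptyset$ or vanish. Together with Example \ref{exampleofgeneratedbyinvariants}, this gives $\HC(\SSkcat_{\mathbb{G}_m}(\Sigma_g))\simeq \HH_\bullet(\SSkalg_{\mathbb{G}_m}(\Sigma_g))$.

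The second step computes $\SSkalg_{\mathbb{G}_m}(\Sigma_g)$ by Theorem \ref{DQHR}, together with Example \ref{LskalgintG} and Proposition \ref{semisimpleintskalg}. We have $\SSkalg^\Int(\Sigma_g^\circ)=\mathcal{D}_q(\mathbb{G}_m)^{\otimes g}$, a $\mathbb{Z}$-graded quantum torus in $2g$ variables $x_i,y_i$. The moment map sends $z\mapsto\prod_i (x_iy_ix_i^{-1}y_i^{-1})$, which equals a power of $q$ times $1$, so it factors through the counit up to scalar. Resolving the trivial module $\mathbb{1}$ over $\mathbb{k}[z^{\pm1}]$ by the Koszul complex $\mathbb{k}[z^{\pm1}]\xrightarrow{z-1}\mathbb{k}[z^{\pm1}]$ gives
\[\SSkalg^\Int(\Sigma^\circ)\otimes^{\mathbb{L}}_{\mathcal{O}_q(\mathbb{G}_m)}\mathbb{1}\simeq \mathcal{D}_q^{\otimes g}\otimes\Lambda[\varepsilon],\]
with $\varepsilon$ in degree $1$ and the differential $\mu(z)-1$ acting as zero. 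Taking $\mathbb{G}_m$-invariants, i.e.\ the weight-zero part, yields the quantum torus $\mathcal{T}_{2g}$ of weight-zero monomials, tensored with $\mathbb{k}[\varepsilon]/\varepsilon^2$. This agrees with the stated answer $\mathcal{D}_q(\mathbb{G}_m)^{\otimes g}\otimes\mathbb{k}[\varepsilon]/(\varepsilon^2)$ in the remark after Lemma \ref{SkalgS^2=C(g)}. I would also check that the algebra structure is a tensor product, with $\varepsilon$ central, using the formality argument of Lemma \ref{SkalgS^2=C(g)}.

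The third step computes Hochschild homology via the Künneth formula, $\HH(A\otimes B)\simeq\HH(A)\otimes\HH(B)$. For the exterior factor, $\HH_\bullet(\mathbb{k}[\varepsilon]/\varepsilon^2)$ with $|\varepsilon|=1$ is $\mathbb{k}[\varepsilon]\otimes\mathbb{k}[u]$, where $|u|=2$ is the divided-power class. This is $\Lambda[\varepsilon]\otimes C_\bullet(B\mathbb{G}_m)$, matching the Koszul-dual computation in Theorem \ref{skS^3} and Theorem \ref{SkGS^2timesS1}. For the quantum torus at generic $q$, the Hochschild homology localizes onto the $q$-commutative weight zero monomials. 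I expect these to be only the constants, giving the exterior algebra $\Lambda^\bullet\mathbb{k}^{2g}$ via the Koszul resolution of the Laurent ring. Combining the two factors gives $C_\bullet(B\mathbb{G}_m)\otimes\Lambda^\bullet\mathbb{k}^{2g+1}$.

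I expect the main obstacle to be the Morita reduction in the first step, namely showing that no nonzero-weight generator survives for $g\ge1$. A secondary obstacle is the quantum torus Hochschild computation: I must verify that the twisted-sector contributions vanish for generic $q$, and that the symplectic form on $\mathbb{Z}^{2g}$ is nondegenerate, so that only the zero monomial is central.
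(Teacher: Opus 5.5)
Your route matches the paper's. You use Theorem \ref{SkHH}, then the Morita reduction to $\SSkalg_{\mathbb{G}_m}(\Sigma_g)$, showing via Theorem \ref{QHR} and the Koszul complex that the generators $\mathbb{1}_{\Sigma^\circ}\boxtimes\mathbb{k}_n$ with $n\neq 0$ vanish; this is exactly Lemma \ref{Gmskcatmorskalg}. You then identify $\SSkalg_{\mathbb{G}_m}(\Sigma_g)\cong\mathcal{D}_q(\mathbb{G}_m)^{\otimes g}\otimes\mathbb{k}[\varepsilon]/(\varepsilon^2)$ and finish with K\"unneth and HKR for the exterior factor. Reproving the quantum-torus Hochschild homology through the $\mathbb{Z}^{2g}$-grading, with only degree $0$ surviving for a nondegenerate form at non-root-of-unity $q$, is a fine substitute for citing Wambst.

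\textbf{The gap: the value of the moment map.} One step is wrong as written, and everything hinges on it. You take $\mu(z)=\prod_i x_iy_ix_i^{-1}y_i^{-1}$, call it ``a power of $q$ times $1$'', and then use that the Koszul differential $\mu(z)-1$ is zero. These two claims are incompatible.
\begin{itemize}
\item In the quantum torus each group commutator $x_iy_ix_i^{-1}y_i^{-1}$ is a nontrivial power of $q$. So your formula gives $\mu(z)=q^{c}$ with $c\neq 0$ for $g\ge 1$.
\item Then $\mathcal{D}_q^{\otimes g}\otimes^{\mathbb{L}}_{\mathcal{O}_q(\mathbb{G}_m)}\mathbb{1}$ is the contractible complex $\mathcal{D}_q^{\otimes g}\xrightarrow{\cdot(q^{c}-1)}\mathcal{D}_q^{\otimes g}$, so $\emptyset\simeq 0$.
\item In your first step it would then be some $\mathbb{k}_n$ with $q^{2n}=q^{c}$, not $\mathbb{1}$, that survives.
\end{itemize}
What you need is $\mu(z)=1$ exactly. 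The moment map sends $z$ to the single boundary-parallel curve of $\Sigma_g^\circ$, not to the stacked product of four curves. The ribbon correction cancels the power of $q$; the paper records this as $q^{-1}[\partial,K]=q^{-1}q=1$.

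There is a cheap check inside the axioms. The skein property gives $\rmH_0(\SSkalg_{\mathbb{G}_m}(\Sigma_g))\cong\Skalg_{\mathbb{G}_m}(\Sigma_g)\neq 0$. The boundary curve is null-homologous, so $\mu(z)$ is a scalar. Theorem \ref{QHR} would make $\SSkalg_{\mathbb{G}_m}(\Sigma_g)$ zero unless that scalar is $1$.

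The same fact is what your encircling-loop variant needs for $g\ge 1$. Unlike on $S^2$, the loop around the marked point cannot be slid off the back of $\Sigma_g$: it is isotopic to $\partial\Sigma_g^\circ$, so you need precisely this evaluation.

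\textbf{The weight caveat is vacuous.} For $\mathbb{G}_m$, each $\mathbb{k}_n^*\otimes\mathbb{k}_n$ is the trivial character. So $\SSkalg^{\Int}_{\mathbb{G}_m}(\Sigma_g^\circ)=\mathcal{D}_q(\mathbb{G}_m)^{\otimes g}$ lies entirely in weight zero. The braided tensor product is the ordinary one, and $\Hom(\mathbb{1},-)$ changes nothing. There is no proper ``weight-zero sub-torus'', and the $a,b$ holonomies do not shift weights. The only thing distinguishing $\mathbb{1}_{\Sigma^\circ}\boxtimes\mathbb{k}_n$ from $\emptyset$ is the scalar $\lambda=q^{\pm 2n}$ by which $z$ acts on $\intend(\mathbb{k}_n)$. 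With $\mu(z)=1$ established, your steps go through and give the Poincar\'e series $(1+x)^{2g+1}(1+x^2+x^4+\cdots)$, as in the paper.
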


\begin{rem}\label{BR26}
Actually we know the general result for derived $\mathbb{G}_m$-skein module for any closed oriented 3-manifold and for any $q$. Joint in progress with Qiuyu Ren, we have a non-Blob type, combinatorial, global, simplicial model for derived $\mathbb{G}_m$-skein module and use that model, we can prove the following theorem: \cite{BR26} For any $q$ and any connected closed oriented 3-manifold $M$, 
\[  \SSk_{\mathbb{G}_m,q}(M)\cong C_\bullet(B\mathbb{G}_m;\mathbb{k})\otimes \mathbb{k}[\rmH_3(M)]\otimes^\mathbb{L}_{\mathbb{k}[\rmH_2(M)]}\mathbb{k}[\rmH_1(M)]     \]
Here $\mathbb{k}[\rmH_2(M)]$ acts on $\mathbb{k}[\rmH_3(M)]=\mathbb{k}$ trivially, and $\mathbb{k}[\rmH_2(M)]$ acts on $\mathbb{k}[\rmH_1(M)]$ by $(g,h)\mapsto q^{\langle 2g,h\rangle}h$. Here $\langle-,-\rangle$ is the intersection form of homology of $M$.

We are considering to generalize it to more general reductive group $G$.
\end{rem}

In order to use Theorem \ref{SkHH}, we need to prove a compact generation property for derived $\mathbb{G}_m$-skein category:
\begin{lem}\label{Gmskcatmorskalg}
Let $G=\mathbb{G}_m$, and $\Sigma$ a closed oriented surface. 
\[ \widehat{\SSkcat_{\mathbb{G}_m}(\Sigma)}\simeq \LMod_{\SSkalg_{\mathbb{G}_m}(\Sigma)}(\VVect)   \]
\end{lem}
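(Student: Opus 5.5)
We will follow the strategy of Lemma \ref{skcatS^2=Modskalg}. The goal is to show that the single object $\emptyset=\mathbb{1}_{\Sigma^\circ}\boxtimes_{\widehat{\SSkcat_{\mathbb{G}_m}(\Ann)}}\mathbb{1}$ is a compact generator of $\widehat{\SSkcat_{\mathbb{G}_m}(\Sigma)}$. Once this is known, the equivalence follows from the standard Schwede--Shipley/Morita argument of Theorem \ref{Bim=Prcirc}: the compact generator $\emptyset$ induces an equivalence $\widehat{\SSkcat_{\mathbb{G}_m}(\Sigma)}\simeq \LMod_{\End(\emptyset)}(\VVect)$, and $\End(\emptyset)=\SSkalg_{\mathbb{G}_m}(\Sigma)$ by Definition \ref{DSA}. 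Compactness of $\emptyset$ is automatic, since it lies in $\SSkcat_{\mathbb{G}_m}(\Sigma)$. So the content of the lemma is generation.

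By Lemma \ref{1boxtimesVarecompactgenerators}, the objects $X_V:=\mathbb{1}_{\Sigma^\circ}\boxtimes V$ with $V\in\rep^\fd_q(\mathbb{G}_m)$ compactly generate. Every such $V$ is a finite sum of characters $\mathbb{k}_n$, $n\in\mathbb{Z}$. It therefore suffices to show that $X_{\mathbb{k}_n}\simeq 0$ for $n\neq 0$, since $X_{\mathbb{k}_0}=\emptyset$. As in Lemma \ref{skcatS^2=Modskalg}, it is enough to show that $\id_{X_{\mathbb{k}_n}}$ is nullhomotopic. This is a statement in $\rmH_0$, so by the skein property it reduces to the vanishing of the identity in the ordinary skein module of $\Sigma\times I$ with one boundary point labelled $\mathbb{k}_n$ on each end.

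We use the same encircling argument. Take $W=\mathbb{k}_1$ and a small $W$-labelled loop in $\Sigma\times I$ parallel to $\Sigma$ and linking the $\mathbb{k}_n$-strand. Pushing the loop off the strand in one direction gives a skein equal to $q^{\pm 2n}\dim_q(W)\,\id$, coming from the double braiding of $\mathbb{k}_1$ with $\mathbb{k}_n$. Pushing it off the other way, through the top of the cylinder, gives a skein equal to $\dim_q(W)\,\id$, provided the loop can be slid around the back of $\Sigma$.

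On a closed surface the strand is a point puncture, and the small loop around it bounds a disk on the other side of the puncture. This is where closedness is used, exactly as the equator on $S^2$ was used. Hence $(1-q^{\pm 2n})\dim_q(\mathbb{k}_1)\,\id_{X_{\mathbb{k}_n}}=0$. Since $q$ is not a root of unity and $\dim_q\mathbb{k}_1$ is a unit, we conclude $\id_{X_{\mathbb{k}_n}}=0$.

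The main obstacle is justifying this sliding on a surface of genus $g>0$. There the loop around the marked point, pushed across $\Sigma\setminus\{\mathrm{pt}\}$, does not sweep out a disk, because $\Sigma^\circ$ has genus. Instead, the loop is homotopic to a product of commutators $\prod[a_i,b_i]$. For $\mathbb{G}_m$ we expect this to be harmless, since $\mathbb{k}_1$ strands braid with one another by scalars. Concretely, we would rewrite the boundary loop as a composite of $a_i,b_i$ loops and check that the scalar factors cancel pairwise.

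Alternatively, we can give an algebraic proof through Theorem \ref{QHR}. We have $\End(X_V)\simeq\Hom(\mathbb{1},\SSkalg^\Int(\Sigma^\circ)\otimes^{\mathbb{L}}_{\mathcal{O}_q(\mathbb{G}_m)}\intend(V))$. Here $\SSkalg^\Int(\Sigma^\circ)=\mathcal{D}_q(\mathbb{G}_m)^{\otimes g}$, a quantum torus, and its moment map sends the generator $z$ of $\mathcal{O}_q(\mathbb{G}_m)=\mathbb{k}[z^{\pm1}]$ to a product of commutators $q$-twisted by the grading. The strategy is to show that on the degree $-n$ component the Koszul complex for $z-1$ has differential given by multiplication by $(q^{2n}-1)$ times a unit, hence is acyclic. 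I would try the algebraic route first, since it avoids the topological subtlety, and compute the degree $n$ component of $\mathcal{D}_q(\mathbb{G}_m)^{\otimes g}\otimes^{\mathbb{L}}_{\mathbb{k}[z^{\pm 1}]}\mathbb{k}_n$ explicitly.
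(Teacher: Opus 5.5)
Your reductions are correct and coincide with the paper's. By Lemma \ref{1boxtimesVarecompactgenerators} the objects $X_V=\mathbb{1}_{\Sigma^\circ}\boxtimes V$ compactly generate, so it suffices to show $X_{\mathbb{k}_n}\simeq 0$ for $n\neq 0$; then $\emptyset$ is a compact generator with endomorphism algebra $\SSkalg_{\mathbb{G}_m}(\Sigma)$. Reducing this to the vanishing of the class of the identity in the ordinary skein module is also fine. The paper then takes your second route: Theorem \ref{QHR} plus a two-term Koszul complex. The gap is that the only step with real content, namely positive genus, is established by neither of your routes, and the mechanisms you sketch for it would give the wrong scalar.

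\textbf{Topological route.} The claim that the small loop bounds a disk on the other side is false for $g\geq 1$, as you then note. Rewriting the boundary loop as $\prod_i[a_i,b_i]$ and checking that scalars ``cancel pairwise'' also does not work. In the $\mathbb{G}_m$ quantum torus each stacked commutator $a_ib_ia_i^{-1}b_i^{-1}$ is a nontrivial power of $q$ (the double braiding of $\mathbb{k}_1$ with itself). So the naive value coincides with $\lambda_n$ for some $n\neq 0$, and the argument would fail to kill that $X_{\mathbb{k}_n}$.

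What you need is that the \emph{simple} boundary loop of $\Sigma^\circ$ equals $\dim_q(\mathbb{k}_1)$ times the empty skein. This is the paper's assertion that the quantum moment map $\mu:\mathcal{O}_q(\mathbb{G}_m)\to\mathcal{D}_q(\mathbb{G}_m)^{\otimes g}$ sends the generator $x$ to the ribbon-corrected commutator $q^{-1}[\partial,K]=1$. One way to see it: the loop is null-homologous, so it lies in the one-dimensional $H_1$-degree-zero part of the quantum torus. Embedding $\Sigma\times I$ in $S^3$ then identifies the scalar with that of a $0$-framed unknot.

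\textbf{Algebraic route.} Your sketch leaves $\mu(x)$ undetermined and attributes the factor $q^{2n}-1$ to a graded component, which is not where it comes from. $\mathcal{D}_q(\mathbb{G}_m)$ has weight zero, since the adjoint action of $\mathbb{G}_m$ is trivial, and $\mu(x)$ is the scalar $1$. So there is no degree $-n$ piece to isolate, and $\Hom(\mathbb{1},-)$ changes nothing.

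The nontrivial scalar sits on the other factor. $\intend(\mathbb{k}_n)\cong\mathbb{1}$ carries the module structure $x\mapsto\lambda_n$, where $\lambda_n$ is the double braiding of the defining character with $\mathbb{k}_n$; it is $\neq 1$ for $n\neq 0$ because $q$ is not a root of unity. Resolve $\intend(\mathbb{k}_n)$ by $\mathcal{O}_q(\mathbb{G}_m)\xrightarrow{\cdot(x-\lambda_n)}\mathcal{O}_q(\mathbb{G}_m)$. Using $\mu(x)=1$, this gives $\mathcal{D}_q(\mathbb{G}_m)^{\otimes g}\xrightarrow{\cdot(1-\lambda_n)}\mathcal{D}_q(\mathbb{G}_m)^{\otimes g}$, which is contractible.

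Supplying $\mu(x)=1$ and placing $\lambda_n$ on $\intend(V)$ is exactly what your proposal is missing.
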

\begin{proof}
Note that by Lemma \ref{1boxtimesVarecompactgenerators}, we have $\{\mathbb{1}_{\Sigma^\circ}\boxtimes_{\widehat{\SSkcat_\CA(\Ann)}}V\}_{V\in \rep^\fd_q(\mathbb{G}_m)}$ are compact generators of $\widehat{\SSkcat_{\mathbb{G}_m}(\Sigma)}$.

By \ref{QHR}, to prove the lemma, it's enough to show that $\mathcal{D}_q(\mathbb{G}_m)^{\otimes g}\otimes^\mathbb{L}_{\mathcal{O}_q(\mathbb{G}_m)}V^*\otimes V\simeq 0$ iff $V\notin\FZ_2(\rep_q(\mathbb{G}_m))=\Vect$. The $\mathcal{O}_q(\mathbb{G}_m)$-action on $\mathcal{D}_q(\mathbb{G}_m)^{\otimes g}$ is given by the quantum moment map
\begin{align*}  \mu:\mathcal{O}_q(\mathbb{G}_m)&\to \mathcal{D}_q(\mathbb{G}_m)^{\otimes g}  \\
x &\mapsto q^{-1}[\partial,K]=q^{-1}q=1
\end{align*}
Here the factor $q^{-1}$ is given by the ribbon structure. 

Although $V^*\otimes V\simeq\mathbb{1}$, the $\mathcal{O}_q(\mathbb{G}_m)$-action on $\mathbb{1}$ is non-trivial. It is again given by quantum moment map see Section. This quantum moment map is given by
\[  \mathcal{O}_q(\mathbb{G}_m)\to \intend_{\Rep_q(\mathbb{G}_m)}(V)\cong V^*\otimes V       \]
\begin{center}
\begin{tikzpicture} 

\draw[line width=2,color=red] (0,0) arc (-60:30:3 and 1);
\draw[line width=2,color=red] (1,1.45) arc (30:60:3 and 1);

\draw[line width=2,color=blue] (2,1.7) arc (120:210:3 and 1);
\draw[line width=2,color=blue] (1,0.25) arc (210:240:3 and 1);

\node[color=red,left] at (0,0) {$x$};
\node[color=red,left] at (0,1.8) {$x^*$};

\node[color=red,right] at (2,0) {$V$};
\node[color=red,right] at (2,1.8) {$V^*$};
\end{tikzpicture}
\end{center}
Since $V\notin \FZ_2(\rep_q(G))$, the double braiding with defining representation $x$ is $\lambda\ne 1$. In other words, the $\mathcal{O}_q(\mathbb{G}_m)$-action on $V^*\otimes V\simeq \mathbb{1}$ is given by 
\begin{align*}
\mathcal{O}_q(\mathbb{G}_m)&\to \mathbb{1}\\
x &\mapsto \lambda
\end{align*}

The \emph{quantized Koszul complex} of $\mathbb{1}$ as $\mathcal{O}_q(\mathbb{G}_m)$-modules is given as
\[ 0\to \mathcal{O}_q(\mathbb{G}_m)\xrightarrow{\cdot (x-\lambda)} \mathcal{O}_q(\mathbb{G}_m)\to 0     \]
Then the derived tensor product $\mathcal{D}_q(\mathbb{G}_m)^{\otimes g}\otimes^\mathbb{L}_{\mathcal{O}_q(\mathbb{G}_m)}V^*\otimes V$ is the following complex
\[ 0\to \mathcal{D}_q(\mathbb{G}_m)^{\otimes g}\xrightarrow{\cdot(1-\lambda)}\mathcal{D}_q(\mathbb{G}_m)^{\otimes g}\to 0    \]
which is homotopic to 0.
\end{proof}
\begin{rem}
When $\Sigma=S^2$, this lemma coincides with Lemma \ref{skcatS^2=Modskalg}.
\end{rem}

\begin{cor}\label{GmSk}
Let $G=\mathbb{G}_m$, and $\Sigma_g$ a closed oriented surface of genus $g$. We have 
\[  \SSk_{\mathbb{G}_m}(\Sigma_g\times S^1)\simeq \HC(\SSkalg_{\mathbb{G}_m}(\Sigma_g))   \]
\end{cor}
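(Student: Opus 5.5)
The plan is to combine Theorem \ref{SkHH} with Lemma \ref{Gmskcatmorskalg} and the Morita invariance of Hochschild chains. By Theorem \ref{SkHH}, applied with $\CA=\rep^{\fd}_q(\mathbb{G}_m)$ and $\Sigma=\Sigma_g$, there is an equivalence
\[ \SSk_{\mathbb{G}_m}(\Sigma_g\times S^1)\simeq \HC(\SSkcat_{\mathbb{G}_m}(\Sigma_g)), \]
and the remaining task is to replace the DG category on the right by the single DG algebra $\SSkalg_{\mathbb{G}_m}(\Sigma_g)$.

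For this replacement we use Lemma \ref{Gmskcatmorskalg}, which gives
\[ \widehat{\SSkcat_{\mathbb{G}_m}(\Sigma_g)}\simeq \LMod_{\SSkalg_{\mathbb{G}_m}(\Sigma_g)}(\VVect)\simeq \widehat{B\SSkalg_{\mathbb{G}_m}(\Sigma_g)}. \]
Here Definition \ref{DSA} identifies $\SSkalg_{\mathbb{G}_m}(\Sigma_g)$ with $\End_{\SSkcat_{\mathbb{G}_m}(\Sigma_g)}(\emptyset)$. The equivalence in the lemma is implemented by the compact generator $\emptyset$: it sends $\emptyset$ to the free module of rank one. Both sides lie in $\PPr^\circ$, so Theorem \ref{Bim=Prcirc} shows that $\SSkcat_{\mathbb{G}_m}(\Sigma_g)$ and $B\SSkalg_{\mathbb{G}_m}(\Sigma_g)$ are Morita equivalent, i.e. equivalent in $\BBim$. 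Concretely, the idempotent completions of both are the compact objects of the same presentable category.

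Hochschild chains are invariant under Morita equivalence, as recorded in the example following Example \ref{widehatF=derivedfunctor}. We therefore get
\[ \HC(\SSkcat_{\mathbb{G}_m}(\Sigma_g))\simeq \HC(B\SSkalg_{\mathbb{G}_m}(\Sigma_g))=\HC(\SSkalg_{\mathbb{G}_m}(\Sigma_g)), \]
where the last equality holds because the Hochschild complex of the one-object DG category $BA$ is the usual Hochschild complex $A\otimes^{\mathbb{L}}_{A\otimes A^{\op}}A$. This is a special case of Example \ref{coendmorita}. Composing this with the equivalence from Theorem \ref{SkHH} gives the claim.

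The main point requiring care is the Morita step. We must check that the equivalence of Lemma \ref{Gmskcatmorskalg} restricts on compact objects to an equivalence between the idempotent completion of $\SSkcat_{\mathbb{G}_m}(\Sigma_g)$ and perfect $\SSkalg_{\mathbb{G}_m}(\Sigma_g)$-modules. By Theorem \ref{Bim=Prcirc}, this reduces to the fact that any equivalence in $\PPr^\circ$ preserves compact objects, so it should be formal. All the substantive content, namely the vanishing of the generators $\mathbb{1}_{\Sigma^\circ}\boxtimes V$ for nontrivial $V$, is already contained in Lemma \ref{Gmskcatmorskalg}.
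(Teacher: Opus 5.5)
Your proposal is correct and is essentially the paper's proof, which just combines Lemma~\ref{Gmskcatmorskalg} with Theorem~\ref{SkHH}. You spell out the Morita-invariance step (via Theorem~\ref{Bim=Prcirc} and $\HC(BA)\simeq\HC(A)$) that the paper leaves implicit; the only quibble is that, with the paper's conventions, $\widehat{BA}$ consists of right rather than left $A$-modules, which is harmless since $\HC(A^{\op})\simeq\HC(A)$.
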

\begin{proof}
Combine Lemma \ref{Gmskcatmorskalg} and Theorem \ref{SkHH}.
\end{proof}

By Theorem \ref{QHR}, Theorem \ref{semisimpleintskalg} and Example \ref{LskalgintG}, we know that 
\[\SSkalg_{\mathbb{G}_m}(\Sigma_g)\cong \Hom_{\Rep_q(G)}(\mathbb{k}, \mathcal{D}_q(\mathbb{G}_m)^{\otimes g}\otimes^\mathbb{L}_{\mathcal{O}_q(\mathbb{G}_m)}\mathbb{k}) \]   

Our goal now is to compute $\SSkalg_{\mathbb{G}_m}(\Sigma_g)$ first.

The quantum moment map which is different from the quantum moment map $\mathcal{O}_q(\mathbb{G}_m)\to \mathbb{1}, x\mapsto \lambda$ in the proof of \ref{Gmskcatmorskalg}  
\begin{align*}
\epsilon:\mathcal{O}_q(\mathbb{G}_m)&\to \mathbb{k}\\
x&\mapsto 1
\end{align*}
This gives $\mathbb{k}$ a $\mathcal{O}_q(\mathbb{G}_m)$-module structure.
There is a free resolution, called \emph{quantized Koszul complex}, of $\mathbb{k}$ as $\mathcal{O}_q(\mathbb{G}_m)$-modules:
\[ 0\to \mathcal{O}_q(\mathbb{G}_m)\xrightarrow{\cdot (x-1)}\mathcal{O}_q(\mathbb{G}_m)\to 0     \]

There is another quantum moment map which is the same as $\mu$ in the proof of \ref{Gmskcatmorskalg} 
\begin{align*} \mu:\mathcal{O}_q(\mathbb{G}_m)&\to \mathcal{D}_q(\mathbb{G}_m)^{\otimes g}\\  
x&\mapsto 1
\end{align*}

Thus $\SSkalg_{\mathbb{G}_m}(\Sigma^\circ_g)\otimes^{\mathbb{L}}_{\SSkalg^{\Int}_{\mathbb{G}_m}(\Ann)}\mathbb{k}$ is the following chain complex:
\[ 0\to \mathcal{D}_q(\mathbb{G}_m)^{\otimes g}\xrightarrow{\cdot (1-1)}\mathcal{D}_q(\mathbb{G}_m)^{\otimes g}\to 0     \]

Therefore, we have 
\begin{equation}\label{SkalgGm(Sigmag)}
\SSkalg_{\mathbb{G}_m}(\Sigma_g)\cong\Hom_{\Rep_q(\mathbb{G}_m)}(\mathbb{k},\SSkalg^\Int_{\mathbb{G}_m}(\Sigma^\circ_g)\otimes^{\mathbb{L}}_{\SSkalg^\Int_{\mathbb{G}_m}(\Ann)}\mathbb{k})\cong \mathcal{D}_q(\mathbb{G}_m)^{\otimes g}\otimes \mathbb{k}[\varepsilon]/(\varepsilon^2)\end{equation}
where $\varepsilon$ has homological degree 1.

Now we compute the homological dimension of $\SSk_{\mathbb{G}_m}(\Sigma_g\times S^1)$. By Corollary \ref{GmSk}, we need to compute Hochschild homology of $\SSkalg_{\mathbb{G}_m}(\Sigma_g)\cong \mathcal{D}_q(\mathbb{G}_m)^{\otimes g}\otimes \mathbb{k}[\varepsilon]/(\varepsilon^2)$.  

To compute it, we first need to compute the homological dimension of $\HC(\mathcal{D}_q(\mathbb{G}_m)^{\otimes g})$. By \cite{Wam97} Theorem 1.1, we have 
\[ \Sigma_{i\in\mathbb{N}^0} \dim \HH_i(\mathcal{D}_q(\mathbb{G}_m)^{\otimes g})\cdot x^i=(1+x)^{2g}      \]

Now let's compute $\HH(\mathbb{k}[\varepsilon]/(\varepsilon^2))$ where $\varepsilon$ has homological degree 1.
\begin{prop}
[Proposition 5.4.6, \cite{Lod13}] Let $(A,\delta)$ be a commutative DG algebra such that either $A=\Lambda V$ or $\mathbb{k}$ contains $\mathbb{Q}$ and $A$ is smooth. Then the Hochschild complex $\HC(A,\delta)$ is quasi-isomorphic to $\Omega_{(A,\delta)}$ and so
\[ \HH_\bullet(A,\delta)\cong \rmH_\bullet(\Omega_{(A,\delta)})  \]
where $\Omega^i_{(A,\delta)}:=\overline{\Lambda}_A \Omega^1_{(A,\delta)}$. Here $I$ is the kernel of multiplication map of $A$, and $\Omega^1_{(A,\delta)}:=I/I^2$.
\end{prop}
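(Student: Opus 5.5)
The plan is to prove the statement as a differential-graded version of the Hochschild–Kostant–Rosenberg theorem. We compare both sides through an explicit map and reduce to the case $\delta=0$ by a filtration argument.

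\textbf{The map.} We use the antisymmetrization map
\[ \varepsilon:\Omega^n_{(A,\delta)}\to \HC_n(A,\delta),\qquad a_0\,da_1\cdots da_n\mapsto \sum_{\sigma\in S_n}\pm\, a_0\otimes a_{\sigma(1)}\otimes\cdots\otimes a_{\sigma(n)}, \]
where the sign combines $\mathrm{sgn}(\sigma)$ with the Koszul signs coming from the internal degrees of the $a_i$. Here $\Omega^1_{(A,\delta)}=I/I^2$ is identified with K\"ahler differentials via $da\mapsto a\otimes 1-1\otimes a$.

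\textbf{Step 1: $\varepsilon$ is a chain map.} We must check that $\varepsilon$ intertwines the differential $\delta$ on $\Omega$ (extended as a derivation, $d\delta=\pm\delta d$) with the total differential $b+\delta$ on $\HC(A,\delta)$. The two checks are separate.
\begin{itemize}
\item $b\circ\varepsilon=0$. This holds because $A$ is graded commutative: in $b$ applied to a shuffle sum, the face maps merging adjacent entries cancel in pairs with their Koszul-transposed partners.
\item $\delta$-compatibility. This holds because $\delta$ acts on the bar complex by the Leibniz rule entrywise, which is the same rule used on $\Omega$.
\end{itemize}
In the other direction there is the standard map $\pi:\HC(A)\to\Omega_A$, $a_0\otimes\cdots\otimes a_n\mapsto \frac{1}{n!}a_0\,da_1\cdots da_n$. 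When $\mathbb{Q}\subset\mathbb{k}$ we have $\pi\circ\varepsilon=\mathrm{id}$. In the case $A=\Lambda V$ we avoid $\pi$ and argue directly, as in Step 3.

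\textbf{Step 2: reduce to $\delta=0$.} We filter both complexes by the internal (homological) degree of $A$, or, equivalently, view them as double complexes with one direction given by $b$ (respectively zero on $\Omega$) and the other by $\delta$. The associated spectral sequences have $E^1$ pages equal to $\HH_\bullet(A)$ and $\Omega_A$ of the underlying graded algebra with $\delta=0$, and $\varepsilon$ induces the map between them. Everything is non-negatively graded, so the filtrations are bounded below and exhaustive, and the spectral sequences converge. Hence it suffices to prove that $\varepsilon$ is a quasi-isomorphism when $\delta=0$.

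\textbf{Step 3: the case $\delta=0$.}
\begin{itemize}
\item For $A=\Lambda V$, a free graded-commutative algebra, we use the Koszul resolution $A\otimes\Lambda(sV)\otimes A\to A$ of $A$ as an $A^e$-module. Compare it with the bar resolution via the comparison map that antisymmetrizes. Tensoring with $A$ over $A^e$ kills the Koszul differential and leaves $A\otimes\Lambda(sV)\cong\Omega_A$, and under this identification the comparison map is $\varepsilon$. This works over any $\mathbb{k}$, since no denominators appear in the Koszul resolution.
\item For $\mathbb{Q}\subset\mathbb{k}$ and $A$ smooth, we reduce to the polynomial case. Hochschild homology and $\Omega_A$ both localize along \'etale maps, and $A$ is \'etale-locally a polynomial algebra. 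Alternatively, we can argue that $I/I^2$ is projective and that $I$ is locally generated by a regular sequence, which yields a Koszul resolution of $A$ over $A\otimes A$ and the same computation as in the first case.
\end{itemize}

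\textbf{Main obstacle.} I expect the hardest part to be the signs and convergence in Step 2. One has to verify that the Koszul signs make $\varepsilon$ commute with $\delta$ exactly, and that the spectral sequence comparison is legitimate for the direct-sum totalization. The non-negative grading should handle the convergence issue; the sign verification is routine but tedious.
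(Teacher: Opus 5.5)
The paper does not prove this statement; it only cites Loday. Your outline matches the standard argument behind that citation: the antisymmetrization map $\varepsilon$, a filtration by internal degree to reduce to $\delta=0$, then a Koszul/HKR computation. Steps 1 and 2 are fine, and convergence is not the real difficulty, since in total degree $n$ the internal degree lies in $[0,n]$, so the filtration is finite degreewise.

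The genuine gap is in Step 3, in the claim that the case $A=\Lambda V$ ``works over any $\mathbb{k}$, since no denominators appear''.

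\textbf{Why the claim fails.} If $x\in V$ has odd degree, then $sx$ is even, so $\Lambda(sV)$ contains the polynomial algebra $\mathbb{k}[sx]$. The Koszul differential is $(sx)^n\mapsto n\,(x\otimes 1-1\otimes x)(sx)^{n-1}$. These integers $n$ are exactly the denominators you need, and the complex is not acyclic unless they are invertible. Over a general $\mathbb{k}$ the correct resolution uses divided powers $\Gamma(sx)$. This gives $\HH_\bullet(\Lambda V)\cong\Lambda V\otimes\Gamma(sV)$ and $\varepsilon((dx)^n)=n!\,\gamma_n(sx)$, so $\varepsilon$ is not a quasi-isomorphism.

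\textbf{The conclusion itself fails.} This cannot be patched by choosing a different comparison map. Over $\mathbb{k}=\mathbb{F}_p$ with $p$ an odd prime, take $A=\Lambda(x,y)$ with $|x|=1$, $|y|=2$, $\delta y=x$.
\begin{itemize}
\item In $\Omega_{(A,\delta)}$ one has $\delta(dy)=\pm dx$. So $\Omega_{(A,\delta)}$ splits as $A\cdot 1$ plus the subcomplexes $A(dx)^n dy\oplus A(dx)^{n+1}$, each of which is the cone of an isomorphism. Hence $\rmH_\bullet(\Omega_{(A,\delta)})\cong\rmH_\bullet(A)$, which is one-dimensional in degree $2p$, spanned by $y^p$.
\item The divided-power model $A\otimes\Gamma(sx)\otimes\Lambda(sy)$ computes $\HH_\bullet(A,\delta)$; its differential is induced by $sy\mapsto -sx$. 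It splits in the same way, but the connecting maps $A\gamma_n(sx)sy\to A\gamma_{n+1}(sx)$ are now multiplication by $\pm(n+1)$.
\item These vanish for $n+1=p$, so $\gamma_p(sx)$ survives and $\HH_{2p}(A,\delta)$ is two-dimensional.
\end{itemize}

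\textbf{The fix.} The $\Lambda V$ clause needs $\mathbb{Q}\subset\mathbb{k}$. The alternative, $V$ concentrated in even degrees, forces $\delta=0$. Under $\mathbb{Q}\subset\mathbb{k}$ your argument goes through as written. This costs the paper nothing, since it only applies the statement to $\mathbb{k}[\varepsilon]/(\varepsilon^2)$ with $|\varepsilon|=1$ over a field containing $\mathbb{Q}$.

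\textbf{A smaller point on the smooth case.} Your second bullet reduces \'etale-locally to polynomial rings, which treats $A$ as an ordinary algebra. But a non-negatively graded CDGA concentrated in degree $0$ has $\delta=0$, so that only recovers classical HKR. The genuinely graded smooth case needs a graded local model, such as $B\otimes\Lambda W$ with $B$ smooth. There the odd generators again force $\mathbb{Q}\subset\mathbb{k}$.
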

In our case, $(A,\delta)=(\mathbb{k}[\varepsilon]/(\varepsilon^2),0)$. 

Now we first compute $\Omega^1_{(A,0)}$. The multiplication map $\mu:A\otimes A\to A, a\otimes b\mapsto a\cdot b$. Then $I=\ker \mu$ is generated by $1\otimes \varepsilon-\varepsilon\otimes 1$ and $\varepsilon\otimes \varepsilon$. We have $I^2=0$. So $\Omega^1_{(A,0)}=I/I^2=I$, which is generated by $d\varepsilon$ and $\varepsilon d\varepsilon$ (note that $d\varepsilon$ has homological degree 1). So we can write $\Omega^1_{(A,0)}$ as $A\cdot d\varepsilon$. Then $\overline{\Lambda}_A \Omega^1_{(A,0)}=A\otimes \overline{\Lambda} \mathbb{k}\cdot d\varepsilon \cong A\otimes \Lambda \mathbb{k}\cdot d\varepsilon[1]\cong A\otimes \Sym \mathbb{k}[2]\cong A\otimes \mathbb{k}[u]$ where $u$ has homological degree 2. This leads
\[ \Sigma_{i\in\mathbb{N}^0}\dim \HH_i(\mathbb{k}[\varepsilon]/(\varepsilon^2))\cdot x^i= (1+x)(1+x^2+x^4+x^6+\cdots)    \]

Then by K\"unneth formula, the homological dimension of $\SSk_{\mathbb{G}_m}(\Sigma_g\times S^1)$ is
\[ \Sigma_{i\in \mathbb{N}^0} \dim \rmH_i(\SSk_{\mathbb{G}_m}(\Sigma_g\times S^1))\cdot x^i=(1+x)^{2g+1}(1+x^2+x^4+x^6+\cdots)    \]

\begin{rem}
In particular, $\rmH_0(\SSk_{\mathbb{G}_m}(\Sigma_g\times S^1))=\mathbb{k}$. This coincides with the torsion part of first homology $\rmH_1(\Sigma_g\times S^1,\mathbb{Z})$. Here $\rmH_1(\Sigma_g\times S^1,\mathbb{Z})=\mathbb{Z}^{2g+1}$ which is a free abelian group. So its torsion part is trivial. Thus the free vector space generated by torsion part is 1-dimensional.
\end{rem}

\begin{rem}
The above result also coincides with Remark \ref{BR26}.
\end{rem}

\begin{expl}
We compute $\SSk_{\Sl_2}(S^2\times S^1)$. We need to compute $\HH_\bullet(\SSkalg_{\Sl_2}(S^2))$. By Equation \ref{Skalgsl2S^2}, we need to compute $\HH_\bullet(\mathbb{k}[\varepsilon]/(\varepsilon^2))$ where $\varepsilon$ has homological degree 3.

By the same method above, the homological dimension of $\SSk_{\Sl_2}(S^2\times S^1)$ is
\[ \Sigma_{i\in \mathbb{N}^0} \dim \rmH_i(\SSk_{\Sl_2}(S^2\times S^1))\cdot x^i=(1+x^3)(1+x^4+x^8+x^{12}+\cdots)    \]
\end{expl}
\begin{rem}
Note that this is same with $\rmH_\bullet^G(G)\cong\rmH_\bullet(LBG)\cong \rmH_\bullet(BG)\otimes\rmH_\bullet(G)$\footnote{It is a well-known folk theorem that there is a homotopy equivalence between $(G\times EG)/G$, where $G$ acts by conjugation, and the free loop space of $BG$} for $G=\Sl_2$. (Theorem \ref{SkGS^2timesS1}).
\end{rem}

\begin{rem}\label{smomth}
All the examples in this paper are "smooth" examples. In order to get more non-trivial evidents for conjectures or have a more deep understanding for derived skein module, a computation for non-smooth example is necessary. 

The simplest non-smooth example should be $\SSk_{\Sl_2}(T^3)$ or $\SSk_{\Gl_2}(T^3)$. For ordinary skein module case, the dimension of $\Sk_{\Sl_N}(T^3)$ and $\Sk_{\Gl_N}(T^3)$ was computed by Gunningham-Jordan-Vazirani in \cite{GJV24}, also see \cite{Car17}\cite{Gil18} for a topological method when $G=\Sl_2$.  One can choose the well-known genus-3 Heegaard splitting of $T^3=H_3^\gamma\cup_{\Sigma_3} H_3$. The inter handlebody contractible loops on surface $\Sigma$ are $b$-loops $b_1,b_2,b_3$. There are several steps:
\begin{enumerate}
\item understand what the outer handlebody contractible loops on surface are. Once one figures this out, the $\Sk^\Int_G(H_3^\gamma)$ is just $\mathcal{D}_q(G)^{\otimes 3}$ quotient by those contractible loops;
\item find a projective resolution of $\mathcal{O}_q(G)$ as $\mathcal{D}_q(G)$-modules. One can construct this projective resolution by taking twisted tensor product of the Koszul resolution of $\mathbb{k}$ as $\mathcal{O}_q(G)$-modules with $\mathcal{O}_q(G)$. In \cite{JR25}, despite for a different motivation, they construct a Koszul complex of $\mathbb{k}$ as $\mathcal{O}_q(\Sl_2)$-modules, which will be useful here.

\item compute the projective resolution of $\mathcal{O}_q(G)^{\otimes 3}$ as $\mathcal{D}_q(G)^{\otimes 3}$-modules. 
\item compute $\SSk_G(T^3)$ by \ref{computableexcisionforrepqG}.
\end{enumerate}
\end{rem}

\section{Deformation quantization modules and finiteness property}\label{DQmoduleandfinite}

In this section, we are closely following \cite{GJS22} for the proof. We prove (Theorem \ref{finiteness1}) the derived relative tensor product 
$\Sk^\Int_G(H_1)\otimes^{\mathbb{L}}_{\Skalg^\Int_G(\Sigma^\circ)} \Sk^\Int_G(H_2)$ is bounded complex with finite-dimensional homologies for $q=e^{\hbar}$, where $\hbar$ is a formal parameter, and hence for generic $q$. And also $\SSk_G(M)$ is a complex with finite-dimensional homologies for $q=e^{\hbar}$ (Theorem \ref{finiteness}) by analytic and algebraic methods.

\begin{defn}
Let $M$ be a cochain complex of $\mathbb{C}[\![\hbar]\!]$-modules. It is \emph{cohomologically complete} if
\[ \RHom(\mathbb{C}(\!(\hbar)\!),M)=0.     \]
\end{defn}

\begin{lem}\label{cohocompleteislevelwise}
Let $M$ be a complex of $\mathbb{C}[\![\hbar]\!]$-modules. $M$ is cohomologically complete iff $\rmH^i(M)$ is cohomologically complete for each $i\in \mathbb{Z}$.
\end{lem}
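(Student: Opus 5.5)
The plan is to reduce the statement to the standard fact that, over a principal ideal domain, a complex is determined up to quasi-isomorphism by its cohomology. Concretely, $\mathbb{C}[\![\hbar]\!]$ is a discrete valuation ring, so it is hereditary (global dimension one). In the derived category $D(\mathbb{C}[\![\hbar]\!])$ this gives a non-canonical splitting
\[ M\simeq \bigoplus_{i\in\mathbb{Z}} \rmH^i(M)[-i]. \]
To obtain it, I would choose a quasi-isomorphic complex of free modules. The cycles of that complex are free, so each cycle module splits off as a direct summand, and this produces the decomposition. This is the standard argument for complexes over hereditary rings.

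Next I use that $\RHom(\mathbb{C}(\!(\hbar)\!),-)$ is an exact functor of triangulated categories. It commutes with arbitrary products, though a priori not with direct sums. Since the decomposition above involves direct sums, I would handle this in one of two ways:
\begin{enumerate}
\item Observe that in $D(\mathbb{C}[\![\hbar]\!])$ the canonical map $\bigoplus_i \rmH^i(M)[-i]\to\prod_i \rmH^i(M)[-i]$ is an isomorphism. This holds because in each cohomological degree only one factor contributes, so the map is a quasi-isomorphism.
\item Alternatively, avoid the splitting altogether and use the truncation triangles $\tau^{\le n-1}M\to\tau^{\le n}M\to \rmH^n(M)[-n]$, together with $M\simeq \operatorname{holim}_n \tau^{\ge -n}M$ and $M\simeq\operatorname{hocolim}\tau^{\le n}M$.
\end{enumerate}
I prefer the first route. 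It yields
\[ \RHom(\mathbb{C}(\!(\hbar)\!),M)\simeq\prod_i \RHom(\mathbb{C}(\!(\hbar)\!),\rmH^i(M))[-i]. \]

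Both directions then follow immediately. A product of complexes vanishes in $D(\mathbb{C}[\![\hbar]\!])$ iff each factor vanishes, because cohomology commutes with products of complexes. Hence $M$ is cohomologically complete iff each $\rmH^i(M)$ is. This is also the argument in Kashiwara–Schapira's treatment of cohomologically complete modules, and I would cite it as a cross-check.

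The main obstacle is justifying the splitting and the sum-versus-product identification carefully when $M$ is unbounded. A secondary point is that the projective dimension of $\mathbb{C}(\!(\hbar)\!)$ is one, so $\RHom(\mathbb{C}(\!(\hbar)\!),N)$ for a module $N$ is concentrated in degrees $0$ and $1$. This ensures no cross-degree interference when taking the product. If the splitting argument proves awkward, I would fall back on the following. Write $\mathbb{C}(\!(\hbar)\!)$ via the two-term free resolution
\[ \bigoplus_{\mathbb{N}}\mathbb{C}[\![\hbar]\!]\to\bigoplus_{\mathbb{N}}\mathbb{C}[\![\hbar]\!] \]
given by the telescope of multiplication by $\hbar$. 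This computes $\RHom(\mathbb{C}(\!(\hbar)\!),M)$ as the homotopy limit of $\cdots\xrightarrow{\hbar}M\xrightarrow{\hbar}M$. Then use the Milnor $\lim$–$\lim^1$ sequence degreewise, which again reduces to statements about the $\rmH^i(M)$ because $\lim^1$ vanishes beyond degree one.
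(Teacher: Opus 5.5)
Your argument is correct, but it is not what the paper does. The paper gives no proof of its own; it simply cites Proposition 6.6.2(3) of Kedlaya's notes \cite{DC}.

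The standard proof of that fact is essentially your fallback. Since $\mathbb{C}(\!(\hbar)\!)=\operatorname{colim}(\mathbb{C}[\![\hbar]\!]\xrightarrow{\hbar}\mathbb{C}[\![\hbar]\!]\xrightarrow{\hbar}\cdots)$ has a two-term free telescope resolution, one has $\RHom(\mathbb{C}(\!(\hbar)\!),M)\simeq\operatorname{holim}(\cdots\xrightarrow{\hbar}M\xrightarrow{\hbar}M)$. The Milnor sequence then gives, for every $n$, a short exact sequence $0\to\mathrm{Ext}^1(\mathbb{C}(\!(\hbar)\!),\rmH^{n-1}(M))\to\rmH^n\RHom(\mathbb{C}(\!(\hbar)\!),M)\to\mathrm{Hom}(\mathbb{C}(\!(\hbar)\!),\rmH^n(M))\to 0$. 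Both directions follow immediately. This argument only uses that the localization has projective dimension at most one, so it proves the same statement for derived $f$-completeness over an arbitrary ring.

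Your main route instead exploits that $\mathbb{C}[\![\hbar]\!]$ is hereditary, so every complex is formal: $M\simeq\bigoplus_i\rmH^i(M)[-i]\simeq\prod_i\rmH^i(M)[-i]$. After that you only need two facts: $\RHom(\mathbb{C}(\!(\hbar)\!),-)$ preserves products, and each factor is a retract of the product. This is slicker, but it is tied to global dimension one.

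Two small fixes.
\begin{enumerate}
\item In the splitting step, what makes $0\to Z^n\to P^n\to d(P^n)\to 0$ split is that the boundaries $d(P^n)\subseteq P^{n+1}$ are free, being submodules of a free module over a PID. Freeness of the cycles alone would not make them a direct summand, as $2\mathbb{Z}\subset\mathbb{Z}$ shows.
\item The unbounded case is not a genuine obstacle. Sums and products of modules are exact, so the chain-level sum and product (which here coincide degree by degree) already compute the derived ones. Likewise, your remark about cross-degree interference is not needed, since a product vanishes iff each factor does.
\end{enumerate}
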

\begin{proof}
This is well-known. See Proposition 6.6.2 (3) in \cite{DC}.
\end{proof}

\begin{lem}\label{truncation}
Let $M$ be a cochain complex of $\mathbb{C}[\![\hbar]\!]$-modules. If we have $\RHom(\mathbb{C}(\!(\hbar)\!),M)=0$, then we have $\RHom(\mathbb{C}(\!(\hbar)\!),\tau_{\le n}M)=0$ for every $n$. Here $\tau_{\le n}M$ denotes the standard cohomological truncation.
\end{lem}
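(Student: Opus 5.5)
The plan is to use Lemma \ref{cohocompleteislevelwise} to reduce the statement to individual cohomology groups, and then observe that truncation does not change those groups in the relevant range. By Lemma \ref{cohocompleteislevelwise}, the hypothesis $\RHom(\mathbb{C}(\!(\hbar)\!),M)=0$ is equivalent to every $\rmH^i(M)$ being cohomologically complete, viewed as a complex concentrated in a single degree. The standard truncation has cohomology
\[ \rmH^i(\tau_{\le n}M)\cong \rmH^i(M) \text{ for } i\le n, \qquad \rmH^i(\tau_{\le n}M)=0 \text{ for } i>n. \]
The zero module is trivially cohomologically complete. Hence every cohomology group of $\tau_{\le n}M$ is cohomologically complete. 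Applying Lemma \ref{cohocompleteislevelwise} in the reverse direction, now to the complex $\tau_{\le n}M$, gives $\RHom(\mathbb{C}(\!(\hbar)\!),\tau_{\le n}M)=0$.

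If one prefers not to apply the lemma twice, there is a direct argument. Use the exact triangle $\tau_{\le n}M\to M\to \tau_{>n}M\xrightarrow{+1}$ and apply $\RHom(\mathbb{C}(\!(\hbar)\!),-)$. Since the middle term vanishes, this gives
\[ \RHom(\mathbb{C}(\!(\hbar)\!),\tau_{\le n}M)\simeq \RHom(\mathbb{C}(\!(\hbar)\!),\tau_{>n}M)[-1]. \]
The plan is then to show that both sides vanish by a degree comparison. The module $\mathbb{C}(\!(\hbar)\!)$ has projective dimension at most $1$ over the PID $\mathbb{C}[\![\hbar]\!]$, because it is flat, hence torsion-free, and the ring is a PID. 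For a complex $N$, write $\rmH^*$ for cohomology in all degrees. Then $\RHom(\mathbb{C}(\!(\hbar)\!),N)$ has cohomology only in degrees $[a,b+1]$ when $\rmH^*(N)$ lives in $[a,b]$. The left side has cohomology in degrees $\le n+1$. The right side, after the shift by $[-1]$, has cohomology in degrees $\ge n+2$. Since these ranges do not overlap, both sides are zero.

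This degree argument needs care for unbounded complexes. It can be handled with the hypercohomology spectral sequence, which converges because $\mathbb{C}(\!(\hbar)\!)$ has finite projective dimension. I expect the main, though mild, obstacle to be justifying the use of Lemma \ref{cohocompleteislevelwise} for unbounded complexes. That is exactly why I would run the first argument, which uses the lemma as stated, and keep the triangle argument only as a backup.
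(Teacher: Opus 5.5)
Your main argument is correct and is the same as the paper's proof, which likewise notes that $\rmH^i(\tau_{\le n}M)$ agrees with $\rmH^i(M)$ for $i\le n$ and vanishes for $i>n$, and then applies Lemma \ref{cohocompleteislevelwise} in both directions. Your backup triangle-and-degree argument is also correct, and it needs no spectral sequence even for unbounded complexes: computing $\RHom(\mathbb{C}(\!(\hbar)\!),-)$ with a two-term free resolution of $\mathbb{C}(\!(\hbar)\!)$ gives the bounds $[a,b+1]$ directly for arbitrary complexes.
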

\begin{proof}
Since $\rmH^i(\tau_{\le n}M)=\rmH^i(M)$ for $i\le n$ and $\rmH^i(\tau_{\le n}M)=0$ for $i> n$, by Lemma \ref{cohocompleteislevelwise}, we get the statement.
\end{proof}

Let $X$ be a smooth affine Poisson scheme and $L_1,L_2\subset X$ be smooth Lagrangian subschemes. Here, by a Lagrangian subscheme of a Poisson scheme we will mean a subscheme of an open symplectic leaf which is Lagrangian there. In addition, fix their deformation quantizations:
\begin{itemize}
\item Let $A$ be a $\hbar$-complete $\mathbb{C}[\![\hbar]\!]$-algebra without $\hbar$-torsion which is a deformation quantization of $\mathcal{O}(X)$.
\item Let $M_2$ be a cyclic left $A$-module without $\hbar$-torsion which is a deformation quantization of $\mathcal{O}(L_2)$.
\item Let $M_1$ be a cyclic right $A$-module without $\hbar$-torsion which is a deformation quantization of $\mathcal{O}(L_1)$.
\end{itemize}

Let the input ribbon category be $\CA=\rep^\fd_{\hbar}(G)$ the category of representations of the quantum group over $\mathbb{k}=\mathbb{C}[\![\hbar]\!]$, where each representation is a free $\mathbb{k}$-module of finite rank. In this section, from now on we will drop the subscript $\rep^{\fd}_{\hbar}(G)$ from our notations for skein modules and skein categories. Let $M=H_1\cup_{\Sigma} H_2$ be a Heegaard splitting of $M$.

\begin{prop}\label{DQSkalg}
\cite{GJS22} Let $X$ be the Poisson scheme $G^{2g}$ with the Fock-Rosly Poisson structure. $\Skalg^\Int(\Sigma^\circ)$ is a flat deformation quantization of $\mathcal{O}(X)$ which is $\hbar$-complete $\mathbb{C}[\![\hbar]\!]$-algebra without $\hbar$-torsion.
\end{prop}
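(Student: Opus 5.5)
I will prove Proposition \ref{DQSkalg} by reducing it to the combinatorial description of the internal skein algebra of the punctured surface, and then identifying its classical limit with the Fock--Rosly algebra.

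\emph{Step 1 (explicit model).} By Example \ref{LskalgintG} and Lemma \ref{2gSkalgAnn=SkalgSigmacirc}, $\Skalg^\Int(\Sigma^\circ)$ is the braided tensor product $\mathcal{O}_\hbar(G)^{\otimes 2g}$ in $\widehat{\rep^\fd_\hbar(G)}$. This tensor product is twisted by the $R$-matrix between the $a_i$ and $b_i$ factors, which gives the algebra $\mathcal{D}_\hbar(G)^{\otimes g}$. The argument of \cite{BZBJ18} goes through over $\mathbb{k}=\mathbb{C}[\![\hbar]\!]$ with representations free of finite rank. As an object of $\widehat{\CA}$ it is therefore the reflection-equation algebra. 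Its underlying $\mathbb{k}$-module is $\mathcal{O}_\hbar(G)^{\otimes 2g}$, where $\mathcal{O}_\hbar(G)$ is the $\hbar$-adically completed coend $\int^{V}V^*\otimes V$ over $\rep^\fd_\hbar(G)$.

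\emph{Step 2 (flatness and completeness).} By the Peter--Weyl decomposition, $\mathcal{O}_\hbar(G)\cong\widehat{\bigoplus}_\lambda V_\lambda^*\otimes V_\lambda$, where the $V_\lambda$ are the free deformations of the irreducible $G$-modules. The braided product only changes the multiplication, not the underlying module. Hence $\Skalg^\Int(\Sigma^\circ)$ is the $\hbar$-adic completion of a free $\mathbb{C}[\![\hbar]\!]$-module. It is therefore topologically free, so it has no $\hbar$-torsion and is $\hbar$-complete. Reducing modulo $\hbar$ gives $\bigoplus_\lambda V_\lambda^*\otimes V_\lambda\cong\mathcal{O}(G)^{\otimes 2g}=\mathcal{O}(G^{2g})$ as vector spaces. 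Since $R=1+\hbar r+O(\hbar^2)$, the multiplication mod $\hbar$ is commutative and equals the ordinary product on $\mathcal{O}(G^{2g})$. This gives flatness.

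\emph{Step 3 (Poisson bracket), the main obstacle.} I must compute the first-order commutator $\frac{1}{\hbar}[f,g]\bmod\hbar$ and identify it with the Fock--Rosly bracket on $G^{2g}$ for the gluing pattern of Step 1. I would compute it on matrix coefficients. Within a single $\mathcal{O}_\hbar(G)$ factor, the reflection equation $R_{21}K_1R_{12}K_2=K_2R_{21}K_1R_{12}$ expands to first order as the semi-dynamical Semenov-Tian-Shansky bracket, written in terms of $r$ and $r_{21}$. Across two factors, the braided-tensor-product relation expands to the mixed terms, which are determined by whether the two handles are nested or linked in the pattern. This is exactly Fock--Rosly's graph-connection description for the ciliated fat graph with one vertex and $2g$ loops. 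Here I would cite \cite{BZBJ18} Section 6 and \cite{GJS22}, which carry out this identification. The care needed is in matching conventions: the ribbon twist and the choice of $r$ versus its symmetrization, as in \cite{BZBJ18a}. I would finish by noting that the bracket is independent of the gluing pattern up to Poisson isomorphism, again by \cite{BZBJ18}.
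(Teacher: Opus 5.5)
Your outline unpacks the paper's bare citation of \cite{GJS22} along the same lines: the braided-product model comes from \cite{BZBJ18}, topological freeness and the classical limit $\mathcal{O}(G^{2g})$ come from Peter--Weyl and $R\equiv 1 \bmod \hbar$, and the Fock--Rosly identification of the bracket is taken from \cite{BZBJ18,GJS22}, so it is essentially the paper's route. One correction is needed: in the free cocompletion $\widehat{\CA}$ the coend is the \emph{uncompleted} free module $\bigoplus_\lambda V_\lambda^*\otimes V_\lambda$, which is $\hbar$-torsion-free but not $\hbar$-adically (or cohomologically) complete, so Step 1 cannot claim that the object of $\widehat{\CA}$ has the completed coend as its underlying module; the $\hbar$-completeness in the statement holds only after you explicitly pass to the $\hbar$-adic completion of $\Skalg^\Int(\Sigma^\circ)$, which is how the statement has to be read, and with that convention your Step 2 goes through.
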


\begin{prop}\label{DGSkN_2}
\cite{GJS22} $\Sk^\Int(H_2)$ is a deformation quantization of $L_2=G^g\subset G^{2g}$ which is a cyclic left $\Skalg^\Int(\Sigma^\circ)$-module which is $\hbar$-complete and without $\hbar$-torsion.
\end{prop}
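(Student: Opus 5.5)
The plan is to deduce the statement from the explicit presentation of internal skein modules of handlebodies obtained earlier. I then identify the classical limit by a reduction modulo $\hbar$.

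\textbf{Cyclicity.} Work with $\CA=\rep^\fd_\hbar(G)$. By Theorem \ref{SkintH=derivedtensorproduct}, in its underived form,
\[ \Sk^\Int(H_2)\cong \Skalg^\Int(\Sigma^\circ)\otimes_{b,\Skalg^\Int(\Theta),\varepsilon}\mathbb{1}. \]
This is the quotient of $\Skalg^\Int(\Sigma^\circ)$ by the left ideal generated by $b_*(\ker\varepsilon)$. It is therefore a cyclic left $\Skalg^\Int(\Sigma^\circ)$-module, generated by the image of the empty skein. This is the map $\Skalg^\Int(\Sigma^\circ)\to\Sk^\Int(H_2)$ induced by $\Sigma^\circ\times I\hookrightarrow H_2$, which is surjective.

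\textbf{Torsion-freeness and completeness.} Lemma \ref{mapg} gives an isomorphism of $\Skalg^\Int(\Theta)$-modules $\Skalg^\Int(\Theta)\xrightarrow{\sim}\Sk^\Int(H_2)$. Since $\Theta\cong\Sigma_{0,g+1}$, the analogue of Example \ref{LskalgintG} over $\mathbb{C}[\![\hbar]\!]$ identifies $\Skalg^\Int(\Theta)$ with $\mathcal{O}_\hbar(G)^{\otimes g}$. As an object of $\rep_\hbar(G)$ this is a completed direct sum of $V^*\otimes V$ over dominant weights, each free of finite rank over $\mathbb{C}[\![\hbar]\!]$. Hence it is topologically free: it has no $\hbar$-torsion and is $\hbar$-complete. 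Transporting along the isomorphism gives both properties for $\Sk^\Int(H_2)$.

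\textbf{Classical limit.} Reducing modulo $\hbar$, Proposition \ref{DQSkalg} identifies $\Skalg^\Int(\Sigma^\circ)/\hbar$ with $\mathcal{O}(G^{2g})$, with coordinates given by the holonomies along the $a_i$- and $b_i$-cycles via Lemma \ref{2gSkalgAnn=SkalgSigmacirc}. The left ideal generated by $b_*(\ker\varepsilon)$ reduces to the ideal of $\{b_1=\cdots=b_g=e\}$. Using the $\hbar$-flatness just established, I get
\[ \Sk^\Int(H_2)/\hbar\cong\mathcal{O}(G^{2g})/(b_i-e)\cong\mathcal{O}(G^g), \]
the functions on $L_2$ parametrized by the $a$-holonomies. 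This is consistent with Lemma \ref{mapg}, since both sides reduce to $\mathcal{O}(G)^{\otimes g}$ in the $a$-variables.

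\textbf{Lagrangian property (main obstacle).} It remains to show that $L_2$ lies in an open symplectic leaf of the Fock--Rosly structure and is Lagrangian there. Its dimension is $g\dim G$, half of $\dim G^{2g}$, so it suffices to show that $L_2$ meets the open leaf and is isotropic.

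\begin{itemize}
\item \emph{Isotropy.} The Fock--Rosly bracket of two $b$-coordinates is given by classical $r$-matrix terms built from the $b$-holonomies themselves. These terms vanish at $b_i=e$, which I expect gives coisotropy, hence Lagrangianity once the dimension count is in place.
\item \emph{Meeting the open leaf.} Open leaves are determined by the conjugacy class of the total boundary holonomy $\prod_i[a_i,b_i]$. On $L_2$ this product is trivial, so I must check that the leaf through such points is still open.
\end{itemize}

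This is the genuinely geometric input, and here I would follow \cite{GJS22} directly rather than reprove it.
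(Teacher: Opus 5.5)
The paper gives no argument of its own here; it imports the statement wholesale from \cite{GJS22}. Your route is more explicit. You obtain cyclicity, absence of $\hbar$-torsion and the classical limit from the presentation $\Sk^\Int(H_2)\cong\Skalg^\Int(\Sigma^\circ)/J$, with $J$ the left ideal generated by $b_*(\ker\varepsilon)$, together with Lemma~\ref{mapg}. You then import only the Poisson-geometric input. That part is sound, and it has the merit of isolating exactly what is borrowed.

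One convention should be stated. An infinite direct sum of free finite-rank $\mathbb{C}[\![\hbar]\!]$-modules is free but not $\hbar$-adically complete. So ``$\hbar$-complete'' must refer to the $\hbar$-adically completed skein objects, as in \cite{GJS22}. Completion preserves the surjection $\Skalg^\Int(\Sigma^\circ)\to\Sk^\Int(H_2)$ and the isomorphism of Lemma~\ref{mapg}, so your conclusions survive. Your ``hence topologically free'' does depend on this, though.

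The last step rests on a wrong picture. Open symplectic leaves of the Fock--Rosly structure on the \emph{framed} space $G^{2g}$ are not labelled by the conjugacy class of $\mu=\prod_i[a_i,b_i]$. That description belongs to the unframed character variety $G^{2g}/\!/G$, whose leaves are obtained by fixing the conjugacy class of $\mu$. In $G^{2g}$ such level sets have positive codimension when $G$ is non-abelian, so they cannot be open leaves.

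Also, ``meets the open leaf'' is not enough. The definition used in this section requires $L_2$ to be \emph{contained} in an open leaf, so you must show the Fock--Rosly bivector is nondegenerate at every point of $L_2$. One way to see this:
\begin{itemize}
\item The Fock--Rosly structure is the $r$-matrix twist of the nondegenerate quasi-Hamiltonian structure on $G^{2g}$ with moment map $\mu$.
\item Such a twist is symplectic over $\mu^{-1}(G^*_0)$, where $G^*_0\subset G$ is the open image of the dual Poisson--Lie group. This is a condition on the \emph{value} of $\mu$, not on a conjugacy class.
\item On $L_2$ we have $\mu\equiv e$, and $e\in G^*_0$.
\item Since $L_2\cong G^g$ is connected, it lies in a single open leaf.
\end{itemize}

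Your $r$-matrix observation in the isotropy bullet is correct, but coisotropy follows more directly from what you already proved. If $x,y\in J$, then $[x,y]\in J$, and $[x,y]\in\hbar\,\Skalg^\Int(\Sigma^\circ)$ because the classical limit is commutative. Since $\Sk^\Int(H_2)$ has no $\hbar$-torsion, $J\cap\hbar\,\Skalg^\Int(\Sigma^\circ)=\hbar J$. Hence $\hbar^{-1}[x,y]\in J$, so the ideal of $L_2$ is closed under the Poisson bracket. Together with $\dim L_2=\tfrac12\dim G^{2g}$, this makes $L_2$ Lagrangian in that leaf.

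Since you defer the open-leaf statement to \cite{GJS22}, as the paper does, your argument stands once the conjugacy-class heuristic is replaced.
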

\begin{prop}\label{DGSkN_1}
\cite{GJS22} $\Sk^\Int(H_1)$ is a deformation quantization of $L_1=G^g\subset G^{2g}$ which is a cyclic right $\Skalg^\Int(\Sigma^\circ)$-module which is $\hbar$-complete and without $\hbar$-torsion.
\end{prop}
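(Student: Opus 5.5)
The plan is to deduce the statement for $H_1$ from the corresponding statement for $H_2$ (Proposition \ref{DGSkN_2}), by orientation reversal, together with the explicit presentation of handlebody modules in Theorem \ref{SkintH=derivedtensorproduct}.

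\textbf{Step 1: presentation of $\Sk^\Int(H_1)$.} Apply Theorem \ref{SkintH=derivedtensorproduct} to $H_1$. Since $\partial H_1=\overline{\Sigma}$ in the Heegaard splitting, the module is a right module. Its right-handed version gives
\[ \Sk^\Int(H_1)\cong \mathbb{1}\otimes_{\varepsilon,\Skalg^\Int(\Theta),b'}\Skalg^\Int(\Sigma^\circ), \]
where $b'$ is the system of meridian loops contractible in $H_1$. This exhibits $\Sk^\Int(H_1)$ as a quotient of the free rank-one right module, generated by the empty skein. Hence it is a cyclic right $\Skalg^\Int(\Sigma^\circ)$-module.

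\textbf{Step 2: flatness and completeness.} By Lemma \ref{mapg}, the composite $\Skalg^\Int(\Theta)\to \Sk^\Int(H_1)$ induced by the complementary $a'$-cycles is an isomorphism of objects of $\widehat{\CA}$. For $\CA=\rep^\fd_\hbar(G)$, the algebra $\Skalg^\Int(\Theta)\cong\mathcal{O}_\hbar(G)^{\otimes g}$ is a topologically free $\mathbb{C}[\![\hbar]\!]$-module. It is $\hbar$-complete and has no $\hbar$-torsion, being the reflection-equation/FRT quantization of $\mathcal{O}(G)^{\otimes g}$. Both properties therefore transfer to $\Sk^\Int(H_1)$.

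\textbf{Step 3: the classical limit.} Reducing mod $\hbar$ commutes with the quotient in Step 1, because the relative tensor product is underived in the torsion-free situation. The reduction is $\mathcal{O}(G^{2g})\otimes_{\mathcal{O}(G^g)}\mathbb{C}$, with $\mathcal{O}(G^g)$ acting through the $b'$-holonomies. This is $\mathcal{O}$ of the locus where the meridian holonomies are trivial, i.e.\ $L_1=\{\text{meridians}=1\}\cong G^g$, identified via Lemma \ref{mapg}. This locus is the image of $\Loc_G(H_1)\to\Loc_G(\Sigma^\circ)$, which is Lagrangian in an open symplectic leaf of the Fock–Rosly Poisson structure on $G^{2g}$, as in \cite{GJS22}.

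\textbf{Step 4 (main obstacle).} The hardest point is checking that the right action on the reduction is the classical one induced from the Fock–Rosly quantization. Equivalently, one must check that orientation reversal $\Sigma\mapsto\overline{\Sigma}$ intertwines the left module of $H_2$ with the right module of $H_1$, compatibly with the identification $\Skalg^\Int(\overline{\Sigma}^\circ)\cong\Skalg^\Int(\Sigma^\circ)^{\op}$. I would do this by reflecting the cylinder $\Sigma^\circ\times I$. Under reflection, the half-twist and ribbon structure conjugate the quantum group parameter consistently; this flips the sign of the Poisson bracket, but does not affect the Lagrangian condition. With this identification, Steps 1–3 reduce directly to Proposition \ref{DGSkN_2}.
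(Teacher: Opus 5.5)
The paper gives no proof of Proposition \ref{DGSkN_1}; it imports the statement from \cite{GJS22}. The argument there runs, in substance, along your Steps 1--3:
\begin{itemize}
\item present the handlebody module as the quotient $\mathbb{1}\otimes_{\Skalg^\Int(\Theta)}\Skalg^\Int(\Sigma^\circ)$ of the free rank-one module;
\item get $\hbar$-torsion-freeness and completeness from the $a$-cycle isomorphism of Lemma \ref{mapg};
\item read off $L_1=\{b'\text{-holonomies}=1\}$ in the classical limit, with the Lagrangian/open-leaf property taken from \cite{GJS22}.
\end{itemize}
So your proof is correct in substance. What it adds over the bare citation is a derivation from the handlebody presentation machinery the paper reproduces. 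It still imports the same nontrivial inputs: the classical-limit identification $\Skalg^\Int(\Theta)/\hbar\cong\mathcal{O}(G^g)$, compatible with $b'_*$ and $\varepsilon$, and the symplectic-leaf statement.

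Where you go astray is Step 4. It is not the main obstacle, and it is not ``equivalent'' to anything left open by Steps 1--3. Since $A/\hbar=\mathcal{O}(X)$ is commutative, the right $A/\hbar$-module structure on $M_1/\hbar$ is just the quotient $\mathcal{O}(X)\to\mathcal{O}(L_1)$ you already computed in Step 3. The Poisson bracket enters only through the Lagrangian condition on $L_1\subset X$. That condition is insensitive to $\pi\mapsto-\pi$, and you cite it anyway. So Steps 1--3 already form a complete direct proof.

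The orientation-reversal reduction to Proposition \ref{DGSkN_2} is a second, independent proof, not a completion of the first. It is viable via the orientation-preserving diffeomorphism $\overline{\Sigma}\times I\to\Sigma\times I$, $(x,t)\mapsto(x,1-t)$, which gives $\Skalg(\overline{\Sigma}^\circ)\cong\Skalg(\Sigma^\circ)^{\op}$ up to identifying the boundary $\CA$-actions. It should be presented as an alternative, not as the last step of Steps 1--3.

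Three smaller corrections:
\begin{itemize}
\item In Step 3, reduction mod $\hbar$ commutes with $\mathbb{1}\otimes_{\Skalg^\Int(\Theta)}(-)$ simply by right exactness of the underived tensor product. Torsion-freeness is not what makes this work; it is what Step 2 supplies.
\item Step 1 should invoke the underived presentation of $\Sk^\Int(H)$ from \cite{GJS22} (or \cite{BZBJ18}). Theorem \ref{SkintH=derivedtensorproduct} concerns $\SSk^\Int$, and is identified with $\Sk^\Int$ only through Proposition \ref{LSkint=Skint}. That proposition needs a semisimplicity hypothesis which does not literally apply to $\rep^\fd_\hbar(G)$ over $\mathbb{C}[\![\hbar]\!]$.
\item The $\hbar$-completeness in Step 2 holds for the $\hbar$-adically completed algebra $\mathcal{O}_\hbar(G)^{\otimes g}$. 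It does not hold for the ind-object $\bigoplus_\lambda V_\lambda^*\otimes V_\lambda$ in $\widehat{\CA}$. The paper adopts the same implicit convention, so you should state it explicitly.
\end{itemize}
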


We want to first prove the derived strong finiteness theorem \ref{finiteness1} which is the internal derived skein module $\SSk^\Int_G(M-\mathbb{B})$ is a bounded complex with finite-dimensional homologies over $\mathbb{Q}(A)$. In order to prove this, we would like to prove \ref{localfiniteness} first. Note that \ref{localfiniteness} is nothing about boundedness.

After choosing an isomorphism of vector spaces $A\cong \mathcal{O}(X)[\![\hbar]\!]$, the multiplication on $A$ is given by a power series of bidifferential operators. Because a differential operator can only lower the order of vanishing along a subvariety by a finite amount, this multiplication naturally extends to the formal completion $\mathcal{O}(\widehat{X}_{L_1})$ of the structure sheaf along $L_1$. The resulting completed algebra is denoted $\widehat{A}$. Thus we have an inclusion $A\subset \widehat{A}$, and $\widehat{A}$ is a deformation quantization of the completed symplectic variety $\widehat{X}_{L_1}$ (the formal neighborhood of $L_1$ in $X$).

\begin{lem}
$\widehat{A}$ is a flat $A$-module.
\end{lem}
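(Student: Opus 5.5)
We need to show that $\widehat{A}$, the completion of $A\cong\mathcal{O}(X)[\![\hbar]\!]$ along $L_1$, is flat over $A$. The plan is to treat $\widehat{A}$ as an $I$-adic completion of a Noetherian ring and reduce to the commutative case modulo $\hbar$.

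First, let $I\subset A$ be the preimage of the ideal of $L_1$ under $A\to A/\hbar A=\mathcal{O}(X)$, together with $\hbar$. Because $L_1$ is Lagrangian, hence coisotropic, the ideal $J=I(L_1)$ is closed under the Poisson bracket. So, after the standard adjustment of the star product, I will check that $I$ is a two-sided ideal of $A$. Its order filtration is compatible with the bidifferential operators, which is exactly the observation used in the text to extend the product. This identifies $\widehat{A}$ with the $I$-adic completion $\varprojlim A/I^n$; equivalently it is $\mathcal{O}(\widehat{X}_{L_1})[\![\hbar]\!]$ with the extended product.

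Second, I will show that $A$ is Noetherian. Since $A$ is $\hbar$-complete and $\hbar$-torsion free, and $A/\hbar A=\mathcal{O}(X)$ is Noetherian ($X$ is affine of finite type), a standard lifting argument for complete filtered rings gives this. The associated graded ring for the $\hbar$-adic filtration is $\mathcal{O}(X)[\hbar]$, which is Noetherian, so $A$ is left and right Noetherian. Similarly, the Rees ring $\bigoplus_n I^n$ is Noetherian, because its associated graded ring is a quotient of a Rees-type ring of $\mathcal{O}(X)[\hbar]$ with respect to the ideal $(J,\hbar)$. This is where coisotropy matters: the associated graded ring is commutative up to a Poisson correction.

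Third, I will apply the noncommutative Artin–Rees lemma, which holds because the Rees ring is Noetherian. It gives that $I$-adic completion is exact on finitely generated $A$-modules and that $\widehat{A}\otimes_A N\cong\widehat{N}$ for finitely generated $N$. Exactness on finitely generated modules implies $\operatorname{Tor}_1^A(\widehat{A},A/K)=0$ for every finitely generated left ideal $K$. By the ideal criterion for flatness, $\widehat{A}$ is then a flat $A$-module, and the same argument works on the other side.

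The main obstacle is the second step: verifying that $I$ is an honest two-sided ideal with Noetherian Rees ring (the AR property) in the noncommutative setting. If the direct route fails, I will fall back on a reduction to $\hbar=0$. Both $A$ and $\widehat{A}$ are $\hbar$-complete and $\hbar$-torsion free. The induced map $A/\hbar\to\widehat{A}/\hbar$ is $\mathcal{O}(X)\to\mathcal{O}(\widehat{X}_{L_1})$, which is flat by the classical commutative result. A local flatness criterion for $\hbar$-adically complete modules then lifts flatness mod $\hbar$ to flatness of $\widehat{A}$ over $A$. The cost is checking that the criterion's hypotheses (Noetherian $A$, torsion-freeness, and $\operatorname{Tor}_1^A(\widehat{A},A/\hbar)=0$) hold; the last follows from $\hbar$-torsion freeness.
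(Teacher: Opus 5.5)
\textbf{Verdict: correct, via your fallback.} Your primary route is the paper's argument made more careful; your fallback is a genuinely different route and is the one I would make your main proof.

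\textbf{Comparison with the paper.} The paper's proof is one sentence: $A$ is Noetherian, so the map to its completion with respect to an ideal is flat. That is the commutative-algebra fact used verbatim. For noncommutative Noetherian rings it is false in general. For example, for $U(\mathfrak{sl}_2)$ the augmentation ideal satisfies $I=I^2$, so the completion is the trivial module $\mathbb{C}$, which is not flat because $\mathrm{Tor}_3(\mathbb{C},\mathbb{C})\neq 0$. The standard sufficient condition is exactly the one you isolate: $I$ should have the Artin--Rees property, e.g.\ a Noetherian Rees ring. So your first route makes explicit an input the paper leaves implicit, but it must then verify that input, which is the hard part.

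\textbf{Why the fallback is better.} It needs Artin--Rees only for the ideal $(\hbar)$, where it is automatic because $\hbar$ is central in the Noetherian ring $A$. Concretely, the $\hbar$-power torsion of a finitely generated $P$ is killed by a fixed $\hbar^c$, so $\hbar^cP$ is $\hbar$-torsion free. The remaining ingredients are:
\begin{itemize}
\item $\mathrm{Tor}^A_1(P,\widehat{A})$ is derived $\hbar$-complete, so derived Nakayama applies;
\item $\widehat{A}/\hbar^n\widehat{A}$ is flat over $A/\hbar^nA$, by torsion-freeness plus flatness of $\mathcal{O}(X)\to\mathcal{O}(\widehat{X}_{L_1})$;
\item the exact sequence $0\to\hbar^cP\to P\to P/\hbar^cP\to 0$.
\end{itemize}
This reduces the lemma to the commutative statement modulo $\hbar$, which is what the paper's sentence is really appealing to.

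\textbf{Corrections if you keep the first route.}
\begin{itemize}
\item Two-sidedness of $I$ is automatic: $I$ is the preimage of $J$ under the ring surjection $A\to A/\hbar A=\mathcal{O}(X)$. No adjustment of the star product or coisotropy is involved, so this is not part of the obstacle.
\item $\mathrm{gr}_IA$ is not commutative, and not a quotient of the commutative Rees algebra of $(J,\hbar)$. For $X=T^*\mathbb{A}^1$, $L_1=\{p=0\}$ and $[p,x]=\hbar$, the class of $\hbar$ in $I/I^2$ is nonzero. In that example $\mathrm{gr}_IA$ is the homogenized Weyl algebra, i.e.\ a Rees algebra of differential operators on $L_1$. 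It is still Noetherian, but your stated justification does not show this.
\item What coisotropy actually gives is $[I,I]\subseteq\hbar I$. Since $A$ is not $I$-adically complete, you cannot lift Noetherianity to the Rees ring from an associated graded ring without an extra completeness argument, for instance the $\hbar$-adic completeness of each $I^n\supseteq\hbar^nA$.
\end{itemize}
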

\begin{proof}
Since $A$ is Noetherian, the ring map from $A$ to its completion $\widehat{A}$ with respect to any ideal $I$ of $A$ is flat.
\end{proof}

Recall that the $A$-module $M_1$ is cyclic, i.e. we have a surjection $A\to M_1$. In particular, the $\mathcal{O}(X)[\![\hbar]\!]$-module structure on $\mathcal{O}(L_1)[\![\hbar]\!]$ is also given by a bidifferential operator. Therefore, the $A$-module structure on $M_1$ extends to an $\widehat{A}$-module structure. Define $\widehat{M_2}=M_2\otimes^\mathbb{L}_A\widehat{A}\cong M_2\otimes_A \widehat{A}$ which is a finitely generated $\widehat{A}$-module. Then
\[ M_2\otimes^\mathbb{L}_A M_1\cong M_2\otimes^\mathbb{L}_A \widehat{A}\otimes^{\mathbb{L}}_{\widehat{A}} M_1=  \widehat{M_2}\otimes^{\mathbb{L}}_{\widehat{A}}M_1    \]

Following \cite{KS10} Section 7.1,  consider the composite map
\[  \hbar^{-1}\widehat{A}\xrightarrow{\hbar \cdot} \widehat{A}\twoheadrightarrow \widehat{A}/\hbar\widehat{A}\cong \mathcal{O}(\widehat{X}_{L_1})\twoheadrightarrow \mathcal{O}(L_1)   \]
where the first arrow is multiplication by $\hbar$ (so an element $a/\hbar$ is sent to $a$), the second is the quotient by $\hbar \widehat{A}$, and the last is the restriction map to $L_1$ (the zero-section of $\widehat{T}^*L_1\cong \widehat{X}_{L_1}$). Define $J$ to be the Kernel of the above map. Now $\widehat{A}_{L_1}$ is the $\mathbb{C}[\![\hbar]\!]$-subalgebra of $\widehat{A}[\hbar^{-1}]$ generated by $J$: $\widehat{A}_{L_1}=\mathbb{C}[\![\hbar]\!]\langle J\rangle\subset \widehat{A}[\hbar^{-1}]$.
\begin{prop}
(\cite{GJS22} Prop 3.7) The inclusion $\widehat{A}_{L_1}\subset \widehat{A}[\hbar^{-1}]$ induces an equality
\[ \widehat{A}_{L_1}[\hbar^{-1}]=\widehat{A}[\hbar^{-1}].   \]
\end{prop}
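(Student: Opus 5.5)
The plan is a sandwich argument: exhibit $\widehat{A}$ inside $\widehat{A}_{L_1}$, so that
\[ \widehat{A}\subseteq \widehat{A}_{L_1}\subseteq \widehat{A}[\hbar^{-1}], \]
and then invert $\hbar$ throughout.

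\emph{Step 1: show $\widehat{A}\subseteq J$.} Regard $\widehat{A}$ as a subset of $\hbar^{-1}\widehat{A}$ by writing $a=(\hbar a)/\hbar$. Under the composite map defining $J$, multiplication by $\hbar$ sends this element to $\hbar a\in \widehat{A}$. The next arrow is the quotient $\widehat{A}\twoheadrightarrow \widehat{A}/\hbar\widehat{A}$, and $\hbar a$ lies in $\hbar\widehat{A}$, so its image there is already $0$. The final restriction to $\mathcal{O}(L_1)$ keeps it at $0$. Hence $\widehat{A}\subseteq J$. Since $J\subseteq \widehat{A}_{L_1}$ by definition of $\widehat{A}_{L_1}$ as the $\mathbb{C}[\![\hbar]\!]$-subalgebra generated by $J$, we get $\widehat{A}\subseteq \widehat{A}_{L_1}$. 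The second inclusion $\widehat{A}_{L_1}\subseteq \widehat{A}[\hbar^{-1}]$ holds by construction.

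\emph{Step 2: localize.} Inverting $\hbar$ is an exact localization, i.e.\ $-\otimes_{\mathbb{C}[\![\hbar]\!]}\mathbb{C}(\!(\hbar)\!)$, so it preserves inclusions of submodules of the $\hbar$-torsion-free module $\widehat{A}[\hbar^{-1}]$. Applying it to the chain of Step 1 gives
\[ \widehat{A}[\hbar^{-1}]\subseteq \widehat{A}_{L_1}[\hbar^{-1}]\subseteq \widehat{A}[\hbar^{-1}][\hbar^{-1}]=\widehat{A}[\hbar^{-1}]. \]
All three terms are subsets of the same ambient algebra, so the outer terms coincide and the middle inclusion is an equality. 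This is an equality of algebras, since both sides are subalgebras of $\widehat{A}[\hbar^{-1}]$ and the inclusion is an algebra map.

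The only point needing care is Step 1, namely that the composite map defining $J$ is taken on $\hbar^{-1}\widehat{A}$ with $\widehat{A}$ embedded via $a=(\hbar a)/\hbar$. This uses that $\widehat{A}$ has no $\hbar$-torsion, which holds because $A$ has none (Proposition \ref{DQSkalg}) and $\widehat{A}$ is a flat $A$-module. Once this is checked, the rest is formal. In fact $J$ consists of $\widehat{A}$ together with elements $\hbar^{-1}a$ where $a$ reduces into the ideal of $L_1$, so $J$ is strictly larger than $\widehat{A}$; only the containment of $\widehat{A}$ is needed here.
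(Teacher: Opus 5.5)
Your proof is correct: since $\widehat{A}=\hbar^{-1}(\hbar\widehat{A})$ lies in $J$, you get $\widehat{A}\subseteq\widehat{A}_{L_1}\subseteq\widehat{A}[\hbar^{-1}]$, and inverting $\hbar$ forces equality. This is the natural, essentially tautological proof; the paper gives no proof of its own and simply cites \cite{GJS22} Prop 3.7, which follows \cite{KS10} Section 7.1. One minor point: $\hbar$-torsion-freeness of $\widehat{A}$ follows directly from $\widehat{A}\cong\mathcal{O}(\widehat{X}_{L_1})[\![\hbar]\!]$ as a $\mathbb{C}[\![\hbar]\!]$-module, so you do not need the flatness lemma for it.
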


We will use the following "cohomologically complete Nakayama" theorem inductively to prove \ref{bounded1impliesbounded2}:
\begin{thm}\label{Nakayama}
(\cite{PSY13} Theorem 0.2; \cite{KS10} Theorem 1.6.4) Let $M$ be a cohomologically complete complex of $\mathbb{C}[\![\hbar]\!]$-modules, such that $\rmH^i(M)=0$ for $i>0$, and such that $\rmH^{0}(\mathbb{C}\otimes^{\mathbb{L}}_{\mathbb{C}[\![\hbar]\!]}M)$ is finitely generated over $\mathbb{C}$. Then $\rmH^{0}(M)$ is finitely generated as a $\mathbb{C}[\![\hbar]\!]$-module.
\end{thm}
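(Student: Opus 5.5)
The plan is to reduce the statement to a single $\mathbb{C}[\![\hbar]\!]$-module, namely $H:=\rmH^0(M)$. For that module I will show that $H/\hbar H$ is finite-dimensional and that $H$ is cohomologically complete. Then I will run a Nakayama-type argument in which cohomological completeness replaces the usual finite-generation hypothesis.

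First, I will compute $\mathbb{C}\otimes^{\mathbb{L}}_{\mathbb{C}[\![\hbar]\!]}M$ using the free resolution $0\to\mathbb{C}[\![\hbar]\!]\xrightarrow{\hbar}\mathbb{C}[\![\hbar]\!]\to\mathbb{C}\to 0$. This identifies it with the cone of $\hbar\cdot:M\to M$. The associated long exact sequence reads
\[ \rmH^0(M)\xrightarrow{\hbar}\rmH^0(M)\to \rmH^0(\mathbb{C}\otimes^{\mathbb{L}}_{\mathbb{C}[\![\hbar]\!]}M)\to \rmH^1(M)=0 . \]
It gives $H/\hbar H\cong \rmH^0(\mathbb{C}\otimes^{\mathbb{L}}M)$, which is finite-dimensional by hypothesis. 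Second, by Lemma \ref{cohocompleteislevelwise}, $H$ is itself cohomologically complete.

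Next, I will choose $m_1,\dots,m_k\in H$ lifting a basis of $H/\hbar H$, and let $\varphi:\mathbb{C}[\![\hbar]\!]^k\to H$ be the induced map. Its cokernel $Q$ satisfies $Q/\hbar Q=0$ by right exactness of $-\otimes\mathbb{C}$, so $Q=\hbar Q$. I also want $Q$ to be cohomologically complete. Since $\RHom(\mathbb{C}(\!(\hbar)\!),-)$ is exact, cohomologically complete complexes are closed under cones. The free module $\mathbb{C}[\![\hbar]\!]^k$ is complete, because $\hbar$ is injective on it and it is $\hbar$-adically complete. Hence $\mathrm{cone}(\varphi)$ is complete. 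Its $\rmH^0$ is $Q$, so by Lemma \ref{cohocompleteislevelwise} again $Q$ is complete.

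The main step is to show that a cohomologically complete module $Q$ with $Q=\hbar Q$ must vanish. I will write $\mathbb{C}(\!(\hbar)\!)=\colim(\mathbb{C}[\![\hbar]\!]\xrightarrow{\hbar}\mathbb{C}[\![\hbar]\!]\xrightarrow{\hbar}\cdots)$. This gives $\RHom(\mathbb{C}(\!(\hbar)\!),Q)\simeq \mathrm{R}\lim(\cdots\xrightarrow{\hbar}Q\xrightarrow{\hbar}Q)$, and its $\rmH^0$ is the inverse limit $\lim(\cdots\xrightarrow{\hbar} Q\xrightarrow{\hbar} Q)$. This limit vanishes by completeness. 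If $Q\neq 0$, surjectivity of $\hbar$ lets me build a compatible sequence $(q_0,q_1,\dots)$ with $q_0\neq0$ and $\hbar q_{n+1}=q_n$, which is a nonzero element of the limit. Hence $Q=0$, so $\varphi$ is surjective and $H=\rmH^0(M)$ is finitely generated over $\mathbb{C}[\![\hbar]\!]$.

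I expect the main obstacle to be the bookkeeping around the cone: identifying $\rmH^0(\mathrm{cone}\,\varphi)$ with $Q$ up to the right shift convention, and checking that the lower cohomology $\ker\varphi$ does not interfere. Lemma \ref{cohocompleteislevelwise} is used to sidestep exactly this point.
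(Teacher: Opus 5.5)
Your proof is correct. The paper gives no argument of its own here; it quotes the result from \cite{PSY13} and \cite{KS10}. So your proposal is a self-contained alternative, not a reconstruction.

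Your argument runs in three steps, all sound:
\begin{enumerate}
\item Because $\rmH^1(M)=0$, the top-degree long exact sequence of the cone of $\hbar$ gives $\rmH^0(M)/\hbar\rmH^0(M)\cong\rmH^0(\mathbb{C}\otimes^{\mathbb{L}}_{\mathbb{C}[\![\hbar]\!]}M)$.
\item Cohomological completeness passes to $\rmH^0(M)$, and then to the cokernel $Q$ of $\varphi$.
\item A derived-complete module with $Q=\hbar Q$ vanishes, because $\mathrm{Hom}(\mathbb{C}(\!(\hbar)\!),Q)\cong\varprojlim(\cdots\xrightarrow{\hbar}Q\xrightarrow{\hbar}Q)$. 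This identification is legitimate since $\mathbb{C}(\!(\hbar)\!)$ has the length-one free resolution $0\to\bigoplus_n\mathbb{C}[\![\hbar]\!]\to\bigoplus_n\mathbb{C}[\![\hbar]\!]\to\mathbb{C}(\!(\hbar)\!)\to0$.
\end{enumerate}

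The bookkeeping you flagged is harmless. In cohomological grading, $\mathrm{cone}(\varphi)=[\mathbb{C}[\![\hbar]\!]^k\to\rmH^0(M)]$ sits in degrees $-1,0$, so its $\rmH^0$ is $Q$. The $\ker\varphi$ term in degree $-1$ never enters, because you only use the direction "complete complex $\Rightarrow$ complete cohomology" of Lemma \ref{cohocompleteislevelwise}. Alternatively, derived-complete modules are closed under cokernels, which gives completeness of $Q$ directly.

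What each route buys: the cited theorems are much more general. \cite{PSY13} treats a weakly proregular ideal in an arbitrary commutative ring and an arbitrary top degree. \cite{KS10} works with sheaves of $\mathbb{C}[\![\hbar]\!]$-algebras and yields coherence, not just finite generation. In that generality, the vanishing step needs the derived-completion (MGM-type) machinery. For the principal ideal $(\hbar)$, your explicit construction of compatible $\hbar$-divided elements replaces all of it. Your proof uses only Lemma \ref{cohocompleteislevelwise} and exactness of $\RHom(\mathbb{C}(\!(\hbar)\!),-)$. It also gives slightly more: any lift of a basis of $\rmH^0(M)/\hbar\rmH^0(M)$ already generates $\rmH^0(M)$.
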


\begin{thm}\label{bounded1impliesbounded2}
Suppose $N_1$ is finitely generated left $\widehat{A}_{L_1}$-module without $\hbar$-torsion (hence cohomologically complete) together with an isomorphism of left $\widehat{A}[\hbar^{-1}]$-modules $N_1[\hbar^{-1}]\cong M_1[\hbar^{-1}]$ and similarly for $N_2$. Suppose \[N_2/\hbar\otimes^{\mathbb{L}}_{\widehat{A}_{L_1}/\hbar}N_1/\hbar\] is a  complex with finite dimensional homologies over $\mathbb{C}$. Then \[(\widehat{M_2}\otimes^\mathbb{L}_{\widehat{A}}M_1)[\hbar^{-1}]\cong (\widehat{M}_2\otimes^\mathbb{L}_{\widehat{A}_{L_1}}M_1)[\hbar^{-1}]\] is a complex with finite-dimensional homologies over $\mathbb{C}(\!(\hbar)\!)$.
\end{thm}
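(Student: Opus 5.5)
The plan is to pass from the ``completed'' algebra $\widehat{A}$ to the lattice $\widehat{A}_{L_1}$, in which the Lagrangian modules become well behaved. We then apply the cohomologically complete Nakayama theorem (Theorem \ref{Nakayama}) degree by degree to the lattice tensor product $N_2\otimes^{\mathbb{L}}_{\widehat{A}_{L_1}}N_1$, and finally invert $\hbar$.

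\textbf{Step 1 (localization).} Localization $(-)[\hbar^{-1}]=\mathbb{C}(\!(\hbar)\!)\otimes_{\mathbb{C}[\![\hbar]\!]}(-)$ is exact and commutes with derived tensor products. The proposition $\widehat{A}_{L_1}[\hbar^{-1}]=\widehat{A}[\hbar^{-1}]$ (\cite{GJS22} Prop 3.7) then gives
\[ (\widehat{M_2}\otimes^{\mathbb{L}}_{\widehat{A}}M_1)[\hbar^{-1}]\cong \widehat{M_2}[\hbar^{-1}]\otimes^{\mathbb{L}}_{\widehat{A}[\hbar^{-1}]}M_1[\hbar^{-1}]\cong (\widehat{M}_2\otimes^\mathbb{L}_{\widehat{A}_{L_1}}M_1)[\hbar^{-1}], \]
which is the first isomorphism of the statement. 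Using the given isomorphisms $N_i[\hbar^{-1}]\cong M_i[\hbar^{-1}]$ (with $\widehat{M_2}$ in place of $M_2$), this complex is also isomorphic to $(N_2\otimes^{\mathbb{L}}_{\widehat{A}_{L_1}}N_1)[\hbar^{-1}]$. It therefore suffices to show that each homology group of $P:=N_2\otimes^{\mathbb{L}}_{\widehat{A}_{L_1}}N_1$ is finitely generated over $\mathbb{C}[\![\hbar]\!]$. Once that holds, each $\rmH_i(P)[\hbar^{-1}]$ is finite-dimensional over $\mathbb{C}(\!(\hbar)\!)$.

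\textbf{Step 2 (reduction mod $\hbar$).} Because $N_1$, $N_2$ and $\widehat{A}_{L_1}$ have no $\hbar$-torsion, base change gives
\[ \mathbb{C}\otimes^{\mathbb{L}}_{\mathbb{C}[\![\hbar]\!]}P\cong N_2/\hbar\otimes^{\mathbb{L}}_{\widehat{A}_{L_1}/\hbar}N_1/\hbar. \]
By hypothesis this complex has finite-dimensional homology in every degree. Next we show $P$ is cohomologically complete. We resolve $N_1$ by finitely generated free $\widehat{A}_{L_1}$-modules. Then each term of $P$ is a finite sum of copies of $N_2$, and $N_2$ is cohomologically complete because it is finitely generated and $\hbar$-torsion free. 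Taking the resulting direct-sum totalization, Lemma \ref{cohocompleteislevelwise} together with Lemma \ref{truncation} shows that the homologies of $P$ are cohomologically complete.

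\textbf{Step 3 (induction on degree).} The complex $P$ is connective: in cohomological grading $\rmH^i(P)=0$ for $i>0$. Theorem \ref{Nakayama} then gives that $\rmH^0(P)$ is finitely generated. For the inductive step, suppose $\rmH^{j}(P)$ is finitely generated for $-n<j\le 0$. We consider the truncation triangle $\tau_{\le -n}P\to P\to\tau_{>-n}P$. The object $\tau_{>-n}P$ has finitely many finitely generated homologies, so $\mathbb{C}\otimes^{\mathbb{L}}\tau_{>-n}P$ has finite-dimensional homology. From the long exact sequence, $\mathbb{C}\otimes^{\mathbb{L}}\tau_{\le -n}P$ also has finite-dimensional homology, and $\tau_{\le -n}P$ is cohomologically complete by Lemma \ref{truncation}. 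Applying Theorem \ref{Nakayama} to the shift $\tau_{\le -n}P[-n]$ shows that $\rmH^{-n}(P)$ is finitely generated.

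The main obstacle I expect is Step 2, specifically the cohomological completeness of $P$. Completeness is not preserved by arbitrary direct sums, so the finite generation of $N_1$, and the resulting resolution by finitely generated free modules, must be used carefully; a Noetherian hypothesis on $\widehat{A}_{L_1}$ may be needed. The $\hbar$-torsion-freeness is what makes the mod-$\hbar$ base change in Step 2 valid.
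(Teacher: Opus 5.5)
Your proposal is correct and follows essentially the same route as the paper. It localizes using \cite{GJS22} Prop 3.7 and the lattices $N_i$, reduces to finite generation of the homology of $K=N_2\otimes^{\mathbb{L}}_{\widehat{A}_{L_1}}N_1$, and applies Theorem \ref{Nakayama} to the shifted truncations $\tau_{\le -n}K$ one degree at a time.

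The paper simply cites \cite{KS10}\cite{GJS22} for the completeness of $K$; your sketch is fine if you phrase it via brutal truncations of your model (bounded complexes of complete modules) rather than via Lemma \ref{truncation}. In the inductive step the paper uses the universal-coefficient injection $\rmH^{-n}(K)/\hbar\hookrightarrow\rmH^{-n}(K_0)$, so your inductive hypothesis and triangle argument, while valid, are not actually needed.
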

\begin{proof}
By \cite{GJS22} Prop 3.7, we have 
\[(\widehat{M_2}\otimes^\mathbb{L}_{\widehat{A}}M_1)[\hbar^{-1}]\cong (\widehat{M}_2\otimes^\mathbb{L}_{\widehat{A}_{L_1}}M_1)[\hbar^{-1}]\]
We want to show that $(\widehat{M}_2\otimes^\mathbb{L}_{\widehat{A}}M_1)[\hbar^{-1}]$ is a  complex with finite-dimensional homologies over $\mathbb{C}(\!(\hbar)\!)$. Since we have
\[ (\widehat{M}_2\otimes^\mathbb{L}_{\widehat{A}}M_2)[\hbar^{-1}]\cong (\widehat{M_2}[\hbar^{-1}]\otimes^\mathbb{L}_{\widehat{A}[\hbar^{-1}]}M_1[\hbar^{-1}])\cong (N_2[\hbar^{-1}]\otimes^{\mathbb{L}}_{\widehat{A}[\hbar^{-1}]}(N_1[\hbar^{-1}]))\cong (N_2\otimes^\mathbb{L}_{\widehat{A}_{L_1}}N_1)[\hbar^{-1}]    \]
So it's enough to show that $N_2\otimes^{\mathbb{L}}_{\widehat{A}_{L_1}}N_1$ is a complex with finitely generated homologies over $\mathbb{C}[\![\hbar]\!]$. 

The derived tensor product $K:=N_2\otimes^\mathbb{L}_{\widehat{A}_{L_1}}N_1$ is cohomologically complete \cite{KS10}\cite{GJS22}. We also have $K_0:=\mathbb{C}\otimes^{\mathbb{L}}_{\mathbb{C}[\![\hbar]\!]}K= \mathbb{C}\otimes^\mathbb{L}_{\mathbb{C}[\![\hbar]\!]}(N_2\otimes^{\mathbb{L}}_{\widehat{A}_{L_1}}N_1)\cong (N_2/\hbar)\otimes^\mathbb{L}_{\widehat{A}_{L_1}/\hbar}(N_1/\hbar)$ is a  complex with finite-dimensional homologies over $\mathbb{C}$. Assume $\rmH^i(K_0)=0$ for $i< a\ \text{and  }  i>0$. Recall $\rmH^i(M[-n])=\rmH^{i+n}(M)$. $\tau_{\le m}M$ means the truncation killing all cohomology above degree $m$.

We have $\rmH^0(K_0)$ is finite-dimensional over $\mathbb{C}$. We also know $\rmH^i(K)=0$ for $i>0$, So by \ref{Nakayama}, we have $\rmH^0(K)$ is finitely generated over $\mathbb{C}[\![\hbar]\!]$. 

We want to show $\rmH^{-1}(K)=\rmH^0(K[1])=\rmH^0((\tau_{\le-1}K)[1] )$ is finitely generated over $\mathbb{C}[[\hbar]]$. Truncation (Lemma \ref{truncation}) and shift (obviously) preserves cohomologically completeness. So we have $\tau_{\le -1}K[1]$ is still a cohomologically complete complex. We have $\rmH^i(\tau_{\le -1}K[1])=0$ for all $i>0$. Since $\rmH^0(K_0[1])$ is finitely generated, $\rmH^0(\mathbb{C}\otimes^\mathbb{L}_{\mathbb{C}[\![\hbar]\!]}(\tau_{\le -1}K)[1])$, as a submodule of $\rmH^0(K_0[1])$, is still finitely generated. Thus $\rmH^0((\tau_{\le 1}K)[1])$ is finitely generated.

Inductively, we know $K$ is a complex with finitely generated homologies over $\mathbb{C}[\![\hbar]\!]$.
\end{proof}

Note that the above theorem is nothing about boundedness. The following theorem is also not.

\begin{thm}\label{localfiniteness}
The localizations
\[ (M_2\otimes^\mathbb{L}_A M_1)[\hbar^{-1}]   \]
is a complex with finite-dimensional homologies over $\mathbb{C}(\!(\hbar)\!)$.
\end{thm}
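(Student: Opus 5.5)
The plan is to reduce the claim to Theorem \ref{bounded1impliesbounded2}, following the strategy of \cite{GJS22}. Completion gives
\[ M_2\otimes^\mathbb{L}_A M_1\cong \widehat{M_2}\otimes^\mathbb{L}_{\widehat{A}}M_1 ,\]
using the flatness of $\widehat{A}$ over $A$ and the fact that the $A$-action on $M_1$ extends to $\widehat{A}$. Inverting $\hbar$ therefore identifies $(M_2\otimes^\mathbb{L}_A M_1)[\hbar^{-1}]$ with $(\widehat{M_2}\otimes^\mathbb{L}_{\widehat{A}}M_1)[\hbar^{-1}]$. So it suffices to verify the hypotheses of Theorem \ref{bounded1impliesbounded2}: we must construct lattices $N_1,N_2$ over $\widehat{A}_{L_1}$ such that the reduction $N_2/\hbar\otimes^\mathbb{L}_{\widehat{A}_{L_1}/\hbar}N_1/\hbar$ has finite-dimensional homologies.

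\textbf{Step 1: choice of lattices.} Following \cite{KS10} Section 7.1 and \cite{GJS22}, take $N_1:=\widehat{A}_{L_1}\cdot u_1\subset M_1[\hbar^{-1}]$, where $u_1$ is the cyclic generator of $M_1$. Since $J$ annihilates $u_1$ to leading order (this is exactly how $J$ is defined), $N_1$ is $\hbar$-torsion-free and finitely generated, and $N_1[\hbar^{-1}]\cong M_1[\hbar^{-1}]$. Similarly take $N_2:=\widehat{A}_{L_1}\cdot \widehat{M_2}\subset \widehat{M_2}[\hbar^{-1}]$, which is finitely generated because $\widehat{M_2}$ is cyclic. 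Cohomological completeness follows from finite generation and the absence of $\hbar$-torsion, as noted in the statement of Theorem \ref{bounded1impliesbounded2}.

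\textbf{Step 2: the classical limit.} By \cite{KS10}, the algebra $\widehat{A}_{L_1}/\hbar$ is identified with functions on the formal neighbourhood of $L_1$ in $T^*L_1$, which is (a completion of) differential operators on $L_1$. Under this identification:
\begin{itemize}
\item $N_1/\hbar$ is the holonomic module $\mathcal{O}_{L_1}$;
\item $N_2/\hbar$ is a coherent module whose characteristic variety lies in the Lagrangian $\widehat{L_2}$, restricted to the formal neighbourhood of $L_1$, hence also holonomic.
\end{itemize}
The derived tensor product of holonomic $\mathcal{D}$-modules on the smooth variety $L_1$ computes de Rham cohomology of a holonomic complex. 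Equivalently, by Kashiwara–Schapira finiteness for a pair of Lagrangian DQ-modules, it has finite-dimensional cohomology in each degree, and in fact in finitely many degrees.

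\textbf{Step 3: conclusion.} Applying Theorem \ref{bounded1impliesbounded2}, we obtain that $(\widehat{M_2}\otimes^\mathbb{L}_{\widehat{A}}M_1)[\hbar^{-1}]$ has finite-dimensional homologies over $\mathbb{C}(\!(\hbar)\!)$. Combined with the identification above, this gives the claim.

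\textbf{Main obstacle.} The hard part is Step 2. One must justify that the reduction of $N_2$ is genuinely holonomic over the formal completion, and that the holonomic finiteness theorem still applies there. The difficulty is that $\widehat{L_2}$ meets the formal neighbourhood of $L_1$ only along $L_1\cap L_2$, which need not be transverse. My first attempt would be to cite \cite{GJS22} Prop 3.8, where the corresponding underived finiteness is established, and to observe that their argument controls the whole derived tensor product, not only $\rmH^0$. Should that fail, I would instead appeal to the Kashiwara–Schapira constructibility of $\RHom$ between holonomic DQ-modules, which yields finiteness degreewise.
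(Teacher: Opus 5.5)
Your proposal follows the paper's proof. You pass to $(\widehat{M_2}\otimes^{\mathbb{L}}_{\widehat{A}}M_1)[\hbar^{-1}]$, take $\widehat{A}_{L_1}$-lattices $N_1,N_2$, take from \cite{GJS22} the finiteness of $N_2/\hbar\otimes^{\mathbb{L}}_{\widehat{A}_{L_1}/\hbar}N_1/\hbar\cong N_2/\hbar\otimes^{\mathbb{L}}_{D(L_1)}N_1/\hbar$, and conclude with Theorem \ref{bounded1impliesbounded2}.

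There is one slip in Step 2, and it does not affect the argument. It is $\widehat{A}/\hbar$ that is the algebra of functions on the formal neighbourhood of $L_1$, whereas $\widehat{A}_{L_1}/\hbar$ is (a completion of) the noncommutative algebra $D(L_1)$. Likewise, the characteristic variety of $N_2/\hbar$ lies in the normal cone of $L_2$ along $L_1$ inside $T^*L_1$, not in $\widehat{L_2}$ itself; that normal cone is what makes $N_2/\hbar$ holonomic.
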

\begin{proof}
We know $M_2\otimes^\mathbb{L}_{A}M_1\cong \widehat{M_2}\otimes^\mathbb{L}_{\widehat{A}}M_1$. Thus it enough to show that $\widehat{M_2}\otimes^\mathbb{L}_{\widehat{A}}M_1[\hbar^{-1}]$ is a  complex with finite-dimensional homologies over $\mathbb{C}(\!(\hbar)\!)$.

Let $D(L_1)$ be the $\mathbb{C}$-algebra of differential operators. It admits a filtration given by the order of the differential operator. Consider the Rees algebra which is a graded $\mathbb{C}[\hbar]$-algebra and complete it in the $\hbar$-adic topology as well as with respect to the order filtration. We denote the completion by $\widehat{D}_{\hbar}(L_1)$. In the proof of Theorem 3.6 of \cite{GJS22}, they show that 
\[  N_2/\hbar\otimes^\mathbb{L}_{\widehat{A}_{L_1}/\hbar}N_1/\hbar \cong N_2/\hbar\otimes^\mathbb{L}_{D(L_1)}N_1/\hbar     \]
are (bounded) complex with finite-dimensional homologies over $\mathbb{C}$. Then by Theorem \ref{bounded1impliesbounded2}, we get $\widehat{M_2}\otimes^{\mathbb{L}}_{\widehat{A}}M_1[\hbar^{-1}]$ is a complex with finite-dimensional homologies over $\mathbb{C}(\!(\hbar)\!)$.
\end{proof}

\begin{thm}\label{finiteness1}
Let $\CA=\rep^\fd_q(G)$ for $q$ generic. Let $M$ be closed with a Heegaard splitting $M=H_1\cup_{\Sigma} H_2$. Then 
\[\SSk^\Int_G(M-\mathbb{B})\cong \Sk^\Int_G(H_1)\otimes^\mathbb{L}_{\Skalg^\Int_G(\Sigma^\circ)}\Sk^\Int_G(H_2)\]
is a bounded complex with finite-dimensional homologies over $\mathbb{Q}(q^{1/d})$.
\end{thm}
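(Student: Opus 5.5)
The first isomorphism in the statement is Theorem \ref{Sk^intGM-B}, so the content to prove is boundedness and finite-dimensionality of homology over $\mathbb{Q}(q^{1/d})$. I will follow the structure of \cite{GJS22} and pass from generic $q$ to the formal parameter $q=e^\hbar$. Set $A=\Skalg^\Int(\Sigma^\circ)$, $M_1=\Sk^\Int(H_1)$ and $M_2=\Sk^\Int(H_2)$ over $\mathbb{C}[\![\hbar]\!]$ for $\CA=\rep^\fd_\hbar(G)$. By Propositions \ref{DQSkalg}, \ref{DGSkN_1} and \ref{DGSkN_2}, these are $\hbar$-complete, $\hbar$-torsion-free deformation quantizations of $X=G^{2g}$ with the Fock--Rosly structure and of two Lagrangians $L_1,L_2\cong G^g$. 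Theorem \ref{localfiniteness} then shows that $(M_2\otimes^{\mathbb{L}}_A M_1)[\hbar^{-1}]$ has finite-dimensional homology over $\mathbb{C}(\!(\hbar)\!)$ in each degree. Two things remain: boundedness, and the descent to $\mathbb{Q}(q^{1/d})$.

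\textbf{Boundedness.} I will bound the Tor-amplitude of $M_1$ over $A$, or equivalently over the localized algebras.

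First I will show that $A[\hbar^{-1}]$, or better the generic-$q$ algebra $\mathcal{D}_q(G)^{\otimes g}$ from Example \ref{LskalgintG}, has finite global dimension. The argument is to filter it so that the associated graded is a (twisted) commutative algebra, namely $\mathcal{O}(G^{2g})$ or a quantum-torus-like degeneration. Since $G^{2g}$ is smooth affine, the associated graded has finite global dimension, about $2g\dim G$. Finite global dimension then lifts along the filtration by the standard filtered-to-graded argument.

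A cleaner alternative is to work with $M_1$ directly. I would build a finite free resolution of $\Sk^\Int_G(H_1)=\Skalg^\Int(\Sigma^\circ)\otimes_{b,\Skalg^\Int(\Theta),\varepsilon}\mathbb{1}$ (Theorem \ref{SkintH=derivedtensorproduct}). This comes from a quantized Koszul resolution of $\mathbb{1}$ over $\Skalg^\Int(\Theta)\cong\mathcal{O}_q(G)^{\otimes g}$, which has length at most $g\dim G$ because $\mathcal{O}_q(G)$ has finite global dimension for $q$ not a root of unity. The resolution is then pushed forward along $b$, and the pushforward stays exact because the $b$-action is free by Lemma \ref{freea-action}. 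Either route gives $\rmH_i=0$ for $i>N$ with $N$ independent of everything except $g$ and $\dim G$, so the complex is bounded.

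\textbf{Descent.} The step from $\mathbb{C}(\!(\hbar)\!)$ to $\mathbb{Q}(q^{1/d})$ copies the argument of \cite{GJS22}. All algebras and modules involved are defined over $\mathbb{Q}[q^{\pm1/d}]$, and they are finitely presented there: $\mathcal{D}_q(G)$ is Noetherian and the modules are cyclic. So the derived tensor product over $\mathbb{Q}(q^{1/d})$ can be computed by a resolution by finitely generated free modules. The boundedness step lets me choose this resolution finite.

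Base change along the field extension $\mathbb{Q}(q^{1/d})\hookrightarrow\mathbb{C}(\!(\hbar)\!)$, $q\mapsto e^\hbar$, is flat. Hence it commutes with derived tensor products and with homology, so $\dim_{\mathbb{Q}(q^{1/d})}\rmH_i=\dim_{\mathbb{C}(\!(\hbar)\!)}\rmH_i$. Here I must check that the $\hbar$-adic completions of Propositions \ref{DQSkalg}--\ref{DGSkN_1}, after inverting $\hbar$, agree with the base change of the generic objects, as established in \cite{GJS22}. I must also check that Theorem \ref{localfiniteness} applies to the localized base-changed complex. Finiteness in each degree together with boundedness then gives the claim.

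\textbf{Main obstacle.} I expect boundedness to be the hard step. Theorem \ref{localfiniteness} says nothing about amplitude, so I will need a genuine finite global (or Tor) dimension statement for quantum coordinate algebras and quantum differential operators. If the filtration argument is messy, the fallback is to cite known results that $\mathcal{O}_q(G)$, and the quantum torus degenerations of $\mathcal{D}_q(G)$, are Auslander-regular of finite global dimension.
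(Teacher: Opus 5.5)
Your proposal follows essentially the same route as the paper. Finite-dimensionality of each homology group comes from passing to $q=e^\hbar$ and invoking Propositions \ref{DQSkalg}--\ref{DGSkN_1} with Theorem \ref{localfiniteness}. Boundedness comes from transporting the finite quantum Koszul resolution over $\mathcal{O}_q(G)^{\otimes g}$ along the free $b$-action; your ``cleaner alternative'' is exactly the paper's $\mathcal{O}_q(G)^g\tilde{\otimes}K$ resolution. You also flag that the $\hbar$-adically completed derived tensor product must be compared with the base change of the generic one, which the paper only asserts in a single line.
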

\begin{proof}
The homological dimension of the complex $\SSk^\Int_G(M-\mathbb{B})\otimes_{\mathbb{Z}[q^{1/d},q^{-1/d}]}\mathbb{Q}(q^{1/d})$ over $\mathbb{Q}(q^{1/d})$ coincides with the homological dimension of the $\SSk^\Int_G(M-\mathbb{B})\otimes_{\mathbb{Z}[q^{1/d},q^{-1/d}]}\mathbb{C}(\!(\hbar)\!)$ where $q=e^{\hbar}$. By Prop \ref{DQSkalg} \ref{DGSkN_2} \ref{DGSkN_1}, and \ref{localfiniteness}, we get $\SSk^\Int_G(M-\mathbb{B})$ is a complex with finite-dimensional homologies over $\mathbb{Q}(q^{1/d})$.

We still need to prove that $\SSk^\Int_G(M-\mathbb{B})\cong \SSk^\Int_G(H_1)\otimes^\mathbb{L}_{\mathcal{D}_q(G)^g}\mathcal{O}_q(G)^g$ is bounded. Since $\mathcal{O}_q(G)$ has finite global dimension, we have a bounded free resolution (i.e. quantum Koszul complex) $K$ of $\mathbb{k}$ as $\mathcal{O}_q(G)$-modules. So we also have a bounded free resolution $K^g$ of $\mathbb{k}^g$ as $\mathcal{O}_q(G)^g$-module. Note that $\mathcal{D}_q(G)\cong \mathcal{O}_q(G)\tilde{\otimes}\mathcal{O}_q(G)$. Here $\tilde{\otimes}$ is twisted tensor product. After taking the twisted tensor product with $\mathcal{O}_q(G)^g$, we still have a bounded resolution $\mathcal{O}_q(G)^g\tilde{\otimes} K$ of $\mathcal{O}_q(G)^g$ as $\mathcal{D}_q(G)^g$-modules. Therefore, we have $\SSk^\Int_G(M-\mathbb{B})$ is bounded.
\end{proof}

\begin{rem}
This implies that the property that the homology of derived skein module can have infinite length comes from filling the 3-handle $\mathbb{B}$ back.
\end{rem}

Now we want to prove a (weak) finiteness theorem for derived skein module:
\begin{thm}\label{finiteness}
Let $G$ be a reductive algebraic group, and $q$ generic. The derived $G$-skein module $\SSk_G(M)$ of closed oriented 3-manifold is a complex with  finite-dimensional homologies over $\mathbb{Q}(A)$.
\end{thm}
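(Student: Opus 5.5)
We deduce the statement from Theorem \ref{computableexcisionforrepqG} together with Theorem \ref{finiteness1}. First choose a Heegaard splitting $M=H_1\cup_\Sigma H_2$ and write $K:=\Sk^\Int_G(H_1)\otimes^\mathbb{L}_{\Skalg^\Int_G(\Sigma^\circ)}\Sk^\Int_G(H_2)\cong \SSk^\Int_G(M-\mathbb{B})$. By Theorem \ref{computableexcisionforrepqG},
\[ \SSk_G(M)\cong K\otimes^{\mathbb{L}}_{\rmH_\bullet(G)}\mathbb{k}. \]
Theorem \ref{finiteness1} says that $K$ is a bounded complex with finite-dimensional total homology over $\mathbb{k}=\mathbb{Q}(q^{1/d})$. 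Here $K$ is a module over $\rmH_\bullet(G)\cong C_\bullet(\mathfrak g;\mathbb{k})$ (Lemma \ref{SkalgS^2=C(g)}), viewed via Corollary \ref{Hom(1,-)useless} and Lemma \ref{skcatS^2=Modskalg} as an object of $\Mod_{\rmH_\bullet(G)}(\VVect)$. The remaining task is purely algebraic: show that $K\otimes^{\mathbb{L}}_{\rmH_\bullet(G)}\mathbb{k}$ has finite-dimensional homology in each degree.

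We would compute this tensor product with the bar resolution of $\mathbb{k}$ over $R:=\rmH_\bullet(G)$. The algebra $R$ is a finite-dimensional, non-negatively graded, connected exterior algebra $\Lambda(\xi_1,\dots,\xi_r)$ with generators in odd degrees $2m_i-1\ge 1$, where the $m_i$ are the exponents of $G$. Since $R$ is formal, we may replace $C_\bullet(\mathfrak g)$ by $R$ itself. Then $\mathbb{k}$ has the Koszul (Tate) resolution $R\otimes \Sym(\mathbb{k}\langle u_1,\dots,u_r\rangle)$ with $|u_i|=2m_i$. This gives
\[ K\otimes^\mathbb{L}_R\mathbb{k}\simeq \bigl(K\otimes \mathbb{k}[u_1,\dots,u_r],\ \delta\bigr). \]
The underlying graded object is a sum of shifts of $K$ indexed by monomials in the $u_i$. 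Consider the filtration by polynomial degree in the $u_i$. Its associated spectral sequence has $E^1=\rmH_\bullet(K)\otimes \mathbb{k}[u_1,\dots,u_r]$. In a fixed total degree $n$, only finitely many monomials $u^\alpha$ have $|u^\alpha|\le n - a$, where $a$ is the lowest degree in which $\rmH_\bullet(K)$ is nonzero. Because each $|u_i|\ge 2$ and $K$ is bounded, only finitely many terms of $E^1$ contribute in degree $n$, and each contribution is finite-dimensional. The filtration is exhaustive and bounded below in each fixed total degree, so the spectral sequence converges. Hence $\rmH_n(\SSk_G(M))$ is finite-dimensional for every $n$.

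The main obstacle is making sure that the grading bookkeeping is legitimate. Two points need care. First, we must identify the $\rmH_\bullet(G)$-action on $K$ with a genuine DG module structure over the formal model $R$, so that the Koszul resolution applies. We would obtain this from Lemma \ref{skcatS^2=Modskalg} combined with formality of $C_\bullet(\mathfrak g)$, transferring the module along the quasi-isomorphism $C_\bullet(\mathfrak g)\simeq R$. Second, we must control convergence of the spectral sequence, which holds because $K$ is bounded below and each $u_i$ has positive degree. Both are routine once the boundedness of $K$ from Theorem \ref{finiteness1} is available. As a consistency check, the result matches the computed examples: $\SSk_G(S^3)\simeq \mathbb{k}[u_1,\dots,u_r]\cong \rmH_\bullet(BG)$, which is of infinite length but finite in each degree.
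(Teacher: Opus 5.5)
Your proposal is correct and follows the same route as the paper. You combine Theorem \ref{computableexcisionforrepqG} with Theorem \ref{finiteness1} and compute $K\otimes^{\mathbb{L}}_{\rmH_\bullet(G)}\mathbb{k}$ with the Koszul resolution of $\mathbb{k}$ over the exterior algebra $\rmH_\bullet(G)$. Your version is more careful than the paper's in two ways: you give the polynomial-degree filtration and the convergence argument that the paper leaves implicit, and you give the polynomial generators the correct degrees $2m_i=|\xi_i|+1\ge 2$, which is exactly what makes each homological degree finite (the paper's $\deg y_i=\deg x_i-1$ is a slip, as its own computation of $\SSk_{\mathbb{G}_m}(S^3)$, with its generator in degree $2$, shows).
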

\begin{proof}
It's well-known that $\rmH_\bullet(G)$ is an exterior algebra on a finite set of odd-degree generators:
\[ \rmH_\bullet(G)\cong \Lambda^\bullet(x_1,\cdots, x_r),\ \ \ \deg x_i=2d_i-1\ \  (d_i\ge 1)    \]
Since $\Lambda^\bullet(x_1,\cdots,x_r)$ is Koszul algebra, a free resolution of $\mathbb{k}$ as  $\Lambda^\bullet(x_1,\cdots,x_r)$-modules can be given by the Koszul complex. Let $V=\langle x_1,\cdots,x_r \rangle$ free vector space. Then the Koszul resolution of $\mathbb{k}$ over $\Lambda(V)$ is
\[ P_\bullet=\Lambda^\bullet (V)\otimes S^\bullet(V[-1])   \]
where $S(V[-1])$ denotes the symmetric algebra on the shifted vector space $V[-1]$ (so its generators $y_i$ have degree $\deg y_i=\deg x_i-1$ which is even). This also means $S^\bullet(V[-1])$ is polynomial algebra with each homogeneous component $S^n(V[-1])$ finite-dimensional.

We want to use \ref{computableexcisionforrepqG} to prove. Due to theorem \ref{finiteness1}, $\Sk^\Int_G(H_1)\otimes^\mathbb{L}_{\Skalg^\Int_G(\Sigma^\circ)}\Sk^\Int_G(H_2)$ is (bounded) complex with finite dimensional homologies, then after tensoring with the Koszul complex above, we have $\SSk_G(M)\cong\Sk^\Int_G(H_1)\otimes^\mathbb{L}_{\Skalg^\Int_G(\Sigma^\circ)}\Sk^\Int_G(H_2)\otimes^{\mathbb{L}}_{\rmH_\bullet(G)}\mathbb{k}$ is a complex with finite-dimensional homologies.
\end{proof}
\begin{rem}
According to the computation results in Section \ref{computeableformulaandresults}, the homology of derived skein module can be infinite-length.
\end{rem}

\section{Conjectures and Expectations}\label{ConjandExpect}

\subsection{Derived skein module and cohomological Donaldson-Thomas invariants }\label{SkeinDT}

We denote the derived character stack, i.e. the derived stack of local systems on closed oriented 3-manifold $M$, to be $\LLoc_G(M)$. It admits a (-1)-shifted symplectic structure. So its classical truncation $\Loc_G(M)$ admits a d-critical stack structure by \cite{BBBj15} Theorem 3.12. Moreover, in \cite{NS23}, Naef-Safronov define orientation data on $\Loc_G(M)$. By \cite{BBBj15} Theorem 4.8, since $\Loc_G(M)$ is a d-critical stack with an orientation, then there is a perverse sheaf $\phi_{\Loc_G(M)}$ on it. Its hypercohomology is the so-called \emph{cohomological DT invariants} $\rmH^\bullet(\Loc_G(M),\phi_{\Loc_G(M)})$ of $\Loc_G(M)$. It was known that $\rmH^\bullet(\Loc_G(M),\phi_{\Loc_G(M)})$ gives a deformation quantization of $\Loc_G(M)$ by Pridham's work \cite{Pri19}.

Pick a point $x\in M$; we can define a map $\Loc_G(M)\to BG$,  then we call $R_G(M):=\Loc_G(M)\times_{BG}\pt$ the \emph{representation variety} which also carries a natural structure of d-critical loci. Define $\phi_{R_G(M)}$ to be the pullback of $\phi_{\Loc_G(M)}$ to $R_G(M)$. In \cite{AM19}, Abouzaid-Manolescu call the hypercohomology $\rmH^\bullet(R_G(M),\phi_{R_G(M)})$ of $\phi_{R_G(M)}$ \emph{framed complexified instanton Floer homology}. 

This object is closely related to $\SSk^\Int_G(M-\mathbb{B})$, see \ref{Sk^intGM-B}. The representation variety $R_G(M)$ is an intersection $L_1\cap L_2$ of two complex algebraic Lagrangian submanifolds $L_1, L_2$ (representation varieties of the handlebodies $H_1,H_2$) in a complex symplectic subvariety $\Loc_G(\Sigma\# T^2)$ of twisted flat $G$-connections on $\Sigma\# T^2$. In Section \ref{DQmoduleandfinite}, we have seen that $\SSk^\Int_G(M-\mathbb{B})$ is a derived relative tensor product of \emph{algebraic} deformation quantization modules $\Sk^\Int_G(H_i)$ quantizing $L_i\subset \Loc_G(\Sigma\# T^2)$ over the \emph{algebraic} deformation quantization algebra $\Skalg^\Int_G(\Sigma^\circ)$. In \cite{GS23} Theorem A, Gunningham-Safronov, who work in analytic setting, prove a similar derived relative tensor product $(\Sk^\Int_G(H_1)\otimes^\mathbb{L}_{\Skalg^\Int_G(\Sigma^\circ)}\Sk^\Int_G(H_2))^{\rm{analytic}}$ is isomorphic to an analytic perverse sheaf $\phi^{\rm{analytic}}_{R_G(M)}$. Thus conjecture \ref{GSconj1} naturally appears.
\begin{rem}
One would expect that $\Sk^\Int_G(H_1)\otimes^\mathbb{L}_{\Skalg^\Int_G(\Sigma^\circ)}\Sk^\Int_G(H_2)\cong \phi_{R_G(M)}$. Then as a corollary, the G-equivariant homology of $\SSk^\Int_G(M-\mathbb{B})$ is equivalent to cohomological DT invariants $\rmH^\bullet(\Loc_G(M),\phi_{\Loc_G(M)})$.
\end{rem}

A question is: do we have $\rmH_\bullet(\SSk_G(M))\cong \rmH^\bullet(R_G(M),\phi_{R_G(M)})$? According to Theorem \ref{computableexcisionforrepqG}, the answer is \text{No} since we still have to fill the 3-ball $\mathbb{B}$ back. 

It was shown by Bullock \cite{Bul97} and Przytycki and Sikora \cite{PS00} that the $A=-1$ specialization of the skein module $\Sk(M)$ is isomorphic to the algebra of functions $\mathcal{O}(\Loc_{\Sl_2}(M))$ on the character variety (equivalently, character stack). So derived skein module should be viewed as a deformation (actually BV) quantization of character stack rather than representation variety. 

Then we need to choose a good cohomology theory. Let $\CY$ be an algebraic stack. Denote by $\Shv(\CY)$ be the (compactly generated) DG category of ind-constructible sheaves on an algebraic stack $\CY$ \footnote{Note that this is different from ind-completion of the DG category of constructible sheaves since not all constructible sheaves are compact.}. For a morphism $f:\CY_1\to \CY_2$ between algebraic stack, the usual direct image functor $f_*:\Shv(\CY_1)\to \Shv(\CY_2)$ is not, in general, colimit-preserving. This is a significant issue because we are working in $\PPr$, where the default 1-morphisms are assumed to be colimit-preserving. In \cite{AGKRRV20} Section A.2.3, they define the \emph{renormalized functor of direct image}:
\begin{defn}
(\cite{AGKRRV20} A.2.3) The \emph{renormalized functor of direct image}, denoted
\[ f_\Delta:\Shv(\CY_1)\to \Shv(\CY_2)  \]
to be the unique colimit-preserving functor such that restricts to $f_*$ on $\Shv(\CY_1)^c$.
\end{defn}

Then we define renormalized cochain complex of sheaves:
\begin{defn}
(\cite{AGKRRV20} A.2.4) For $\CY_1=\CY$ and $\CY_2=\pt$, we obtain the renormalized cochain complex of sheaves, denoted
\[ C_\Delta^\bullet(\CY,-):\Shv(\CY)\to \VVect      \]
We call its cohomology $\rmH_{\Delta}^\bullet(\CY,-)$ as \emph{renormalized cohomology}. 
\end{defn}

From now on, all stacks are quasi-compact Verdier compatible algebraic stack of quotient type.
\begin{rem}
To understand $f_\Delta$ (a more strong definition), it is enough to know just renormalized chain complex (a more weak definition) as the following construction:
$f_\Delta=(f^!)^\vee$:
\begin{align*}
\Shv(\CY_1) \xrightarrow{\coev_{\CY_2} \otimes \id} \Shv(\CY_2)\boxtimes \Shv(\CY_2)\boxtimes \Shv(\CY_1)&\xrightarrow{\id\otimes f^!\otimes\id}\Shv(\CY_2)\boxtimes \Shv(\CY_1)\boxtimes\Shv(\CY_1)\\ 
\xrightarrow{\id\otimes \ev_{\CY_1}} \Shv(\CY_2).
\end{align*}
Here 
\begin{itemize}
\item since $\Shv(\CY)$ is compactly generated, it has dual $\Shv(\CY)^\vee$. By assumption, we can identify $\Shv(\CY)\simeq \Shv(\CY)^\vee$ by Verdier self-duality;
\item $\coev_{\CY_2}:\VVect\to \Shv(\CY_2)\boxtimes \Shv(\CY_2)$ is the unit of Verdier self-duality; 
\item $\ev_{\CY_1}=C^\Delta_\bullet(\CY_1,-\boxtimes^!-)$. Here $-\boxtimes^!-:\Shv(\CY_1)\boxtimes \Shv(\CY_1)\hookrightarrow \Shv(\CY_1\times \CY_1)\xrightarrow{\Delta^!_{\CY_1}}\Shv(\CY_1)$.
\end{itemize}
\end{rem}

\begin{lem}
(\cite{AGKRRV20} Lemma A.2.6) The functor $f_\Delta:\Shv(\CY_1)\to \Shv(\CY_2)$ is cohomologically right bounded.
\end{lem}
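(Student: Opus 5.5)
The plan is to reduce the statement to the case of a map out of a quasi-compact scheme, where direct image has finite cohomological amplitude. Then I would propagate the bound to $\CY_1$ through a smooth atlas, using that $f_\Delta$ is colimit-preserving and that the t-structure on $\Shv(-)$ is compatible with filtered colimits. Concretely, the aim is an integer $n$ with $f_\Delta(\Shv(\CY_1)^{\le 0})\subset \Shv(\CY_2)^{\le n}$.

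\textbf{Step 1 (atlas and resolution).} Since $\CY_1$ is a quasi-compact stack of quotient type, choose a smooth surjection $p:Z\to \CY_1$ from a quasi-compact scheme, of relative dimension $d$. The functor $p^!\simeq p^*[2d]$ is continuous, conservative and t-exact up to the shift $2d$. Its left adjoint $p_!$ preserves compact objects and is right t-exact up to a shift depending only on $d$. By Barr--Beck (Theorem \ref{Barrbeck}) applied to the monad $p^!p_!$, every $F\in\Shv(\CY_1)$ is the geometric realization of the bar resolution $(p_!p^!)^{\bullet+1}F$. The terms of this simplicial object are $p_!$ of objects on $Z$, or equivalently pushforwards from the fibre powers $Z_k=Z\times_{\CY_1}\cdots\times_{\CY_1}Z$. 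If $F\in \Shv(\CY_1)^{\le 0}$, every term lies in $\Shv^{\le c}$ for a constant $c$ independent of $k$. Geometric realizations preserve coconnectivity bounds of this form, and $f_\Delta$ commutes with colimits. It therefore suffices to bound $f_\Delta\circ p_!$ on $\Shv(Z)^{\le 0}$.

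\textbf{Step 2 (reduction to compacts on $Z$).} Every object of $\Shv(Z)^{\le 0}$ is a filtered colimit of constructible objects in $\Shv(Z)^{\le 0}$, because on a scheme the t-structure is compactly generated by constructible sheaves. The functor $f_\Delta p_!$ is continuous, so it is enough to bound $f_\Delta p_!(G)=f_*p_!(G)$ for constructible $G\in \Shv(Z)^{\le 0}$. Here I use that $p_!G$ is compact and that $f_\Delta$ agrees with $f_*$ on compacts.

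\textbf{Step 3 (the key estimate).} I would show that $f_*p_!G$ is uniformly bounded above. Since $p$ is representable, the plan is to compare $f_*p_!$ with a direct image from a scheme. Pulling back along a smooth atlas $Y\to\CY_2$, which is allowed since bounds can be checked smooth locally on $\CY_2$, the composite becomes a direct image along a morphism of finite type quasi-compact schemes. By Artin vanishing and finite cohomological dimension, this direct image has amplitude bounded by $2\dim$ of the source. The factor $p_!$ only contributes the fixed shift coming from the relative dimension $d$.

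This step is the main obstacle. $f_*$ on the stack $\CY_1$ is not a direct image from a scheme: it involves group cohomology of the stabilizers, which is unbounded above. The uniform bound has to come from the compactness of $p_!G$, which is exactly what the renormalization exploits. What I would try first is to write $\CY_1=Z/H$ and use that $p_!G$ is built out of $!$-pushforwards from the scheme $Z$. Then $f_*p_!G$ is computed as an honest direct image from $Z$ rather than through $H$-equivariant cohomology, and the bound depends only on $\dim Z$, $d$ and $\dim Y$.
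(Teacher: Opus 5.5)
The paper gives no proof of this lemma; it only cites \cite{AGKRRV20}. Your Steps 1 and 2 are a sound reduction. Since $p_!p^!=p_!p^*[2d]$ is right t-exact, the bar resolution writes any $F\in\Shv(\CY_1)^{\le 0}$ as a geometric realization of objects $p_!G_k$ with $G_k\in\Shv(Z)^{\le -2d}$. By continuity of $f_\Delta$, it then suffices to bound $f_*p_!G$ uniformly for constructible $G\in\Shv(Z)^{\le 0}$. That estimate is the entire content of the lemma, and Step 3 does not prove it.

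The first version of Step 3 is wrong. After base change along an atlas $Y\to\CY_2$, the source $\CY_1\times_{\CY_2}Y$ is still a stack, so the mixed composite $f_*p_!$ does not become a direct image between schemes. Bounding $p_!$ and $f_*$ separately cannot work either, because $f_*$ has infinite amplitude.

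Your fallback claim, that $f_*p_!G$ ``is an honest direct image from $Z$'', is unjustified and false as stated. Take $Z=\mathrm{pt}\to BH=\CY_1$ and $\CY_2=\mathrm{pt}$, with $H$ connected reductive of dimension $n$, so that $\Shv(BH)\simeq\LMod_{C_\bullet(H)}(\VVect)$. Then $p_!\mathbb{k}=C_\bullet(H)[-2n]$ and $f_*p_!\mathbb{k}=\RHom_{C_\bullet(H)}(\mathbb{k},C_\bullet(H))[-2n]\simeq\mathbb{k}[-n]$, whereas $(fp)_*\mathbb{k}=(fp)_!\mathbb{k}=\mathbb{k}$.

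The bound in this example is genuinely equivariant. It holds because $C_\bullet(H)$ is a finite-dimensional exterior, hence self-injective, algebra, so $\RHom_{C_\bullet(H)}(\mathbb{k},C_\bullet(H))$ is one-dimensional. By contrast, $\underline{\mathbb{k}}_{BH}$ also lies in the heart but is not compact, and $f_*\underline{\mathbb{k}}_{BH}=C^\bullet(BH)$ is unbounded above. Your argument never uses an input of this kind, so the key step is missing.

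One way to supply it is as follows. Reduce to $\CY_2=Y$ a scheme, take $H$ connected (e.g.\ $\mathrm{GL}_n$), and factor $f$ as $Z/H\xrightarrow{g}Y\times BH\xrightarrow{\mathrm{pr}}Y$.
\begin{enumerate}
\item The map $g$ is representable of finite type: its base change along $Y\to Y\times BH$ is $Z\to Y$. Hence $g_*$ has bounded amplitude.
\item $g_*$ also preserves compact objects, for instance via $g_*\simeq\mathbb{D}g_!\mathbb{D}$ on compacts, using the Verdier-compatibility assumption.
\item On compact objects of $Y\times BH$, the functor $\mathrm{pr}_*$ (relative $C_\bullet(H)$-invariants) agrees with the continuous functor $\RHom_{C_\bullet(H)}(\mathbb{k},C_\bullet(H))\otimes_{C_\bullet(H)}(-)\simeq\mathbb{k}[n]\otimes^{\mathbb{L}}_{C_\bullet(H)}(-)$. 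This functor is right t-exact up to a shift.
\end{enumerate}
Together these give the uniform bound on $f_*p_!G=\mathrm{pr}_*g_*p_!G$. Alternatively, you can stratify $\CY_1$ by gerbes and reduce to $BH$, as in Drinfeld--Gaitsgory's treatment of renormalized cohomology \cite{DG13}.
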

The above lemma implies that after shifting, renormalized chain complex can be shifted to a non-negative chain complex.

Combining the computation results in Section \ref{computeableformulaandresults}, some physical intuition, especially the hint from Conjecture \ref{GSconj1}, we have:
\begin{conj}\label{GSconj}
(Gunningham-Safronov) For $q$ generic, and $M$ be a connected closed oriented 3-manifold.
\[ \rmH_\bullet(\SSk_G(M))\cong \rmH^{-\bullet}_{\Delta}(\Loc_G(M),\phi_{\Loc_G(M)})    \]
\end{conj}

Questions: In the root of unity case, do we have a deformation quantization analogue? For the underived case, it was conjectured by Pavel Safronov \cite{Pav22} that 
if $q$ is a good root of unity, there is a line bundle $\CL_q$ on $\Loc_{G^\vee}(M)$ such that 
\[ \Sk_{G,q}(M)\cong \rmH^0(\Loc_{G^\vee}(M),\CL_q)    \]
\begin{rem}
It's not hard to see that 
\[\rmH_\Delta^0(\Loc_G(M),\phi_{\Loc_G(M)})\cong \rmH^0(R_G(M),\phi_{R_G(M)}).\]
\end{rem}
\begin{rem}
 Bierent-Jordan-Vancraeynest-Vazirani computes a very important and interesting example: $\Gl_N$-skein module of mapping tori recently \cite{BJVV25}.  The positive property of their skein partition function can be also viewed as an evidence for the conjecture \ref{GSconj}.
\end{rem}
\begin{rem}
In \cite{DKS25}, Detcherry-Kalfagianni-Sikora checks some $\Sl_2$ examples for the 0-homology part of the  conjecture \ref{GSconj}.
\end{rem}

\subsection{Compact generation conjecture for derived skein category}

According to the results like \ref{Gmskcatmorskalg} and \ref{skcatS^2=Modskalg}, one would expect that the categorical finiteness conjecture in \cite{GJV24} still holds in the derived case:
\begin{conj}\label{GJVconj}
(Gunningham-Jordan-Vazirani) Let $G$ be a reductive algebraic group, $q$ is generic and $\Sigma$ is a closed surface. Then the stable presentable $\infty$-category $\widehat{\SSkcat_G(\Sigma)}$ admits a compact generator.
\end{conj}
\begin{rem}
If your input category $\CA$ admits a compact generator, then the statement that $\widehat{\SSkcat_\CA(\Sigma)}$ admits a compact generator is not surprising. However, a nontrivial point in the above conjecture is that $\rep_q(G)$ is not finitely generated, but finitely $\otimes$-generated. Then it would be surprising that its global integration becomes finitely generated. There are some deep fusions between representation theory and topology. As we have seen in the proof of  \ref{skcatS^2=Modskalg} and \ref{Gmskcatmorskalg}, the nondegeneracy, which is an ordinary linear notion, of $\rep_q(G)$ plays a core role here. It seems that, for this conjecture, the derived version is essentially equivalent to the ordinary version even if here the surface is closed. 
\end{rem}
\begin{rem}
In  \cite{GJV24}, they checked this conjecture for $G=\Gl_n,\Sl_n$ and $\Sigma=T^2$ case. In \cite{DJ26}, Detcherry-Jordan have checked this conjecture for $\Sigma=\Sigma_g$ and $G=\Sl_2$ case.
\end{rem}

\subsection{Derived skein Langlands conjecture}\label{Langlands}

As we said in Section \ref{SkeinDT}, derived skein module is a BV quantization of functions on the character variety. This already puts skein modules close to Betti Geometric Langlands, because Betti Geometric Langlands is also about character varieties/ local systems. So it is natural to ask
\[ \text{What is the quantum/topological Langlands duality for skein modules?}   \]

A labelled knot or ribbon graph in $M$ defines a function on the character variety. The basic example is a Wilson loop. Suppose we have a representation $\rho:\pi_1(M)\to G$ and a loop $\gamma \subset M$. Then $\rho(\gamma)\in G$, and if $V$ is a representation of $G$, we can take $\Tr_V(\rho(\gamma))$. These trace functions are just classical Wilson loop observables. This gives a function $\Loc_G(M)\to \mathbb{C}$. Thus
\[ \text{classical Wilson loops correspond to $\mathcal{O}(\Loc_G(M))$ }  \]
while
\[  \text{quantum Wilson loops correspond to $\SSk_{G,q}(M)$}.   \]
This is why derived skein modules should be viewed as a quantum version of the character stack side of Langlands.

Of course, as one of the motivations of this paper, one expects:
\begin{conj}\label{TLconj}
(Ben-Zvi-Gunningham-Jordan-Safronov) Let $q$ be not a root of unity, and $M$ be a connected closed oriented 3-manifold.
\[  \SSk_G(M)\cong \SSk_{G^L}(M)  \]
\end{conj}

The deeper reason comes from physics. Kapustin-Witten explained geometric Langlands using 4D $N=4$ supersymmetric Yang-Mills theory. This theory has an $S$-duality:
\[ Z_{G,q}\simeq Z_{G^L,q^L}   \]
where $Z_{G,\Psi}$ is the topological field theory with gauge group $G$ and parameter $q$. If derived skein module are the state spaces of that quantum theory, then S-duality predicts the above conjecture. Remark \ref{stableunderL} gives an evidence for this conjecture.

However, recently Pei Du post his paper on skein module and gauge theory \cite{Pei26}. The above conjecture was doubted from a physical point of view. As we said above, the skein module only uses Wilson lines, but S-duality exchanges Wilsons lines with 't Hooft lines. Therefore, the skein module $\Sk_G(M)$ is not automatically S-dual to $\Sk_{G^L}(M)$. He proposed that the correct $S$-dual object should include Wilson, 't Hooft, and more generally dyonic line operators thus should be larger than skein module.

On the other hand, Gunningham-Jordan-Safronov gives the first non-trivial confirmation of Skein Langlands duality of type A:
\begin{thm}
(\cite{Jor23} Theorem 5.2) Suppose that $q$ is generic,
\[   \Sk_{\Sl_2}(\Sigma_g\times S^1)\cong \Sk_{\PGL_2}(\Sigma_g\times S^1)    \]
\end{thm}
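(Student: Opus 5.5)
The statement is cited from \cite{Jor23}, Theorem 5.2: for generic $q$, $\Sk_{\Sl_2}(\Sigma_g\times S^1)\cong \Sk_{\PGL_2}(\Sigma_g\times S^1)$. The plan is to compute both sides explicitly and compare them, namely to compute dimensions over $\mathbb{k}=\mathbb{Q}(q^{1/d})$ and show they agree. Both sides are finite-dimensional by the finiteness theorem of \cite{GJS22}, so an equality of dimensions gives an abstract isomorphism.

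\textbf{Step 1: reduce to $\rmH_0$ of a Hochschild complex.} By the skein property of Definition \ref{axiomdefn} and Theorem \ref{SkHH},
\[ \Sk_G(\Sigma_g\times S^1)\cong \rmH_0(\SSk_G(\Sigma_g\times S^1))\cong \HH_0(\SSkcat_G(\Sigma_g)). \]
Since $\HH_0$ only sees the homotopy category, this is $\HH_0(\Skcat_G(\Sigma_g))$. This step does not need the compact generation conjecture.

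\textbf{Step 2: decompose the skein category into blocks.} Here $G=\Sl_2$ or $\PGL_2$. We use the grading by the center $Z(\Sl_2)=\mu_2$ (respectively by $\pi_1(\PGL_2)$) of $\rep_q(G)$. For $\PGL_2$, $\rep_q$ consists of the even-weight representations of $\Sl_2$. By the analogue of Lemma \ref{skcatS^2=Modskalg}, the objects of $\Skcat_G(\Sigma_g)$ split according to their classes in $H_1(\Sigma_g;\mu_2)$, with suitable twists. The dimension count $\dim\HH_0$ then becomes a sum over these sectors. Each sector's contribution should be computable from the character theory of the quantum torus inside $\mathcal{D}_q(G)^{\otimes g}$. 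Concretely, we take $\HH_0$ of the quantum Hamiltonian reduction algebra from Theorem \ref{DQHR}, restricted to each sector.

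\textbf{Step 3: compare the two counts, and the main obstacle.} The counts match through an exchange of two structures. On the $\Sl_2$ side, the twisted sectors come from the $H^1(\Sigma_g;\mu_2)$-action on $\Loc_{\Sl_2}$. On the $\PGL_2$ side, they come from the components of $\Loc_{\PGL_2}$ labelled by $H^2$ (the Stiefel--Whitney class), together with $H_1$-gradings. The required identity is the one Jordan proves: a sum over $2^{2g}$ sectors, with generic-fibre contributions counted via the finite group $\mu_2^{2g}$ and its dual. It should follow from Fourier duality on $H^1(\Sigma_g;\mu_2)$.

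The main obstacle is the precise determination of the dimension of $\HH_0$ in each twisted sector, including the contributions of the nongeneric, reducible-locus strata. I would first try to get these from the deformation-quantization picture of Section \ref{DQmoduleandfinite}. Failing that, I would cite the computations of \cite{GJV24} and \cite{Jor23} directly, since the paper presents this theorem as a quotation rather than proving it.
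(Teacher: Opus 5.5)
The paper gives no proof of this statement; it simply quotes \cite{Jor23}. So your proposal has to stand on its own, and it has a genuine gap. Step 1 is fine: for a connective DG category $\HH_0$ depends only on $\rmH_0$. In fact $\Sk(\Sigma\times S^1)\cong\HH_0(\Skcat(\Sigma))$ holds directly for ordinary skeins, so you need not go through Theorem \ref{SkHH}, which would make the argument conditional on Remark \ref{tanglehypo}. Steps 2 and 3, however, contain no actual argument, and the mechanisms they invoke fail.

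\textbf{Step 2.} There is no analogue of Lemma \ref{skcatS^2=Modskalg} on $\Sigma_g$ for $g\ge 1$. That proof removes the $W$-loop around the marked point by sliding it over the back of $S^2$. On $\Sigma_g$ this loop is a product of commutators in the punctured surface and cannot be removed. Indeed, a point labelled by the fundamental representation is a nonzero object of $\Skcat_{\Sl_2}(\Sigma_g)$: the skeins with odd winding in the $S^1$-direction are traces of its endomorphisms, and they do not vanish.

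Moreover, $\rmH_1(\Sigma_g;\mu_2)$ does not grade objects at all; a labelled point configuration only carries a total parity in $\mu_2$. The grading lives on skeins: $\Sk_{\Sl_2}(M)$ is graded by $\rmH_1(M;\mathbb{Z}/2)$ via the odd-coloured subgraph, and for $M=\Sigma_g\times S^1$ this group has order $2^{2g+1}$, not $2^{2g}$.

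On the $\PGL_2$ side there is no grading of this kind whatsoever. $Z(\PGL_2)$ is trivial, all labels are even $\Sl_2$-representations, and $\pi_1(\PGL_2)$ does not grade $\rep_q(\PGL_2)$. The $w_2$-components of $\Loc_{\PGL_2}(M)$ that you want to match with $\Sl_2$-sectors are therefore not blocks of the skein category. Exhibiting them skein-theoretically requires a separate comparison of $\PGL_2$-skeins with (twisted) $\Sl_2$-skeins, and that is a substantial part of what has to be proved.

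\textbf{The computational device.} The algebra in Theorem \ref{DQHR} is $\End(\emptyset)$. $\HH_0$ of it maps onto the span of skeins that can be isotoped off a slice $\Sigma_g\times\{\mathrm{pt}\}$, so it misses every sector of odd $S^1$-winding. With several generators $\mathbb{1}_{\Sigma^\circ}\boxtimes V$ (Lemma \ref{1boxtimesVarecompactgenerators}) you must take $\HH_0$ of the full subcategory: traces modulo $fg-gf$ for $f,g$ running between \emph{different} generators. A sum of $\HH_0$'s of the endomorphism algebras of Theorem \ref{QHR} is not enough. Cutting the infinitely many generators down to finitely many is precisely Conjecture \ref{GJVconj}. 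That is announced for $\Sl_2$ in \cite{DJ26} but is not available for $\PGL_2$, contrary to your remark that compact generation is not needed.

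\textbf{Step 3.} ``It should follow from Fourier duality'' is not an argument. Pontryagin duality only trades the $\rmH_1(M;\mathbb{Z}/2)$-grading for $\rmH^1(M;\mu_2)$-isotypic pieces. It says nothing about the dimensions of the individual sectors, which are not forced by any symmetry; there are extra contributions from the reducible locus, as you note yourself. The theorem is exactly a statement about those dimensions.

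Your fallback of citing \cite{Jor23} for the sector counts is circular. \cite{GJV24} treats only tori, so it cannot supply the genus-$g$ $\PGL_2$ count. As written, the proposal is a heuristic outline rather than a proof.
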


However Pei claims that type A is special. For $\Sl_N \leftrightarrow \PGL_N$, he thinks the extra subspaces removed by ignoring 't Hooft/dyonic operators seem to match on both sides. So for type A group, skein module can accidentally still satisfy Langlands duality. But this is not guaranteed in general.

\appendix

\appendixpage

\section{The relation between $A_\infty$-categories and DG categories}\label{AinftyandDG}

As we said in Remark \ref{BlobcomplexAinfty}, the ordinary Blob complex actually gives us an $A_\infty$ model for derived skein category rather than DG category. In this Appendix, we compare $A_\infty$-category and DG category to show that their differences are not important for our categorical approach in this paper. We recommend to read \cite{Orn18} for more details.

\begin{defn}
An \emph{(unital) $A_\infty$-category} $\CC$ is a $\mathbb{k}$-linear category such that
\begin{itemize}
\item for all $X,Y$ in $\ob(\CC)$ the Hom-sets $\Hom_\CC(X,Y)$ are chain complexes;
\item for all objects $X_1,\cdots X_n$ in $\ob(\CC)$ there is a family of linear composition maps (the higher compositions) 
\[ m_n:\Hom_\CC(X_0,X_1)\otimes \Hom_\CC(X_1,X_2)\otimes\cdots\otimes \Hom_\CC(X_{n-1},X_n)\to \Hom_\CC(X_0,X_n)  \]
of degree $n-2$ (homological grading convention is used) for $n\ge 1$. Note that $m_1$ is the differential on the chain complex $\Hom_\CC(X,Y)$;
\item $m_n$ satisfy the Stasheff identities for all $n\ge 0$.
\item for every object $X$, there is a unit $e_x$ of $x$ to be a morphism of degree 0 such that $m_2(f,e_x)=f$, $m_2(e_x,g)=(-1)^{\deg (g)} g$ and $m_n(\cdots,e_x,\cdots)=0$ for all $n\ne 2$.
\end{itemize}
\end{defn}

We don't give the precise Stasheff identities here but we can give some explaination: The first few identities have the familiar meaning. The relation for $n=1$ says that $m^2_1=0$, so every morphism space is a chain complex. The relation for $n=2$ says that $m_1$ is a derivation with respect to $m_2$. The relation for $n=3$ says that $m_2$ is associative up to the specified homotopy $m_3$. Thus a unit $A_\infty$-category is a category enriched in chain complexes whose composition is associative up to coherent higher homotopies.

\begin{defn}
A \emph{DG category} is a $A_\infty$-category such that $m_n=0$, for every $n>2$.
\end{defn}
\begin{rem}
The relation for $n=3$ says that the failure of $m_2$ to be strictly associative is controlled by $m_3$. Hence the condition $m_n=0$ for $n\ge 3$ is exactly the condition that composition be strictly associative.
\end{rem}

\begin{expl}
Let $\VVect$ be the DG category of all (non-negative) complexes. It has a canonical t-structure whose heart is the abelian category $\Vect$ of vector spaces. 
\end{expl}

\begin{defn}
Let $\CC$ be an $A_\infty$-category. Its \emph{homology category} $\rmH_0(\CC)$ is the ordinary category with the same objects as $\CC$ and with morphism space
\[ \Hom_{\rmH_0(\CC)}(X,Y):=\rmH_0(\Hom_\CA(X,Y),m_1)      \]
where the composition is induced by $m_2$.
\end{defn}

\begin{defn}
We define the opposite category of $\CC$, denoted by $\CC^\op$, to be the category defined as the $A_\infty$-category with the same objects but 
\begin{itemize}
\item morphism spaces 
\[ \Hom_{\CC^\op}(X,Y):=\Hom_{\CC}(Y,X)  \]
\item higher compositions defined by reversing the order of the inputs:
\[  m^\op_n(a_n,\cdots,a_1)=(-1)^\dagger m_n(a_1,\cdots,a_n)    \]
where the sign $(-1)^\dagger$ is the Koszul sign coming from reversing the order of the homogeneous inputs.
\item the unit is same.
\end{itemize}
\end{defn}

\begin{defn}
Let $\CC,\CD$ be $A_\infty$-categories. An \emph{(unital) $A_\infty$ functor} 
\[  F:\CC\to \CD   \]
consists of a map on objects together with multilinear maps
\[  F_n:\Hom_\CC(X_{n-1},X_n)\otimes\cdots\otimes \Hom_\CA(X_0,X_1)\to \Hom_\CD(F(X_0),F(X_n))  \]
of degree $n-1$, satisfying the $A_\infty$-functor identities and the unit conditions
\[ F_1(e_X)=e_{F(X)}  \]
and
\[ F_n(a_n,\cdots,a_1)=0  \]
for $n\ne 1$ whenever one of the inputs is a unit.
\end{defn}

\begin{defn}
An $A_\infty$-functor
\[ F:\CC\to \CD   \]
is a quasi-equivalence if, for every pair of objects $X,Y$ the map
\[ F_1:\Hom_\CC(X,Y)\to \Hom_\CD(F(X),F(Y))   \]
is a quasi-isomorphism, and the induced functor
\[ \rmH_0(F):\rmH_0(\CC)\to \rmH_0(\CD)   \]
is essentially surjective.
\end{defn}

\begin{defn}
A DG functor is a $A_\infty$-functor $F$ such that 
\[ F_n=0, \ \ \ \forall n\ge 2   \]
\end{defn}

\begin{expl}
The identity $A_\infty$-functor $\id_\CC:\CC\to \CC$ is the $A_\infty$-functor whose object map is identity and whose unary component is
\[ (\id_\CC)_1=\id_{\Hom_\CC(X,Y)}  \]
on every hom complex, and whose higher components vanish:
\[ (\id_\CC)_n=0, \ \ \ \forall n\ge 2  \]
\end{expl}

\begin{defn}
Let $\CC$ and $\CD$ be $A_\infty$-categories Their \emph{tensor product} $\CC\boxtimes\CD$  is the $A_\infty$-category whose objects are pairs
\[ (X,Y), \ \ \ X\in \ob(\CC), Y\in \ob(\CD)    \]
whose hom complexes are
\[  \Hom_{\CC\boxtimes\CD}((X,Y),(X',Y'))=\Hom_\CC(X,X')\otimes \Hom_\CD(Y,Y')   \]
and whose higher composition maps are induced from those of $\CC$ and $\CB$ using the standard Koszul sign rule. The unit object at $(X,Y)$ is $e_{(X,Y)}:=e_X\otimes e_Y$.
\end{defn}
\begin{expl}
Let $\CC$ be a small $A_\infty$-category. Then $\CC\boxtimes \VVect=\CC$.
\end{expl}

\begin{defn}
The symmetric monoidal $(\infty,1)$-category $\ACat$ has:
\begin{itemize}
\item objects are small $A_\infty$-categories.
\item 1-morphisms from $\CC$ to $\CD$ are the $A_\infty$-functors $\CC\to \CD$.
\item tensor product is given by tensor product $\boxtimes$ of $A_\infty$-categories.
\end{itemize}
\end{defn}
\begin{thm}
There is an equivalence of symmetric monoidal stable $\infty$-categories
\[  \ACat\simeq \DGcat     \]
\end{thm}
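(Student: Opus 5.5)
The plan is to construct explicit functors in both directions between the homotopy theories of $A_\infty$-categories and of DG categories, and then to show that they are mutually inverse up to quasi-equivalence. The localization is at quasi-equivalences on both sides.

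\textbf{Step 1: the two functors.} The first functor is the inclusion $\iota:\DGcat\to\ACat$. A DG category is, by the definition above, an $A_\infty$-category with $m_n=0$ for $n>2$, and a DG functor is an $A_\infty$-functor with $F_n=0$ for $n\ge 2$. In the other direction we use a strictification functor $S:\ACat\to\DGcat$. Two models are available:
\begin{itemize}
\item the bar--cobar construction, $S(\CC)=\Omega B\CC$, the cobar category of the bar dg-cocategory;
\item the $A_\infty$-Yoneda embedding, sending $\CC$ to the full DG subcategory of the DG category $\fun_{A_\infty}(\CC^\op,\VVect)$ of $A_\infty$-modules spanned by the representables.
\end{itemize}
I would use the Yoneda model, since it visibly preserves the hom complexes up to quasi-isomorphism.

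\textbf{Step 2: the unit and counit.} The key input is the $A_\infty$-Yoneda lemma. It provides a unital $A_\infty$-functor $\CC\to\iota S(\CC)$ whose component $F_1$ is a quasi-isomorphism on each hom complex. The functor is a bijection on objects, so it is a quasi-equivalence.

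For the converse composite, let $\CD$ be a DG category. The DG Yoneda embedding lands inside $S\iota(\CD)$, and the induced comparison is again a quasi-equivalence. This shows that $\iota$ and $S$ induce inverse equivalences after inverting quasi-equivalences.

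One more fact is needed to pass to functor spaces. Every $A_\infty$-functor between DG categories is homotopic to a zig-zag of DG functors, because $A_\infty$-functors $\CC\to\CD$ correspond to DG functors $\Omega B\CC\to\CD$. This identifies the mapping spaces of $\ACat$ with the derived mapping spaces of $\DGcat$ in the sense of Toën~\cite{Toe07}. The identification follows the standard treatments surveyed in \cite{Orn18}.

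\textbf{Step 3: the monoidal structure.} It remains to upgrade the equivalence to a symmetric monoidal one. The inclusion $\iota$ is strictly monoidal for $\boxtimes$, since the tensor product of DG categories is a special case of the $A_\infty$ tensor product. So it suffices to show that the induced functor on $\infty$-categories is an equivalence whose underlying functor is symmetric monoidal. A symmetric monoidal functor whose underlying functor is an equivalence is automatically a symmetric monoidal equivalence.

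The genuine subtlety, and the step I expect to be the main obstacle, is that the $A_\infty$ tensor product $\CC\boxtimes\CD$ is not strictly defined for two general $A_\infty$-categories: there are no canonical higher compositions on the tensor product. The workaround I would try first is to define $\boxtimes$ on $\ACat$ through strictification, $\CC\boxtimes\CD:=\iota\bigl(S\CC\boxtimes S\CD\bigr)$. Alternatively one can use the Saneblidze--Umble diagonal on the associahedra. One then checks that this is well defined up to quasi-equivalence, which holds because $\mathbb{k}$ is a field, so $\boxtimes$ preserves quasi-equivalences. This reduces the monoidal compatibility to the DG case.
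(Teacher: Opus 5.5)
The paper gives no argument for this statement. It is quoted from the literature, with a pointer to \cite{Orn18}, and your Steps 1--2 follow the standard route there. The genuine gap is in Step 3.

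\textbf{Step 3.} You propose two tensor products on $\ACat$, and neither works as written.
\begin{enumerate}
\item With $\CC\boxtimes\CD:=\iota(S\CC\boxtimes S\CD)$, the claim that $\iota$ is strictly monoidal is false. Indeed $\iota\CC\boxtimes\iota\CD=\iota(S\iota\CC\boxtimes S\iota\CD)$ is only quasi-equivalent to $\iota(\CC\boxtimes\CD)$.
\item With a cellular (Saneblidze--Umble) diagonal, $\iota$ does preserve the binary product on the nose. For $n\ge 3$, every term of $\Delta(m_n)$ involves some $m_k$ with $k\ge 3$ on one factor, so it vanishes on DG inputs. But that product is not strictly associative or symmetric, since the diagonal is neither coassociative nor cocommutative. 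So there is no strict symmetric monoidal structure for $\iota$ to preserve.
\end{enumerate}
In either case, knowing that a binary operation is well defined up to quasi-equivalence supplies none of the associativity and symmetry coherences that make up a symmetric monoidal $\infty$-category and a symmetric monoidal functor. The fact you invoke (a symmetric monoidal functor that is an underlying equivalence is a symmetric monoidal equivalence) presupposes exactly those data.

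The repair is to reverse the logic. Once Steps 1--2 give an equivalence of underlying $\infty$-categories, transport the symmetric monoidal structure of $\DGcat$ along it. The monoidal part of the statement then holds by construction. The only remaining content is an objectwise comparison of the transported product with a diagonal-based one. That reduces to the DG case by the observation in item 2, together with the fact that over a field the product preserves quasi-equivalences. You were right that the paper's description of $\boxtimes$ on $\ACat$ (higher compositions ``induced \ldots\ using the standard Koszul sign rule'') is not well defined; this is why the structure on $\ACat$ has to be the transported one.

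\textbf{Step 2, choice of model.} The Yoneda model does not give a functor $S\colon\ACat\to\DGcat$ on the nose. For an $A_\infty$-functor $F\colon\CC\to\CC'$, induction of modules sends $h_x$ only to a module quasi-isomorphic to the representable of $F(x)$. It is also compatible with composition only up to quasi-isomorphism. So the objectwise quasi-equivalences $\CC\to\iota S(\CC)$ do not assemble into a natural transformation, and ``$\iota$ and $S$ induce inverse equivalences'' does not follow as written.

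Use $\Omega B$ instead. It is strictly functorial in $A_\infty$-functors, since these are the dg-cocategory maps $B\CC\to B\CC'$. The unit $\CC\to\iota\Omega B\CC$ and the counit $\Omega B\CD\to\CD$ are natural quasi-equivalences, which already yields inverse equivalences of the localizations at quasi-equivalences.

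Your mapping-space paragraph is then needed only to identify these mapping spaces with cores of $A_\infty$-functor categories. Your sentence gives the $\pi_0$-level statement. The full identification is the theorem of Faonte and of Canonaco--Ornaghi--Stellari, which you are citing rather than proving, at the same level of detail as the paper.

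\textbf{Hypotheses.} These should be stated explicitly.
\begin{itemize}
\item The results you rely on, namely the $A_\infty$ Whitehead theorem and $\boxtimes$ preserving quasi-equivalences, need $\mathbb{k}$ to be a field. The paper's standing $\mathbb{k}$ is an arbitrary $\mathbb{Z}[q,q^{-1}]$-algebra, e.g.\ $\mathbb{C}[\![\hbar]\!]$ in the deformation-quantization section. Over such a ring, tensor products must be derived and hom complexes taken h-projective.
\item The word ``stable'' in the statement cannot be proved, because $\DGcat$ is not even pointed. The empty DG category is initial, while the one-object zero category is terminal. What your argument establishes is an equivalence of symmetric monoidal $\infty$-categories.
\end{itemize}
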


\begin{defn}
Let $\CC\in \ACat$ be an $A_\infty$-category. A \emph{right $\CC$-module} is a $A_\infty$-functor $\CC^\op\to \VVect$. Similarly, \emph{left $\CC$-module} is an $A_\infty$-functor $\CC\to \VVect$. A $(\CC,\CD)$-bimodule is an $A_\infty$-functor $\CC\boxtimes \CD^\op\to \VVect$.
\end{defn}

\begin{defn}
Let $F,G:\CC\to \CD$ be two $A_\infty$-functors. A \emph{prenatural transformation} of degree $g$ from $F$ to $G$ is a sequence
\[ T=(T_0,T_1,T_2,\cdots)  \]
where for each object $x\in \CC$
\[ T_0(x)\in \Hom_\CD(Fx,Gx)_g.  \]
For every $d\ge 1$, every sequence of objects
\[ x_0,x_1,\cdots,x_d  \]
in $\CC$, there is a map of degree $g+d$
\[ T_d:\Hom_\CC(x_{d-1},x_d)\otimes\cdots\otimes\Hom_\CC(x_0,x_1) \to \Hom_\CD(Fx_0,Gx_d) \]
\end{defn}

Denote $\Fun(\CC,\CD)$ be the category of $A_\infty$-functors between two $A_\infty$-categories $\CC$ and $\CD$, together with prenatural transformations
\begin{lem}
$\Fun(\CC,\CD)$ is an $A_\infty$-category. Moreover if $\CD$ is a DG category, then $\Fun(\CC,\CD)$ is a DG category.
\end{lem}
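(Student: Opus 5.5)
We construct the $A_\infty$-structure on $\Fun(\CC,\CD)$ by the standard bar-construction recipe and then specialize to the case where $\CD$ is DG. A prenatural transformation $T$ of degree $g$ is the same datum as a degree-$g$ coderivation-type map between the bar constructions. Concretely, set $B\CC=\bigoplus_d \Hom_\CC(x_{d-1},x_d)[1]\otimes\cdots\otimes\Hom_\CC(x_0,x_1)[1]$, which is a dg cocategory with codifferential built from the $m_n$. An $A_\infty$-functor $F$ is then a dg cofunctor $B\CC\to B\CD$. The collection $(T_0,T_1,\dots)$ extends uniquely to an $(F,G)$-coderivation $B\CC\to B\CD$. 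This identifies $\Hom_{\Fun(\CC,\CD)}(F,G)$ with a graded space of coderivations (equivalently with the Hochschild-type product $\prod_d \Hom(\Hom_\CC^{\otimes d},\Hom_\CD(F-,G-))$ with shifts).

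We define the operations in three steps.
\begin{enumerate}
\item The differential $\mu^1(T)$ is the commutator of the coderivation $T$ with the codifferentials of $B\CC$ and $B\CD$.
\item The composition $\mu^2(T',T)$ is given componentwise by inserting $T$ and $T'$ (together with components of $F,G,H$ in the gaps) into $m^\CD$: we sum over all ways to split the input string and apply $m^\CD_k$ to the outputs of $F_{i}$'s, $T$, $G_j$'s, $T'$, $H_l$'s.
\item The higher operations $\mu^n(T_n,\dots,T_1)$ are defined the same way, with $n$ transformations inserted into $m^\CD_k$.
\end{enumerate}
The Stasheff identities for $\mu$ are then obtained from the Stasheff identities of $m^\CD$ and the $A_\infty$-functor equations of $F,G,H,\dots$. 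The cleanest way to see this is the same as the classical argument of Lefèvre-Hasegawa and Seidel: $\Fun(\CC,\CD)$ embeds in the $A_\infty$-category of twisted coderivations, and $\mu$ is obtained by conjugating the formal identity $b_\CD^2=0$. The unit is given by $e_F$ with $(e_F)_0(x)=e_{Fx}$ and higher components $(e_F)_d$ coming from $F$ and strict unitality. Checking the unit axioms uses that $F$ is strictly unital and that $m^\CD_n(\dots,e,\dots)=0$ for $n\neq 2$.

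For the second claim, suppose $\CD$ is DG, i.e.\ $m^\CD_k=0$ for $k\ge3$. In the formula for $\mu^n$, each term applies a single $m^\CD_k$ with at least $n$ transformation outputs among its $k$ inputs. Since only $k\le 2$ survives, all terms vanish once $n\ge3$, so $\mu^n=0$ for $n\ge3$. Then $\mu^1$ and $\mu^2$ involve only $m^\CD_1$ and $m^\CD_2$, and the $n=3$ Stasheff relation becomes strict associativity of $\mu^2$. Hence $\Fun(\CC,\CD)$ is a DG category, matching the paper's definition of a DG category as an $A_\infty$-category with $m_n=0$ for $n>2$.

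The main obstacle is the sign bookkeeping and verifying the Stasheff identities for $\mu$. Rather than doing this by hand, we will invoke the bar/coderivation formalism of \cite{Orn18}, under which the identities reduce to $b^2=0$ on $B\CD$.
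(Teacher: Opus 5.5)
The paper gives no proof of this lemma; it only points to \cite{Orn18}. Your sketch is the standard Seidel/Lef\`evre-Hasegawa construction that reference relies on, and your arity count is correct: each term of $\mu^n$ feeds all $n$ transformations into a single $m^\CD_k$, so $k\ge n$, and hence $\mu^n=0$ for $n\ge 3$ when $\CD$ is DG.

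One correction concerns the unit. Take $(e_F)_0(x)=e_{Fx}$ and $(e_F)_d=0$ for all $d\ge 1$; nonzero higher components would in general add terms $m^\CD_2(T_{d-s}(\cdots),(e_F)_s(\cdots))$ and break $\mu^2(T,e_F)=T$. With this choice the unit axioms follow from strict unitality of $\CD$ alone, so you do not need $F$ to be strictly unital.
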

\begin{expl}
Let $\CC$ be an $A_\infty$-category. Then $\Fun(\CC^\op,\VVect)$ is a (compactly generated pretriangulated) DG category which we call it \emph{free cocompletion of $\CC$}.
\end{expl}

\begin{defn}
The symmetric monoidal $(\infty,1)$-category $\ABim$:
\begin{itemize}
\item objects are small $A_\infty$-categories.
\item 1-morphisms from $\CC$ to $\CD$ are $(\CC,\CD)$-bimodules.
\item tensor product is given by tensor product $\boxtimes$ of $A_\infty$-categories.
\end{itemize}
\end{defn}
\begin{thm}
There is an equivalence of symmetric monoidal $(\infty,1)$-categories
\[ \ABim\simeq \BBim   \]
\end{thm}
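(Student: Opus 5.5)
The plan is to deduce the statement from the equivalence $\ACat\simeq\DGcat$ just stated, by passing to free cocompletions and Theorem \ref{Bim=Prcirc}. Concretely, I would build a comparison symmetric monoidal functor $\ABim\to\BBim$ that is the identity on objects up to strictification. To a small $A_\infty$-category $\CC$ assign a DG category $\CC^{\mathrm{dg}}$ together with an $A_\infty$-quasi-equivalence $\CC\to\CC^{\mathrm{dg}}$. For definiteness, take $\CC^{\mathrm{dg}}$ to be the full DG subcategory of $\Fun(\CC^\op,\VVect)$ on the Yoneda images of objects of $\CC$; this is a DG category by the lemma on $\Fun(\CC,\CD)$ with DG target, and the $A_\infty$-Yoneda embedding is cohomologically fully faithful. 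To an $A_\infty$-bimodule ${}_\CC F_\CD$ assign the DG bimodule obtained by restricting along the Yoneda quasi-equivalences, that is, the corresponding object of $\Fun(\CC^{\mathrm{dg}}\boxtimes(\CD^{\mathrm{dg}})^\op,\VVect)$.

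The key steps are as follows.

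\begin{enumerate}
\item Show that restriction along an $A_\infty$-quasi-equivalence $\CC\to\CC'$ induces an equivalence $\Fun(\CC'^\op,\VVect)\simeq\Fun(\CC^\op,\VVect)$ of stable $\infty$-categories. Both sides are compactly generated by representables, and restriction preserves representables up to quasi-isomorphism and is fully faithful on them. This is exactly the argument of Theorem \ref{Bim=Prcirc}.

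\item Identify $\ABim$ with $\PPr^\circ$ via $\CC\mapsto\Fun(\CC^\op,\VVect)$. An $A_\infty$-bimodule corresponds to the colimit-preserving functor given by the $A_\infty$ bar tensor product. Composition of bimodules is by definition the bar construction, which on the cocompleted side becomes composition of colimit-preserving functors. This is the $A_\infty$ analogue of Example \ref{widehatF=derivedfunctor}.

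\item Compose with the inverse of $\widehat{(-)}:\BBim\xrightarrow{\sim}\PPr^\circ$ from Theorem \ref{Bim=Prcirc}. Check compatibility with $\boxtimes$: both free cocompletions send tensor products to the Lurie tensor product in $\PPr$, since representables of a tensor product are exterior products of representables.

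\item Conclude essential surjectivity using the inclusion $\DGcat\subset\ACat$ (DG categories are $A_\infty$-categories with $m_{n\ge3}=0$).
\end{enumerate}

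The main obstacle is step 2: making the $A_\infty$ bar construction homotopy associative coherently enough to define the $(\infty,1)$-category $\ABim$, and matching it with composition in $\PPr^\circ$. I would sidestep this by \emph{defining} composition in $\ABim$ through the cocompletion, so that it is transported from $\PPr^\circ$. It then remains to check only on mapping spaces that $\Fun(\CC\boxtimes\CD^\op,\VVect)$ is the space of colimit-preserving functors $\Fun(\CD^\op,\VVect)\to\Fun(\CC^\op,\VVect)$. This reduces to $A_\infty$-Yoneda together with the fact that left Kan extension from representables is determined by its values on them. A good reference for the homotopy theory is \cite{Orn18}.
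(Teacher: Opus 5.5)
The paper gives no proof of this statement. It is asserted in the appendix without argument, with only a general pointer to \cite{Orn18} for comparison results between $A_\infty$- and DG categories. So your proposal is an independent route, and its shape is sound: strictify objects by the $A_\infty$-Yoneda embedding $\CC\to\CC^{\mathrm{dg}}$, then do all the bookkeeping in $\PPr^\circ$ via Theorem~\ref{Bim=Prcirc}. The advantage is that coherence comes for free: composition becomes composition of colimit-preserving functors, so you never have to make the $A_\infty$ bar construction coherently associative by hand. The cost follows from the fact that the paper's definition of $\ABim$ never specifies composition. Your $(\infty,1)$-structure on $\ABim$ is therefore defined by transport, and the equivalence holds by definition on composition. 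What remains is the identification of mapping spaces, plus one check you should state explicitly: the transported composition agrees, up to natural quasi-isomorphism, with the $A_\infty$ bar tensor product of bimodules. That check is what entitles the resulting category to be called $\ABim$.

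Two places carry more weight than you indicate.

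First, the assignment on bimodules. Restriction along $\CC\to\CC^{\mathrm{dg}}$ goes from $\CC^{\mathrm{dg}}$-modules to $\CC$-modules, so you mean its inverse, which your Step 1 supplies. The resulting object of $\Fun(\CC^{\mathrm{dg}}\boxtimes(\CD^{\mathrm{dg}})^\op,\VVect)$ is still an $A_\infty$-bimodule, not a DG one. To make it a $1$-morphism of $\BBim$, you need the rectification theorem: $A_\infty$-(bi)modules over DG categories are equivalent to DG (bi)modules (Lef\`evre-Hasegawa). The same theorem is needed in Step 2 to identify $\Fun(\CC^\op,\VVect)$ with $\widehat{\CC^{\mathrm{dg}}}$, so that it is an object of $\PPr^\circ$ at all. $A_\infty$-Yoneda plus Kan extension does not supply this. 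With rectification you get $\mathrm{Map}_{\ABim}(\CC,\CD)\simeq\mathrm{Map}_{\BBim}(\CC^{\mathrm{dg}},\CD^{\mathrm{dg}})$ directly, and no separate $A_\infty$ Eilenberg--Watts argument is needed. Also check your variance: with the paper's conventions, a functor $\CC\boxtimes\CD^\op\to\VVect$ corresponds to a colimit-preserving functor $\Fun(\CC^\op,\VVect)\to\Fun(\CD^\op,\VVect)$, not the reverse.

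Second, the monoidal part. The $\boxtimes$ of two genuinely non-DG $A_\infty$-categories is not given by a naive Koszul-sign formula, as the appendix suggests; one needs a diagonal on the associahedra or DG models. So in Step 3 you should define or compute $\CC\boxtimes\CD$ through $\CC^{\mathrm{dg}}\boxtimes\CD^{\mathrm{dg}}$. This makes your compatibility check valid, and makes the symmetric monoidal structure, like composition, a matter of transport.

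Finally, the $A_\infty$ homotopy theory you use is standard only over a field or with $\mathbb{k}$-projective Hom complexes. This covers Step 1, Yoneda, and the fact that $A_\infty$-modules compute derived Homs without cofibrant replacement. State that assumption, since the paper's $\mathbb{k}$ is an arbitrary $\mathbb{Z}[q,q^{-1}]$-algebra.
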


\bibliography{Top.bib}

@Book{EGNO15,
  author    = {Pavel Etingof and Shlomo Gelaki and Dmitri Nikshych and Victor Ostrik},
  publisher = {American Math. Soc},
  title     = {Tensor categories},
  year      = {2015},
  address   = {Providence, RI},
  isbn      = {9781470434410},
}

@Book{Lur17,
  author    = {Lurie, Jacob},
  publisher = {Book online},
  title     = {Higher Algebra},
  year      = {2017},
  note      = {\url{http://www.math.harvard.edu/~lurie/papers/HA.pdf}},
}

@Article{Lur08,
  author    = {Jacob Lurie},
  journal   = {Current Developments in Mathematics},
  title     = {On the Classification of Topological Field Theories},
  year      = {2008},
  number    = {1},
  pages     = {129--280},
  volume    = {2008},
  doi       = {10.4310/cdm.2008.v2008.n1.a3},
  eprint    = {0905.0465},
  publisher = {International Press of Boston},
}

@InCollection{AF20,
  author    = {David Ayala and John Francis},
  booktitle = {Handbook of Homotopy Theory},
  publisher = {Chapman and Hall/{CRC}},
  title     = {A factorization homology primer},
  year      = {2020},
  month     = {jan},
  pages     = {39--101},
  doi       = {10.1201/9781351251624-2},
  eprint    = {1903.10961},
}

@Article{DSPS19,
  author    = {Christopher L. Douglas and Christopher Schommer-Pries and Noah Snyder},
  journal   = {Kyoto Journal of Mathematics},
  title     = {The balanced tensor product of module categories},
  year      = {2019},
  month     = {apr},
  number    = {1},
  volume    = {59},
  doi       = {10.1215/21562261-2018-0006},
  eprint    = {1406.4204},
  publisher = {Duke University Press},
}

@Article{AF15,
  author    = {David Ayala and John Francis},
  journal   = {Journal of Topology},
  title     = {Factorization homology of topological manifolds},
  year      = {2015},
  month     = {oct},
  number    = {4},
  pages     = {1045--1084},
  volume    = {8},
  doi       = {10.1112/jtopol/jtv028},
  eprint    = {1206.5522},
  publisher = {Wiley},
}

@Article{AFR18,
  author    = {David Ayala and John Francis and Nick Rozenblyum},
  journal   = {Advances in Mathematics},
  title     = {Factorization homology I: Higher categories},
  year      = {2018},
  month     = {jul},
  pages     = {1042--1177},
  volume    = {333},
  doi       = {10.1016/j.aim.2018.05.031},
  eprint    = {1504.04007},
  publisher = {Elsevier {BV}},
}

@Article{BZBJ18,
  author    = {David Ben-Zvi and Adrien Brochier and David Jordan},
  journal   = {Journal of Topology},
  title     = {Integrating quantum groups over surfaces},
  year      = {2018},
  month     = {aug},
  number    = {4},
  pages     = {874--917},
  volume    = {11},
  doi       = {10.1112/topo.12072},
  eprint    = {1501.04652},
  publisher = {Wiley},
}

@Article{BZBJ18a,
  author    = {David Ben-Zvi and Adrien Brochier and David Jordan},
  journal   = {Selecta Mathematica},
  title     = {Quantum character varieties and braided module categories},
  year      = {2018},
  month     = {jul},
  number    = {5},
  pages     = {4711--4748},
  volume    = {24},
  doi       = {10.1007/s00029-018-0426-y},
  eprint    = {1606.04769},
  publisher = {Springer Science and Business Media {LLC}},
}

@Article{CR16,
  author    = {Nils Carqueville and Ingo Runkel},
  journal   = {Quantum Topology},
  title     = {Orbifold completion of defect bicategories},
  year      = {2016},
  number    = {2},
  pages     = {203--279},
  volume    = {7},
  doi       = {10.4171/qt/76},
  eprint    = {1210.6363},
  publisher = {European Mathematical Society - {EMS} - Publishing House {GmbH}},
}

@Article{CRS19,
  author    = {Nils Carqueville and Ingo Runkel and Gregor Schaumann},
  journal   = {Geometry {\&} Topology},
  title     = {Orbifolds of n-dimensional defect {TQFTs}},
  year      = {2019},
  month     = {apr},
  number    = {2},
  pages     = {781--864},
  volume    = {23},
  doi       = {10.2140/gt.2019.23.781},
  eprint    = {1705.06085},
  publisher = {Mathematical Sciences Publishers},
}

@article{AMR24,
  title={Symmetries of the cyclic nerve},
  author={Ayala, David and Mazel-Gee, Aaron and Rozenblyum, Nick},
  journal={arXiv preprint arXiv:2405.03897},
  year={2024}
}

@book{GR19,
  title={A study in derived algebraic geometry: Volume I: correspondences and duality},
  author={Gaitsgory, Dennis and Rozenblyum, Nick},
  volume={221},
  year={2019},
  publisher={American Mathematical Society}
}

@article{BZN09,
  title={The character theory of a complex group},
  author={Ben-Zvi, David and Nadler, David},
  journal={arXiv preprint arXiv:0904.1247},
  year={2009}
}

@article{LO10,
  title={Uniqueness of enhancement for triangulated categories},
  author={Lunts, Valery and Orlov, Dmitri},
  journal={Journal of the American Mathematical Society},
  volume={23},
  number={3},
  pages={853--908},
  year={2010}
}

@article{SAG18,
  title={Spectral algebraic geometry},
  author={Lurie, Jacob},
  journal={preprint},
  year={2018}
}

@article{DAG2,
  title={Derived algebraic geometry II: Noncommutative algebra},
  author={Lurie, Jacob},
  journal={arXiv preprint math/0702299},
  year={2007}
}

@article{DG13,
  title={On some finiteness questions for algebraic stacks},
  author={Drinfeld, Vladimir and Gaitsgory, Dennis},
  journal={Geometric and Functional Analysis},
  volume={23},
  number={1},
  pages={149--294},
  year={2013},
  publisher={Springer}
}

@article{BGR22,
  title={PROJECTIVE GENERATION FOR EQUIVARIANT-MODULES},
  author={Bellamy, Gwyn and Gunningham, Sam and Raskin, Sam},
  journal={Transformation Groups},
  volume={27},
  number={3},
  pages={737--749},
  year={2022},
  publisher={Springer}
}

@article{Wam97,
  title={Hochschild and cyclic homology of the quantum multiparametric torus},
  author={Wambst, Marc},
  journal={Journal of Pure and Applied Algebra},
  volume={114},
  number={3},
  pages={321--329},
  year={1997},
  publisher={Elsevier}
}

@book{Lod13,
  title={Cyclic homology},
  author={Loday, Jean-Louis},
  volume={301},
  year={2013},
  publisher={Springer Science \& Business Media}
}

@article{Ara23,
  title={Homotopy limits and homotopy colimits of chain complexes},
  author={Arakawa, Kensuke},
  journal={arXiv preprint arXiv:2310.00201},
  year={2023}
}

@incollection{JF15,
  title={Heisenberg-picture quantum field theory},
  author={Johnson-Freyd, Theo},
  booktitle={Representation Theory, Mathematical Physics, and Integrable Systems: In Honor of Nicolai Reshetikhin},
  pages={371--409},
  year={2021},
  publisher={Springer}
}

@misc{Wal06,
  title={TQFTs},
  author={Walker, Kevin},
  year={2006}
}

@article{MW12,
  title={Blob homology},
  author={Morrison, Scott and Walker, Kevin},
  journal={Geometry \& Topology},
  volume={16},
  number={3},
  pages={1481--1607},
  year={2012},
  publisher={Mathematical Sciences Publishers}
}

@article{coo19,
  title={Excision of skein categories and factorisation homology},
  author={Cooke, Juliet},
  journal={Advances in Mathematics},
  volume={414},
  pages={108848},
  year={2023},
  publisher={Elsevier}
}

@article{BH24,
  title={Skein Categories in Non-semisimple Settings},
  author={Brown, Jennifer and Ha{\"\i}oun, Benjamin},
  journal={Selecta Mathematica},
  volume={32},
  number={2},
  pages={34},
  year={2026},
  publisher={Springer}
}

@article{MWW22,
  title={Invariants of 4-manifolds from Khovanov-Rozansky link homology},
  author={Morrison, Scott and Walker, Kevin and Wedrich, Paul},
  journal={Geom. Topol},
  volume={26},
  number={8},
  pages={3367--3420},
  year={2022}
}

@article{BFK97,
  title={Skein homology},
  author={Bullock, Doug and Frohman, Charles and Kania-Bartoszy{\'n}ska, Joanna},
  journal={Canadian Mathematical Bulletin},
  volume={41},
  number={2},
  pages={140--144},
  year={1998},
  publisher={Cambridge University Press}
}

@article{Pav22,
  title={Skein module and 4D TQFT},
  author={Pavel Safronov},
  journal={TQFT club talk,  https://tqft.math.tecnico.ulisboa.pt/seminars?year=2022},
  year={2022},
}

@article{SW21,
   title={Homotopy coherent mapping class group actions and excision for Hochschild complexes of modular categories},
   volume={386},
   ISSN={0001-8708},
   url={http://dx.doi.org/10.1016/j.aim.2021.107814},
   DOI={10.1016/j.aim.2021.107814},
   journal={Advances in Mathematics},
   publisher={Elsevier BV},
   author={Schweigert, Christoph and Woike, Lukas},
   year={2021},
   month=aug, pages={107814} }

@article{HRW24,
  title={Bordered invariants from Khovanov homology},
  author={Hogancamp, Matthew and Rose, David EV and Wedrich, Paul},
  journal={arXiv preprint arXiv:2404.06301},
  year={2024}
}

@article{GJS22,
  title={The finiteness conjecture for skein modules},
  author={Gunningham, Sam and Jordan, David and Safronov, Pavel},
  journal={Inventiones mathematicae},
  volume={232},
  number={1},
  pages={301--363},
  year={2023},
  publisher={Springer}
}

@article{GS23,
  title={Deformation quantization and perverse sheaves},
  author={Gunningham, Sam and Safronov, Pavel},
  journal={Duke Mathematical Journal},
  volume={175},
  number={6},
  pages={1067--1161},
  year={2026},
  publisher={Duke University Press}
}

@article{BFN10,
  title={Integral transforms and Drinfeld centers in derived algebraic geometry},
  author={Ben-Zvi, David and Francis, John and Nadler, David},
  journal={Journal of the American Mathematical Society},
  volume={23},
  number={4},
  pages={909--966},
  year={2010}
}

@article{AM19,
  title={A sheaf-theoretic model for $SL_2(\mathbb{C})$ Floer homology},
  author={Abouzaid, Mohammed and Manolescu, Ciprian},
  journal={J. Eur. Math. Soc.(JEMS)},
  volume={22},
  number={11},
  pages={3641--3695},
  year={2020}
}

@article{Ho45,
  title={On the cohomology groups of an associative algebra},
  author={Hochschild, Gerhard},
  journal={Annals of Mathematics},
  volume={46},
  number={1},
  pages={58--67},
  year={1945},
  publisher={JSTOR}
}

@article{Jor23,
  title={Langlands duality for skein modules of 3-manifolds},
  author={Jordan, David},
  journal={String-Math},
  volume={107},
  number={2024},
  pages={127},
  year={2022}
}

@misc{BR26, 
      author={Chun-Yu Bai and Qiuyu Ren},
      year={2026},
      journal={In progress}
}

@article{Prz91,
  title={Skein modules of 3-manifolds},
  author={Przytycki, J{\'o}zef H},
  journal={arXiv preprint},
  year={1991}
}

@article{Tur90,
  title={Conway and Kauffman modules of a solid torus},
  author={Turaev, Vladimir G},
  journal={Journal of Soviet Mathematics},
  volume={52},
  number={1},
  pages={2799--2805},
  year={1990},
  publisher={Kluwer Academic Publishers-Plenum Publishers New York}
}

@article{RW24,
  title={Khovanov homology and exotic $4 $-manifolds},
  author={Ren, Qiuyu and Willis, Michael},
  journal={arXiv preprint arXiv:2402.10452},
  year={2024}
}

@article{GJV24,
  title={Skeins on tori},
  author={Gunningham, Sam and Jordan, David and Vazirani, Monica},
  journal={arXiv preprint arXiv:2409.05613},
  year={2024}
}

@article{Car17,
  title={Nine generators of the skein space of the 3--torus},
  author={Carrega, Alessio},
  journal={Algebraic \& geometric topology},
  volume={17},
  number={6},
  pages={3449--3460},
  year={2017},
  publisher={Mathematical Sciences Publishers}
}

@article{Gil18,
  title={On the Kauffman bracket skein module of the 3-torus},
  author={Gilmer, Patrick M},
  journal={Indiana University Mathematics Journal},
  pages={993--998},
  year={2018},
  publisher={JSTOR}
}

@article{BJVV25,
  title={The skein partition function of the mapping torus},
  author={Bierent, Julia and Jordan, David and Vancraeynest, Matthias and Vazirani, Monica},
  journal={arXiv preprint arXiv:2511.19139},
  year={2025}
}

@article{DKS25,
   title={Kauffman bracket skein modules of small 3-manifolds},
   volume={467},
   ISSN={0001-8708},
   url={http://dx.doi.org/10.1016/j.aim.2025.110169},
   DOI={10.1016/j.aim.2025.110169},
   journal={Advances in Mathematics},
   publisher={Elsevier BV},
   author={Detcherry, Renaud and Kalfagianni, Efstratia and Sikora, Adam S.},
   year={2025},
   month=may, pages={110169} }

@article{Pei26,
  title={Gauge Theory and Skein Modules},
  author={Pei, Du},
  journal={arXiv preprint arXiv:2601.16213},
  year={2026}
}

@article{AFMR17,
  title={Factorization homology of enriched $\infty$-categories},
  author={Ayala, David and Francis, John and Mazel-Gee, Aaron and Rozenblyum, Nick},
  journal={arXiv preprint arXiv:1710.06414},
  year={2017}
}

@article{Orn18,
  title={COMPARISON RESULTS ABOUT DG-CATEGORIES, $A_\infty$-CATEGORIES, STABLE $\infty$-CATEGORIES AND NONCOMMUTATIVE MOTIVES},
  author={Ornaghi, Mattia},
journal={PhD thesis},
  year={2018}
}

@misc{DJ26, 
      author={Renaud Detcherry and David Jordan},
      journal={in preparation}
      }

@article{Saf19,
  title={A categorical approach to quantum moment maps},
  author={Safronov, Pavel},
  journal={arXiv preprint arXiv:1901.09031},
  year={2019}
}

@article{BCT09,
  title={Open-closed field theories, string topology, and Hochschild homology},
  author={Blumberg, Andrew J and Cohen, Ralph L and Teleman, Constantin},
  journal={arXiv preprint arXiv:0906.5198},
  year={2009}
}

@article{Kin24,
  title={Non-semisimple Crane-Yetter theory varying over the character stack},
  author={Kinnear, Patrick},
  journal={arXiv preprint arXiv:2404.19667},
  year={2024}
}

@article{AFRcor,
  title={Corrigendum to “Factorization homology I: Higher categories”[Adv. Math. 333 (2018) 1042--1177]},
  author={Ayala, David and Francis, John and Rozenblyum, Nick},
  journal={Advances in Mathematics},
  volume={370},
  pages={107217},
  year={2020},
  publisher={Elsevier}
}

@article{PSY13,
  title={Cohomologically cofinite complexes},
  author={Porta, Marco and Shaul, Liran and Yekutieli, Amnon},
  journal={Communications in Algebra},
  volume={43},
  number={2},
  pages={597--615},
  year={2015},
  publisher={Taylor \& Francis}
}

@article{KS10,
  title={Deformation quantization modules},
  author={Kashiwara, Masaki and Schapira, Pierre},
  journal={arXiv preprint arXiv:1003.3304},
  year={2010}
}

@article{NS23,
  title={Torsion volume forms},
  author={Naef, Florian and Safronov, Pavel},
  journal={arXiv preprint arXiv:2308.08369},
  year={2023}
}

@article{BBBj15,
   title={A ‘Darboux theorem’ for shifted symplectic structures on derived Artin stacks, with applications},
   volume={19},
   ISSN={1465-3060},
   url={http://dx.doi.org/10.2140/gt.2015.19.1287},
   DOI={10.2140/gt.2015.19.1287},
   number={3},
   journal={Geometry \& Topology},
   publisher={Mathematical Sciences Publishers},
   author={Ben-Bassat, Oren and Brav, Christopher and Bussi, Vittoria and Joyce, Dominic},
   year={2015},
   month=May, pages={1287–1359} }

@misc{Bul97,
      title={Rings of $SL_2({\mathbb C})$-Characters and the Kauffman Bracket Skein Module}, 
      author={Doug Bullock},
      year={1996},
      eprint={q-alg/9604014},
      archivePrefix={arXiv},
      primaryClass={q-alg},
      url={https://arxiv.org/abs/q-alg/9604014}, 
}

@article{PS00,
  title={On Skein Algebras And Sl\_2 (C)-Character Varieties},
  author={Przytycki, J{\'o}zef H and Sikora, Adam S},
  journal={arXiv preprint q-alg/9705011},
  year={1997}
}

@article{BJ25,
  title={Parabolic skein modules},
  author={Brown, Jennifer and Jordan, David},
  journal={arXiv preprint arXiv:2505.14836},
  year={2025}
}

@article{AF17,
  title={The cobordism hypothesis},
  author={Ayala, David and Francis, John},
  journal={arXiv preprint arXiv:1705.02240},
  year={2017}
}

@article{JLSS21,
  title={Quantum decorated character stacks},
  author={Jordan, David and Le, Ian and Schrader, Gus and Shapiro, Alexander},
  journal={arXiv preprint arXiv:2102.12283},
  year={2021}
}

@article{Dim13,
  title={Quantum Riemann surfaces in Chern--Simons theory},
  author={Dimofte, Tudor},
  journal={Adv. Theor. Math. Phys},
  volume={17},
  pages={479--599},
  year={2013}
}

@article{GY26,
  title={A quantum trace map for 3-manifolds},
  author={Garoufalidis, Stavros and Yu, Tao},
  journal={Advances in Mathematics},
  volume={486},
  pages={110735},
  year={2026},
  publisher={Elsevier}
}

@article{Gar04,
  title={On the characteristic and deformation varieties of a knot},
  author={Garoufalidis, Stavros},
  journal={Geom. Topol. Monogr},
  volume={7},
  pages={291--304},
  year={2004}
}

@article{DimGar13,
  title={The quantum content of the gluing equations},
  author={Dimofte, Tudor and Garoufalidis, Stavros},
  journal={Geometry \& Topology},
  volume={17},
  number={3},
  pages={1253--1315},
  year={2013},
  publisher={Mathematical Sciences Publishers}
}

@article{BW10a,
  title={Kauffman brackets, character varieties, and triangulations of surfaces},
  author={Bonahon, Francis and Wong, Helen},
  journal={arXiv preprint arXiv:1009.0084},
  year={2010}
}

@article{BW11,
  title={Quantum traces for representations of surface groups in $SL_2(\mathbb{C})$},
  author={Bonahon, Francis and Wong, Helen},
  journal={Geometry \& Topology},
  volume={15},
  number={3},
  pages={1569--1615},
  year={2011},
  publisher={Mathematical Sciences Publishers}
}

@article{BW16,
  title={Representations of the Kauffman bracket skein algebra I: invariants and miraculous cancellations},
  author={Bonahon, Francis and Wong, Helen},
  journal={Inventiones mathematicae},
  volume={204},
  number={1},
  pages={195--243},
  year={2016},
  publisher={Springer}
}

@article{BW17,
  title={Representations of the Kauffman bracket skein algebra, II: Punctured surfaces},
  author={Bonahon, Francis and Wong, Helen},
  journal={Algebraic \& geometric topology},
  volume={17},
  number={6},
  pages={3399--3434},
  year={2017},
  publisher={Mathematical Sciences Publishers}
}

@article{FSY23,
  title={String-net models for pivotal bicategories},
  author={Fuchs, J{\"u}rgen and Schweigert, Christoph and Yang, Yang},
  journal={arXiv preprint arXiv:2302.01468},
  year={2023}
}

@article{Walker22,
  title={Going from $m+\varepsilon$ to $n+1$ in non-semisimple oriented TQFTs.},
  author={Kevin Walker},
  journal={Talk at the UQSL seminar https://canyon23.net/math/talks/np1.pdf.},
  year={2022}
}

@article{CGBPM26,
  title={Skein (3+ 1)-TQFTs from non-semisimple ribbon categories},
  author={Costantino, Francesco and Geer, Nathan and Ha{\"\i}oun, Benjamin and Patureau Mirand, Bertrand and others},
  journal={SIGMA. Symmetry, Integrability and Geometry: Methods and Applications},
  volume={22},
  pages={034},
  year={2026},
  publisher={SIGMA. Symmetry, Integrability and Geometry: Methods and Applications}
}

@article{Hai24,
  title={Defining extended TQFTs via handle attachments},
  author={Ha{\"\i}oun, Benjamin},
  journal={arXiv preprint arXiv:2412.14649},
  year={2024}
}

@article{JR25,
  title={Finiteness and holonomicity of skein modules},
  author={Jordan, David and Romaidis, Iordanis},
  journal={arXiv preprint arXiv:2509.22313},
  year={2025}
}

@misc{Van, 
      title={Weaves, skeins and higher rank cluster charts. Upcoming},
      author={Vancraeynest,Matthias},
}

@misc{Bie, 
      title={Factorisation homology of wild character varieties. Upcoming},
      author={Julia Bierent},
}

@article{CN25,
  title={Cochain valued TQFTs from nonsemisimple modular tensor categories},
  author={Czenky, Agustina and Negron, Cris},
  journal={arXiv preprint arXiv:2507.17169},
  year={2025}
}

@article{AGKRRV20,
  title={Duality for automorphic sheaves with nilpotent singular support},
  author={Arinkin, Dima and Gaitsgory, Dennis and Kazhdan, David and Raskin, Sam and Rozenblyum, Nick and Varshavsky, Yasha},
  journal={arXiv preprint arXiv:2012.07665},
  year={2020}
}

@article{Pri19,
  title={Deformation quantisation for (-1)-shifted symplectic structures and vanishing cycles},
  author={Pridham, JP},
  journal={Algebraic Geometry},
  volume={6},
  number={6},
  pages={747--779},
  year={2019},
  publisher={European Mathematical Society}
}

@article{Pri26,
  title={Deformation quantisation of exact shifted symplectic structures, with an application to vanishing cycles},
  author={Pridham, JP},
  journal={arXiv preprint arXiv:2511.07602},
  year={2025}
}

@article{Toe07,
  title={The homotopy theory of dg-categories and derived Morita theory},
  author={To{\"e}n, Bertrand},
  journal={Inventiones mathematicae},
  volume={167},
  number={3},
  pages={615--667},
  year={2007},
  publisher={Springer}
}

@article{Tab07,
  title={A new Quillen model for the Morita homotopy theory of DG categories},
  author={Tabuada, Goncalo},
  journal={arXiv preprint math/0701205},
  year={2007}
}

@article{Prz98,
  title={Fundamentals of Kauffman bracket skein modules},
  author={Przytycki, J{\'o}zef H},
  journal={arXiv preprint math/9809113},
  year={1998}
}

@article{BD25,
  title={On torsion in the Kauffman bracket skein module of 3-manifolds},
  author={Belletti, Giulio and Detcherry, Renaud},
  journal={International Mathematics Research Notices},
  volume={2025},
  number={21},
  pages={rnaf333},
  year={2025},
  publisher={Oxford University Press}
}

@article{Kal25,
  title={Skein modules, character varieties and essential
surfaces of 3-manifolds},
  author={ E. Kalfagianni},
  journal={Quantum topology conference Bonn MPIM,  
  https://users.math.msu.edu/users/kalfagia/MPI.pdf,  year=2025  },
  year={2025},
}

@article{AF24,
  title={The tangle hypothesis: Dimension 1},
  author={Ayala, David and Francis, John},
  journal={arXiv preprint arXiv:2410.23965},
  year={2024}
}

@article{KW07,
  title={Electric-magnetic duality and the geometric Langlands program},
  author={Kapustin, Anton and Witten, Edward},
  journal={Communications in number theory and physics},
  volume={1},
  number={1},
  pages={1--236},
  year={2007},
  publisher={International Press of Boston, Inc. Somerville, MA 02143, USA}
}

@misc{DC, 
      title={ https://kskedlaya.org/prismatic/sec\_derived-complete.html},
      author={Kiran S. Kedlaya}
}

@article{GY24,
  title={The 3d-index of the 3d-skein module via the quantum trace map},
  author={Garoufalidis, Stavros and Yu, Tao},
  journal={arXiv preprint arXiv:2406.04918},
  year={2024}
}

@article{PP24,
  title={3d quantum trace map},
  author={Panitch, Samuel and Park, Sunghyuk},
  journal={arXiv preprint arXiv:2403.12850},
  year={2024}
}
\end{document}